\documentclass[a4paper,reqno,11pt]{amsart} 

\usepackage{graphicx} % Required for inserting images
\usepackage{float}
\usepackage[alphabetic, abbrev, msc-links, nobysame]{amsrefs}

\DeclareUnicodeCharacter{02BC}{'}
\DeclareUnicodeCharacter{211D}{\ensuremath{\mathbb{R}}}
\usepackage[foot]{amsaddr}
\usepackage{physics}
\usepackage{amsmath,amsthm}
\usepackage[all,cmtip]{xy}
\usepackage{amssymb}
\usepackage{geometry}
\usepackage{fancyhdr}
\usepackage{lipsum}
\usepackage{mathtools}
\mathtoolsset{showonlyrefs} % number only equations that are referenced with \eqref
\usepackage{relsize}
\usepackage{physics}
\allowdisplaybreaks
\usepackage{braket}
\usepackage{bm}
\usepackage{esint}
\usepackage{enumitem}
\usepackage{mathrsfs}
\usepackage{xcolor}
\usepackage{bbold}
\usepackage{hyperref}
\usepackage[alphabetic, abbrev, msc-links, nobysame]{amsrefs}

\numberwithin{equation}{section}
\newtheorem{thm}{Theorem}[section]

\newtheorem{cor}[thm]{Corollary}
\newtheorem{definition}[thm]{Definition}

\newtheorem{lemma}[thm]{Lemma}

\newtheorem{remark}[thm]{Remark}
\newtheorem{prop}[thm]{Proposition}
\newtheorem{corollary}[thm]{Corollary}
\newtheorem{example}[thm]{Example}

\def\bi{
	\begin{itemize}}
\def\ei{
	\end{itemize}}
\def\bem{
	\begin{emuerate}}
\def\eem{
	\end{enumerate}}
\def\beq{
	\begin{eqnarray*}}
\def\eeq{
	\end{eqnarray*}}

\def\i{itemize}
\def\N{\mathbb{N}} %naturals
\def\Z{\mathbb{Z}} %integers
\def\R{\mathbb{R}} %reals
\def\C{\mathbb{C}} %complexes
\def\P{\mathbb{P}} %probability
\def\E{\mathbb{E}} %expectation
\def\F{\mathcal{F}} %Filtration sigma algebra
\def\Ru{\mathcal{R}} %Update operator

\def\mc{\mathcal}
\def\var{\text{var}}

\def\1{\mathbb{1}}
\DeclarePairedDelimiter{\Parentheses}{(}{)} \newcommand{\pr}{\Parentheses*}

\DeclarePairedDelimiter{\Bracket}{[}{]}
\newcommand{\bracket}{\Bracket*}

\title{From Individual-Based Models to General Stochastic Reaction Diffusion Equations}
\date{}
\author[A. Gonz\'{a}lez Casanova]{Adri\'{a}n Gonz\'{a}lez Casanova\textsuperscript{*}} \email{agonz591@asu.edu}
\author[J. Yang]{Johnny (Yifan) Yang\textsuperscript{*}}\email{jyang436@asu.edu}

\address{\textsuperscript{*}School of Mathematical and Statistical Sciences,
Arizona State University, Tempe, AZ 85287-1804, USA}
\begin{document}

\begin{abstract}
In this article, we construct, for a broad class of one-dimensional stochastic reaction–diffusion equations, a sequence of spatially-structured individual-based models whose rescaled empirical processes converge to the solution of the equation.

The purpose of this article is to show that, whenever the rescaled one-step mean and variance of a reasonable individual-based model converge to those of an SDE with drift \(b(x)x(1-x)+\theta^+(1-x)-\theta^-x\) and noise coefficient \( \sqrt{\sigma(x)x(1-x)} \), under certain moment conditions there also exists a corresponding multi-island model, composed of independent and identically distributed copies of the individual-based model on each island, with selection and mutation scaled to the diffusive time scale, coupled through migration, which converges to an SPDE with a Laplacian plus the same drift and noise coefficients, provided this SPDE is unique in law.
\end{abstract}
\maketitle

\tableofcontents
\section{Introduction}

Cannings models~\cite{cannings1974latent} provide a flexible generalization of the Wright–Fisher model. These models have played a central role in the study of genealogies, particularly through their connection to Kingman~\cite{kingman1982coalescent} and $\Lambda$-coalescents~\cites{pitman1999coalescents,sagitov1999general,mohle2001classification}, and admit a rich variety of scaling limits. A key feature is that, although individuals choose parents  from the previous generation, these choices are no longer independent, leading to an increased variance in offspring distributions while maintaining neutrality.

The present work is motivated by combining two perspectives. On the one hand, we seek to incorporate various selection mechanisms and Gillespie’s principle~\cites{gillespie1974natural,gillespie1975natural} that offspring variance can depend on the current state of the population. On the other hand, we aim to retain the structural flexibility of Cannings models, allowing for correlations in the reproduction mechanism. This leads naturally to a new class of individual-based models in which the correlation structure of reproduction depends on the current frequency configuration.

Our first objective is to understand the scaling limits of such one dimensional models. One expects that, under suitable assumptions, these systems converge to stochastic differential equations (SDEs) whose drift and diffusion coefficients reflect selection, mutation, and frequency-dependent variance effects. The resulting diffusions are known in the literature, because they are the moment duals of Branching Coalescing Pairwise Branching processes \cite{GPP}, when the drift and the noise coefficient are polynomials with suitable signs.

The classical stepping stone model~\cite{kimura1964stepping} provides a spatial version of the Wright--Fisher model and may be viewed as a discrete counterpart of the stochastic Fisher--Kolmogorov--Petrovskii--Piskunov (FKPP) equation. In this model, colonies are arranged in space, individuals reproduce locally, and migration between neighboring colonies generates spatial interaction. Under diffusive rescaling of space and time, the corresponding type-frequency process converges to the stochastic FKPP equation. For voter models this was proved by Mueller and Tribe~\cite{mueller1995stochastic}, with long range interactions, and by Durrett and Fan~\cite{durrett2016genealogies}, with many individuals in each colony and interactions between neighboring colonies; without selection, the limit is the continuum stepping stone model of~\cite{shiga1988stepping}. The Wright--Fisher version with discrete generations is the case $\sigma\equiv1$ of Theorem~\ref{thm:canning_convergence}.

The main contribution of this paper is to lift this analysis to spatially structured populations. Inspired by the stepping stone model \cite{kimura1964stepping}, we consider systems of colonies in which each colony evolves according to an individual-based mechanism of the type described above, and migration couples neighboring colonies. We call those models the Cannings-Gillespie (CG) stepping stone models. Our goal is to identify conditions under which these spatial models converge, under diffusive scaling of space and time, to stochastic reaction–diffusion equations.

We establish a general convergence criterion that links the behavior of the underlying one dimensional model to the limiting SPDE. More precisely, we show that if the rescaled one-step mean and variance of the single-colony dynamics converge to those of an SDE with drift $b$ and diffusion coefficient $\sigma$, then under certain moment conditions (see Theorem~\ref{thm:main}), the associated spatial model, with selection and mutation scaled to the diffusive time scale, converges to an SPDE with Laplacian, drift $b$, and noise coefficient $\sqrt{\sigma}$, provided this SPDE is unique in law. This result can be viewed as a spatial extension of classical convergence criteria such as Möhle’s lemma~\cites{mohle2000total,mohle2001forward}, now formulated in the context of SPDEs.

As an application, we introduce and analyze the CG stepping stone models. Their scaling limits include every finite-invariant-interval equation covered by the uniqueness result of Athreya and Tribe~\cite{athreya2000uniqueness}, including equations covered after an affine change of the state variable; see Corollary~\ref{cor:AT conditions}. Thus every equation in this Athreya--Tribe class admits an individual-based spatial approximation.  Mueller stated in~\cite{mueller2009some}*{p.~115} that he ``firmly believe[s] that many SPDEs should be viewed as limits of particle systems.'' Our work offers this perspective for the Athreya--Tribe (AT) class. 

Indeed, for every SPDE in the AT class,  we give two individual-based models with different reproduction mechanisms. In the first, each colony has $N$ individuals, but only some of them  reach reproductive age, and their number depends on the type frequency (Section~\ref{sec:ex-pop}). In the second construction, every individual reaches reproductive age, but family sizes are random and unequal: the parental weights follow a Dirichlet law, and their variance depends on the frequency (Section~\ref{sec:ex-dir}).  Figures~\ref{fig:active-parent-generations} and~\ref{fig:dirichlet-cg-eq11} show the two mechanisms for the Gillespie FKPP equation (see \eqref{eq:gillespie-fkpp}).

\paragraph{Related work.}
Mueller and Tribe~\cite{mueller1995stochastic} showed that rescaled long range voter processes converge to the heat equation with Wright--Fisher noise $\sqrt{u(1-u)}$, with an additional drift proportional to $u(1-u)$ when the voter model is biased, and that rescaled long range contact processes converge to the KPP equation with branching noise $\sqrt{u}$. Durrett and Fan~\cite{durrett2016genealogies} placed $M_n$ individuals at each site of $L_n^{-1}\Z$, found the scalings under which this biased voter model converges to the stochastic FKPP equation, and used the limit to study genealogies in expanding populations. Fan~\cite{fan2021stochastic} extended these limits to metric graphs. Kliem~\cite{kliem2011convergence} obtained a class of drifts by perturbing the voter model, and Etheridge, V\'eber and Yu~\cite{etheridge2020rescaling} derived the stochastic FKPP equation in one dimension from the spatial $\Lambda$-Fleming--Viot process with selection. In all of these limits the noise is $\sqrt{\gamma u(1-u)}$ or $\sqrt{\gamma u}$ and its dependence on $u$ is fixed by the reproduction mechanism. In our limits the noise is $\sqrt{\sigma(u)u(1-u)}$, where $\sigma$ is a continuous function determined by the reproduction inside a colony.

Our results (Theorem~\ref{thm:main}) assume that the limiting equation is unique in law. For the Wright--Fisher SPDE with selection this follows from a duality of Shiga~\cite{shiga1988stepping}. Athreya and Tribe~\cite{athreya2000uniqueness} extended moment duality to a class of analytic coefficients that is stable under small perturbations, and noted that equations in this class are expected to arise from particle systems whose offspring mean and variance depend on the local density~\cite{athreya2000uniqueness}*{Section~1.1}. Corollary~\ref{cor:AT conditions} gives such approximations for every equation in their class with a finite invariant interval.

The rest of the paper is organized as follows. Section~\ref{sec:setup-main} introduces the one-colony paintbox  CG model and its spatial version, and states the main result, Theorem~\ref{thm:canning_convergence}, together with the general convergence result, Theorem~\ref{thm:main}. Section~\ref{sec:examples} gives two ways to build one-colony models that satisfy the hypotheses of Theorem~\ref{thm:canning_convergence}. Section~\ref{sec:proof-canning} derives  Theorem~\ref{thm:canning_convergence} from Theorem~\ref{thm:main}. The remaining sections prove Theorem~\ref{thm:main}. Section~\ref{sec:prelim-estimates} collects background on the limiting SPDE and on the discrete heat kernel. Section~\ref{sec:discrete-evolution} gives the discrete mild and weak formulations and the space--time moment bounds, which yield tightness in Section~\ref{sec:tightness}. Section~\ref{sec:limit-points} identifies every subsequential limit through the martingale problem and completes the proof. Appendix~\ref{app:heat-kernel-proofs} contains the proofs of the auxiliary heat-kernel and interpolation estimates.

\paragraph{Motivational example:}
Our first main example is what we call the \emph{Gillespie stepping stone model}. In this model, reproduction within each colony follows a zero-dimensional Gillespie-type population genetic mechanism, while migration occurs between neighboring colonies. We ask for the scaling limit when colonies become dense in space and time is accelerated diffusively. We show that the rescaled frequency process converges to a stochastic partial differential equation that we call the \emph{Gillespie FKPP equation},
\begin{align}
	\label{eq:gillespie-fkpp}\partial_t u = \frac{\alpha}{2}\Delta u-u(1-u)+\varepsilon\sqrt{u(1-u)(1-\kappa u)}\dot{W}.
\end{align}
Here, $\varepsilon>0$ controls the noise amplitude. This equation is a natural extension of the stochastic FKPP equation, but with a noise coefficient that reflects the frequency-dependent population size and hence the non-classical demographic variance. Its moment dual is a system of branching--coalescing Brownian motions with pairwise branching, which is an interesting object in its own right; it is a spatial version of the ancestral selection/efficiency graph of~\cite{casanova2020wright}. The following example gives a simple individual-based model corresponding to~\eqref{eq:gillespie-fkpp}. We will discuss its spatial version in more detail in Section~\ref{sec:ex-pop}; see Remark~\ref{rem:resource-example}. 

\begin{example}[A resource-limited stepping stone model]\label{ex:gillespie-resource}
	Fix a colony of $N$ individuals with a total amount of available resource equal to $R:=(1-\kappa)N\ge1$. Suppose there are two competing types. Type $A$ individuals require only $1-\kappa$ units of resource to reach reproductive age, while type $a$ individuals require one unit, where $\kappa\in[0,1)$. If the current frequency of type $A$ is $x\in[0,1]$, then the  number of individuals that can reach reproductive age is
	\begin{align*}
		N_R(x):=\max\left\{m\in\N:\; m x(1-\kappa)+m(1-x)\le R\right\}=\left\lfloor\frac{R}{1-\kappa x}\right\rfloor\le N.
	\end{align*}
	Thus the effective population size is no longer constant, but depends on the current type frequency.

	These $N_R(x)$ individuals are chosen uniformly at random among the $N$ individuals. Let $\xi^R_g$ be the type-$A$ frequency at generation $g$ and $W^R_g$ the type-$A$ frequency among its $N_R(\xi^R_g)$ chosen individuals. Each of the $N$ individuals of generation $g+1$ independently has type $A$ with probability $p_R(W^R_g)$, so
	\begin{align*}
		\xi^R_{g+1}=\frac{1}{N}\mathrm{Bin}\bigl(N,p_R(W^R_g)\bigr),
		\qquad
		p_R(x):=\frac{x}{x+(1+\beta_R)(1-x)},
	\end{align*}
	where $\beta_R\ge0$ is the selective advantage of type $a$.
	The key feature is that, uniformly in $x$ and for bounded $\beta_R$, the one-step variance is $x(1-x)/N_R(x)+O(\beta_R/R)$ (exactly $x(1-x)/N_R(x)$ when $\beta_R=0$), so it depends on the frequency through $N_R(x)$, while the mean change is $p_R(x)-x+O(\beta_R/R)$, whose leading term does not. In particular, taking $\beta_R=1/R$ and speeding up time by $R$, one obtains as $R\to\infty$ the diffusion (by a standard diffusion approximation, see e.g.~\cite{ethier2009markov}*{Chapter~7, Theorem~4.1})
	\begin{align*}
		dX_t=-X_t(1-X_t)\,dt+\sqrt{X_t(1-X_t)(1-\kappa X_t)}\,dB_t,
	\end{align*}
	whose noise coefficient is no longer the classical Wright--Fisher term $X_t(1-X_t)$ but carries the factor $1-\kappa X_t$. This is the Wright--Fisher diffusion with efficiency of~\cite{casanova2020wright}*{(1.2)}. It is also the limit of the model of~\cite{casanova2020wright} in which an offspring that would exceed the resource is rejected~\cite{alsmeyer2026two}*{Theorem~2.2}.
\end{example}

\begin{figure}[H]
	\centering
	\includegraphics[width=0.8\textwidth]{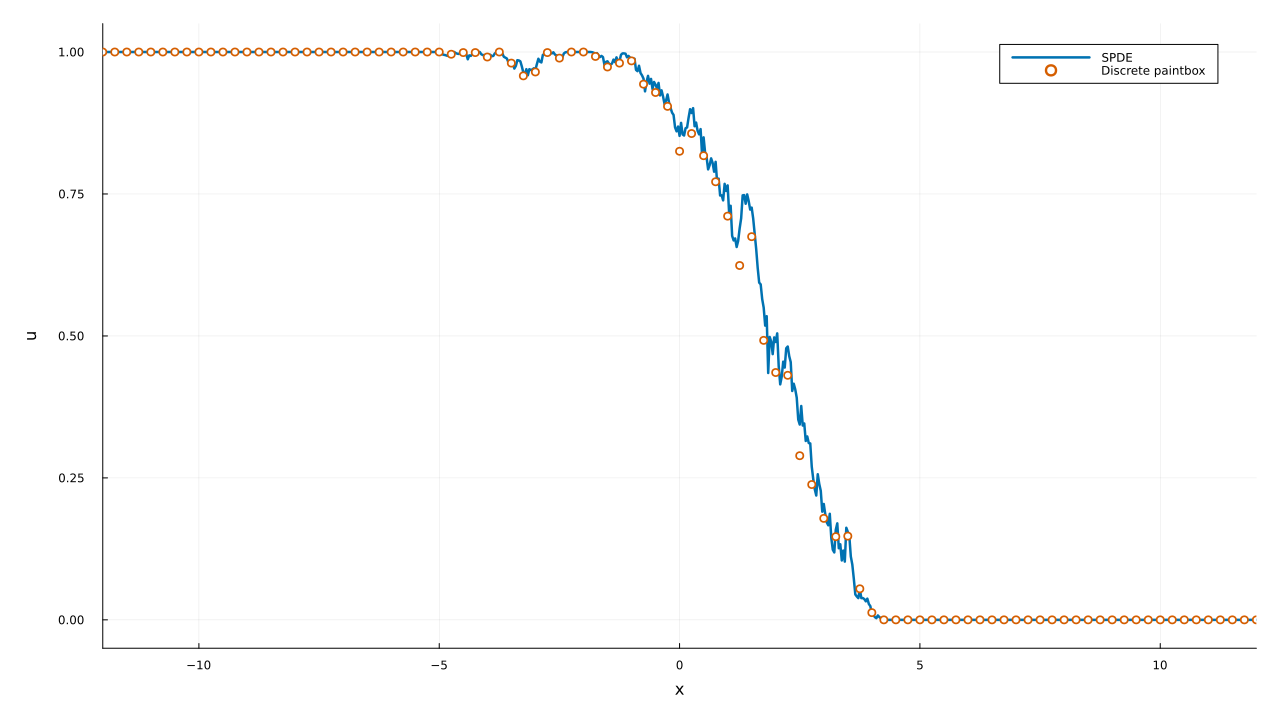}
	\caption{A sample of the solution of~\eqref{eq:gillespie-fkpp} with $\alpha=\kappa=\frac{1}{2}$ and $\varepsilon=\frac{1}{4}$, in blue, and of its individual-based approximation of Section~\ref{sec:ex-pop} with $N=1024$, in orange.}
	\label{fig:gillespie-fkpp-paintbox-comparison}
\end{figure}

\section{Setup and main result}\label{sec:setup-main}
In this section, we introduce the class of discrete population models that will serve as approximations to the spatial SPDEs studied in this paper. We begin with a one-colony paintbox Cannings-Gillespie model, in which reproduction is governed by an exchangeable random vector of offspring weights whose law may depend on the current type frequency. This framework allows both the selective bias and the effective offspring variance to vary with the population composition.

We then place independent copies of this update mechanism on the lattice and couple neighboring colonies through migration. After specifying the spatial rescaling, we state the main convergence result for the spatially interpolated frequency process and record concrete moment conditions on the one-colony model that will be verified in examples and used in the general convergence theorem proved later.

\subsection{\texorpdfstring{Paintbox Cannings-Gillespie (CG) model}{Paintbox Cannings-Gillespie (CG) model}}\label{sec:canning}
In this section, we will introduce the paintbox Cannings-Gillespie model, which is a generalization of the Cannings models in which the distribution of the weights depends on the frequency of the previous generation.

Let $N \in \N$  and suppose for any $x \in [0,1]$,  $\mathcal W(x) :=\mathcal W^N (x):= (w_1(x),\dots, w_N(x))$ is an exchangeable  $N$-tuple of  weights taking its values in
$$
	\left\{(w_1(x),\dots,w_N(x)): w_i(x) \ge 0,\, \text{ for all } 1 \le i \le N,\, \sum_{i = 1}^N w_i(x) = 1 \right\}.
$$
Assume that each individual has one of two types, either type $(A)$ or type $(a)$. In each generation $g \in \N$, the individuals are numbered by $i \in [N]:= \{1,\dots,N\}$, and denoted as $(i,g)$. A parental relation between individuals in generation $g$ and $g-1$ is defined in the following way. Let $K_{g}$ be the number of individuals in generation $g \in \N$ that are of type $A$, and let $X_g := \frac{K_g}{N}$ be the frequency of type $A$. Set $[N^{-1}]:=\{0,1/N,\dots,1\}$. Since the state space records only the type frequency, we relabel the individuals before each reproduction step so that, whenever $X_{g-1}=x\in[N^{-1}]$, the type-$A$ individuals are indexed by
\[
	I_x:=\{1,\dots,Nx\}\subset[N].
\]
Because $\mathcal W^N(x)$ is exchangeable, this relabeling does not change the law of the frequency process.

To determine the parental relation between individuals of generation $g-1$ and $g$, we first introduce a function $s_N:[0,1]\to (-1,1)$. Given $X_{g-1}=x$, sample $\mathcal W(x)$ independently of the past. Conditioned on $X_{g-1}=x$ and $\mathcal W(x)$, each individual $(j,g)$ independently chooses its parent according to
\begin{align*}
	\P \left((i,g-1) \text{ is parent of } (j,g)|X_{g-1} = x,\, \mathcal W(x) \right) = \frac{(1+\mathbf 1_{i \in I_x}s_N(x))w_i(x)}{1+s_N(x) W_A^N(x)},
\end{align*}
where $W_A^N(x) := \sum_{i \in I_x} w_i(x)$. In particular, conditioned on $X_{g-1}=x \in[N^{-1}]$ and $\mathcal W(x)$, individuals in generation $g$ inherit their parental type independently, with type-$A$ probability
\begin{align}\label{eq: pnxw}
	P_N(x,\mathcal W(x)):=\P \left(\text{the parent of $(i,g)$ is of type }A\mid X_{g-1}=x,\,\mathcal W(x)\right)
	=
	\frac{(1+s_N(x))W_A^N(x)}{1+s_N(x)W_A^N(x)}.
\end{align}
Let $\theta_N^+,\theta_N^-\in[0,1]$ satisfy $\theta_N^++\theta_N^-\le1$. After an offspring inherits its parental type, independently of the other offspring, the sampled weights, and the parental choices, it mutates from type $a$ to type $A$ with probability $\theta_N^+$ and from type $A$ to type $a$ with probability $\theta_N^-$. Thus its final type-$A$ probability is
\begin{align}\label{eq: mutation adjusted probability}
	\widehat P_N(x,\mathcal W(x))
	&:=\theta_N^+ +\bigl(1-\theta_N^+ - \theta_N^- \bigr) P_N(x,\mathcal W(x))\notag\\
	&=P_N(x,\mathcal W(x)) + \theta_N^ + \bigl(1-P_N(x,\mathcal W(x))\bigr) - \theta_N^- P_N(x,\mathcal W(x)).
\end{align}
Therefore, given $X_{g-1}$ and $\mathcal W(X_{g-1})$,
\begin{align}\label{eq:canning_conditional_binomial}
	X_g \sim N^{-1}\,\text{Binomial}\pr{N,\widehat P_N\pr{X_{g-1}, \mathcal W\pr{X_{g-1}}}}.
\end{align}

For $x\in[0,1]$, let
\begin{equation}\label{eq:cn scaling}
	c_N(x) := \E \left[\sum_{i = 1}^N w_i(x)^2 \right].
\end{equation}
In the neutral case, $c_N$ is the probability that two individuals have a common ancestor in the previous generation, given the frequency in that generation is $x$.

The following example demonstrates the method to model a population, whose size depends on the frequency, due to for example, type $A$ individuals consuming fewer resources.
\begin{example}[Dirichlet-Cannings-Gillespie]
	Imagine  that there is a frequency-dependent population size, skewed offspring distribution and classic selection $s_N(x)=s$ for some $s\in[0,1)$.

	In order to construct $\mathcal W$, we let
	\begin{align*}
		\hat w_i(x) = Y_iB_i(x),
	\end{align*}
	where $\{Y_i\}_{i \in [N]}$ are i.i.d.\ $\mathrm{Gamma}(a,1)$ random variables for some $a>0$, and $B_i(x) \sim \mathrm{Bernoulli}\bigl(\frac{1-\kappa}{1-\kappa x}\bigr)$, i.i.d.\ and independent of $\{Y_i\}_{i\in[N]}$, for some $\kappa \in (0,1)$. For $\sum_{i \in [N]} \hat w_i(x) >0$, we set
	\begin{align*}
		w_i(x):=  \frac{\hat w_i(x)}{\sum_{j \in [N]} \hat w_j(x)}.
	\end{align*}
	Else, set $w_i(x) = \frac{1}{N}$ for all $i \in [N]$. Since the $Y_i$ are i.i.d.\ $\mathrm{Gamma}(a,1)$, given $J(x):=\{i:B_i(x)=1\}\neq\emptyset$ the weights $(w_i(x))_{i\in J(x)}$ are $\mathrm{Dirichlet}(a,\dots,a)$ distributed, and $w_i(x)=0$ for $i\notin J(x)$.
\end{example}
We will assume in Sections~\ref{sec:canning}--\ref{sec:main-result}, \ref{sec:examples} and~\ref{sec:proof-canning} that there are fixed $b\in\mathcal C([0,1];\R)$, $\sigma\in\mathcal C([0,1];[0,\infty))$, constants $\theta^+,\theta^-\ge0$, and a positive sequence $h_N \rightarrow 0$ such that
\begin{align}\label{eq:bsigma def}
	\lim_{N\to\infty}\sup_{x\in[0,1]}
	\left(
	\abs{\frac{s_N(x)}{h_N^2}-b(x)}+\abs{\frac{c_N(x)}{h_N}-\sigma(x)}+\abs{\frac{\theta_N^+}{h_N^2}-\theta^+} +\abs{\frac{\theta_N^-}{h_N^2}-\theta^-}
	\right)
	=0.
\end{align}
We write
\begin{align}\label{eq: full CG drift}
	B(x):=b(x)x(1-x)+\theta^+(1-x)-\theta^-x, \qquad x\in[0,1],
\end{align}
for the full limiting drift. In particular, $B(0)=\theta^+\ge0$ and $B(1)=-\theta^-\le0$. For each $N \in \N$, let $\{X_g^N\}_{g \in \N}$ denote the frequency process of the CG model with parameters given as above. The one-colony chain may be realized by iterating i.i.d. copies of the following random update operator.
\begin{definition}[Update operator]\label{def: update operator}
	For each $x\in[N^{-1}]$, let $\mathcal W_x^N=(w_1^x,\dots,w_N^x)$ be an independent random vector with law $\mathcal W^N(x)$. Define
	\[
		W_A^N(x):=\sum_{i\in I_x} w_i^x.
	\]
	Conditioned on $\mathcal W_x^N$, let
	\[
		K_x^N\sim \mathrm{Binomial}\bigl(N,
		\widehat P_N(x,\mathcal W_x^N)\bigr).
	\]
	Assume moreover that the family $\{K_x^N\}_{x\in [N^{-1}]}$ is conditionally independent given $\{\mathcal W_x^N\}_{x\in [N^{-1}]}$. We then define the random map $\widetilde{\mathcal R}^N:[N^{-1}]\to [N^{-1}]$ by
	\[
		\widetilde{\mathcal R}^N(x):=\frac{K_x^N}{N},\qquad x\in [N^{-1}].
	\]
	Since $\mathcal W^N(x)$ is exchangeable, the law of $\widetilde{\mathcal R}^N(x)$ does not depend on the particular choice of the deterministic set $I_x$ with cardinality $Nx$. If $\{\widetilde{\mathcal R}_g^N\}_{g\ge 1}$ are i.i.d. copies of $\widetilde{\mathcal R}^N$, then the one-colony frequency process is given by
	\[
		X_g^N=\widetilde{\mathcal R}_g^N(X_{g-1}^N),\qquad g\ge 1.
	\]
\end{definition}
When the current frequency is $x$, we write $X_1:=\widetilde{\mathcal R}^N(x)$, and $\P_x$, $\E_x$ for probability and expectation under this one-step law.

\subsection{Spatial CG models}

For spatial models, let $N \in \mathbb N$ and $\Z = \{\dots, -1,0, 1,\dots\}$ be the  integer lattice. For any $f:\Z \to \R$, we  define the discrete Laplacian to be
\begin{align*}
	\Delta_d f(i) := f(i+1) - 2f(i) +f(i-1),\qquad i \in \Z.
\end{align*}
Here, the subscript stands for discrete.

For each site and each time step, let $V$ be uniform on $(0,1)$ and independent of $\widetilde{\mathcal R}^N$. Define the random function $U_N:[0,1]\to[N^{-1}]$ by
\[
	U_N(x):=\frac{\lfloor Nx\rfloor}{N}+\frac1N\1_{\{V<Nx-\lfloor Nx\rfloor\}},
	\qquad
	\overline{\mathcal R}^N(x):=\widetilde{\mathcal R}^N(U_N(x)).
\]
Then $U_N(x)\in[N^{-1}]$ and
\begin{align}\label{ineq:uniform_rv_estimates}
	\E[U_N(x)]=x,\qquad
	|U_N(x)-x|\le\frac1N,\qquad
	\E[(U_N(x)-x)^2]\le\frac1{4N^2}.
\end{align}
Moreover, $U_N(x)=x$ whenever $x\in[N^{-1}]$, and hence $\overline{\mathcal R}^N(x)=\widetilde{\mathcal R}^N(x)$ almost surely on $[N^{-1}]$.

On a common probability space $(\Omega,\mathcal F,\mathbb P)$, let $\{\overline{\mathcal R}^N_{i,g}\}_{(i,g)\in\Z\times\Z_+}$ be an i.i.d. family of random operators with common distribution $\overline{\mathcal R}^N$ defined above. Define the filtration by
\begin{align*}
	\mathcal F_g^N:=\sigma\pr{\overline{\mathcal R}_{i,k}^N:\, i\in\Z,\,0\le k<g},\qquad g\in\Z_+.
\end{align*}
In particular, $\{\overline{\mathcal R}_{i,g}^N\}_{i\in\Z}$ is independent of $\mathcal F_g^N$.

\textbf{Spatial frequency process:} Let us fix $\alpha \in (0,1)$ and suppose $X_0^N: \Z \to [0,1]$ is deterministic. For $g\in\Z_+$ and $i\in\Z$, define
\begin{align}\label{eq: discrete spatial frequency proces definition}
	X_{g+1}^N(i):=  \overline{\mathcal R}^N_{i,g}\pr{X_g^N(i) + \frac{\alpha}{2}\Delta_d X_g^N(i)}.
\end{align}
Hence $X_{g}^N \in \mathcal F_g^N$ for all $g \in \Z_+$.

In the mutation-free case $\theta_N^+=\theta_N^-=0$, the Wright--Fisher model with selection $s>0$ satisfies, for any $p \in [N^{-1}]$,
\begin{align*}
	\widetilde{\mathcal R}^N(p)\sim N^{-1}\text{Binom}\pr{N, \frac{\pr{1+\frac{s}{N^2}}p}{1+\frac{s}{N^2}p}}.
\end{align*}
In this mutation-free Wright--Fisher example, the migrated frequency in \eqref{eq: discrete spatial frequency proces definition} is first rounded without bias to an adjacent grid point, and the Wright--Fisher update is then applied at that grid point.

\subsection{\texorpdfstring{Main result: continuum spatial CG models}{Main result: continuum spatial CG models}}\label{sec:main-result}
Here we state our main result: the discrete CG models converge to the desired SPDEs on $\R$.

Let $\Z_N:= \Z h_N = \{\dots,0,h_N,2h_N,\dots\}$ be the scaled integer lattice. We define for $t \in \R_+$ and $x_i   \in  \Z_N$ that
\begin{align}\label{eq:xin}
	\xi^N_t(x_i):= X^N_{\lfloor t/h_N^2\rfloor}( x_i/h_N).
\end{align}
Furthermore, we denote for each $t\ge 0$, $\widetilde\xi^N_t$ the linear spatial interpolation for $\xi^N_t$.

For each $\lambda \in \R$, define the real valued function $e_\lambda(x):= \exp \pr{-\lambda |x|}$ for $x \in \R$, and  $\norm{f}_{(\lambda)}:= \sup_x \abs{f(x)}e_{\lambda}(x)$.  We define the collection of real valued functions on $\R$ with tempered growth to be
\begin{align*}
	\mathcal C_\R^\text{tem}:= \left\{f \in \mathcal C(\R)\text{ with }\norm{f}_{(\lambda)} < \infty\text{ for all } \lambda > 0 \right\}.
\end{align*}
$\mathcal C^\text{tem}_\R$ is equipped with the topology generated by the norms $\{\norm{\cdot}_{(\lambda)}\}_{\lambda >0}$. It can be shown that, under this topology, the space $\mathcal C^\text{tem}_\R$ is Polish, i.e., complete separable metrizable space.

For any topological space $\mathcal X$ and functions $f,g$ on $\mathcal X$, we will say $f(x) \lesssim g(x)$ uniformly in $x \in \mathcal X$ if there exists $C>0$ so that $f(x) \le C g(x)$ for all $x\in \mathcal X$.

\begin{thm}\label{thm:canning_convergence}
	Let $b,\sigma$ in~\eqref{eq:bsigma def} be continuous. Suppose for each $u_0 \in \mathcal C(\R;[0,1])$, there exists a (probabilistically) unique weak solution to
	\begin{align}\label{eq:spde_canning}
		\partial_t u = \frac{\alpha}{2}\Delta u + B(u) + \sqrt{\sigma(u)u(1-u)}\dot{W},\qquad u(0,\cdot) = u_0.
	\end{align}
	Suppose for some $p>8$ and $\varepsilon >0$, uniformly in $N$ and $x\in[N^{-1}]$,
	\begin{align}\label{ineq:p_moment_canning_thm}
		\E_x \left[\abs{W_A^N(x) - x}^p \right] \lesssim h_N^{4+\varepsilon}.
	\end{align}
	Assume in addition that $\widetilde\xi_0^N\to u_0$ in $\mathcal C^\mathrm{tem}_\R$ as $N\to\infty$. Then the spatially interpolated process $\widetilde \xi^N$ as in~\eqref{eq:xin} converges in (probabilistic) distribution to the solution of~\eqref{eq:spde_canning} in  $\mathcal D(\R_+;\mathcal C^\mathrm{tem}_\R)$.
\end{thm}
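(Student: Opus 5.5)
The plan is to derive Theorem~\ref{thm:canning_convergence} from the general convergence criterion, Theorem~\ref{thm:main}, which is stated later in this section and proved in the remaining sections. That criterion only asks for one-step information about a generic update operator $\overline{\mathcal R}^N$. Its hypotheses are:
\begin{itemize}
\item the rescaled conditional mean $h_N^{-2}\bigl(\E[\overline{\mathcal R}^N(x)]-x\bigr)$ converges to $B(x)$ uniformly in $x$;
\item the rescaled conditional variance $h_N^{-1}\,\mathrm{Var}(\overline{\mathcal R}^N(x))$ converges to $\sigma(x)x(1-x)$ uniformly in $x$;
\item a centered $p$-th moment bound, $p>8$, of the form $\E|\overline{\mathcal R}^N(x)-\E\overline{\mathcal R}^N(x)|^p\lesssim h_N^{4+\varepsilon}$, giving tightness through the mild formulation and heat-kernel estimates;
\item uniqueness in law of the limit SPDE.
\end{itemize}
Uniqueness and convergence of the initial data are assumed directly. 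So the work is to verify the one-step conditions for the CG update operator, first on the grid $[N^{-1}]$ and then on all of $[0,1]$ using the unbiased rounding $U_N$ and~\eqref{ineq:uniform_rv_estimates}.

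\textbf{Mean.} Condition on $\mathcal W_x^N$, so that $\E_x[X_1\mid\mathcal W]=\widehat P_N(x,\mathcal W)$. Expand~\eqref{eq: pnxw} in $s_N$:
\[
P_N=W_A+s_NW_A(1-W_A)+O(s_N^2).
\]
By exchangeability, $\E W_A^N(x)=x$. Write $\E[W_A(1-W_A)]=x(1-x)-\mathrm{Var}(W_A)$. The variance term is $O(h_N)$ because $\E[\mathrm{Var}(W_A)]$ is a quadratic expression in $c_N$: exchangeability gives $\mathrm{Var}(W_A)=x(1-x)\frac{Nc_N-1}{N-1}$ on the grid. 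Hence $s_N\mathrm{Var}(W_A)=o(h_N^2)$. Adding the mutation terms from~\eqref{eq: mutation adjusted probability} and using~\eqref{eq:bsigma def}, the mean increment is $h_N^2B(x)+o(h_N^2)$ uniformly in $x$.

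\textbf{Variance.} Use the law of total variance:
\[
\mathrm{Var}(X_1)=\E\bigl[\widehat P(1-\widehat P)\bigr]/N+\mathrm{Var}(\widehat P).
\]
The dominant term is $\mathrm{Var}(W_A)=x(1-x)(Nc_N-1)/(N-1)$. Together with the binomial term $x(1-x)/N$, and after the $s_N,\theta_N$ corrections of order $h_N^2$, this gives
\[
x(1-x)\Bigl[\tfrac1N+\tfrac{Nc_N-1}{N-1}\bigl(1-\tfrac1N\bigr)\Bigr]+o(h_N)=x(1-x)c_N+o(h_N).
\]
By~\eqref{eq:bsigma def} this equals $h_N\sigma(x)x(1-x)+o(h_N)$. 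We need $1/N=O(h_N)$, which follows since $c_N\ge 1/N$; if $\sigma$ vanishes this is still consistent.

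\textbf{$p$-th moment.} Split
\[
X_1-\E X_1=\bigl(X_1-\widehat P\bigr)+\bigl(\widehat P-W_A\bigr)+\bigl(W_A-x\bigr)+\bigl(x-\E X_1\bigr).
\]
\begin{itemize}
\item The binomial fluctuation is bounded by Rosenthal's inequality: $\E|X_1-\widehat P|^p\lesssim N^{-p/2}+N^{1-p}$. This is at most $h_N^{4+\varepsilon}$ when $p>8$ and $1/N\lesssim h_N$ (possibly after adjusting $\varepsilon$).
\item The selection-mutation part is $O(h_N^2)$ deterministically, so its $p$-th power is $O(h_N^{2p})$.
\item The $W_A$ term is exactly hypothesis~\eqref{ineq:p_moment_canning_thm}.
\end{itemize}
Off the grid, composing with $U_N$ costs only $O(1/N)$ in the mean and variance. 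This is negligible because $1/N\lesssim h_N$ and the increments are needed to order $h_N^2$. The apparent $O(N^{-1})$ mean error is in fact handled by the relation $\E U_N(x)=x$ together with the near-linearity of the mean map.

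\textbf{Main obstacle.} I expect the delicate step to be the off-grid mean condition at order $h_N^2$ combined with the Rosenthal estimate. Both need a quantitative relation between $1/N$ and $h_N$, which must be extracted from $c_N\ge1/N$ together with $c_N/h_N\to\sigma$. Where $\sigma$ may vanish, I would first try to show that only $N^{-1}\lesssim h_N$ (not $N^{-1}=o(h_N)$) is needed.
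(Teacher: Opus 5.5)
Your proposal is correct and is essentially the paper's own proof: apply Theorem~\ref{thm:main} with $L_N=h_N^{-1}$ and $\mathcal R_N=\overline{\mathcal R}^N$. On the grid you check the mean, second and $p$-th moments via the expansion of $P_N$ in $s_N$, the exchangeability identity $\E_x[(W_A^N(x)-x)^2]=x(1-x)\frac{Nc_N(x)-1}{N-1}$, and $N^{-1}\le c_N(x)\lesssim h_N$; you then transfer off the grid using $\E[U_N(x)]=x$ and uniform continuity of $B$ and $\sigma(x)x(1-x)$, exactly as in Lemmas~\ref{lem:canning_drift}--\ref{lem:canning_unbiased_transfer}.

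Only minor tightening is needed.
\begin{itemize}
\item Theorem~\ref{thm:main} uses moments about $x$, not centered ones. This is equivalent here because the mean shift is $O(h_N^2)$.
\item You should record the boundary conditions $B(0)=\theta^+\ge0$ and $B(1)=-\theta^-\le0$.
\item Off the grid, the rounding adds to the second moment at most $\E[(U_N(x)-x)^2]\le 1/(4N^2)\lesssim h_N^2$ plus an $O(h_N^2/N)$ cross term. That is the bound you need, not the $O(1/N)$ you state, since $O(1/N)$ is not $o(h_N)$ when $Nh_N$ stays bounded.
\end{itemize}
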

\begin{remark}\label{rem:existence}
	Note that the proof of Theorem \ref{thm:canning_convergence}, along with examples in Section \ref{sec:examples} gives a constructive proof to the existence of solution for equation \eqref{eq:spde_canning} when $\min\sigma>0$, which is an alternative method to Shiga~\cite{shiga1994two}. In addition, we do not assume continuity of the prelimit coefficients, $s_N,\, c_N$ as in \eqref{eq:bsigma def}.
\end{remark}
\begin{remark}
	Note that in this work, we do require a priori that the equation~\eqref{eq:spde_canning} have weak uniqueness.
	Corollary~\ref{cor:AT conditions} below uses the mutation terms in~\eqref{eq: full CG drift} to construct discrete approximations for the full finite-invariant-interval class covered by the Athreya--Tribe uniqueness result.
\end{remark}
\begin{remark}
	Under~\eqref{ineq:p_moment_canning_thm}, reproduction events in which one parent produces a positive fraction of its colony are too rare to appear in the limit. Therefore the limit has continuous sample paths. In non-spatial models, such large reproduction events lead to $\Lambda$-coalescents~\cites{pitman1999coalescents,sagitov1999general} and to jumps in the frequency process~\cite{mohle2001forward}.
\end{remark}

\subsection{A general convergence result}
We now record an abstract version of Theorem~\ref{thm:canning_convergence}. Let $(L_n)_{n\in\N}$ be positive real numbers with $L_n\to\infty$; we write $L=L_n$ and suppress $n$, so that $L\to\infty$ means $n\to\infty$. For each $n$, let $\mathcal R^L$ be a random operator from $[0,1]$ to $[0,1]$, jointly measurable in $(x,\omega)$, and let $\{Y_g^L\}_{g\in\Z_+}$ denote the associated one-colony frequency chain, defined by
\[
	Y_{g+1}^L=\mathcal R_g^L(Y_g^L),\qquad g\ge 0,
\]
where $\{\mathcal R_g^L\}_{g\ge 0}$ are i.i.d. copies of $\mathcal R^L$. Assume that
\begin{itemize}
	\item There exist continuous functions $b:[0,1]\to\R$ and $\sigma:[0,1]\to[0,\infty)$ such that, upon defining
	\begin{equation}\label{eq:R assumption}
		\begin{aligned}
			b_L(x):=L^2\E_x\left[\mathcal R^L(x)-x\right],\qquad	 \sigma_L(x):=L\E_x\left[\pr{\mathcal R^L(x)-x}^2\right], \qquad x\in[0,1],
		\end{aligned}
	\end{equation}
	one has
	\begin{align}\label{eq:coefficient convergence}
		\lim_{L\to\infty}\sup_{x\in[0,1]} \pr{\abs{b_L(x)-b(x)}+\abs{\sigma_L(x)-\sigma(x)}}=0.
	\end{align}
	\item There exist $p>8$ and $\kappa_p >3$ such that, uniformly for sufficiently large $L \ge 1$ and $x \in [0,1]$,
	\begin{align}\label{ineq:p_moment bound}
		\E_x\left[ \abs{\mathcal R^{L}(x)-x}^p\right] \lesssim L^{-1-\kappa_p}.
	\end{align}
\end{itemize}

To build the corresponding spatial model, we use the same nearest-neighbor migration mechanism as in Subsection 2.2, with $1/L$ in the role of $h_N$. Fix $\alpha\in(0,1)$ and a deterministic $Y_0^L:\Z\to[0,1]$, let $\{\mathcal R_{i,g}^L\}_{(i,g)\in\Z\times\Z_+}$ be an i.i.d. family with common law $\mathcal R^L$, and define
\begin{align}\label{eq:abstract spatial frequency process}
	Y_{g+1}^L(i):=\mathcal R_{i,g}^L\pr{Y_g^L(i)+\frac{\alpha}{2}\Delta_d Y_g^L(i)},\qquad i\in\Z,\ g\in\Z_+.
\end{align}
Let $\mathcal F_g:=\sigma(\mathcal R^L_{i,k}:\, i\in\Z,\,0\le k<g)$ for $g\in\Z_+$, so that $Y^L_g$ is $\mathcal F_g$-measurable and $\{\mathcal R^L_{i,g}\}_{i\in\Z}$ is independent of $\mathcal F_g$. Writing $x_i=i/L\in \Z/L$, we then set
\begin{align*}
	\xi_t^L(x_i):=Y_{\lfloor tL^2\rfloor}^L(i),\qquad t\ge 0,
\end{align*}
and let $\widetilde \xi_t^L$ denote the linear spatial interpolation of $\xi_t^L$.

Let us state the corresponding abstract limit theorem. The target equation is
\begin{align}\label{eq: spde target}
	\partial_t u = \frac{\alpha}{2}\Delta u+ b(u) + \sqrt{\sigma(u)}\dot{W},
\end{align}
where $\dot{W}$ is the space-time white noise on $\R_+\times\R$.

\begin{thm}\label{thm:main}
	Let $b$ and $\sigma$ in~\eqref{eq:coefficient convergence} be continuous with
	\begin{align*}
		b(0) \ge 0,\quad b(1) \le 0,\qquad \sigma(0) = \sigma(1) = 0.
	\end{align*}
	Suppose for all $u_0\in \mathcal C(\R;[0,1])$, the equation~\eqref{eq: spde target} has a unique weak solution with initial condition $u(0,\cdot)=u_0$. If $\widetilde \xi_0^L \to u_0$ in $\mathcal C_\R^\text{tem}$, then $\widetilde{\xi}^L$ converges in distribution in $\mathcal D(\R_+;\mathcal C_\R^\text{tem})$ to the solution of~\eqref{eq: spde target}.
\end{thm}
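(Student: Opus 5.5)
The plan is the standard three-step scheme for particle approximations of SPDEs in the spirit of Mueller--Tribe: (i) a discrete mild formulation with moment bounds, (ii) tightness in $\mathcal D(\R_+;\mathcal C^{\mathrm{tem}}_\R)$, and (iii) identification of limit points through a martingale problem, closed by the assumed uniqueness in law.

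\textbf{Discrete mild formulation.} First decompose the update \eqref{eq:abstract spatial frequency process}. With $\bar Y_g:=Y_g^L+\frac{\alpha}{2}\Delta_d Y_g^L$, which lies in $[0,1]$ since $\alpha<1$, write
\[
Y_{g+1}^L(i)=\bar Y_g(i)+L^{-2}b_L(\bar Y_g(i))+M_{g+1}(i),
\]
where $M_{g+1}(i):=\mathcal R^L_{i,g}(\bar Y_g(i))-\E[\,\cdot\mid\mathcal F_g]$. The increments $M_{g+1}(i)$ are conditionally independent across sites with conditional variance $L^{-1}\sigma_L(\bar Y_g(i))-L^{-4}b_L^2$. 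Iterating with the kernel $p_g$ of the lazy walk $I+\frac{\alpha}{2}\Delta_d$ gives
\[
Y_g=p_g*Y_0+\sum_{k<g}p_{g-1-k}*\bigl(L^{-2}b_L(\bar Y_k)\bigr)+\sum_{k<g}p_{g-1-k}*M_{k+1}.
\]
After rescaling by $x=i/L$ and $t=gL^{-2}$, the kernel $Lp_{tL^2}(xL)$ approximates the heat kernel of $\frac{\alpha}{2}\Delta$. The drift term is a Riemann sum. The martingale term has quadratic variation
\[
\sum_k\sum_j p^2_{g-1-k}(i-j)\,L^{-1}\sigma_L \approx \int_0^t\!\int q_{t-s}^2(x-y)\,\sigma(u)\,dy\,ds,
\]
which matches the white-noise convolution. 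Here one needs discrete heat-kernel estimates, namely local CLT bounds and Hölder-type bounds on $p_g(i)-p_{g'}(i')$, which I take as standard (Appendix).

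\textbf{Moment bounds and tightness.} Since $Y\in[0,1]$, only increment bounds are needed. The key estimate is
\[
\E\bigl|\xi^L_t(x)-\xi^L_s(y)\bigr|^p\lesssim\bigl(|x-y|^{1/2}+|t-s|^{1/4}\bigr)^{p}\ +\ \text{error}(L),
\]
with weights $e_\lambda$ to handle $\R$.
\begin{itemize}
\item \emph{Initial-data term:} controlled by the convergence $\widetilde\xi_0^L\to u_0$ in $\mathcal C^{\mathrm{tem}}_\R$ and heat-kernel smoothing.
\item \emph{Drift term:} Lipschitz in $(t,x)$ because $b_L$ is bounded uniformly.
\item \emph{Martingale term:} apply Burkholder's inequality for the discrete martingale. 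This gives a square-function contribution, bounded using $\sigma_L\le C$ and the heat-kernel increments, plus a sum of $p$-th powers of individual increments. The latter is controlled by \eqref{ineq:p_moment bound}: it contributes roughly $\sum_{k,j}p_{g-k}(i-j)^p\,L^{-1-\kappa_p}$.
\end{itemize}
Checking that $p>8$ and $\kappa_p>3$ make this $o(1)$, and small enough at lattice scale, is where these hypotheses enter. Kolmogorov--Chentsov then gives tightness of $\widetilde\xi^L$ in $\mathcal D(\R_+;\mathcal C^{\mathrm{tem}}_\R)$ and continuity of all limit points. The time discretization into steps of $L^{-2}$ needs a bound on the maximal jump; $p>8$ with a union bound over a polynomial-size grid should show it tends to $0$.

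\textbf{Identification and main obstacle.} For $\phi\in\mathcal C_c^\infty$, sum the decomposition against $L^{-1}\phi(i/L)$. Summation by parts turns $\frac{\alpha}{2}\Delta_d$ into $\frac{\alpha}{2}\Delta\phi$ up to $O(L^{-2})$ per step, so
\[
\langle\xi^L_t,\phi\rangle-\langle\xi^L_0,\phi\rangle-\int_0^t\Bigl\langle\xi^L_s,\tfrac{\alpha}{2}\Delta\phi\Bigr\rangle+\bigl\langle b_L(\bar\xi^L_s),\phi\bigr\rangle\,ds
\]
is a martingale with bracket $\int_0^t\langle\sigma_L(\bar\xi^L_s),\phi^2\rangle\,ds$ plus vanishing errors. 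Uniform convergence \eqref{eq:coefficient convergence}, continuity of $b$ and $\sigma$, and $\|\bar\xi^L-\xi^L\|\to0$ (by the Hölder bounds) let us pass to the limit. Uniform integrability from $Y\in[0,1]$ gives the martingale problem for \eqref{eq: spde target}, which is equivalent to a weak solution. Limits stay in $[0,1]$, and the boundary conditions on $b$ and $\sigma$ are consistent with this. Uniqueness in law then yields convergence of the whole sequence.

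I expect the main obstacle to be the martingale moment estimate in the tightness step. The difficulty is getting Hölder bounds uniform in $L$ down to lattice scale while handling the non-Gaussian increments through \eqref{ineq:p_moment bound}, and then transferring them to the linear interpolation in the tempered topology. I would first try Burkholder--Rosenthal with the local CLT bound $p_g(i)\lesssim g^{-1/2}$ and see whether the exponents close.
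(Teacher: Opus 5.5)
Your plan is correct and is essentially the paper's proof. The paper uses the same discrete mild formulation, Rosenthal-type moment bounds for spatial and temporal increments (where $p>8$ and $\kappa_p>3$ enter exactly to push the temporal exponent $\frac p4\wedge\frac{1+\kappa_p}{2}$ above $2$), Kolmogorov/Garsia--Rodemich--Rumsey-type tightness in $\mathcal C^{\mathrm{tem}}_\R$, and identification through the martingale problem, Iwata's equivalence with weak solutions, and the assumed weak uniqueness. The only real difference is the time-discretization gap, which the paper handles with a H\"older--Lebesgue interpolation inequality. Your union bound also works there: one-step increments of the stochastic part have $p$-th moment $\lesssim L^{-(\frac p2\wedge(1+\kappa_p))}$, and there are only $O(L^3)$ weighted space--time lattice points, so the total is $o(1)$.
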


\section{Example constructions and the Athreya--Tribe class}\label{sec:examples}

The convergence criterion of Theorem~\ref{thm:canning_convergence} reduces the construction of a convergent spatial model to the verification of a one-colony paintbox update. We give two such constructions.  The first varies the number of active parents while keeping the census size fixed, and the second uses Dirichlet weights. At the end of the section, we show that the Athreya--Tribe class of reaction--diffusion equations~\cite{athreya2000uniqueness} is reached by each of these constructions.

Throughout this section, we work with the paintbox Cannings--Gillespie model of Section~\ref{sec:canning}. Thus $\mc W^N(x)=\pr{w_1(x),\dots,w_N(x)}$ is an exchangeable vector of weights in the simplex
\begin{align}\label{def: simplex SN}
	\mc S_N:=\set{w\in[0,1]^N:\textstyle\sum_{i=1}^N w_i=1}.
\end{align}
For $x\in[N^{-1}]$, we write
\begin{align*}
	W_A^N(x)=\sum_{i\in I_x}w_i(x),\qquad c_N(x)=\E\bracket{\sum_{i=1}^N w_i(x)^2}.
\end{align*}

For a fixed-census paintbox construction, the limiting coefficients are encoded by the scaling relations in~\eqref{eq:bsigma def}. Once the weak well-posedness and initial-condition hypotheses of Theorem~\ref{thm:canning_convergence} are satisfied, it remains only to verify the moment condition
\begin{align}\label{eq:mom-cond}
	\E_x\bracket{\,\abs{W_A^N(x)-x}^{\,p}\,}\lesssim h_N^{\,4+\varepsilon}
	\qquad\text{for some }p>8,\ \varepsilon>0,
\end{align}
uniformly in $N$ and $x\in[N^{-1}]$; the spatial process then converges to the corresponding equation~\eqref{eq:spde_canning}. We first record the constraint imposed on fixed-population models by the canonical normalization $h_N=1/N$.

\begin{lemma}\label{lem:wf-floor}
	For every $w\in\mc S_N$ one has $\sum_{i=1}^N w_i^2\ge 1/N$, with equality precisely for the uniform vector $w_i\equiv 1/N$. Consequently $c_N(x)\ge 1/N$ for every colony model, so under the normalization $h_N=1/N$ the diffusion profile satisfies $\sigma(x)\ge 1$, the value $\sigma\equiv 1$ corresponding to Wright--Fisher resampling.
\end{lemma}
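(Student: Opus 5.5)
The inequality $\sum_i w_i^2\ge 1/N$ on $\mc S_N$ is Cauchy--Schwarz. The remaining claims follow by taking expectations and passing to the limit in~\eqref{eq:bsigma def}.

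\emph{Step 1 (pointwise bound).} For $w\in\mc S_N$, apply Cauchy--Schwarz to the vectors $(w_1,\dots,w_N)$ and $(1,\dots,1)$:
\begin{align*}
	1=\Bigl(\sum_{i=1}^N w_i\cdot 1\Bigr)^2\le N\sum_{i=1}^N w_i^2 .
\end{align*}
Equality holds iff $w$ is proportional to $(1,\dots,1)$, which under $\sum_i w_i=1$ forces $w_i\equiv 1/N$. Equivalently, one can use the identity
\begin{align*}
	\sum_{i=1}^N w_i^2-\frac1N=\sum_{i=1}^N\Bigl(w_i-\frac1N\Bigr)^2\ge0 .
\end{align*}
It gives the inequality and the equality case at once, and also shows that the excess of $c_N$ over $1/N$ is the expected squared deviation of the weights from uniform.

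\emph{Step 2 (expectation).} $\mc W^N(x)$ takes values in $\mc S_N$ almost surely for every $x$. Taking expectations in Step 1 gives $c_N(x)\ge 1/N$ for all $x\in[0,1]$. Moreover, $c_N(x)=1/N$ iff $\mc W^N(x)$ is almost surely uniform, i.e.\ iff reproduction is Wright--Fisher multinomial sampling at that frequency.

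\emph{Step 3 (limit).} Set $h_N=1/N$. Then $c_N(x)/h_N=Nc_N(x)\ge1$ for every $N$ and $x$. By~\eqref{eq:bsigma def}, $Nc_N\to\sigma$ uniformly on $[0,1]$, hence pointwise, and a pointwise limit of functions bounded below by $1$ is bounded below by $1$. So $\sigma\ge1$. If $\mc W^N(x)\equiv(1/N,\dots,1/N)$ for all $x$, then $Nc_N\equiv1$ and $\sigma\equiv1$, which is the Wright--Fisher case.

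There is no genuine obstacle. The only care needed is in the wording of the last claim. $\sigma\equiv1$ is attained by Wright--Fisher resampling. The converse, that $\sigma\equiv1$ forces Wright--Fisher, holds only asymptotically: it says $N\,\E\bigl[\sum_i (w_i-1/N)^2\bigr]\to0$, not exact uniformity for finite $N$. I would state the correspondence in that asymptotic sense.
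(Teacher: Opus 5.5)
Your proof is correct and follows the paper's argument: Cauchy--Schwarz applied to $(w_1,\dots,w_N)$ and $(1,\dots,1)$ together with its equality case, then taking expectations to get $c_N(x)\ge 1/N=h_N$, then passing to the limit in~\eqref{eq:bsigma def} to obtain $\sigma\ge1$. Two of your points go beyond the paper's proof, and both are correct: the identity $\sum_i w_i^2-1/N=\sum_i(w_i-1/N)^2$, and your observation that $\sigma\equiv1$ only forces the weights to be uniform asymptotically, not exactly for finite $N$.
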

\begin{proof}
	By the Cauchy--Schwarz inequality, $1=\pr{\sum_{i}w_i}^2\le N\sum_{i}w_i^2$, with equality if and only if all $w_i$ are equal. Taking expectations in~\eqref{eq:cn scaling} gives $c_N(x)\ge 1/N=h_N$, and hence $\sigma(x)=\lim_{N \to \infty}c_N(x)/h_N\ge 1$.
\end{proof}

Lemma~\ref{lem:wf-floor} shows that, under the normalization $h_N=1/N$, changing the offspring weights cannot produce a profile below one.  The two constructions below remove this restriction by choosing a different scalar normalization. The first varies the number of active parents, while the second tunes the concentration of Dirichlet weights.  Both remain in the fixed-census paintbox framework and realize every continuous diffusion profile bounded away from zero.

\subsection{Varying population size}\label{sec:ex-pop}

We remain in the fixed-census paintbox CG framework of Section~\ref{sec:canning}.  Thus every generation still contains $N$ labeled individuals.  Here, population size refers to the number of active parents, namely the individuals with positive parental weight, and this number depends on the current frequency.

Fix continuous functions $b:[0,1]\to\R$ and $\sigma:[0,1]\to(0,\infty)$, and constants $\theta^+,\theta^-\ge0$.  Set
\begin{align}\label{eq:pop-scale}
	\sigma_*
	&:={\min_{x\in[0,1]}\sigma(x)}>0,
	&
	h_N
	&:={\frac1{N\sigma_*}}.
\end{align}
Since $h_N\to0$, there exists $N_0\ge2$ such that, for every $N\ge N_0$,
\begin{align}\label{ineq:pop-admissible-range}
	h_N\le1,
	\qquad
	h_N\norm{\sigma}_\infty\le\frac12,
	\qquad
	h_N^2\norm{b}_\infty<\frac12,
	\qquad
	h_N^2\pr{\theta^++\theta^-}\le1.
\end{align}
We use the construction below for $N\ge N_0$; for $2\le N<N_0$, choose arbitrary admissible paintbox CG models, which do not affect the limiting statement.

\begin{definition}[Frequency-dependent fertile-parent CG model] \label{def:pop-update}
	For $N\ge N_0$ and $x\in[0,1]$, define the  effective population size
	\begin{align}\label{eq:pop-target-size}
		r_N(x)
		&:={\frac1{h_N\sigma(x)}}
		={\frac{N\sigma_*}{\sigma(x)}}.
	\end{align}
	For each $x \in [0,1]$, let $M_N(x)$ be a random variable taking values in $\{\lfloor r_N(x)\rfloor,\,\lfloor r_N(x)\rfloor+1\}$ with probabilities
	\begin{equation}\label{eq:pop-size-rounding}
		\begin{gathered}
			\P\pr{M_N(x)=\lfloor r_N(x)\rfloor}
			={\frac{\lfloor r_N(x)\rfloor
			\pr{\lfloor r_N(x)\rfloor+1-r_N(x)}}{r_N(x)}},\\
			\P\pr{M_N(x)=\lfloor r_N(x)\rfloor+1}
			={\frac{\pr{\lfloor r_N(x)\rfloor+1}
			\pr{r_N(x)-\lfloor r_N(x)\rfloor}}{r_N(x)}}.
		\end{gathered}
	\end{equation}
	Conditional on $M_N(x)$, choose independently a uniform subset $J_N(x)\subset[N]$ of cardinality $M_N(x)$ and set
	\begin{align}\label{eq:pop-active-weights}
		w_i^N(x)
		&:={\frac{\1_{\{i\in J_N(x)\}}}{M_N(x)}},
		\qquad i\in[N].
	\end{align}
	Define the selection functions and mutation rates  by
	\begin{align}\label{eq:pop-selection-mutation}
		s_N(x):= h_N^2 b(x),\text{ for } x \in [0,1],
		\qquad
		\theta_N^\pm:= h_N^2\theta^\pm.
	\end{align}
	By~\eqref{ineq:pop-admissible-range}, the selection and mutation parameters satisfy the admissibility conditions in Section~\ref{sec:canning}. Together with the weights above, they define a paintbox CG model and its update operator $\widetilde{\mathcal R}^N$ through Definition~\ref{def: update operator}.
\end{definition}
\begin{remark}\label{rem:pop-floor}
    In principle, we can set $M_N(x)= \lfloor r_N(x)\rfloor$ to be deterministic and it will not affect our results. Indeed, in this case, $c_N(x)=1/\lfloor r_N(x)\rfloor$ satisfies $0\le c_N(x)/h_N-\sigma(x)\le\|\sigma\|_\infty/(r_N(x)-1)$ instead of~\eqref{eq:pop-exact-coalescence}, so~\eqref{eq:bsigma def} still holds, and $M_N\ge r_N/2$, so Proposition~\ref{prop:pop-moment-limits} and Corollary~\ref{cor:pop-limit} hold with the same proofs. However the above choices simplify many calculations. 
\end{remark}
\begin{figure}[H]
	\centering
	\includegraphics[width=\textwidth]{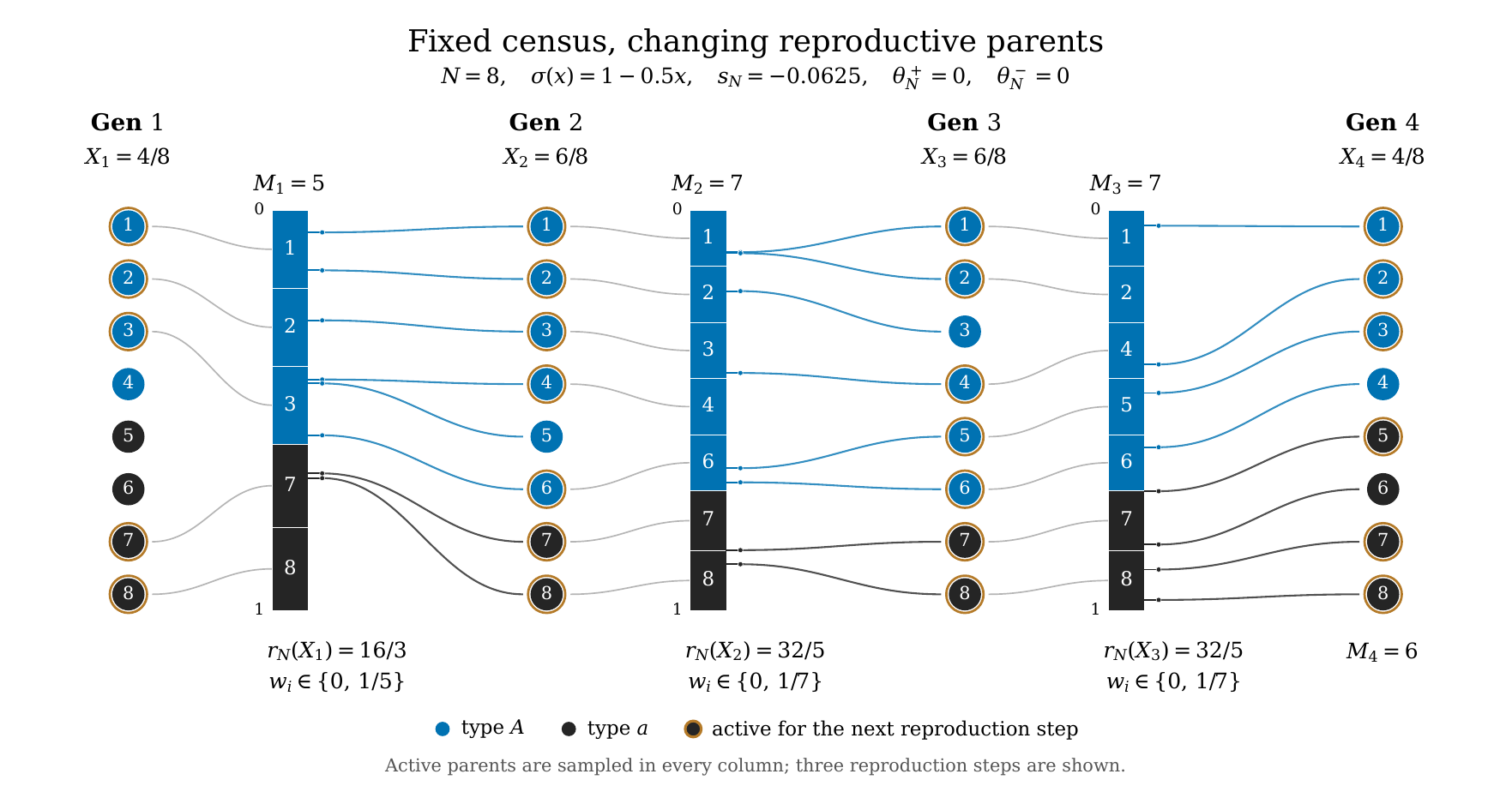}
	\caption{Three reproduction steps in a single colony of the CG model in Definition~\ref{def:pop-update} corresponding to~\eqref{eq:gillespie-fkpp} with $\kappa=1/2$ and $\varepsilon=1$; rings mark active parents.}
	\label{fig:active-parent-generations}
\end{figure}

\begin{prop}[Population-size and coalescence calibration]\label{prop:pop-calibration}
	For every $N\ge N_0$ and $x\in[0,1]$, the random weight vector in Definition~\ref{def:pop-update} is exchangeable and takes values in $\mc S_N$. Moreover, almost surely,
	\begin{align}\label{ineq:pop-size-calibration}
		\left|
		\frac{M_N(x)}N-\frac{\sigma_*}{\sigma(x)}
		\right|
		&\le\frac1N,
		&
		\frac{N\sigma_*}{2\norm{\sigma}_\infty}
		&\le M_N(x)\le N,
	\end{align}
	and the coalescence functional satisfies the exact identity
	\begin{align}\label{eq:pop-exact-coalescence}
		c_N(x)
		&:={\E\bracket{\sum_{i=1}^N{w_i^N(x)}^2}}
		={\E\bracket{\frac1{M_N(x)}}}
		={h_N\sigma(x)}.
	\end{align}
\end{prop}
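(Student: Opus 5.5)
We will verify the three assertions of Proposition~\ref{prop:pop-calibration} directly from Definition~\ref{def:pop-update}, in the order: simplex membership and exchangeability, the almost sure size bounds~\eqref{ineq:pop-size-calibration}, and finally the coalescence identity~\eqref{eq:pop-exact-coalescence}.

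\emph{Exchangeability and simplex.} Conditional on $M_N(x)=m$, the set $J_N(x)$ is uniform among $m$-subsets of $[N]$. Hence for any permutation $\pi$ of $[N]$, the set $\pi(J_N(x))$ has the same law. Since $M_N(x)$ does not depend on labels, $(w^N_{\pi(i)}(x))_i$ has the same law as $(w^N_i(x))_i$. Each weight is nonnegative, and $\sum_i w_i^N(x)=|J_N(x)|/M_N(x)=1$. We need $1\le M_N(x)\le N$ for $J_N(x)$ to exist; this is part of the next step.

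\emph{Size bounds.} Since $\sigma(x)\ge\sigma_*$, we have $r_N(x)=N\sigma_*/\sigma(x)\le N$. Also $r_N(x)\ge N\sigma_*/\norm{\sigma}_\infty=1/(h_N\norm{\sigma}_\infty)\ge2$ by~\eqref{ineq:pop-admissible-range}.

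First we check that the two probabilities in~\eqref{eq:pop-size-rounding} define a law. Write $k=\lfloor r\rfloor$ and $f=r-k\in[0,1)$. The probabilities are nonnegative, and their sum is
\[
\frac{k(1-f)+(k+1)f}{r}=\frac{k+f}{r}=1 .
\]

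Next, $M_N(x)\in\{k,k+1\}$ with $k\le r\le k+1$, so $|M_N(x)-r_N(x)|\le1$. Dividing by $N$ gives the first bound. For the upper bound, the value $k+1$ has positive probability only when $f>0$, i.e.\ when $r$ is not an integer. Since $r\le N$ and $N$ is an integer, this forces $k+1\le N$. Hence $M_N(x)\le N$ almost surely. For the lower bound, $M_N(x)\ge k\ge r-1\ge r/2$ because $r\ge2$. This gives $M_N(x)\ge N\sigma_*/(2\norm{\sigma}_\infty)$, and in particular $M_N(x)\ge1$.

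\emph{Coalescence identity.} Since $\sum_i (w_i^N)^2=M_N\cdot M_N^{-2}=1/M_N$, we get $c_N(x)=\E[1/M_N(x)]$. Using the probabilities in~\eqref{eq:pop-size-rounding},
\[
\E\!\left[\frac{1}{M_N(x)}\right]=\frac1k\cdot\frac{k(1-f)}{r}+\frac1{k+1}\cdot\frac{(k+1)f}{r}=\frac{1-f+f}{r}=\frac1r=h_N\sigma(x).
\]
This computation is the reason for the chosen rounding weights: they are exactly the size-biased weights that make $1/M_N$ unbiased for $1/r$.

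\emph{Main obstacle.} The only delicate point is the upper bound $M_N\le N$ when $r_N(x)=N$, i.e.\ when $\sigma(x)=\sigma_*$. There $f=0$, so the value $k+1=N+1$ carries zero probability. This has to be stated explicitly so that $J_N(x)\subset[N]$ is well defined.
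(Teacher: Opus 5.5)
Your proof is correct and follows essentially the same route as the paper. Both arguments use the bound $2\le N\sigma_*/\|\sigma\|_\infty\le r_N(x)\le N$, the rounding estimates $|M_N(x)-r_N(x)|\le 1$ and $M_N(x)\ge r_N(x)-1\ge r_N(x)/2$, exchangeability from the uniform choice of $J_N(x)$, and the identity $\sum_i w_i^N(x)^2=1/M_N(x)$ with $\mathbb{E}[1/M_N(x)]=1/r_N(x)$. You are slightly more explicit than the paper, which takes $M_N(x)\le N$ and the normalization of~\eqref{eq:pop-size-rounding} ``by definition''; you check that the rounding probabilities sum to one and that the value $N+1$ has zero probability when $r_N(x)=N$.
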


\begin{proof}
	By~\eqref{eq:pop-scale},~\eqref{ineq:pop-admissible-range}, and~\eqref{eq:pop-target-size},
	\begin{align*}
		2
		&\le\frac{N\sigma_*}{\norm{\sigma}_\infty}
		\le r_N(x)
		\le N.
	\end{align*}
	We see by definition that $\abs{M_N(x)-r_N(x)}\le1$ and $M_N(x)\le N$ almost surely. Hence
	\begin{align*}
		\left|
		\frac{M_N(x)}N-\frac{\sigma_*}{\sigma(x)}
		\right|
		&{\overset{\eqref{eq:pop-target-size}}{=}}
		\frac1N\abs{M_N(x)-r_N(x)}
		\le\frac1N.
	\end{align*}
	Moreover,
	\begin{align*}
		M_N(x)
		&\ge\lfloor r_N(x)\rfloor
		\ge r_N(x)-1
		\ge\frac{r_N(x)}2
		\ge\frac{N\sigma_*}{2\norm{\sigma}_\infty},
	\end{align*}
	which proves~\eqref{ineq:pop-size-calibration}.

	Conditional on $M_N(x)$, the set $J_N(x)$ is uniform among subsets of $[N]$ with the prescribed cardinality. Hence the weights are exchangeable, nonnegative, and sum to one. Moreover,
	\begin{align*}
		\E\left[\sum_{i = 1}^N {w_i^N(x)}^2 \right]
		{\overset{\eqref{eq:pop-active-weights}}{=}}\E\bracket{\frac1{M_N(x)}}
		{\overset{\eqref{eq:pop-size-rounding}}{=}}\frac{1}{r_N(x)}
		{\overset{\eqref{eq:pop-target-size}}{=}}h_N\sigma(x).
	\end{align*}
	This proves~\eqref{eq:pop-exact-coalescence}.
\end{proof}

\begin{prop}[Type-mass concentration]\label{prop:pop-moment-limits}
	For every fixed $p\ge2$, there exists $C>0$ so that for all $N\ge N_0$,
	\begin{align}\label{ineq:pop-type-moment}
		\sup_{x\in[N^{-1}]}
		\E_x\bracket{\abs{W_A^N(x)-x}^p}
		&\le Ch_N^{p/2}.
	\end{align}
\end{prop}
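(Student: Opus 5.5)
Conditionally on $M_N(x)=m$, the weights are $1/m$ on a uniform $m$-subset $J_N(x)$ of $[N]$. Hence
\[
W_A^N(x)=\frac{|J_N(x)\cap I_x|}{m},
\]
and $|J_N(x)\cap I_x|$ is hypergeometric: we draw $m$ balls without replacement from an urn of $N$ balls, $Nx$ of which are marked. The plan is to bound the $p$-th central moment of a hypergeometric proportion uniformly in $x$. We then average over the two possible values of $m$. The mean is exactly $x$ for every $m$, so $W_A^N(x)-x$ is conditionally centred and no bias term arises from the randomisation of $M_N(x)$.

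For the conditional moment we use Hoeffding's comparison theorem: for any continuous convex $f$, sampling without replacement is dominated in the convex order by sampling with replacement, i.e.
\[
\E f\Bigl(\sum_{k\le m}Z_k\Bigr)\le \E f\Bigl(\sum_{k\le m}\widetilde Z_k\Bigr),
\]
where the $\widetilde Z_k$ are i.i.d.\ Bernoulli$(x)$. Apply this with $f(y)=|y/m-x|^p$, which is convex since $p\ge 2$. The problem then reduces to the binomial case: $\E|\mathrm{Bin}(m,x)/m-x|^p$. By Rosenthal's (or Marcinkiewicz--Zygmund's) inequality for centred bounded i.i.d.\ summands this is at most $C_p m^{-p/2}$, with $C_p$ depending only on $p$ and not on $x$. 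Alternatively, Hoeffding's exponential tail bound $\P(|W_A^N(x)-x|>t)\le 2e^{-2mt^2}$, which is also valid for sampling without replacement, integrated against $pt^{p-1}\,dt$ gives the same bound directly.

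Finally we use \eqref{ineq:pop-size-calibration} of Proposition~\ref{prop:pop-calibration}, namely $M_N(x)\ge N\sigma_*/(2\norm{\sigma}_\infty)$ almost surely. This yields
\[
\E_x\bigl[|W_A^N(x)-x|^p\bigr]\le C_p\Bigl(\frac{2\norm{\sigma}_\infty}{N\sigma_*}\Bigr)^{p/2}=C_p\,(2\norm{\sigma}_\infty)^{p/2}\,h_N^{p/2},
\]
using $h_N=1/(N\sigma_*)$. The constant is independent of $N\ge N_0$ and of $x\in[N^{-1}]$, which proves \eqref{ineq:pop-type-moment}.

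The only step needing care is the uniform-in-$x$ moment bound for the hypergeometric proportion. Citing Hoeffding's comparison (or the sub-Gaussian tail for sampling without replacement) handles it cleanly, and everything else is bookkeeping.
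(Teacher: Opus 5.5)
Your proposal is correct. Its skeleton matches the paper's proof:
\begin{itemize}
\item condition on $M_N(x)=m$;
\item recognize $mW_A^N(x)$ as hypergeometric with conditional mean $mx$;
\item get a conditional bound of order $m^{-p/2}$, uniform in $x$;
\item finish with the almost-sure lower bound $M_N(x)\ge N\sigma_*/(2\norm{\sigma}_\infty)$ from \eqref{ineq:pop-size-calibration}.
\end{itemize}
Your ``alternative'' is exactly what the paper does. It applies the Hoeffding--Serfling tail bound for sampling without replacement to obtain $\P_x(\abs{W_A^N(x)-x}\ge t)\le 2\exp(-t^2/(h_N\norm{\sigma}_\infty))$, and then integrates against $pt^{p-1}\,\mathrm dt$.

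Your primary route uses a different key lemma. You transfer the convex functional $\abs{y/m-x}^p$ to the with-replacement case via Hoeffding's convex-order comparison, then apply Rosenthal's inequality:
\[
\E\Bigl|\sum_{k\le m}(\widetilde Z_k-x)\Bigr|^p\le C_p\bigl((mx(1-x))^{p/2}+m\bigr)\le 2C_p\,m^{p/2}.
\]
This works purely at the level of moments, with no tail integration. The paper's route instead gives a sub-Gaussian tail, which controls all moments at once with explicit Gaussian constants. The paper reuses that same template for the Dirichlet weights in Proposition~\ref{prop:dir-moment-limits}, via the Marchal--Arbel bound. Either route yields the claimed $Ch_N^{p/2}$ with $C$ independent of $N\ge N_0$ and $x\in[N^{-1}]$.
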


\begin{proof}
	Fix $x\in[N^{-1}]$.  Conditional on $M_N(x)=m$, one has
	\begin{align*}
		mW_A^N(x)
		={\abs{J_N(x)\cap I_x}}
		\sim\operatorname{Hypergeometric}\pr{N,Nx,m}.
	\end{align*}
	In particular, $\E_x[W_A^N(x)\mid M_N(x)]=x$.  The Hoeffding--Serfling inequality for sampling without replacement~\cite{serfling1974probability} gives, for every $t>0$,
	\begin{align*}
		\P_x\pr{\abs{W_A^N(x)-x}\ge t}
		&{=}\E_x\bracket{
		\P_x\pr{\abs{W_A^N(x)-x}\ge t\mid M_N(x)}}\\
		&\le2\E_x\bracket{\exp\pr{-2M_N(x)t^2}}\\
		&{\overset{\eqref{ineq:pop-size-calibration}}{\le}}
		2\exp\pr{-\frac{t^2}{h_N\norm{\sigma}_\infty}}.
	\end{align*}
	Consequently,
	\begin{align*}
		\sup_{x\in[N^{-1}]}
		\E_x\bracket{\abs{W_A^N(x)-x}^p}
		&{=}\sup_{x\in[N^{-1}]}
		p\int_0^\infty t^{p-1}
		\P_x\pr{\abs{W_A^N(x)-x}\ge t}\,\mathrm dt\\
		&\le2p\int_0^\infty
		t^{p-1}\exp\pr{-\frac{t^2}{h_N\norm{\sigma}_\infty}}\,\mathrm dt\\
		&{=}Ch_N^{p/2},
	\end{align*}
	for some $C>0$ independent of $N$, which is~\eqref{ineq:pop-type-moment}.
\end{proof}

\begin{cor}[Varying-population CG approximation with mutation] \label{cor:pop-limit}
	Fix $\alpha\in(0,1)$. Suppose that, for every $u_0\in\mathcal C(\R;[0,1])$, the equation
	\begin{align}\label{eq:pop-limit-spde}
		\partial_tu
		&{=}\frac\alpha2\Delta u+b(u)u(1-u)
		+\theta^+(1-u)-\theta^-u
		+\sqrt{\sigma(u)u(1-u)}\,\dot W,
		\qquad u(0,\cdot)=u_0,
	\end{align}
	has a $[0,1]$-valued weak solution that is unique in law.  Construct the spatial CG model from Definition~\ref{def:pop-update}, with scalar normalization $h_N$ as in~\eqref{eq:pop-scale}. If $\widetilde\xi_0^N\to u_0$ in $\mathcal C_\R^{\mathrm{tem}}$, then $\widetilde\xi^N$ converges in distribution to the solution of~\eqref{eq:pop-limit-spde} in $\mathcal D(\R_+;\mathcal C_\R^{\mathrm{tem}})$.

	In particular, for $b\equiv-1$ and $\sigma(x)=1-\kappa x$, where $0\le\kappa<1$, one has
	\begin{align*}
		\sigma_*
		&{=}1-\kappa,
		&
		h_N
		&{=}\frac1{N(1-\kappa)},
		&
		r_N(x)
		&{=}\frac{N(1-\kappa)}{1-\kappa x}.
	\end{align*}
	The resulting spatial CG models converge to the two-way-mutation Gillespie--FKPP equation
	\begin{align}\label{eq:gillespie-fkpp-mutation}
		\partial_tu
		{=}\frac\alpha2\Delta u-u(1-u)
		+\theta^+(1-u)-\theta^-u
		+\sqrt{u(1-u)(1-\kappa u)}\,\dot W.
	\end{align}
\end{cor}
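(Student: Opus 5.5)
The corollary should follow from Theorem~\ref{thm:canning_convergence}, applied to the fertile-parent model of Definition~\ref{def:pop-update}. That theorem needs three ingredients: the uniform scaling relations~\eqref{eq:bsigma def}, weak uniqueness of~\eqref{eq:spde_canning}, and the moment bound~\eqref{ineq:p_moment_canning_thm}. The plan is to check them in turn. The first step is admissibility. Proposition~\ref{prop:pop-calibration} shows that $\mathcal W^N(x)$ is exchangeable and $\mathcal S_N$-valued. By~\eqref{ineq:pop-admissible-range} we have $s_N(x)=h_N^2b(x)\in(-1,1)$, $\theta_N^\pm\in[0,1]$ and $\theta_N^++\theta_N^-\le1$. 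Hence for $N\ge N_0$ this is a genuine paintbox CG model, and the finitely many $N<N_0$ are irrelevant to a limit statement.

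The scaling relations~\eqref{eq:bsigma def} hold with zero error for every $N\ge N_0$. Indeed, $s_N/h_N^2=b$ and $\theta_N^\pm/h_N^2=\theta^\pm$ by~\eqref{eq:pop-selection-mutation}, and $c_N/h_N=\sigma$ by the exact identity~\eqref{eq:pop-exact-coalescence}. The limiting drift is then $B(u)=b(u)u(1-u)+\theta^+(1-u)-\theta^-u$ as in~\eqref{eq: full CG drift}, so~\eqref{eq:spde_canning} is exactly~\eqref{eq:pop-limit-spde}. The assumed uniqueness in law of its $[0,1]$-valued solution is precisely the uniqueness hypothesis of Theorem~\ref{thm:canning_convergence}. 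Here $b$ and $\sigma$ are continuous, and $\sigma>0$ on $[0,1]$ so that $\sigma_*>0$.

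For the moment condition, Proposition~\ref{prop:pop-moment-limits} gives $\E_x|W_A^N(x)-x|^p\le Ch_N^{p/2}$ uniformly in $x\in[N^{-1}]$, for every $p\ge2$. Choosing $p=10>8$ gives $h_N^{5}=h_N^{4+\varepsilon}$ with $\varepsilon=1$; this uses $h_N\le1$. Thus~\eqref{ineq:p_moment_canning_thm} holds. Theorem~\ref{thm:canning_convergence} then yields convergence of $\widetilde\xi^N$ in $\mathcal D(\R_+;\mathcal C^{\mathrm{tem}}_\R)$ to the solution of~\eqref{eq:pop-limit-spde}, given $\widetilde\xi_0^N\to u_0$.

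For the particular case, take $b\equiv-1$ and $\sigma(x)=1-\kappa x$ with $\kappa\in[0,1)$. Then $\sigma$ is continuous and positive with minimum $\sigma_*=1-\kappa$, attained at $x=1$. This gives the stated $h_N$ and $r_N$, and substituting into~\eqref{eq:pop-limit-spde} yields~\eqref{eq:gillespie-fkpp-mutation}. The one point I expect to need care is that this specialization is conditional on uniqueness in law for~\eqref{eq:gillespie-fkpp-mutation}, which the statement implicitly assumes as in the general part. I would phrase it that way, or justify it by Corollary~\ref{cor:AT conditions} for the Athreya--Tribe class, since the coefficients are polynomial. Otherwise the argument is a direct verification with no real obstacle.
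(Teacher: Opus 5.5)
Your proposal is correct and is essentially the paper's own proof: \eqref{eq:bsigma def} holds exactly by Proposition~\ref{prop:pop-calibration} and~\eqref{eq:pop-selection-mutation}, \eqref{ineq:p_moment_canning_thm} follows from Proposition~\ref{prop:pop-moment-limits} with $h_N\le1$, and Theorem~\ref{thm:canning_convergence} then gives the convergence, with the Gillespie--FKPP case obtained by substitution under the same uniqueness hypothesis. The only flaw is your suggested fallback: polynomial coefficients do not by themselves put~\eqref{eq:gillespie-fkpp-mutation} in the Athreya--Tribe class, because on $[0,1]$ the drift condition~\eqref{ineq: AT H2 drift dominance} reads $1+\theta^++\theta^->\theta^+/R+R$ for some $R>1$, which fails whenever $\theta^-=0$ and $\theta^+\le1$ (for $\theta^\pm=0$ it also fails after any affine change of variable, and there the paper takes uniqueness from other duality results, see Remark~\ref{rem:resource-example}), so keep the specialization conditional on the assumed uniqueness in law, as you first proposed.
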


\begin{proof}
	Note that Proposition~\ref{prop:pop-calibration} verifies~\eqref{eq:bsigma def} exactly. Fix $p>8$ and choose $0<\varepsilon<p/2-4$.  Since $h_N\le1$, we see by Proposition~\ref{prop:pop-moment-limits} that there exists $C>0$ so that for all $N\ge N_0$,
	\begin{align*}
		\sup_{x\in[N^{-1}]}
		\E_x\bracket{\abs{W_A^N(x)-x}^p}
		&{\overset{\eqref{ineq:pop-type-moment}}{\le}}C h_N^{p/2}
		\le C h_N^{4+\varepsilon}.
	\end{align*}
	Thus~\eqref{ineq:p_moment_canning_thm} holds, and Theorem~\ref{thm:canning_convergence} gives the asserted convergence. The specialization follows by substituting $b\equiv-1$ and $\sigma(x)=1-\kappa x$ into~\eqref{eq:pop-limit-spde}.
\end{proof}

\begin{remark}[Example~\ref{ex:gillespie-resource} as a varying-population CG model]\label{rem:resource-example}
	For $N\ge N_0$, Example~\ref{ex:gillespie-resource} with selection under the diffusive scaling $\beta_R:=h_N^2/(1-h_N^2)$ is the CG model of Definition~\ref{def:pop-update} with
	\begin{align*}
		\sigma(x)=\varepsilon^2(1-\kappa x),\qquad b\equiv-1,\qquad \theta^\pm=0,\qquad M_N(x)=\lfloor r_N(x)\rfloor,
	\end{align*}
	the last choice being allowed by Remark~\ref{rem:pop-floor}. Indeed, $\sigma_*=\varepsilon^2(1-\kappa)$, so $h_N=1/(\varepsilon^2R)$ and $r_N(x)=R/(1-\kappa x)$, hence $M_N(x)=N_R(x)$: the active parents are the individuals that reach reproductive age. Since $c_N(x)=1/N_R(x)$, for $\kappa>0$ the noise becomes smaller as type $A$ individuals become more common. Here, Corollary~\ref{cor:pop-limit} with Remark~\ref{rem:pop-floor} shows that the spatial version of Example~\ref{ex:gillespie-resource} converges to~\eqref{eq:gillespie-fkpp}. For every $u_0\in\mathcal C(\R;[0,1])$, this equation has a $[0,1]$-valued weak solution by Remark~\ref{rem:existence}, and it is unique in law by moment duality, see~\cite{shiga1988stepping} and~\cite{barnes2025wright}*{Theorem~1.1} for $\kappa=0$, and~\cite{hou2026subcritical}*{Proposition~3.3 and Lemma~3.4} for $0<\kappa<1$.
\end{remark}

\subsection{Prescribed coefficients via Dirichlet weights}\label{sec:ex-dir}

For this subsection, let $h_N\to0$ be a positive scalar normalization. The notation $c_N(x)$ continues to denote the coalescence functional in~\eqref{eq:cn scaling}.  Fix a continuous function $\sigma:[0,1]\to(0,\infty)$ and a continuous function $b:[0,1]\to\R$. For all sufficiently large $N$, prescribe
\begin{align}\label{eq:dir-target-coalescence}
	c_N(x)=h_N\sigma(x).
\end{align}
By Lemma~\ref{lem:wf-floor}, the prescription \eqref{eq:dir-target-coalescence} is admissible only if $1/N\le h_N\sigma(x)$.  We therefore assume
\begin{align}\label{eq:dir-scale-compatibility}
	\frac1{Nh_N}
	\le
	\min_{x\in[0,1]}\sigma(x)
\end{align}
for all sufficiently large $N$. On the other hand, since \eqref{eq:dir-scale-compatibility} holds for all large $N$, $h_N\to0$, and $b$ and $\sigma$ are bounded continuous functions on $[0,1]$, we see there is $N_0\ge2$ such that for all $N \ge N_0$ and $x \in [0,1]$, it holds that
\begin{align}\label{eq:dir-admissible-range}
	\frac1N
	\le
	h_N\sigma(x)
	<1,
	\qquad
	h_N\le1,\quad
	h_N^2\norm{b}_\infty<\frac12.
\end{align}

Define
\begin{align}\label{eq:dir-selection}
	s_N(x)
	&:={}
	\begin{cases}
		h_N^2b(x),&N\ge N_0,\\
		0,&N<N_0,
	\end{cases}
	\qquad x\in[0,1].
\end{align}
For $N\ge N_0$ and $x\in[0,1]$, define
\begin{align}\label{eq:dirichlet-concentration}
	a_N(x)
	:=
	\begin{cases}
		\dfrac{1-h_N\sigma(x)}{Nh_N\sigma(x)-1},
		& h_N\sigma(x)\in(1/N,1),\\[2ex]
		\infty,
		& h_N\sigma(x)=1/N.
	\end{cases}
\end{align}
If $a_N(x)<\infty$, let
\[
	\mc W^N(x)
	=
	\pr{w_1^N(x),\ldots,w_N^N(x)}
	\sim
	\operatorname{Dirichlet}
	\pr{a_N(x),\ldots,a_N(x)}.
\]
If $a_N(x)=\infty$, set
\[
	\mc W^N(x)
	=
	\pr{\frac1N,\ldots,\frac1N}
\]
deterministically. For the finitely many $N<N_0$, take the uniform weight vector. We will use the convention that $a/\infty =0$ for any $a \in \R$.

A direct computation from~\eqref{eq:dirichlet-concentration}, together with~\eqref{eq:dir-target-coalescence} and the above convention, gives, for every $N\ge N_0$ and $x\in[0,1]$,
\begin{align}\label{eq:dir-total-concentration}
	\frac1{Na_N(x)+1}
	=
	\frac{Nc_N(x)-1}{N-1}
	=
	\frac{Nh_N\sigma(x)-1}{N-1}.
\end{align}

The Dirichlet weights have a natural biological interpretation~\cites{gonzalez2016individual,baake2019modelling,boenkost2021haldaneweak}.  In each generation, every individual produces an offspring family of large i.i.d. random size, and the next generation consists of $N$ individuals sampled from the union of these families.  In the limit of large families, the sampled individuals choose their parents independently, with probabilities proportional to the family sizes.  If the rescaled family sizes are $\operatorname{Gamma}(a_N(x),1)$ distributed, these probabilities form exactly the $\operatorname{Dirichlet}(a_N(x),\ldots,a_N(x))$ vector above~\cite{huillet2022asymptotic}; the geometric family sizes used in~\cites{gonzalez2016individual,baake2019modelling} correspond to $a_N(x)=1$.  Selection enters as in~\cite{boenkost2021haldaneweak}, by reweighting the parents according to their type before sampling. 

The concentration $a_N(x)$  measures how unequal the family sizes are: as $a_N(x)\to\infty$, all families have the same size and one recovers the Wright--Fisher model, whereas for small $a_N(x)$ a few families dominate.  In the neutral case, the number of a parent's offspring in the next generation has variance $(N-1)c_N(x)=(N-1)h_N\sigma(x)$, so~\eqref{eq:dir-target-coalescence} prescribes a frequency-dependent variance in offspring number, which is reminiscent of Gillespie's works~\cites{gillespie1974natural,gillespie1975natural}.

\begin{figure}[H]
	\centering
	\includegraphics[width=0.8\textwidth]{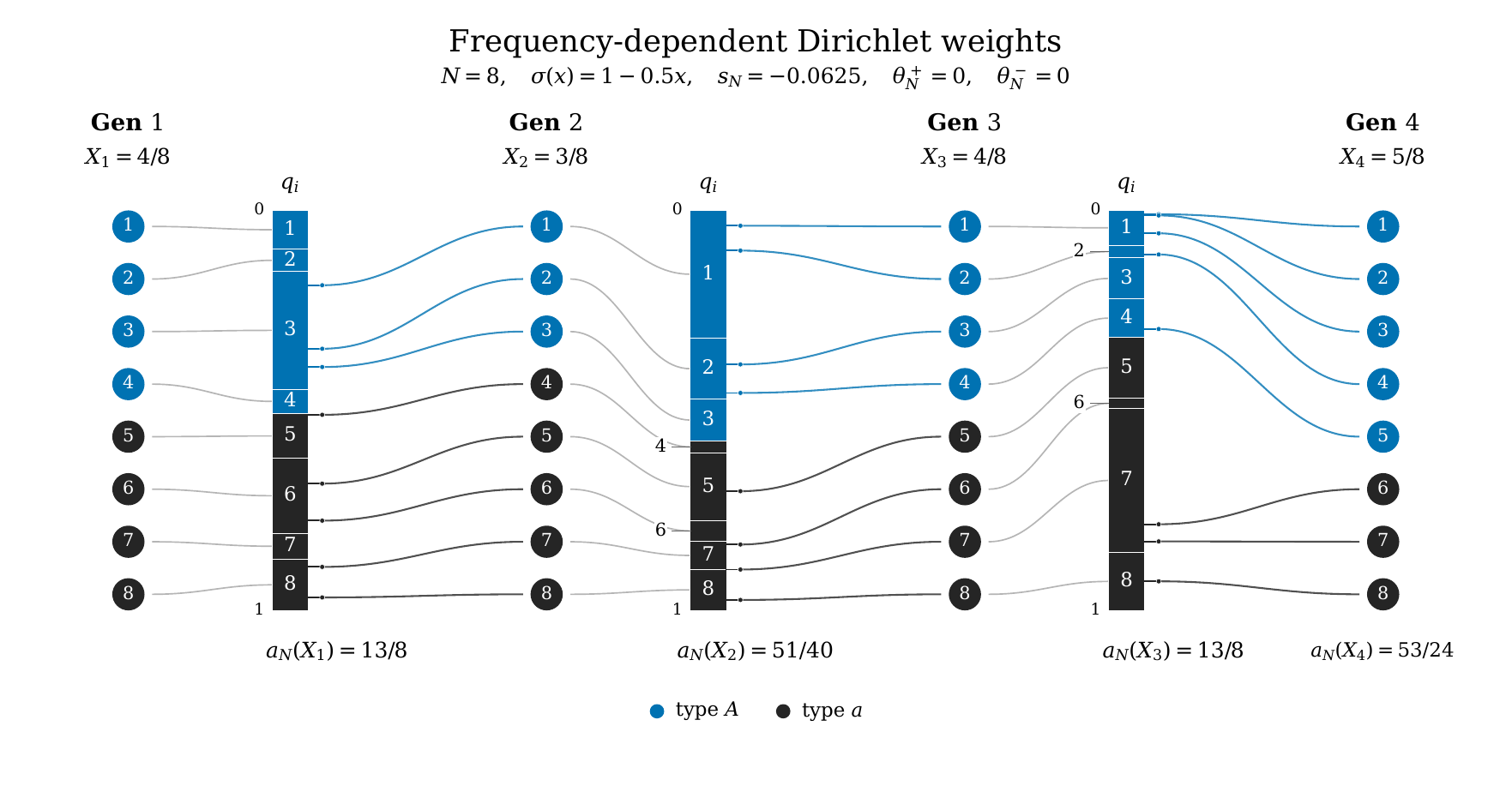}
	\caption{The analogue of Figure~\ref{fig:active-parent-generations} for the Dirichlet-weight CG model, with the same parameters.
	Parental weights are redrawn at each step from~\eqref{eq:dirichlet-concentration}.}
	\label{fig:dirichlet-cg-eq11}
\end{figure}

\begin{prop}\label{prop:dir-calibration}
	For every $N\ge N_0$ and $x\in[0,1]$,
	\begin{align}\label{eq:dir-exact-calibration}
		c_N(x)=\E\bracket{\sum_{i=1}^N{w_i^N(x)}^2}=h_N\sigma(x).
	\end{align}
\end{prop}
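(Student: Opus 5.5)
We split into the two cases of \eqref{eq:dirichlet-concentration}. If $h_N\sigma(x)=1/N$, then $a_N(x)=\infty$ and the weight vector is deterministically uniform. Hence $\sum_i w_i^2=N\cdot N^{-2}=1/N=h_N\sigma(x)$, and \eqref{eq:dir-exact-calibration} holds trivially.

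Suppose instead that $h_N\sigma(x)\in(1/N,1)$, so that $a:=a_N(x)\in(0,\infty)$. We use the standard moments of a symmetric Dirichlet vector. The marginal is $w_1\sim\mathrm{Beta}(a,(N-1)a)$, so
\[
\E[w_1^2]=\frac{a(a+1)}{Na(Na+1)}=\frac{a+1}{N(Na+1)}.
\]
If one prefers to avoid quoting this, it follows from the Gamma representation $w_i=Y_i/\sum_j Y_j$ together with the independence of $w$ and $\sum_jY_j$: we have $\E[Y_1^2]=a(a+1)$ and $\E[(\sum_j Y_j)^2]=Na(Na+1)$. By exchangeability,
\[
c_N(x)=N\E[w_1^2]=\frac{a+1}{Na+1}.
\]
It remains to substitute $a=\frac{1-h_N\sigma(x)}{Nh_N\sigma(x)-1}$. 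Write $\tau:=h_N\sigma(x)$. Then
\[
a+1=\frac{(N-1)\tau}{N\tau-1},\qquad Na+1=\frac{N-N\tau+N\tau-1}{N\tau-1}=\frac{N-1}{N\tau-1},
\]
so the ratio is exactly $\tau$. Equivalently, one can rewrite $c_N=\frac{a+1}{Na+1}=\frac1N+\frac{N-1}{N}\cdot\frac{1}{Na+1}$ and use \eqref{eq:dir-total-concentration} directly.

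The only point needing care is that the Dirichlet law is well defined, namely that $a>0$. This holds because $\tau<1$ and $N\tau>1$ by \eqref{eq:dir-admissible-range}. There is no real obstacle beyond this algebraic identity.
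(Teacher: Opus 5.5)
Your proof is correct and takes the same route as the paper. The paper likewise applies the symmetric Dirichlet second-moment formula $\E[w_i^2]=\frac{a+1}{N(Na+1)}$, sums over $i$ by exchangeability, and substitutes $a+1=\frac{(N-1)h_N\sigma(x)}{Nh_N\sigma(x)-1}$ and $Na+1=\frac{N-1}{Nh_N\sigma(x)-1}$, treating the deterministic uniform case $h_N\sigma(x)=1/N$ separately as trivial.
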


\begin{proof}
	By~\eqref{eq:dirichlet-concentration}, it holds that
	\begin{align}\label{eq:prop:dir-calibration step 1}
		a_N(x)+1= \frac{(N-1)h_N\sigma(x)}{Nh_N\sigma(x) - 1},\qquad Na_N(x)+1
		=
		\frac{N-1}{Nh_N\sigma(x)-1}.
	\end{align}
	For a symmetric $\operatorname{Dirichlet}(a,\ldots,a)$ vector, the moment formula for Dirichlet distribution (cf.~\cite{ng2011dirichlet}*{Eq.~(2.5), p.~39}) gives
	\[
		\E\bracket{{w_i^N(x)}^2}
		=
		\frac{a_N(x)(a_N(x)+1)}{Na_N(x)(Na_N(x)+1)}
		=
		\frac{a_N(x)+1}{N(Na_N(x)+1)},\qquad i = 1,\dots, N.
	\]
	Therefore, by symmetry,
	\[
		\E\bracket{\sum_{i=1}^N{w_i^N(x)}^2}
		= N\E\bracket{w_1^N(x)^2}=
		\frac{a_N(x)+1}{Na_N(x)+1}
		\overset{\eqref{eq:prop:dir-calibration step 1}}{=}
		h_N\sigma(x),
	\]
	which implies~\eqref{eq:dir-exact-calibration}.

	If $h_N\sigma(x) = 1/N$, the weights are deterministically uniform, so~\eqref{eq:dir-exact-calibration} holds trivially.
\end{proof}

\begin{prop}\label{prop:dir-moment-limits}
	Let $\widetilde{\mathcal R}^N$ be the update operator of Definition~\ref{def: update operator} associated with the weights above and the selection functions in~\eqref{eq:dir-selection}. Suppose also that, for some constants $\theta^+,\theta^-\ge0$,
	\begin{align}\label{eq:dir-mutation-scaling}
		\lim_{N\to\infty}h_N^{-2}\theta_N^\pm
		&{=}\theta^\pm.
	\end{align}
	Then one has
	\begin{equation}\label{eq:dir-coefficient-limits}
		\begin{aligned}
			\lim_{N\to\infty}\sup_{x\in[N^{-1}]}
			\Bigg(
			&\abs{
			h_N^{-2}\E_x\bracket{\widetilde{\mathcal R}^N(x)-x}- b(x)x(1-x)-\theta^+(1-x)+\theta^-x
			}\\
			&\quad
			+\abs{
			h_N^{-1}\E_x
			\bracket{\pr{
			\widetilde{\mathcal R}^N(x)-x
			}^2
			}
			-\sigma(x)x(1-x)
			}
			\Bigg)
			=
			0.
		\end{aligned}
	\end{equation}
	Moreover, for every fixed $p\ge2$, there are constants $C>0$ and $N_p\ge N_0$ such that, for every $N\ge N_p$, the following bounds hold:
	\begin{align}
		\sup_{x\in[N^{-1}]}
		\E_x\bracket{
		\abs{W_A^N(x)-x}^p
		}
		&\le
		Ch_N^{p/2},
		\label{eq:dir-type-moment}\\
		\sup_{x\in[N^{-1}]}
		\E_x\bracket{
		\abs{\widetilde{\mathcal R}^N(x)-x}^p
		}
		&\le
		Ch_N^{p/2}.
		\label{eq:dir-update-moment}
	\end{align}
\end{prop}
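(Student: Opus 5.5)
The plan is to compute the first two conditional moments of $\widetilde{\mathcal R}^N(x)$ through the binomial structure~\eqref{eq:canning_conditional_binomial}, and then reduce everything to moments of $W_A^N(x)-x$, which are explicit for symmetric Dirichlet weights. Fix $x\in[N^{-1}]$ and write $W=W_A^N(x)$, $s=s_N(x)=h_N^2b(x)$. By the aggregation property of the Dirichlet law, $W\sim\mathrm{Beta}(Nxa_N,N(1-x)a_N)$ with $a_N=a_N(x)$, or $W\equiv x$ in the degenerate case $a_N=\infty$. Hence $\E W=x$ and
\[
\operatorname{Var}W=\frac{x(1-x)}{Na_N+1}=x(1-x)\frac{Nh_N\sigma(x)-1}{N-1},
\]
by~\eqref{eq:dir-total-concentration}.

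\emph{Drift.} Since $X_1=N^{-1}\mathrm{Bin}(N,\widehat P_N)$, we have $\E_x[X_1-x]=\E[\widehat P_N]-x$. Expand $P_N=(1+s)W/(1+sW)=W+sW(1-W)+O(s^2)$, uniformly because $|s|\le1/2$. Then $\E[P_N]-x=s(x(1-x)-\operatorname{Var}W)+O(h_N^4)$. Since $\operatorname{Var}W\le h_N\|\sigma\|_\infty$, dividing by $h_N^2$ gives $b(x)x(1-x)+o(1)$ uniformly in $x$. The mutation terms in~\eqref{eq: mutation adjusted probability} contribute $\theta_N^+(1-\E P_N)-\theta_N^-\E P_N$. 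Because $\E P_N=x+O(h_N^2)$, after division by $h_N^2$ these converge to $\theta^+(1-x)-\theta^-x$ uniformly, by~\eqref{eq:dir-mutation-scaling}.

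\emph{Second moment.} Condition on $\widehat P=\widehat P_N$:
\[
\E_x[(X_1-x)^2]=\E\bigl[\widehat P(1-\widehat P)/N\bigr]+\E[(\widehat P-x)^2].
\]
Write $\widehat P-x=(W-x)+O(h_N^2)$, where the $O(h_N^2)$ is deterministic and uniform. This gives $\E(\widehat P-x)^2=\operatorname{Var}W+O(h_N^{5/2})$, using Cauchy--Schwarz with $\E|W-x|\le h_N^{1/2}C$. Similarly $\E[\widehat P(1-\widehat P)]=x(1-x)-\operatorname{Var}W+O(h_N^2)$. Summing,
\[
\E_x[(X_1-x)^2]=\frac{x(1-x)}{N}+\Bigl(1-\frac1N\Bigr)\operatorname{Var}W+O(h_N^2)=x(1-x)h_N\sigma(x)+O(h_N^2),
\]
where the identity $\frac1N+\frac{N-1}{N}\cdot\frac{Nh_N\sigma-1}{N-1}=h_N\sigma$ is exact. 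Dividing by $h_N$ yields the second part of~\eqref{eq:dir-coefficient-limits}. The $x$-dependence of $a_N$ is harmless, since all estimates use only $\sup\sigma$ and $\|b\|_\infty$.

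\emph{Moment bounds.} I expect this to be the main obstacle: a uniform $p$-th moment bound for $W-x$ when $W\sim\mathrm{Beta}(\alpha,\beta)$ with $\alpha+\beta=Na_N$ possibly small. Note that $Na_N+1=(N-1)/(Nh_N\sigma-1)\ge 1/(h_N\sigma)$, so the total concentration is at least of order $h_N^{-1}\to\infty$. I would use the Gamma representation $W=G_1/(G_1+G_2)$ with independent $G_1\sim\Gamma(\alpha)$ and $G_2\sim\Gamma(\beta)$. Then
\[
W-x=\frac{(1-x)(G_1-\alpha)-x(G_2-\beta)}{G_1+G_2},
\]
using $x=\alpha/(\alpha+\beta)$. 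On the event $\{G_1+G_2\ge(\alpha+\beta)/2\}$, Rosenthal-type moment bounds for centered Gamma variables give
\[
\E|G_1-\alpha|^p\lesssim \alpha^{p/2}+\alpha\le(\alpha+\beta)^{p/2}\quad\text{once }\alpha+\beta\ge1.
\]
The weights $1-x$ and $x$ keep this uniform even for tiny $\alpha$, so the contribution is $\lesssim(\alpha+\beta)^{-p/2}\lesssim h_N^{p/2}$. The complementary event has probability $\le e^{-c(\alpha+\beta)}$ by Gamma concentration, and there $|W-x|\le1$, so it is negligible. This choice of $N_p$ makes $\alpha+\beta\ge1$ and proves~\eqref{eq:dir-type-moment}.

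Finally, for~\eqref{eq:dir-update-moment}, bound
\[
|X_1-x|\le|X_1-\widehat P|+|\widehat P-W|+|W-x|,
\]
where $|\widehat P-W|=O(h_N^2)$. Conditional binomial moments give $\E|X_1-\widehat P|^p\lesssim N^{-p/2}\le(h_N\sigma_*)^{p/2}$, using $1/N\le h_N\sigma(x)$ from~\eqref{eq:dir-admissible-range}. Combining the three terms gives $Ch_N^{p/2}$.
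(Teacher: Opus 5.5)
Your proposal is correct, and it departs from the paper's route only in the type-mass estimate.

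\textbf{Drift, second moment and update moment.} Here you follow the paper closely. The paper applies the general Lemmas~\ref{lem:canning_drift} and~\ref{lem:canning_var}, which hold for any exchangeable paintbox, with errors controlled through $c_N(x)=h_N\sigma(x)$. You redo the same expansion $P_N=W+sW(1-W)+\rho_N$ and the same Cauchy--Schwarz step inline, using the explicit Beta variance. Your bound on $\widetilde{\mathcal R}^N(x)-x$ is Lemma~\ref{lem:canning_higher_moments} combined with $N^{-1}\le h_N\min\sigma$.

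\textbf{The estimate~\eqref{eq:dir-type-moment}.} This is where the two arguments differ.
\begin{itemize}
\item The paper cites the Marchal--Arbel sub-Gaussian bound $\E_x[e^{\lambda(W-x)}]\le e^{\lambda^2/(8(Na_N(x)+1))}$. It converts this by Chernoff into the Gaussian tail $2\exp(-t^2/(h_N\|\sigma\|_\infty))$ and integrates. This handles every $p$ at once and every $N\ge N_0$, but it rests on a sharp, nontrivial result about Beta laws.
\item You write $W-x=\bigl((1-x)(G_1-\alpha)-x(G_2-\beta)\bigr)/(G_1+G_2)$. You control the numerator by $\E|G-\alpha|^p\lesssim_p\alpha^{p/2}+\alpha$, which holds for every shape, e.g.\ via infinite divisibility and Rosenthal's inequality. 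You control the denominator by the lower tail $\P(\Gamma(m)<m/2)\le e^{-(\ln 2-1/2)m}$, where $m=Na_N(x)\ge 1/(h_N\|\sigma\|_\infty)-1$.
\end{itemize}
Your argument is self-contained and elementary. Since $W-x$ has this form for any normalized i.i.d.\ weights, the method adapts readily to other constructions of that type. The cost is polynomial rather than sub-Gaussian control, $p$-dependent constants, and the need for $N$ large enough that $m\ge1$. That restriction is exactly what the $N_p$ in the statement permits.

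\textbf{A wording fix.} In $\widehat P-x=(W-x)+O(h_N^2)$, the error term is random, not deterministic. It is, however, bounded deterministically by $|s_N(x)|/2+\theta_N^++\theta_N^-$, and that bound is all you use.
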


\begin{proof}
	It suffices to consider $N\ge N_0$.  Fix $p\ge2$.  For $x\in[N^{-1}]$, if $x\in\{0,1\}$ or $a_N(x)=\infty$, then $W_A^N(x)=x$ deterministically.  At every remaining grid point, the gamma representation of a Dirichlet vector \cite{ng2011dirichlet}*{Thm.~2.1, Eq.~(2.7), p.~40} and additivity of independent gamma variables give
	\[
		W_A^N(x)
		\sim
		\operatorname{Beta}\pr{
		Nxa_N(x),N(1-x)a_N(x)
		}.
	\]
	The standard beta mean and variance formulas (cf.~\cite{ng2011dirichlet}*{Eq.~(2.6), p.~39}), together with the deterministic cases, give
	\begin{align}\label{eq:dir-W-first-two-moments}
		\E_x\bracket{W_A^N(x)}
		=
		x,\qquad
		\var_x\pr{W_A^N(x)}
		=
		\frac{x(1-x)}{Na_N(x)+1}
		\overset{\eqref{eq:dir-total-concentration}}{=}
		x(1-x)\frac{Nh_N\sigma(x)-1}{N-1}.
	\end{align}

	The explicit upper bound in \cite{marchal2017sub}*{Def.~1.1, p.~2, and Thm.~2.1, p.~5} gives, at every nondegenerate grid point and for every $\lambda\in\R$,
	\[
		\E_x\bracket{
		\exp\pr{\lambda\pr{W_A^N(x)-x}}
		}
		\le
		\exp\pr{\frac{\lambda^2}{8\pr{Na_N(x)+1}}}
		\le
		\exp\pr{\frac{\lambda^2h_N\norm{\sigma}_\infty}{4}},
	\]
	where the second inequality follows from~\eqref{eq:dir-total-concentration} and $N/(N-1)\le2$.  The same estimate is trivial in the deterministic cases.  Chernoff's bound therefore yields, for every $t>0$,
	\[
		\sup_{x\in[N^{-1}]}
		\P_x\pr{\abs{W_A^N(x)-x}\ge t}
		\le
		2\exp\pr{-\frac{t^2}{h_N\norm{\sigma}_\infty}}.
	\]
	 Consequently, by the moment bounds of Gaussian random variables, we have, for some $C>1$, 
	\begin{align*}
		\sup_{x\in[N^{-1}]}
		\E_x\bracket{\abs{W_A^N(x)-x}^p}
		&=
		\sup_{x\in[N^{-1}]}
		p\int_0^\infty
		t^{p-1}\P_x\pr{\abs{W_A^N(x)-x}\ge t}\,\mathrm dt\nonumber\\
        &\le C\E\left[ \abs{\mathcal N(0,h_N\norm{\sigma}_\infty)}^p\right]\\
		&\le
		C
		\norm{\sigma}_\infty^{p/2}h_N^{p/2},
	\end{align*}
	where $\mathcal N(a,b)$ denotes the normal random variable with mean $a$ and variance $b$. This proves~\eqref{eq:dir-type-moment}.

	For the remaining assertions, note that $\widetilde{\mathcal R}^N(x)$ has the same law as the one-step update $X_1$ under $\P_x$. For some $C>0$ and all sufficiently large $N$,
	\begin{align}\label{ineq:dir-mutation-bound}
		\theta_N^++\theta_N^-  {=}h_N^2\left(h_N^{-2}\theta_N^++h_N^{-2}\theta_N^-\right) {\overset{\eqref{eq:dir-mutation-scaling}}{\le}}Ch_N^2.
	\end{align}
	For the drift term, we see by the triangular inequality  that
	\begin{align*}
		&{\qquad}\limsup_{N\to\infty}\sup_{x\in[N^{-1}]}
		\left|
		h_N^{-2}\E_x\bracket{\widetilde{\mathcal R}^N(x)-x}
		-b(x)x(1-x)-\theta^+(1-x)+\theta^-x
		\right|\\
		&{\overset{\eqref{eq:dir-selection}}{\le }}
		\limsup_{N\to\infty}\Bigg[
		h_N^{-2}\sup_{x\in[N^{-1}]}
		\left|
		\E_x\bracket{\widetilde{\mathcal R}^N(x)-x}
		-s_N(x)x(1-x)-\theta_N^+(1-x)+\theta_N^-x
		\right|\Bigg]\\
		&\hspace{41mm}
		+\limsup_{N \to \infty}\left(\left|h_N^{-2}\theta_N^+-\theta^+\right|
		+\left|h_N^{-2}\theta_N^--\theta^-\right|\right)
		\\
		&\overset{\eqref{eq:dir-mutation-scaling}}{=}\limsup_{N\to\infty}\Bigg[
		h_N^{-2}\sup_{x\in[N^{-1}]}
		\left|
		\E_x\bracket{\widetilde{\mathcal R}^N(x)-x}
		-s_N(x)x(1-x)-\theta_N^+(1-x)+\theta_N^-x
		\right|\Bigg]\\
		&{\overset{\mathrm{Lem.}\,\ref{lem:canning_drift}}{\le}}
		\limsup_{N\to\infty}
		\Bigg[
		\norm{b}_\infty\pr{h_N\norm{\sigma}_\infty+N^{-1}}
		+2h_N^2\norm{b}_\infty^2
		+\frac14\norm{b}_\infty\pr{\theta_N^++\theta_N^-}
		\Bigg]\\
		&{\overset{\eqref{ineq:dir-mutation-bound}}{\le}}
		\limsup_{N\to\infty}\Bigg[
		\norm{b}_\infty\pr{h_N\norm{\sigma}_\infty+N^{-1}}
		+2h_N^2\norm{b}_\infty^2
		+\frac C4h_N^2\norm{b}_\infty
		\Bigg]\\
		&=0.
	\end{align*}

	\begin{samepage}
		For the variance term,
		\begin{align*}
			&\qquad\limsup_{N\to\infty}\sup_{x\in[N^{-1}]}
			\left|
			h_N^{-1}\E_x\bracket{\pr{\widetilde{\mathcal R}^N(x)-x}^2}-\sigma(x)x(1-x)
			\right|\\
			&{\overset{\eqref{eq:dir-exact-calibration}}{=}}
			\limsup_{N\to\infty}h_N^{-1}\sup_{x\in[N^{-1}]}
			\left|
			\E_x\bracket{\pr{\widetilde{\mathcal R}^N(x)-x}^2}-c_N(x)x(1-x)
			\right|\\
			&{\overset{\mathrm{Lem.}\,\ref{lem:canning_var}}{\le}}
			\limsup_{N\to\infty}
			\Bigg[
			\frac{h_N\norm{b}_\infty}{2N}+h_N\norm{b}_\infty
			\sqrt{\frac{h_N\norm{\sigma}_\infty}{2}}
			+\frac14h_N^3\norm{b}_\infty^2+3h_N^{-1}\pr{\theta_N^++\theta_N^-}
			\Bigg]\\
			&{\overset{\eqref{ineq:dir-mutation-bound}}{\le}}\limsup_{N\to\infty}
			\Bigg[
			\frac{h_N\norm{b}_\infty}{2N}
			+h_N\norm{b}_\infty
			\sqrt{\frac{h_N\norm{\sigma}_\infty}{2}}
			+\frac14h_N^3\norm{b}_\infty^2
			+3Ch_N
			\Bigg]\\
			&{=}0.
		\end{align*}
		Hence,~\eqref{eq:dir-coefficient-limits} holds.
	\end{samepage}

	\begin{samepage}
		For every fixed $p\ge2$ and all sufficiently large $N$,
		\begin{align*}
			\sup_{x\in[N^{-1}]}\E_x\bracket{\abs{\widetilde{\mathcal R}^N(x)-x}^p}
			&{\overset{\mathrm{Lem.}\,\ref{lem:canning_higher_moments}}{\le}}
			C\Bigg(
			N^{-p/2}+h_N^{2p}\norm{b}_\infty^p+\pr{\theta_N^++\theta_N^-}^p
			+\sup_{x\in[N^{-1}]}
			\E_x\bracket{\abs{W_A^N(x)-x}^p}
			\Bigg)\\
			&{\overset{\eqref{eq:dir-type-moment}}{\le}}
			C\pr{N^{-p/2}+h_N^{2p}
			+\pr{\theta_N^++\theta_N^-}^p+h_N^{p/2}}\\
			&{\overset{\eqref{ineq:dir-mutation-bound}}{\le}}
			C\pr{N^{-p/2}+h_N^{2p}+h_N^{2p}+h_N^{p/2}}\\
			&{\overset{\eqref{eq:dir-scale-compatibility}}{\le}}
			Ch_N^{p/2},
		\end{align*}
		where we also used the fact that $h_N \le 1$ for sufficiently large $N$.  This proves~\eqref{eq:dir-update-moment}.
	\end{samepage}

\end{proof}

The following corollary is an immediate consequence of Propositions~\ref{prop:dir-calibration} and~\ref{prop:dir-moment-limits}, together with Theorem~\ref{thm:canning_convergence}.  The argument is the same as in the proof of Corollary~\ref{cor:pop-limit} and is omitted.

\begin{cor}[Dirichlet CG approximation with mutation]\label{cor:dirichlet-convergence}
	Fix $\alpha\in(0,1)$ and $\theta^+,\theta^-\ge0$.  Let $b\in\mathcal C([0,1];\R)$, $\sigma\in\mathcal C([0,1];(0,\infty))$, and let $h_N\to0$ be a positive sequence satisfying \eqref{eq:dir-scale-compatibility}.  Consider the Dirichlet-weight CG models specified by~\eqref{eq:dir-target-coalescence},~\eqref{eq:dir-selection}, and~\eqref{eq:dirichlet-concentration}, with admissible mutation probabilities satisfying~\eqref{eq:dir-mutation-scaling}. Suppose that, for every $u_0\in\mathcal C(\R;[0,1])$, the equation
	\begin{align}\label{eq:dir-limit-spde}
		\partial_tu
		&{=}\frac\alpha2\Delta u+b(u)u(1-u)
		+\theta^+(1-u)-\theta^-u
		+\sqrt{\sigma(u)u(1-u)}\,\dot W,
		\qquad u(0,\cdot)=u_0,
	\end{align}
	has a $[0,1]$-valued weak solution that is unique in law.   Let $\widetilde\xi^N$ be the corresponding spatially interpolated processes, with scalar normalization $h_N$.  If $\widetilde\xi_0^N\to u_0$ in $\mathcal C_\R^{\mathrm{tem}}$, then $\widetilde\xi^N$ converges in distribution to the solution of \eqref{eq:dir-limit-spde} in $\mathcal D(\R_+;\mathcal C_\R^{\mathrm{tem}})$.
\end{cor}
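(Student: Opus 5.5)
The proof is a direct application of Theorem~\ref{thm:canning_convergence}, arguing as in Corollary~\ref{cor:pop-limit}. We need to check four hypotheses of that theorem for the Dirichlet-weight model: the scaling relations~\eqref{eq:bsigma def}, continuity of $b$ and $\sigma$, weak uniqueness of~\eqref{eq:spde_canning}, and the moment bound~\eqref{ineq:p_moment_canning_thm}. Once these hold, the conclusion about $\widetilde\xi^N$ in $\mathcal D(\R_+;\mathcal C_\R^{\mathrm{tem}})$ is exactly that of the theorem.

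First, the coefficient limits~\eqref{eq:bsigma def}:
\begin{itemize}
\item \emph{Selection.} By~\eqref{eq:dir-selection}, $s_N(x)/h_N^2=b(x)$ exactly for $N\ge N_0$, so the selection term vanishes identically for large $N$.
\item \emph{Coalescence.} By Proposition~\ref{prop:dir-calibration}, $c_N(x)/h_N=\sigma(x)$ exactly for all $x\in[0,1]$ and $N\ge N_0$.
\item \emph{Mutation.} The terms $|\theta_N^\pm/h_N^2-\theta^\pm|\to0$ by the assumed scaling~\eqref{eq:dir-mutation-scaling}.
\end{itemize}
Since each piece is uniform in $x$, the supremum in~\eqref{eq:bsigma def} tends to zero. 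The limiting drift $B$ in~\eqref{eq: full CG drift} then coincides with the drift $b(u)u(1-u)+\theta^+(1-u)-\theta^-u$ of~\eqref{eq:dir-limit-spde}. Hence~\eqref{eq:spde_canning} is literally~\eqref{eq:dir-limit-spde}. Its weak uniqueness for every $u_0\in\mathcal C(\R;[0,1])$ is assumed in the corollary, and continuity of $b$ and $\sigma$ is part of the hypotheses. Admissibility of the model (namely $s_N\in(-1,1)$ and the mutation probabilities) follows from~\eqref{eq:dir-admissible-range} and the assumed admissibility of $\theta_N^\pm$.

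Second, the moment condition. Fix any $p>8$ and choose $0<\varepsilon<p/2-4$. By~\eqref{eq:dir-type-moment} in Proposition~\ref{prop:dir-moment-limits}, for $N\ge N_p$,
\begin{align*}
\sup_{x\in[N^{-1}]}\E_x\bracket{\abs{W_A^N(x)-x}^p}\le Ch_N^{p/2}\le Ch_N^{4+\varepsilon},
\end{align*}
where the last step uses $h_N\le1$ from~\eqref{eq:dir-admissible-range}. The finitely many $N<N_p$ are absorbed into the implicit constant. For $N<N_0$ the weights are uniform, so $W_A^N(x)=x$ and the bound is trivial. This gives~\eqref{ineq:p_moment_canning_thm} uniformly in $N$ and $x\in[N^{-1}]$.

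Combined with the assumed convergence $\widetilde\xi_0^N\to u_0$ in $\mathcal C_\R^{\mathrm{tem}}$, Theorem~\ref{thm:canning_convergence} yields the asserted convergence. No step here is a genuine obstacle. The only point needing care is bookkeeping: the bounds hold only for $N\ge N_p$, and the small-$N$ models must not break the uniformity claimed in~\eqref{ineq:p_moment_canning_thm}. As explained above, this is handled by enlarging the constant.
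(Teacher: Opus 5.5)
Your proposal is correct and follows the paper's own route: the paper omits this proof and refers to the argument of Corollary~\ref{cor:pop-limit}, which checks \eqref{eq:bsigma def} via the exact calibration (here Proposition~\ref{prop:dir-calibration} together with \eqref{eq:dir-mutation-scaling}), bounds $h_N^{p/2}\le h_N^{4+\varepsilon}$ using \eqref{eq:dir-type-moment}, and then invokes Theorem~\ref{thm:canning_convergence}. Your remark that the finitely many $N<N_p$ can be absorbed into the constant, since the moment is at most $1$ and $h_N>0$, is correct bookkeeping that the paper leaves implicit.
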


\subsection{The Athreya--Tribe class}
We describe the class with a finite invariant interval considered in \cite{athreya2000uniqueness}.  Fix $-\infty<a_-<a_+<\infty$ and let $b_{\mathrm{AT}}$ and $\sigma_{\mathrm{AT}}$ be real-valued functions defined on a neighborhood of $[a_-,a_+]$.  Assume that $\sigma_{\mathrm{AT}}\not\equiv0$ and that
\begin{equation}\label{eq: AT invariant interval conditions}
	\begin{gathered}
		\sigma_{\mathrm{AT}}\ge0\ \text{ on }[a_-,a_+],
		\qquad
		\sigma_{\mathrm{AT}}(a_-)=\sigma_{\mathrm{AT}}(a_+)=0,\\
		b_{\mathrm{AT}}(a_-)\ge0, \qquad b_{\mathrm{AT}}(a_+)\le0.
	\end{gathered}
\end{equation}
Let $R_0:=\max\{|a_-|,|a_+|\}$.  Suppose, for some $R_1>R_0$, that $b_{\mathrm{AT}}$ and $\sigma_{\mathrm{AT}}$ admit holomorphic extensions to the complex disk $\{z\in\C:|z|<R_1\}$, with convergent power-series expansions
\begin{align}\label{eq: AT power series}
	b_{\mathrm{AT}}(z)&=\sum_{k\ge0}b_k z^k,&\sigma_{\mathrm{AT}}(z)&=\sum_{k\ge0}\sigma_k z^k.
\end{align}
Assume further that these coefficients satisfy~\cite{athreya2000uniqueness}*{Condition~(H2)}: for some $R\in(R_0,R_1)$,
\begin{align}\label{ineq: AT H2 dominance}
	\sigma_2&<-\sum_{k\ne2}|\sigma_k|R^{k-2},
\end{align}
and, when $b_{\mathrm{AT}}\not\equiv0$, assume in addition that
\begin{align}\label{ineq: AT H2 drift dominance}
	b_1&<-\sum_{k\ne1}|b_k|R^{k-1}.
\end{align}
If $b_{\mathrm{AT}}\equiv0$, the latter condition is omitted.

For any $v_0\in\mathcal C(\R;[a_-,a_+])$, consider
\begin{align}\label{eq: AT original equation}
	\partial_t v =\frac12\Delta v+b_{\mathrm{AT}}(v) + \sqrt{\sigma_{\mathrm{AT}}(v)}\,\dot W, \qquad v(0,\cdot)=v_0.
\end{align}

\begin{corollary}[Athreya--Tribe approximation]\label{cor:AT conditions}
	Under the preceding conditions, equation~\eqref{eq: AT original equation} admits an $[a_-,a_+]$-valued weak solution $v$ that is unique in law.  Fix $\alpha\in(0,1)$ and set $\ell:=a_+-a_-$.  Each of Corollaries~\ref{cor:pop-limit} and~\ref{cor:dirichlet-convergence} yields a sequence of mutation--selection spatial CG models, the former through varying population size and the latter through Dirichlet weights.  For either sequence, write $\widetilde\xi^N$ for its spatial interpolation.  If $\widetilde\xi_0^N\to(v_0-a_-)/\ell$ in $\mathcal C_\R^{\mathrm{tem}}$, then
	\begin{align}\label{eq: AT approximation conclusion}
		\left(a_-+\ell\,\widetilde\xi^N_{t/\alpha}\right)_{t\ge0}\Longrightarrow (v_t)_{t\ge0}
	\end{align}
	in $\mathcal D(\R_+;\mathcal C_\R^{\mathrm{tem}})$.
\end{corollary}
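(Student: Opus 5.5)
The plan is to reduce equation~\eqref{eq: AT original equation} by an affine change of variables and a time change to an equation of the form~\eqref{eq:pop-limit-spde} (equivalently~\eqref{eq:dir-limit-spde}). Then Corollaries~\ref{cor:pop-limit} and~\ref{cor:dirichlet-convergence} apply directly once the coefficients are identified and uniqueness is checked.

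\emph{Step 1: the change of variables.} Set $u(t,x):=(v(t/\alpha,x)-a_-)/\ell$. Formally,
\[
\partial_t u=\frac{1}{\alpha\ell}\Big(\frac12\Delta v+b_{\mathrm{AT}}(v)\Big)+\frac{1}{\ell}\sqrt{\sigma_{\mathrm{AT}}(v)}\,\frac{1}{\sqrt\alpha}\dot{\widetilde W},
\]
where $\dot{\widetilde W}$ is the time-rescaled white noise. The Laplacian term becomes $\frac{1}{2\alpha}\Delta u$, but we need $\frac{\alpha}{2}\Delta u$. So I will additionally rescale space, $x\mapsto x/\alpha$. White noise is scale-invariant up to explicit factors, so the combined change of variables produces
\[
\partial_t u=\tfrac{\alpha}{2}\Delta u+\widehat B(u)+\sqrt{\widehat\Sigma(u)}\dot W
\]
with explicit positive constants multiplying $b_{\mathrm{AT}}(a_-+\ell u)$ and $\sigma_{\mathrm{AT}}(a_-+\ell u)$. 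If the conclusion~\eqref{eq: AT approximation conclusion} really uses only the time change $t/\alpha$, then the constants must be arranged so that the spatial scaling is absorbed; I will track this carefully, possibly adjusting by the interpolation convention. A key point is that the map $v\mapsto u$ is a homeomorphism of $\mathcal D(\R_+;\mathcal C^{\mathrm{tem}}_\R)$, so convergence and uniqueness in law transfer in both directions.

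\emph{Step 2: factorization of the coefficients.} Write $\widehat\Sigma(u)=\sigma(u)u(1-u)$ and $\widehat B(u)=b(u)u(1-u)+\theta^+(1-u)-\theta^-u$ with continuous $b$ and $\sigma>0$.
\begin{itemize}
\item \emph{Drift.} Set $\theta^+:=\widehat B(0)\ge0$ and $\theta^-:=-\widehat B(1)\ge0$. Then $\widehat B(u)-\theta^+(1-u)+\theta^-u$ is analytic and vanishes at $0$ and $1$. Dividing by $u(1-u)$ gives an analytic, hence continuous, $b$.
\item \emph{Noise.} $\widehat\Sigma$ is analytic and vanishes at $0$ and $1$, so $\sigma:=\widehat\Sigma/(u(1-u))$ is analytic on a neighborhood of $[0,1]$ and $\sigma\ge0$.
\end{itemize}

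\emph{Main obstacle: strict positivity of $\sigma$.} Both corollaries require $\min\sigma>0$. This means showing that $\sigma_{\mathrm{AT}}>0$ on $(a_-,a_+)$ and that $\sigma_{\mathrm{AT}}$ has simple zeros at $a_\pm$. I will try to derive this from (H2). Write $\sigma_{\mathrm{AT}}(z)=z^2\big(\sigma_2+\sum_{k\ne2}\sigma_kz^{k-2}\big)$ on $|z|=R$. The dominance~\eqref{ineq: AT H2 dominance} then makes $\sigma_{\mathrm{AT}}$ dominated by $\sigma_2z^2$ with $\sigma_2<0$. By Rouché's theorem, $\sigma_{\mathrm{AT}}$ has exactly two zeros in $|z|<R$. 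These must be $a_-$ and $a_+$, each simple, since $a_-\ne a_+$ and both lie in $|z|\le R_0<R$. Nonnegativity on $[a_-,a_+]$ together with $\sigma_{\mathrm{AT}}\not\equiv0$ then gives strict positivity in between, and simplicity gives nonzero one-sided derivatives. Hence $\sigma$ is bounded below on $[0,1]$ after dividing by $u(1-u)$.

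\emph{Step 3: existence, uniqueness, and conclusion.} Existence and uniqueness in law for~\eqref{eq: AT original equation} with $[a_-,a_+]$-valued solutions come from \cite{athreya2000uniqueness}. Existence alternatively follows from Remark~\ref{rem:existence}, since $\min\sigma>0$. Via Step 1, these give the well-posedness hypothesis of the two corollaries for the transformed equation with initial datum $(v_0-a_-)/\ell$. Applying Corollary~\ref{cor:pop-limit} with $h_N$ from~\eqref{eq:pop-scale}, and Corollary~\ref{cor:dirichlet-convergence} with any $h_N$ satisfying~\eqref{eq:dir-scale-compatibility} (e.g.\ $h_N=1/(N\min\sigma)$), gives $\widetilde\xi^N\Rightarrow u$. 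Finally, the continuous mapping theorem applied to the inverse affine and time change yields~\eqref{eq: AT approximation conclusion}.
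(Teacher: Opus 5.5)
Your Step~2 and Step~3 are essentially the paper's argument: subtract $\theta^+(1-u)-\theta^-u$ and remove the singularities for the drift, use Rouch\'e on $|z|=R$ to get exactly two simple zeros $a_\pm$ (which is what excludes interior zeros and gives $\min\sigma>0$), invoke Athreya--Tribe for well-posedness, and apply the two corollaries. Your choice $h_N=1/(N\min\sigma)$ for the Dirichlet construction is also admissible; the paper uses $h_N=N^{-1/2}$.

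\textbf{The gap is in Step~1: your time change goes in the wrong direction.} Setting $u(t,x)=(v(t/\alpha,x)-a_-)/\ell$ speeds $v$ up by the factor $1/\alpha$. This multiplies the generator by $1/\alpha$ and produces $\frac{1}{2\alpha}\Delta u$.

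You then repair this with a spatial dilation, $u(t,x)=(v(t/\alpha,x/\alpha)-a_-)/\ell$. That does give an equation of the form \eqref{eq:pop-limit-spde}, with drift $\frac{1}{\alpha\ell}b_{\mathrm{AT}}(a_-+\ell u)$ and noise coefficient $\sqrt{\ell^{-2}\sigma_{\mathrm{AT}}(a_-+\ell u)}$. But undoing it only yields $a_-+\ell\,\widetilde\xi^N_{\alpha t}(\alpha\,\cdot)\Rightarrow v_t$, which is not \eqref{eq: AT approximation conclusion}. No ``interpolation convention'' can absorb the spatial dilation. Theorem~\ref{thm:canning_convergence} ties the lattice spacing $h_N$ to the time step $h_N^2$. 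The dilated object has spacing $h_N/\alpha$ but time step $h_N^2/\alpha$, which is not of that diffusive form. So the final continuous-mapping step in Step~3 does not produce the stated conclusion.

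\textbf{The fix is to slow time down instead.} Put $w_t:=(v_{\alpha t}-a_-)/\ell$. The time change $t\mapsto \alpha t$ multiplies the Laplacian and drift by $\alpha$ and, in law, the white-noise term by $\sqrt\alpha$. Hence $w$ solves $\partial_t w=\frac\alpha2\Delta w+\alpha B_{\mathrm{AT}}(w)+\sqrt{\alpha\Sigma_{\mathrm{AT}}(w)}\,\dot W$, where $B_{\mathrm{AT}}(x)=\ell^{-1}b_{\mathrm{AT}}(a_-+\ell x)$ and $\Sigma_{\mathrm{AT}}(x)=\ell^{-2}\sigma_{\mathrm{AT}}(a_-+\ell x)$. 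The Laplacian already has the required coefficient $\frac\alpha2$, with no spatial rescaling.

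Your Step~2 then applies verbatim to $\alpha B_{\mathrm{AT}}$ and $\alpha\Sigma_{\mathrm{AT}}$. The corollaries give $\widetilde\xi^N\Rightarrow w$, and since $a_-+\ell w_{t/\alpha}=v_t$, the continuous map $f\mapsto a_-+\ell f_{\cdot/\alpha}$ gives exactly \eqref{eq: AT approximation conclusion}.

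One smaller point: the corollaries require well-posedness for every initial datum in $\mathcal C(\R;[0,1])$, not just $(v_0-a_-)/\ell$. This holds because the affine map is a bijection onto $\mathcal C(\R;[a_-,a_+])$ and the time change maps weak solutions to weak solutions in both directions.
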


\begin{proof}
	Existence of an $[a_-,a_+]$-valued weak solution follows from the construction described in~\cite{athreya2000uniqueness}*{Section~1.5, pp.~1715--1716}. It also follows from the proof of Theorem~\ref{thm:canning_convergence}, see Remark~\ref{rem:existence}. Uniqueness in law follows from~\cite{athreya2000uniqueness}*{Theorem~1, p.~1716}, which states that Condition~(H2) alone is sufficient.

	For $x\in[0,1]$, set
	\begin{align*}
		B_{\mathrm{AT}}(x)
		&:=\ell^{-1}b_{\mathrm{AT}}(a_-+\ell x),
		&
		\Sigma_{\mathrm{AT}}(x)
		&:=\ell^{-2}\sigma_{\mathrm{AT}}(a_-+\ell x).
	\end{align*}
	Define
	\begin{align*}
		w_t:=\frac{v_{\alpha t}-a_-}{\ell}.
	\end{align*}
	Applying the affine normalization and the time rescaling directly to~\eqref{eq: AT original equation}, and using that the latter multiplies the space--time white-noise term in law by $\sqrt\alpha$, shows that, after denoting the rescaled white noise again by $\dot W$, the process $w$ is a weak solution of
	\begin{align}\label{eq: AT alpha scaled equation}
		\partial_t w
		=\frac\alpha2\Delta w+\alpha B_{\mathrm{AT}}(w)
		+\sqrt{\alpha\Sigma_{\mathrm{AT}}(w)}\,\dot W,
		\qquad
		w_0=\frac{v_0-a_-}{\ell}.
	\end{align}
	Conversely, if $w$ is any weak solution of~\eqref{eq: AT alpha scaled equation}, then $v_t:=a_-+\ell w_{t/\alpha}$ is a weak solution of~\eqref{eq: AT original equation}. Since every initial condition in $\mathcal C(\R;[0,1])$ is the affine normalization of one in $\mathcal C(\R;[a_-,a_+])$, the same correspondence establishes weak existence and uniqueness in law for~\eqref{eq: AT alpha scaled equation} for every initial condition. It remains only to realize its coefficients through the two CG constructions.

	Set
	\begin{align*}
		r_+&:=B_{\mathrm{AT}}(0)\ge0,
		&
		r_-&:=-B_{\mathrm{AT}}(1)\ge0.
	\end{align*}
	For $x\in(0,1)$, define
	\begin{align*}
		\beta(x):=
		\frac{B_{\mathrm{AT}}(x)-r_+(1-x)+r_-x}{x(1-x)}.
	\end{align*}
	The numerator admits a holomorphic extension to a complex neighborhood of $[0,1]$ and vanishes at $0$ and $1$. By~\cite{conway1978functions}*{Corollary~IV.3.9}, $\beta$ therefore extends analytically across both endpoints. Consequently,
	\begin{align}\label{eq: AT drift decomposition}
		B_{\mathrm{AT}}(x)
		=\beta(x)x(1-x)+r_+(1-x)-r_-x,
		\qquad x\in[0,1].
	\end{align}

	For $|z|=R$,~\eqref{ineq: AT H2 dominance} gives
	\begin{align*}
		\left|\sigma_{\mathrm{AT}}(z)-\sigma_2z^2\right|
		&=\left|\sum_{k\ne2}\sigma_kz^k\right|
		\le\sum_{k\ne2}|\sigma_k|R^k
		=R^2\sum_{k\ne2}|\sigma_k|R^{k-2}
		< -\sigma_2R^2
		=|\sigma_2z^2|.
	\end{align*}
	Hence, by Rouch\'e's theorem~\cite{conway1978functions}*{Theorem~V.3.8}, $\sigma_{\mathrm{AT}}$ and $\sigma_2z^2$ have the same number of zeros in $\{|z|<R\}$, namely two, counted with multiplicity. This implies that the two zeros $a_-\neq a_+$ are simple and that $\sigma_\mathrm{AT}$ has no zeros in $(a_-,a_+)$. By~\cite{conway1978functions}*{Corollary~IV.3.9}, there is therefore an analytic function $\gamma$ in a neighborhood of $[0,1]$ such that
	\begin{align}\label{eq: AT noise decomposition}
		\Sigma_{\mathrm{AT}}(x)=x(1-x)\gamma(x).
	\end{align}
	The factorization and the absence of interior zeros give $\gamma>0$ on $(0,1)$. By continuity and simplicity of the endpoint zeros, $\gamma(0)$ and $\gamma(1)$ are also positive. Hence
	\begin{align}\label{ineq: AT gamma positive}
		\min_{x\in[0,1]}\gamma(x)>0.
	\end{align}

	Define
	\begin{align*}
		b(x)&:=\alpha\beta(x),
		&
		\sigma(x)&:=\alpha\gamma(x),
		&
		\theta^+&:=\alpha r_+,
		&
		\theta^-&:=\alpha r_-.
	\end{align*}
	By~\eqref{eq: AT drift decomposition} and~\eqref{eq: AT noise decomposition},
	\begin{gather*}
		b(x)x(1-x)+\theta^+(1-x)-\theta^-x
		=\alpha B_{\mathrm{AT}}(x),\\
		\sigma(x)x(1-x)
		=\alpha\Sigma_{\mathrm{AT}}(x).
	\end{gather*}
	Thus~\eqref{eq: AT alpha scaled equation} is precisely the limiting equation in Corollaries~\ref{cor:pop-limit} and~\ref{cor:dirichlet-convergence} for these coefficients.

	For the varying-population construction, since $b$ and $\sigma$ are continuous and $\sigma>0$ on $[0,1]$, Corollary~\ref{cor:pop-limit} applies. For the Dirichlet construction, take $h_N=N^{-1/2}$ and, for all sufficiently large $N$, set $\theta_N^\pm=h_N^2\theta^\pm$, choosing the finitely many remaining mutation probabilities subject to admissibility. Moreover,
	\begin{align*}
		\frac1{Nh_N}
		&=\frac1{\sqrt N}
		\le\alpha\min_{x\in[0,1]}\gamma(x)
		=\min_{x\in[0,1]}\sigma(x)
	\end{align*}
	for all sufficiently large $N$, where the inequality follows from~\eqref{ineq: AT gamma positive}. Hence~\eqref{eq:dir-scale-compatibility} holds, and Corollary~\ref{cor:dirichlet-convergence} applies. Under either construction, $\widetilde\xi^N$ converges in distribution to $w$ in $\mathcal D(\R_+;\mathcal C_\R^{\mathrm{tem}})$. The map $f\mapsto a_-+\ell f_{\cdot/\alpha}$ is continuous on this path space, so the continuous mapping theorem gives~\eqref{eq: AT approximation conclusion}.
\end{proof}

\section{Proof of Theorem~\ref{thm:canning_convergence}}\label{sec:proof-canning}

Recall $W_A^N(x)$ and $P_N(x,\mathcal W(x))$ from~\eqref{eq: pnxw}, and $\widehat P_N(x,\mathcal W(x))$ from~\eqref{eq: mutation adjusted probability}. The exact identity
\begin{align}\label{eq:tylor expansion}
	P_N(x,\mathcal W(x))
	=W_A^N(x)+s_N(x)W_A^N(x)(1-W_A^N(x))+\rho_N(x)
\end{align}
holds, where
\begin{align*}
	\rho_N(x):= -\frac{s_N(x)^2W_A^N(x)^2(1-W_A^N(x))}{1+s_N(x)W_A^N(x)}.
\end{align*}
Whenever $\abs{s_N(x)}<1/2$,
\begin{align}\label{ineq:canning_taylor_error}
	\abs{\rho_N(x)}\le 2\abs{s_N(x)}^2,\qquad  \abs{P_N(x,\mathcal W(x))-W_A^N(x)} \le \frac{\abs{s_N(x)}}{2}.
\end{align}
Furthermore,~\eqref{eq: mutation adjusted probability} gives
\begin{equation}\label{ineq: mutation probability difference}
	\begin{gathered}
		\abs{\widehat P_N(x,\mathcal W(x))-P_N(x,\mathcal W(x))}
		\le \theta_N^++\theta_N^-,\\
		\abs{\widehat P_N(x,\mathcal W(x))-W_A^N(x)}
		\overset{\eqref{ineq:canning_taylor_error}}{\le} \frac{\abs{s_N(x)}}2+\theta_N^++\theta_N^-.
	\end{gathered}
\end{equation}
One can check from exchangeability of $\mathcal W(x)$ that
\begin{align*}
	\E\left[W_A^N(x) \right] = x.
\end{align*}
Recall $X_1$, $\P_x$ and $\E_x$ from Section~\ref{sec:canning}.
\begin{lemma}\label{lem:index exchangeable}
	Let $N\ge2$ and $x\in[N^{-1}]$. For $1\le i,j\le N$ with $i\ne j$, exchangeability of $\mathcal W(x)$ gives
	\begin{align*}
		\E_x[w_i(x)]=\frac1N,\qquad
		\E_x[w_i(x)^2]=\frac{c_N(x)}N,\qquad
		\E_x[w_i(x)w_j(x)]=\frac{1-c_N(x)}{N(N-1)}.
	\end{align*}
\end{lemma}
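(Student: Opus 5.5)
The three identities follow from two constraints: the weights sum to one, and their law is invariant under permutations of the indices. Since $x$ is fixed throughout, I write $w_i=w_i(x)$ and use that $\E_x$ only involves the law of $\mathcal W(x)$.

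\emph{First moment.} By exchangeability, $w_1,\dots,w_N$ are identically distributed, so $\E_x[w_i]$ does not depend on $i$. Taking expectations in $\sum_{i=1}^N w_i=1$ gives $N\E_x[w_i]=1$, that is, $\E_x[w_i]=1/N$.

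\emph{Second moment.} Again by exchangeability, $\E_x[w_i^2]$ is the same for every $i$. By the definition~\eqref{eq:cn scaling},
\[
c_N(x)=\E_x\Bigl[\sum_{i=1}^N w_i^2\Bigr]=N\E_x[w_1^2],
\]
so $\E_x[w_i^2]=c_N(x)/N$.

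\emph{Cross moment.} For $i\ne j$, exchangeability says the pair $(w_i,w_j)$ has the same law as $(w_1,w_2)$, so $\E_x[w_iw_j]$ is one common value over all $N(N-1)$ ordered pairs. Squaring the constraint gives
\[
1=\Bigl(\sum_{i=1}^N w_i\Bigr)^2=\sum_{i=1}^N w_i^2+\sum_{i\ne j}w_iw_j .
\]
Taking expectations, $1=c_N(x)+N(N-1)\E_x[w_1w_2]$, which yields the stated formula; here $N\ge2$ guarantees $N(N-1)\neq0$.

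Integrability is automatic, since every $w_i$ lies in $[0,1]$. None of the steps is a real obstacle; the only point needing care is that exchangeability is used for pairs, not just single coordinates, so that all off-diagonal terms share one expectation.
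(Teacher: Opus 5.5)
Your proposal is correct and follows essentially the same argument as the paper. Both obtain the first moment from the unit-sum constraint and exchangeability, the second moment from the definition of $c_N(x)$, and the cross moment by squaring the unit-sum constraint and using exchangeability of pairs.
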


\begin{proof}
	By exchangeability, the normalization of the weights, and~\eqref{eq:cn scaling},
	\begin{align*}
		1 =\E_x\left[\sum_{\ell=1}^N w_\ell(x)\right]
		=N\E_x[w_i(x)],\qquad
		c_N(x)
		=\E_x\left[\sum_{\ell=1}^N w_\ell(x)^2\right]
		=N\E_x[w_i(x)^2].
	\end{align*}
	Moreover,
	\begin{align*}
		1
		&=\E_x\left[\left(\sum_{\ell=1}^Nw_\ell(x)\right)^2\right]\\
		&=\E_x\left[\sum_{\ell=1}^Nw_\ell(x)^2\right]
		+\E_x\left[\sum_{\substack{\ell,m=1\\\ell\ne m}}^N
		w_\ell(x)w_m(x)\right]\\
		&=c_N(x)+N(N-1)\E_x[w_i(x)w_j(x)],
	\end{align*}
	where we used exchangeability of $\mathcal W(x)$ in the last equality. Solving for $\E_x[w_i(x)w_j(x)]$ gives the last identity.
\end{proof}

\begin{lemma}\label{lem:wa var}
	For each $N\ge2$ and $x\in[N^{-1}]$,
	\begin{align*}
		\E_x\left[W_A^N(x)\left(1-W_A^N(x)\right)\right]
		&=x(1-x)-\frac{N}{N-1}x(1-x)
		\left(c_N(x)-\frac1N\right),\\
		\E_x\left[\left(W_A^N(x)-x\right)^2\right]
		&=x(1-x)\frac{N}{N-1}
		\left(c_N(x)-\frac1N\right).
	\end{align*}
\end{lemma}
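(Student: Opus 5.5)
We will compute the second moment of $W_A^N(x)=\sum_{i\in I_x}w_i(x)$ directly from Lemma~\ref{lem:index exchangeable}, and then obtain both identities by elementary algebra. Set $k:=Nx=|I_x|$. Expanding the square,
\begin{align*}
	\E_x\bigl[W_A^N(x)^2\bigr]
	=\sum_{i\in I_x}\E_x[w_i(x)^2]+\sum_{\substack{i,j\in I_x\\ i\ne j}}\E_x[w_i(x)w_j(x)]
	=k\,\frac{c_N(x)}{N}+k(k-1)\,\frac{1-c_N(x)}{N(N-1)}.
\end{align*}
Also, $\E_x[W_A^N(x)]=k/N=x$ by the first identity of Lemma~\ref{lem:index exchangeable}. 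The cases $x\in\{0,1\}$ need no separate treatment: for $k=0$ everything vanishes, and for $k=N$ we have $W_A^N=1$. Both are consistent with the formulas, since the right-hand sides contain the factor $x(1-x)$.

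For the second identity, we write $\E_x[(W_A^N-x)^2]=\E_x[(W_A^N)^2]-x^2$ and substitute $k=Nx$:
\begin{align*}
	x\,c_N(x)+\frac{x(Nx-1)(1-c_N(x))}{N-1}-x^2 .
\end{align*}
We then collect the terms in $c_N(x)$. Its coefficient is $x-\frac{x(Nx-1)}{N-1}=\frac{Nx(1-x)}{N-1}$. The remaining terms are $\frac{x(Nx-1)}{N-1}-x^2=\frac{x(x-1)}{N-1}=-\frac{N}{N-1}x(1-x)\cdot\frac1N$. Together these give $x(1-x)\frac{N}{N-1}\bigl(c_N(x)-\frac1N\bigr)$, which is the second identity.

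The first identity then follows without further computation:
\begin{align*}
	\E_x\bigl[W_A^N(1-W_A^N)\bigr]=\E_x[W_A^N]-\E_x[(W_A^N)^2]=x-x^2-\E_x[(W_A^N-x)^2]=x(1-x)-\E_x[(W_A^N-x)^2],
\end{align*}
and we insert the second identity. There is no real obstacle here. The only point requiring care is the bookkeeping when collecting the $c_N$ and constant terms, together with the observation that the relabeling $I_x=\{1,\dots,Nx\}$ is harmless by exchangeability.
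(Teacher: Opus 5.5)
Your proposal is correct and follows essentially the same route as the paper's proof. Both compute $\E_x\bigl[W_A^N(x)^2\bigr]$ from Lemma~\ref{lem:index exchangeable}, then obtain the second identity via $\E_x\bigl[W_A^N(x)^2\bigr]-x^2$, and finally derive the first identity from $\E_x\bigl[W_A^N(1-W_A^N)\bigr]=x-\E_x\bigl[(W_A^N)^2\bigr]$; your algebra collecting the $c_N(x)$ and constant terms checks out.
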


\begin{proof}
	Since $I_x$ is deterministic and has cardinality $Nx$,
	\begin{align*}
		\E_x[W_A^N(x)]
		&=\sum_{i\in I_x}\E_x[w_i(x)]
		\overset{\mathrm{Lem.}\,\ref{lem:index exchangeable}}{=}\frac{Nx}{N}
		=x,
	\end{align*}
	and
	\begin{align*}
		\E_x\left[W_A^N(x)^2\right]
		&=\sum_{i\in I_x}\E_x[w_i(x)^2]
		+\sum_{\substack{i,j\in I_x\\i\ne j}}
		\E_x[w_i(x)w_j(x)]\\
		&\overset{\mathrm{Lem.}\,\ref{lem:index exchangeable}}{=}
		Nx\frac{c_N(x)}N
		+Nx(Nx-1)\frac{1-c_N(x)}{N(N-1)}.
	\end{align*}
	Consequently,
	\begin{align*}
		\E_x\left[\left(W_A^N(x)-x\right)^2\right]
		&=\E_x\left[W_A^N(x)^2\right]-x^2\\
		&=xc_N(x)+\frac{x(Nx-1)}{N-1}\left(1-c_N(x)\right)-x^2\\
		&=x(1-x)\frac{Nc_N(x)-1}{N-1}\\
		&=x(1-x)\frac{N}{N-1}
		\left(c_N(x)-\frac1N\right).
	\end{align*}
	Finally,
	\begin{align*}
		\E_x\left[W_A^N(x)\left(1-W_A^N(x)\right)\right]
		&=x-\E_x\left[W_A^N(x)^2\right]\\
		&=x-\left(
		\E_x\left[\left(W_A^N(x)-x\right)^2\right]+x^2\right)\\
		&=x(1-x)-\frac{N}{N-1}x(1-x)
		\left(c_N(x)-\frac1N\right).
	\end{align*}
\end{proof}

\begin{lemma}\label{lem:canning_drift}
	Let $N\ge2$ and suppose $\abs{s_N(x)}<1/2$ for all $x \in [N^{-1}]$, then the following  holds
	\begin{align*}
		&\abs{\mathbb E_x\left[X_1-x\right]
		-s_N(x)x(1-x)-\theta_N^+(1-x)+\theta_N^-x}
		\\
		\le&
		\abs{s_N(x)}\pr{c_N(x)+N^{-1}}
		+2\abs{s_N(x)}^2
		+\frac14\abs{s_N(x)}(\theta_N^++\theta_N^-),
		\qquad x \in [N^{-1}].
	\end{align*}
\end{lemma}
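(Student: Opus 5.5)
We condition on $\mathcal W(x)$ and use \eqref{eq:canning_conditional_binomial}. Given $\mathcal W(x)$, $X_1$ is $N^{-1}\mathrm{Binomial}(N,\widehat P_N)$, so $\E_x[X_1\mid\mathcal W(x)]=\widehat P_N(x,\mathcal W(x))$. The drift therefore reduces to $\E_x[\widehat P_N]-x$. By \eqref{eq: mutation adjusted probability},
\[
\widehat P_N-x=(P_N-x)+\theta_N^+(1-P_N)-\theta_N^-P_N
=(1-\theta_N^+-\theta_N^-)(P_N-x)+\theta_N^+(1-x)-\theta_N^-x .
\]
The mutation terms in the statement are thus reproduced exactly. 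What remains is the error
\[
(1-\theta_N^+-\theta_N^-)\E_x[P_N-x]-s_N(x)x(1-x).
\]

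Next we expand $P_N$ using the identity \eqref{eq:tylor expansion}. Since $\E_x[W_A^N(x)]=x$,
\[
\E_x[P_N-x]=s_N(x)\E_x\bigl[W_A^N(1-W_A^N)\bigr]+\E_x[\rho_N(x)].
\]
Lemma~\ref{lem:wa var} replaces $\E_x[W_A^N(1-W_A^N)]$ by $x(1-x)$, with an error of $\frac{N}{N-1}x(1-x)(c_N(x)-1/N)$. We bound this error using $x(1-x)\le 1/4$ and $\frac{N}{N-1}\le 2$, and note that $c_N-1/N\ge0$ by Lemma~\ref{lem:wf-floor}. This gives an error of at most $\frac12(c_N(x)-1/N)\le c_N(x)+N^{-1}$. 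Multiplying by $|s_N(x)|$ yields the first error term in the statement. The Taylor remainder satisfies $|\E_x\rho_N|\le 2|s_N|^2$ by \eqref{ineq:canning_taylor_error}, which gives the second term.

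Finally, the factor $(1-\theta_N^+-\theta_N^-)$ multiplying $\E_x[P_N-x]$ leaves the cross term $(\theta_N^++\theta_N^-)\E_x[P_N-x]$. We would bound this using the pieces just established. Naively it is $(\theta_N^++\theta_N^-)\,|s_N|\bigl(\tfrac14+\tfrac12(c_N-1/N)+2|s_N|\bigr)$, which gives $\frac14|s_N|(\theta_N^++\theta_N^-)$ plus lower-order pieces.

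\textbf{Main obstacle.} Getting exactly the constant $\frac14$ in the third term needs care, and this is where we expect the difficulty. The cleaner route is to bound $\E_x[P_N-W_A^N]$ directly: from \eqref{eq: pnxw},
\[
P_N-W_A^N=\frac{s_NW_A^N(1-W_A^N)}{1+s_NW_A^N}.
\]
Then $|\E_x[P_N-x]|\le|s_N|\,\E_x\bigl[W_A^N(1-W_A^N)/(1+s_NW_A^N)\bigr]$. We would try to show this is at most $\frac14|s_N|$, possibly after absorbing slack into the other two terms, which are stated with generous constants (for instance $c_N+N^{-1}$ versus $\tfrac12(c_N-1/N)$). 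If the exact $\frac14$ proves elusive, we would rebalance by splitting the $(1-\theta_N^+-\theta_N^-)$ factor differently, applying it only to the $s_N W_A^N(1-W_A^N)$ term, where $W_A^N(1-W_A^N)\le\frac14$ pointwise, and handling $\rho_N$ with the unweighted bound.
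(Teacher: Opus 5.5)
Your setup (conditioning on $\mathcal W(x)$, the mutation regrouping, the expansion \eqref{eq:tylor expansion} and Lemma~\ref{lem:wa var}) is the same as the paper's. The fallback in your last sentence is the right fix and is essentially the paper's one-line regrouping
\[
(1-\theta_N^+-\theta_N^-)\E_x[P_N-x]-s_Nx(1-x)
=(1-\theta_N^+-\theta_N^-)\bigl(\E_x[P_N-x]-s_Nx(1-x)\bigr)-(\theta_N^++\theta_N^-)s_Nx(1-x).
\]
The first piece is at most $|s_N|(c_N+N^{-1})+2|s_N|^2$ because $0\le 1-\theta_N^+-\theta_N^-\le 1$. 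The second is at most $\frac14|s_N|(\theta_N^++\theta_N^-)$ because $x(1-x)\le\frac14$. So the constant $\frac14$ costs nothing. Your variant works equally well: put the factor on $s_N\E_x[W_A^N(1-W_A^N)]$, use $W_A^N(1-W_A^N)\le\frac14$ pointwise, and bound $(1-\theta_N^+-\theta_N^-)|\E_x\rho_N|\le 2|s_N|^2$.

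Discard the other two routes, though.
\begin{itemize}
\item The ``cleaner route'' claim $|\E_x[P_N-x]|\le\frac14|s_N|$ is false for $s_N<0$, because $1/(1+s_NW_A^N)$ can be close to $2$. For example, uniform weights with $N=5$, $x=3/5$ and $s_N\equiv-0.45$ give $|P_N-x|\approx 0.33|s_N|$.
\item The naive split $\bigl(\E_x[P_N-x]-s_Nx(1-x)\bigr)-(\theta_N^++\theta_N^-)\E_x[P_N-x]$ fails as written. After the triangle inequality it counts $\rho_N$ with total weight $1+\theta_N^++\theta_N^-$ rather than $1-\theta_N^+-\theta_N^-$. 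With only $|\rho_N|\le 2|s_N|^2$ available, that exceeds the stated $2|s_N|^2$ term, and the slack in the $c_N$ term cannot absorb the excess in general.
\end{itemize}
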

\begin{proof}
	We first note that by the conditional binomial law, we have
	\begin{align}\label{eq:canning_drift_1}
		\E_x\left[X_1-x\right]&=\E_x\left[\E_x\left[X_1-x\mid\mathcal W(x)\right]\right]\nonumber\\
		&=\E_x\left[\widehat P_N(x,\mathcal W(x))-x\right]\nonumber\\
		&{\overset{\eqref{eq: mutation adjusted probability}}{=}}(1-\theta_N^+-\theta_N^-)\E_x \left[ \left(P_N(x,\mathcal W(x))-x\right)\right]+\theta_N^+(1-x)-\theta_N^-x.
	\end{align}
	By Lemma~\ref{lem:wa var}, we have
	\begin{align*}
		\E_x\left[P_N(x,\mathcal W(x)) - x \right]&{\overset{\eqref{eq:tylor expansion}}{=}}\E_x\left[W_A^N(x)-x+s_N(x)W_A^N(x)(1-W_A^N(x))+\rho_N(x)\right]\\
		&=s_N(x)\E_x\left[W_A^N(x)(1-W_A^N(x))\right]+\E_x\left[\rho_N(x)\right]\\
		&{\overset{\mathrm{Lem.}\,\ref{lem:wa var}}{=}}s_N(x)x(1-x)
		\left(1-\frac{N}{N-1}\left(c_N(x)-\frac1N\right)\right)
		+\E_x\left[\rho_N(x)\right].
	\end{align*}
	Therefore,
	\begin{align}
		&\abs{\E_x[P_N(x,\mathcal W(x))-x]-s_N(x)x(1-x)}\notag\\
		&\quad\le
		\abs{s_N(x)}\frac{N}{N-1}x(1-x)
		\abs{c_N(x)-N^{-1}}+\E_x\left[\abs{\rho_N(x)}\right]\notag\\
		&\quad\le
		\abs{s_N(x)}\pr{c_N(x)+N^{-1}}+2\abs{s_N(x)}^2.
		\label{ineq:canning_drift_2}
	\end{align}
	Here we used $\frac{N}{N-1}x(1-x)\le1$ for all $N \ge 2$, $x \in [0,1]$ and~\eqref{ineq:canning_taylor_error} in the last line. Hence,
	\begin{align*}
		&\qquad\abs{\E_x\left[X_1-x\right]-s_N(x)x(1-x)
		-\theta_N^+(1-x)+\theta_N^-x}\\
		&\overset{\eqref{eq:canning_drift_1}}{=}
		\abs{(1-\theta_N^+-\theta_N^-)
		\E_x\left[P_N(x,\mathcal W(x))-x\right]
		-s_N(x)x(1-x)}\\
		&=
		\abs{(1-\theta_N^+-\theta_N^-)
		\left(\E_x\left[P_N(x,\mathcal W(x))-x\right]
		-s_N(x)x(1-x)\right)
		-(\theta_N^++\theta_N^-)s_N(x)x(1-x)}\\
		&\le(1-\theta_N^+-\theta_N^-)
		\abs{\E_x\left[P_N(x,\mathcal W(x))-x\right]
		-s_N(x)x(1-x)}
		+(\theta_N^++\theta_N^-)\abs{s_N(x)}x(1-x)\\
		&\overset{\eqref{ineq:canning_drift_2}}{\le}
		\abs{s_N(x)}\pr{c_N(x)+N^{-1}}
		+2\abs{s_N(x)}^2
		+\frac14\abs{s_N(x)}(\theta_N^++\theta_N^-),
	\end{align*}
	where the last inequality follows from $\theta_N^++\theta_N^-\le1$ and $x(1-x)\le1/4$.
\end{proof}

We now consider the one-step squared increment of the paintbox CG model.
\begin{lemma}\label{lem:canning_var}
	Let $N\ge2$ and suppose $\abs{s_N(x)}<1/2$ for all $x \in [N^{-1}]$, then the following  holds
	\begin{align*}
		\abs{\E_x\left[\pr{X_1-x}^2\right]-x(1-x)c_N(x)}
		\le
		\frac{\abs{s_N(x)}}{2N}
		+\abs{s_N(x)}\sqrt{\frac{c_N(x)}2}
		+\frac{\abs{s_N(x)}^2}{4}
		+3(\theta_N^++\theta_N^-).
	\end{align*}
\end{lemma}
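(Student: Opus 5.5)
The plan is to condition on the weights $\mathcal W(x)$ and use the conditional binomial law \eqref{eq:canning_conditional_binomial}. This gives the exact identity
\begin{align*}
	\E_x\bigl[(X_1-x)^2\mid \mathcal W(x)\bigr]=\frac{\widehat P_N(1-\widehat P_N)}{N}+(\widehat P_N-x)^2,
\end{align*}
where we abbreviate $\widehat P_N=\widehat P_N(x,\mathcal W(x))$, and similarly write $P_N$ and $W_A=W_A^N(x)$. The idea is to replace $\widehat P_N$ successively by $P_N$ (removing mutation) and then by $W_A$ (removing selection). Once $\widehat P_N$ is replaced by $W_A$, the main term is computed exactly by Lemma~\ref{lem:wa var}:
\begin{align*}
	\frac1N\E_x[W_A(1-W_A)]+\E_x[(W_A-x)^2]
	=x(1-x)\Bigl[\frac1N-\frac{c_N(x)-\frac1N}{N-1}+\frac{N c_N(x)-1}{N-1}\Bigr]
	=x(1-x)c_N(x).
\end{align*}
The identity holds because $\bigl((N-1)c_N(x)-(N-1)/N\bigr)/(N-1)=c_N(x)-1/N$. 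So the whole error comes from the two replacements, and I will bound each contribution separately.

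\emph{Mutation step.} By \eqref{ineq: mutation probability difference}, $|\widehat P_N-P_N|\le\theta_N^++\theta_N^-$. The function $y\mapsto y(1-y)$ is $1$-Lipschitz on $[0,1]$, so the first term changes by at most $(\theta_N^++\theta_N^-)/N\le\theta_N^++\theta_N^-$. For the second term,
\begin{align*}
	\bigl|(\widehat P_N-x)^2-(P_N-x)^2\bigr|=|\widehat P_N-P_N|\,|\widehat P_N+P_N-2x|\le2(\theta_N^++\theta_N^-).
\end{align*}
Together these give the term $3(\theta_N^++\theta_N^-)$.

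\emph{Selection step.} Write $P_N=W_A+D$, where $|D|\le|s_N(x)|/2$ by \eqref{ineq:canning_taylor_error}. The same Lipschitz argument bounds the change in the first term by $|s_N(x)|/(2N)$. For the second term,
\begin{align*}
	(P_N-x)^2-(W_A-x)^2=2D(W_A-x)+D^2 .
\end{align*}
The piece $D^2$ contributes at most $|s_N(x)|^2/4$. For the cross term, Cauchy--Schwarz and Lemma~\ref{lem:wa var} give
\begin{align*}
	\E_x|2D(W_A-x)|\le|s_N(x)|\bigl(\E_x[(W_A-x)^2]\bigr)^{1/2}
	\le|s_N(x)|\Bigl(\tfrac14\cdot\tfrac{N}{N-1}c_N(x)\Bigr)^{1/2}\le|s_N(x)|\sqrt{c_N(x)/2},
\end{align*}
using $x(1-x)\le1/4$ and $N/(N-1)\le2$.

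Summing the four errors and taking expectations gives exactly the claimed bound. The only delicate point is the cross term $2D(W_A-x)$. One must not bound it crudely by $|s_N(x)|$, since that would destroy the $h_N^{-1}$ rescaling used later. Instead one uses the second moment of $W_A$ from Lemma~\ref{lem:wa var} to extract the factor $\sqrt{c_N(x)}$. Everything else is bookkeeping with Lipschitz bounds.
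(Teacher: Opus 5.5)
Your proposal is correct and follows essentially the paper's argument. You condition on $\mathcal W(x)$, remove mutation via a Lipschitz bound on $y\mapsto y(1-y)/N+(y-x)^2$ (giving $3(\theta_N^++\theta_N^-)$), then remove selection, using Lemma~\ref{lem:wa var} for the exact main term $x(1-x)c_N(x)$ and Cauchy--Schwarz for the cross term $2\E_x[(W_A^N(x)-x)(P_N-W_A^N(x))]$. The only cosmetic difference is how you handle the binomial-variance term under selection: you use the $1$-Lipschitz property of $y(1-y)$, whereas the paper expands it exactly into $r_{N,1}$ and part of $r_{N,3}$; both give the stated constants.
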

\begin{proof}
	Note, it holds for all $x \in [N^{-1}]$ that
	\begin{align}\label{eq:canning_var_step1}
		\E_x\left[\pr{X_1-x}^2\mid\mathcal W(x)\right]
		\overset{\eqref{eq:canning_conditional_binomial}}{=}\frac{\widehat P_N(x,\mathcal W(x))
		\left(1-\widehat P_N(x,\mathcal W(x))\right)}N
		+\left(\widehat P_N(x,\mathcal W(x))-x\right)^2.
	\end{align}
	Set
	\[
		F_N(y,x):=\frac{y(1-y)}N+(y-x)^2.
	\]
	For $x,y,y'\in[0,1]$,
	\begin{align}\label{ineq: mutation variance comparison}
		\abs{F_N(y,x)-F_N(y',x)}
		&\le \frac1N\abs{y-y'}+2\abs{y-y'}
		\le3\abs{y-y'}.
	\end{align}
	Hence, by the triangular inequality, it holds for all $N \ge 2$ and $x \in [N^{-1}]$ that
	\begin{align*}
		\abs{\E_x\left[\pr{X_1-x}^2 \right] - x(1-x)c_N(x)}&{\overset{\eqref{eq:canning_var_step1}}{=}} \abs{\E_x\left[F_N(\widehat{P}_N(x,\mathcal W(x)),x)\right]-x(1-x)c_N(x)}\\
		&\le \abs{\E_x\left[F_N(\widehat{P}_N(x,\mathcal W(x)),x)-F_N(P_N(x,\mathcal W(x)),x)\right]}\\
		&\qquad + \abs{\E_x \left[F_N(P_N(x,\mathcal W(x)),x) - x(1-x)c_N(x) \right]}\\
		&{\overset{\eqref{ineq: mutation variance comparison}}{\le}}3\E_x\left[\abs{\widehat{P}_N(x,\mathcal W(x))-P_N(x,\mathcal W(x))}\right]\\
		&\qquad +\abs{\E_x \left[F_N(P_N(x,\mathcal W(x)),x) - x(1-x)c_N(x) \right]}\\
		&{\overset{\eqref{ineq: mutation probability difference}}{\le}}3(\theta_N^++\theta_N^-)+\abs{\E_x \left[F_N(P_N(x,\mathcal W(x)),x) - x(1-x)c_N(x) \right]}.
	\end{align*}
	Therefore, it is enough to show
	\begin{align}\label{ineq:canning_var_reduction_target}
		\abs{\E_x \left[F_N(P_N(x,\mathcal W(x)),x) - x(1-x)c_N(x) \right]}
		\le
		\frac{\abs{s_N(x)}}{2N}
		+\abs{s_N(x)}\sqrt{\frac{c_N(x)}2}
		+\frac{\abs{s_N(x)}^2}{4}.
	\end{align}

	Next, consider the following identities,
	\begin{align*}
		&P_N(x,\mathcal W(x))
		\left(1-P_N(x,\mathcal W(x))\right)
		-W_A^N(x)\left(1-W_A^N(x)\right)\\
		&\qquad
		=\left(P_N(x,\mathcal W(x))-W_A^N(x)\right)
		\left(1-2W_A^N(x)\right)
		-\left(P_N(x,\mathcal W(x))-W_A^N(x)\right)^2,\\
		&\left(P_N(x,\mathcal W(x))-x\right)^2
		-\left(W_A^N(x)-x\right)^2\\
		&\qquad
		=2\left(W_A^N(x)-x\right)
		\left(P_N(x,\mathcal W(x))-W_A^N(x)\right)
		+\left(P_N(x,\mathcal W(x))-W_A^N(x)\right)^2.
	\end{align*}
	Lemma~\ref{lem:wa var} yields
	\begin{align*}
		\E_x\left[F_N(W_A^N(x),x)\right]
		=x(1-x)c_N(x).
	\end{align*}
	This implies, for all $x \in [N^{-1}]$,
	\begin{align}\label{eq:canning_var_error_remainder}
		\E_x\left[F_N(P_N(x,\mathcal W(x)),x)\right]-x(1-x)c_N(x)
		&=\E_x\left[F_N(W_A^N(x),x) \right]-x(1-x)c_N(x)\nonumber\\
		&\qquad + \E_x\left[F_N(P_N(x,\mathcal W(x)),x) - F_N(W_A^N(x),x) \right]\nonumber\\
		&=r_{N,1}(x)+r_{N,2}(x)+r_{N,3}(x),
	\end{align}
	where
	\begin{align*}
		r_{N,1}(x)
		&:=\frac1N\E_x\left[
		\left(P_N(x,\mathcal W(x))-W_A^N(x)\right)
		\left(1-2W_A^N(x)\right)
		\right],\\
		r_{N,2}(x)
		&:=2\E_x\left[
		\left(W_A^N(x)-x\right)
		\left(P_N(x,\mathcal W(x))-W_A^N(x)\right)
		\right],\\
		r_{N,3}(x)
		&:=\left(1-\frac1N\right)
		\E_x\left[
		\left(P_N(x,\mathcal W(x))-W_A^N(x)\right)^2
		\right].
	\end{align*}
	For the error terms, we see by~\eqref{ineq:canning_taylor_error} that
	\begin{align*}
		\abs{r_{N,1}(x)}
		\le\frac{\abs{s_N(x)}}{2N},\text{ and }
		\abs{r_{N,3}(x)}
		\le\frac{\abs{s_N(x)}^2}{4},\qquad \text{ for all }x \in [N^{-1}].
	\end{align*}
	For $r_{N,2}$ term, consider by Cauchy-Schwarz inequality that
	\begin{align*}
		\abs{r_{N,2}(x)}
		&\le2 \norm{W_A^N(x) - x}_{L^2_\Omega} \norm{P_N(x,\mathcal W(x))-W_A^N(x)}_{L^2_\Omega}\\
		&{\overset{\eqref{ineq:canning_taylor_error}}{\le} }\norm{W_A^N(x) - x}_{L^2_\Omega} \abs{s_N(x)}\\
		&{\overset{\mathrm{Lem.}\,\ref{lem:wa var}}{=}}\pr{x(1-x)\frac{N}{N-1}\left(c_N(x)-\frac1N\right)}^\frac{1}{2}\abs{s_N(x)}\\
		&\overset{\mathrm{Lem.}\,\ref{lem:wf-floor}}{\le}\abs{s_N(x)}\sqrt{\frac{c_N(x)}2}.
	\end{align*}
	Inserting the bounds for $r_{N,1}$, $r_{N,2}$, and $r_{N,3}$ into~\eqref{eq:canning_var_error_remainder} gives~\eqref{ineq:canning_var_reduction_target}, which completes the proof.
\end{proof}

Next we consider higher moments of one-step difference of the CG model.
\begin{lemma}\label{lem:canning_higher_moments}
	Let $p\ge2$, $N\ge2$, and $x\in[N^{-1}]$, and suppose $\abs{s_N(x)}<1/2$. Then
	\begin{align*}
		\E_x \left[\abs{X_1 - x}^p \right]
		\lesssim
		N^{-p/2}+\abs{s_N(x)}^p
		+(\theta_N^++\theta_N^-)^p
		+\E_x\left[\abs{W_A^N(x)-x}^p\right].
	\end{align*}
\end{lemma}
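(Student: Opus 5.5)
We split the one-step increment into three pieces and bound their $p$-th moments separately. Write $\widehat P:=\widehat P_N(x,\mathcal W(x))$ and decompose
\begin{align*}
	X_1-x=\bigl(X_1-\widehat P\bigr)+\bigl(\widehat P-W_A^N(x)\bigr)+\bigl(W_A^N(x)-x\bigr).
\end{align*}
By the elementary inequality $|a+b+c|^p\le 3^{p-1}(|a|^p+|b|^p+|c|^p)$, it suffices to bound the $p$-th moment of each of the three terms by the corresponding term on the right-hand side.

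The third term contributes exactly $\E_x[|W_A^N(x)-x|^p]$, so nothing needs to be done there. For the second term, the deterministic bound~\eqref{ineq: mutation probability difference} gives
\begin{align*}
	|\widehat P-W_A^N(x)|\le \tfrac12|s_N(x)|+\theta_N^++\theta_N^-
\end{align*}
almost surely, since $|s_N(x)|<1/2$. Raising this to the $p$-th power and using convexity again yields a bound of order $|s_N(x)|^p+(\theta_N^++\theta_N^-)^p$.

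For the first term, we condition on $\mathcal W(x)$. By~\eqref{eq:canning_conditional_binomial}, $NX_1$ is then a $\mathrm{Binomial}(N,\widehat P)$ variable, so $X_1-\widehat P=N^{-1}\sum_{k=1}^N(\eta_k-\widehat P)$ with $\eta_k$ conditionally i.i.d. Bernoulli$(\widehat P)$. These summands are centered and bounded by $1$. Applying Rosenthal's (or Marcinkiewicz--Zygmund's) inequality, or equivalently Hoeffding's inequality integrated against $pt^{p-1}$, gives
\begin{align*}
	\E\bigl[|X_1-\widehat P|^p\mid\mathcal W(x)\bigr]\le C_pN^{-p/2}
\end{align*}
uniformly in $\widehat P\in[0,1]$. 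Taking expectations removes the conditioning.

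The only step needing any care is this conditional binomial moment bound, and specifically its uniformity in the success probability. Hoeffding's sub-Gaussian tail $2e^{-2Nt^2}$ is independent of $\widehat P$, so it handles this directly: integrating gives $C_pN^{-p/2}$, with $C_p$ depending only on $p$. Combining the three bounds then proves the lemma, with the implicit constant depending only on $p$.
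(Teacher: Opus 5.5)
Your proposal is correct and follows essentially the same route as the paper: the same split into the conditional binomial fluctuation, the deterministic selection/mutation shift from \eqref{ineq: mutation probability difference}, and $W_A^N(x)-x$, each controlled by the corresponding term. The only difference is that you spell out the uniform-in-$\widehat P$ binomial moment bound via Hoeffding's inequality, which the paper simply attributes to the conditional binomial law~\eqref{eq:canning_conditional_binomial}.
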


\begin{proof}
	Let $p \ge 2$, we see by the triangle inequality, it holds that
	\begin{align*}
		\E_x\left[\abs{X_1-x}^p\right]
		&\lesssim
		\E_x\left[
		\E_x\left[\abs{X_1-\widehat P_N(x,\mathcal W(x))}^p
		\mid\mathcal W(x)\right]
		+\abs{\widehat P_N(x,\mathcal W(x))-x}^p\right]\\
		&{\overset{\eqref{eq:canning_conditional_binomial}}{\lesssim}}
		N^{-p/2}+\E_x\left[\abs{\widehat P_N(x,\mathcal W(x))-x}^p\right]\\
		&\lesssim N^{-p/2}+\E_x\left[\abs{W_A^N(x)-x}^p\right]+\E_x\left[\abs{W_A^N(x) - \widehat{P}_N(x,\mathcal W(x)) }^p \right]\\
		&{\overset{\eqref{ineq: mutation probability difference}}{\lesssim}} N^{-p/2}
		+\E_x\left[\abs{W_A^N(x) - x}^p \right]
		+\abs{s_N(x)}^p +\pr{\theta_N^++\theta_N^-}^p,
	\end{align*}
	which is the desired result.
\end{proof}

\begin{lemma}\label{lem:canning_unbiased_transfer}
	Under the assumptions of Theorem~\ref{thm:canning_convergence},
	\begin{align}\label{eq:mean_uniform_rv_canning_cov}
		\lim_{N\to\infty}\sup_{x\in[0,1]}
		\left|h_N^{-2}\E_x[\overline{\mathcal R}^N(x)-x]
		-B(x)\right|=0.
	\end{align}
	Moreover,
	\begin{align}\label{eq:var_uniform_rv_canning_cov}
		\lim_{N\to\infty}\sup_{x\in[0,1]}
		\left|h_N^{-1}\E_x[(\overline{\mathcal R}^N(x)-x)^2]
		-\sigma(x)x(1-x)\right|=0.
	\end{align}
	With $p>8$ and $\varepsilon>0$ as in~\eqref{ineq:p_moment_canning_thm}, it holds that
	\[
		\sup_{x\in[0,1]}\E_x\left[|\overline{\mathcal R}^N(x)-x|^p\right]
		\lesssim h_N^{1+\kappa_p},
		\qquad \kappa_p:=\min\{p/2,4+\varepsilon\}-1>3.
	\]
\end{lemma}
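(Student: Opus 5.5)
The plan is to reduce all three assertions to the grid statements of Lemmas~\ref{lem:canning_drift}, \ref{lem:canning_var} and~\ref{lem:canning_higher_moments}. These are transferred from $[N^{-1}]$ to $[0,1]$ by conditioning on the randomization $U_N(x)$. Write $u:=U_N(x)$. Since $V$ is independent of $\widetilde{\mathcal R}^N$, we have
\[
\E_x[\overline{\mathcal R}^N(x)-x]=\E\bigl[\E_u[\widetilde{\mathcal R}^N(u)-u]\bigr]+\E[u-x],
\]
and the last term vanishes by~\eqref{ineq:uniform_rv_estimates}.

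\textbf{Drift.} First, \eqref{eq:bsigma def} gives $\sup|s_N|\lesssim h_N^2<1/2$ for large $N$, so the grid lemmas apply. Lemma~\ref{lem:canning_drift} yields, uniformly in $u\in[N^{-1}]$,
\[
h_N^{-2}\E_u[\widetilde{\mathcal R}^N(u)-u]=s_N(u)h_N^{-2}u(1-u)+h_N^{-2}\theta_N^+(1-u)-h_N^{-2}\theta_N^-u+o(1).
\]
The error is $O(c_N+N^{-1}+h_N^2)$, and $c_N\lesssim h_N$. By~\eqref{eq:bsigma def} the right side equals $B(u)+o(1)$ uniformly. Then $|u-x|\le 1/N$ and the uniform continuity of $B$ on $[0,1]$ give $\sup_x|\E[B(U_N(x))]-B(x)|\to0$, which proves~\eqref{eq:mean_uniform_rv_canning_cov}.

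\textbf{Variance.} Expand
\[
(\widetilde{\mathcal R}^N(u)-x)^2=(\widetilde{\mathcal R}^N(u)-u)^2+2(\widetilde{\mathcal R}^N(u)-u)(u-x)+(u-x)^2 .
\]
\begin{itemize}
\item The first term is handled by Lemma~\ref{lem:canning_var}. Its error is $O(h_N^2\cdot h_N^{1/2}+h_N^2)$, which is $o(h_N)$, so $h_N^{-1}\E_u[(\widetilde{\mathcal R}^N(u)-u)^2]=\sigma(u)u(1-u)+o(1)$ uniformly.
\item The cross term is at most $2N^{-1}|\E_u[\widetilde{\mathcal R}^N(u)-u]|\lesssim N^{-1}h_N^2$.
\item The last term is at most $N^{-2}/4$.
\end{itemize}
\textbf{Main obstacle.} This step needs $N^{-2}=o(h_N)$, i.e.\ $Nh_N\to\infty$ or at least $N^{-1}\lesssim h_N$. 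I would obtain this from Lemma~\ref{lem:wf-floor}: $c_N\ge 1/N$ and $c_N/h_N\to\sigma$ bounded give $1/N\le c_N\lesssim h_N$, so $N^{-2}\lesssim h_N^2=o(h_N)$. Continuity of $\sigma(x)x(1-x)$ then finishes~\eqref{eq:var_uniform_rv_canning_cov}.

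\textbf{Higher moments.} Use $|a+b|^p\lesssim|a|^p+|b|^p$ with $|u-x|\le1/N\lesssim h_N$. Then Lemma~\ref{lem:canning_higher_moments} gives
\[
\E_x|\overline{\mathcal R}^N(x)-x|^p\lesssim N^{-p/2}+h_N^{2p}+h_N^{2p}+h_N^{4+\varepsilon}+N^{-p}.
\]
Here the moment hypothesis~\eqref{ineq:p_moment_canning_thm} supplies the $h_N^{4+\varepsilon}$ term, and $\theta_N^\pm\lesssim h_N^2$. Using $N^{-1}\lesssim h_N$ again, this is $\lesssim h_N^{p/2}+h_N^{4+\varepsilon}\lesssim h_N^{\min\{p/2,4+\varepsilon\}}=h_N^{1+\kappa_p}$. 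Finally, $p>8$ gives $\kappa_p>3$.
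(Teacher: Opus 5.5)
Your proposal is correct and follows essentially the same route as the paper. You condition on the rounding variable $V$, apply Lemmas~\ref{lem:canning_drift}, \ref{lem:canning_var} and~\ref{lem:canning_higher_moments} on the grid, control the rounding errors through $N^{-1}\le c_N\lesssim h_N$ (from Lemma~\ref{lem:wf-floor} and \eqref{eq:bsigma def}), and finish with uniform continuity of $B$ and $\sigma(x)x(1-x)$; your four-term variance expansion and the $p$-th moment splitting match the paper's proof term by term.
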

\begin{proof}
	The limits in~\eqref{eq:bsigma def} and the fact that $c_N(z)\ge N^{-1}$ imply that, for some $C>0$ and all sufficiently large $N$,
	\begin{align}\label{ineq:canning_uniform_initial_bounds}
		\sup_{z\in[N^{-1}]}|s_N(z)|\le Ch_N^2,
		\qquad N^{-1}\le\sup_{z\in[0,1]}c_N(z)\le(\|\sigma\|_\infty+1)h_N,
		\qquad
		\theta_N^++\theta_N^-\le Ch_N^2.
	\end{align}
	In particular, $|s_N(z)|<1/2$ for all $z\in[N^{-1}]$ and all sufficiently large $N$, so Lemmas~\ref{lem:canning_drift}--\ref{lem:canning_higher_moments} apply. Recall the uniform variable $V$ defining $U_N$.  Since $\E[U_N(x)]=x$, it holds for all $x \in [0,1]$ and $N \in \N$ that
	\begin{align*}
		\E_x[\overline{\mathcal R}^N(x)-x]
		&=\E\left[
		\E\left[\widetilde{\mathcal R}^N(U_N(x))-U_N(x)\mid V\right]
		\right]
		+\E[U_N(x)-x]\\
		&=\E\left[
		\E\left[\widetilde{\mathcal R}^N(U_N(x))-U_N(x)\mid V\right]
		\right].
	\end{align*}
	Adding and subtracting $B(U_N(x))$ now yields
	\begin{align*}
		h_N^{-2}\E_x[\overline{\mathcal R}^N(x)-x]
		-B(x)
		=\E[
		h_N^{-2}\E[\widetilde{\mathcal R}^N(U_N(x))
		-U_N(x)\mid V]
		-B(U_N(x))
		]
		+\E[B(U_N(x))-B(x)].
	\end{align*}
	Hence, by the triangular inequality, it holds for $N \in \N$ that
	\begin{align*}
		\sup_{x\in[0,1]}
		\left|h_N^{-2}\E_x[\overline{\mathcal R}^N(x)-x]-B(x)\right|
		&\overset{\eqref{ineq:uniform_rv_estimates}}{\le} \sup_{z\in[N^{-1}]}
		\left|h_N^{-2}\E_z[\widetilde{\mathcal R}^N(z)-z]-B(z)\right|\\
		&\qquad+\sup_{\substack{x\in[0,1],\ z\in[N^{-1}]\\|x-z|\le N^{-1}}}
		|B(z)-B(x)|.
	\end{align*}
	The first conclusion follows from Lemma~\ref{lem:canning_drift},~\eqref{eq:bsigma def}, and uniform continuity of $B$ on $[0,1]$.

	For the second moment, conditioning on $V$ and adding and subtracting $\sigma(U_N(x))U_N(x)(1-U_N(x))$ gives
	\begin{equation}\label{eq:canning_unbiased_transfer_var}
		\begin{aligned}
			&h_N^{-1}\E_x[(\overline{\mathcal R}^N(x)-x)^2]
			-\sigma(x)x(1-x)\\
			&=h_N^{-1} \E\left[\pr{\widetilde{\mathcal R}^N(U_N(x)) - U_N(x) + U_N(x) - x}^2\right]-\sigma(x)x(1-x)\\
			&=\E\left[
			h_N^{-1}\E\left[(\widetilde{\mathcal R}^N(
			U_N(x))-U_N(x))^2\mid V\right]
			-\sigma(U_N(x))U_N(x)(1-U_N(x))
			\right]\\
			&\quad+\E\left[\sigma(U_N(x))
			U_N(x)(1-U_N(x))
			-\sigma(x)x(1-x)\right]\\
			&\quad+\frac2{h_N}\E\left[(U_N(x)-x)
			\E\left[\widetilde{\mathcal R}^N(U_N(x))
			-U_N(x)\mid V\right]\right]\\
			&\quad+\frac1{h_N}\E[(U_N(x)-x)^2].
		\end{aligned}
	\end{equation}
	By Lemma~\ref{lem:canning_var}, it holds that
	\begin{align*}
		&\abs{\E\left[
		h_N^{-1}\E\left[(\widetilde{\mathcal R}^N(
		U_N(x))-U_N(x))^2\mid V\right]
		-\sigma(U_N(x))U_N(x)(1-U_N(x))
		\right]} \\
		\le &\sup_{z \in [N^{-1}]}
		\abs{h_N^{-1}\E\left[(\widetilde{\mathcal R}^N(z)-z)^2\right]
		-\sigma(z)z(1-z)}\\
		\le &\sup_{z \in [N^{-1}]}\abs{h_N^{-1}\pr{\E \left[ (\widetilde{\mathcal R}^N(z)-z)^2\right] - z(1-z)c_N(z)} }+ \sup_{z \in [N^{-1}]}\abs{z(1-z)\frac{c_N(z)}{h_N} - \sigma(z)z(1-z)}\\
		\le & \sup_{z \in [N^{-1}]}\pr{\frac{\abs{s_N(z)}}{2Nh_N}
		+\abs{s_N(z)}\sqrt{\frac{c_N(z)}2}h_N^{-1}
		+\frac{\abs{s_N(z)}^2}{4h_N}
		+3(\theta_N^++\theta_N^-)h_N^{-1}}
		+ \sup_{z \in [N^{-1}]}\abs{\frac{c_N(z)}{h_N} - \sigma(z)}\\
		\overset{\eqref{ineq:canning_uniform_initial_bounds}}{\le}& C\pr{N^{-1}h_N + h_N^{\frac{3}{2}} + h_N^3+h_N} + \sup_{z \in [N^{-1}]}\abs{\frac{c_N(z)}{h_N} - \sigma(z)}.
	\end{align*}
	Also,
	\begin{align*}
		\sup_{x \in [0,1]}
		\abs{\E\left[\sigma(U_N(x))U_N(x)(1-U_N(x))
		-\sigma(x)x(1-x)\right]}
		&\le \sup_{\substack{x,z \in [0,1], \\|x-z|\le N^{-1}}}
		\abs{\sigma(z)z(1-z)-\sigma(x)x(1-x)}.
	\end{align*}
	By the conditional binomial law, $\E_z[W_A^N(z)]=z$, and \eqref{ineq:canning_taylor_error} and \eqref{ineq: mutation probability difference}, it holds that
	\begin{align*}
		\frac2{h_N}\abs{\E\left[(U_N(x)-x)\E\left[\widetilde{\mathcal R}^N(U_N(x))-U_N(x)\mid V\right]\right]}&
		{\overset{\eqref{ineq:uniform_rv_estimates}}{\le}}
		\frac{2}{h_NN}\sup_{z\in[N^{-1}]}\abs{\E_z[\widetilde{\mathcal R}^N(z)-z]}\\
		&\le\frac{2}{h_NN}\pr{\frac12
		\sup_{z\in[N^{-1}]}\abs{s_N(z)}+\theta_N^+ +\theta_N^-}\\
		&{\overset{\eqref{ineq:canning_uniform_initial_bounds}}{\le}} Ch_N^2.
	\end{align*}
	Lastly, there is some $C>0$ such that, for all sufficiently large $N$,
	\[
		\sup_{x \in [0,1]}h_N^{-1}\E[(U_N(x)-x)^2]
		\overset{\eqref{ineq:uniform_rv_estimates}}{\le}
		N^{-2}h_N^{-1}
		\overset{\eqref{ineq:canning_uniform_initial_bounds}}{\le} C N^{-1}.
	\]
	Inserting these estimates into~\eqref{eq:canning_unbiased_transfer_var}, there is some $C>0$ such that, for all sufficiently large $N$,
	\begin{align*}
		\sup_{x \in [0,1]}\abs{h_N^{-1}\E_x[(\overline{\mathcal R}^N(x)-x)^2]-\sigma(x)x(1-x)}
		&\le C \pr{h_N+N^{-1}} +\sup_{z \in [N^{-1}]}\abs{\frac{c_N(z)}{h_N} - \sigma(z)}\\
		&\qquad+ \sup_{\substack{x,z \in [0,1], \\|x-z|\le N^{-1}}}\abs{\sigma(z)z(1-z)-\sigma(x)x(1-x)},
	\end{align*}
	which converges to $0$ as $N \to \infty$ by~\eqref{eq:bsigma def} and uniform continuity of $\sigma$ on $[0,1]$. This proves~\eqref{eq:var_uniform_rv_canning_cov}.

	Finally, by the definition of $\overline{\mathcal R}^N$ and Lemma~\ref{lem:canning_higher_moments}, it holds uniformly for sufficiently large $N \in \N$ that
	\begin{align*}
		\sup_{x\in[0,1]}\E_x\left[\abs{\overline{\mathcal R}^N(x)-x}^p\right]
		&{=}\sup_{x\in[0,1]}\E\left[
		\abs{\pr{\widetilde{\mathcal R}^N(U_N(x))
		-U_N(x)}
		+\pr{U_N(x)-x}}^p\right]\\
		&\le 2^{p-1}\sup_{x\in[0,1]}\E\left[
		\abs{\widetilde{\mathcal R}^N(U_N(x))
		-U_N(x)}^p
		+\abs{U_N(x)-x}^p\right]\\
		&{\overset{\eqref{ineq:uniform_rv_estimates}}{\le}} 2^{p-1}\left(
		\sup_{z\in[N^{-1}]}\E_z\left[\abs{\widetilde{\mathcal R}^N(z)-z}^p\right]
		+N^{-p}\right)\\
		&{\overset{\mathrm{Lem.}\,\ref{lem:canning_higher_moments}}{\lesssim}}
		N^{-p/2}+\sup_{z\in[N^{-1}]}\abs{s_N(z)}^p
		+(\theta_N^++\theta_N^-)^p\\
		&\quad+\sup_{z\in[N^{-1}]}\E_z\left[\abs{W_A^N(z)-z}^p\right]+N^{-p}\\
		&{\overset{\substack{\eqref{ineq:canning_uniform_initial_bounds}}}{\lesssim}}
		h_N^{p/2}+h_N^{2p}
		+h_N^{4+\varepsilon}+h_N^p\\
		&\lesssim h_N^{\min\{p/2,4+\varepsilon\}},
	\end{align*}
	where we also used~\eqref{ineq:p_moment_canning_thm} in the fifth line. This completes the proof of the lemma.
\end{proof}

\begin{proof}[Proof of Theorem~\ref{thm:canning_convergence}]
	We use Theorem~\ref{thm:main}, which is stated along a sequence $L\to\infty$: an $N$-indexed sequence of pairs $(L_N,\mathcal R_N)$ with $L_N>0$ and $L_N\to\infty$ may replace $(L,\mathcal R^L)$ throughout its hypotheses and conclusion. Set
	\[
		L_N:=h_N^{-1},
		\qquad
		\mathcal R_N:=\overline{\mathcal R}^N.
	\]
	Lemma~\ref{lem:canning_unbiased_transfer} gives
	\begin{align*}
		\lim_{N\to\infty}\sup_{x\in[0,1]}
		\abs{h_N^{-2}\E_x\bracket{\mathcal R_N(x)-x}
		-B(x)}&=0,\\
		\lim_{N\to\infty}\sup_{x\in[0,1]}
		\abs{h_N^{-1}\E_x\bracket{\pr{\mathcal R_N(x)-x}^2}
		-\sigma(x)x(1-x)}&=0.
	\end{align*}
	In terms of the coefficients defined in~\eqref{eq:R assumption}, these estimates give~\eqref{eq:coefficient convergence}, with limiting coefficients $B(x)$ and $\sigma(x)x(1-x)$, respectively. Since $B(0)=\theta^+\ge0$, $B(1)=-\theta^-\le0$, and $\sigma(x)x(1-x)$ vanishes at $x=0,1$, the boundary assumptions of Theorem~\ref{thm:main} hold. The third conclusion of Lemma~\ref{lem:canning_unbiased_transfer} gives~\eqref{ineq:p_moment bound} with $\kappa_p>3$. Hence the desired result follows from Theorem~\ref{thm:main}.
\end{proof}

\section{Preliminaries}\label{sec:prelim-estimates}
In Sections~\ref{sec:prelim-estimates}--\ref{sec:limit-points} we write $L$ for the sequence $(L_n)$ and assume, without loss of generality, that $L\in\N$; the general case follows by rescaling space by $\lfloor L\rfloor/L\to1$ and time by $(\lfloor L\rfloor/L)^2$.

\subsection{Stochastic partial differential equations}

Following \cite{shiga1994two}, we fix a filtered probability space $(\Omega,\mathcal F,(\mathcal F_t)_{t\ge 0},\P)$ and an $(\mathcal F_t)$-space--time white noise $W(t,x)$ on $\R_+\times\R$. Formally, $\dot W$ is the generalized Gaussian field with covariance
\begin{align*}
	\E\bigl[\dot W(t,x)\dot W(t',x')\bigr]=\delta(t-t')\,\delta(x-x'),
	\qquad t,t'\ge 0,\ x,x'\in \R.
\end{align*}
A convenient rigorous interpretation is via the Walsh stochastic integral with respect to the associated martingale measure; see\ \cite{walsh1986introduction}*{Section 2}.

For any $(\mathcal F_t)$-predictable process $\phi(t,x)$ satisfying
\begin{align*}
	\int_0^t\int_{\R} \abs{\phi(s,x)}^2\,\mathrm dx\,\mathrm ds < \infty,
	\qquad \text{for every } t>0,
\end{align*}
we can define the stochastic integral $\int_0^t\int_{\R}\phi(s,x)\,W(\mathrm ds,\mathrm dx)$, which is an $(\mathcal F_t)$-local martingale with quadratic variation $\int_0^t\int_{\R} \abs{\phi(s,x)}^2\,\mathrm dx\,\mathrm ds$.

Let $a(t,x,u)$ and $b(t,x,u)$ be measurable functions on $\R_+\times \R \times \R$, and  consider reaction--diffusion type SPDEs of the form
\begin{equation}\label{eq:spde-generic}
	\begin{gathered}
		\partial_t u(t,x)=\frac{\alpha}{2}\Delta u(t,x)+b\bigl(t,x,u(t,x)\bigr)+a\bigl(t,x,u(t,x)\bigr)\,\dot W(t,x),\qquad (t,x)\in \R_+\times\R,\\
		u(0,x) = f(x).
	\end{gathered}
\end{equation}

More precisely,~\eqref{eq:spde-generic} is understood in the sense of Schwartz distributions: for every $\varphi\in C_c^\infty(\R)$ and $t\ge 0$,
\begin{equation}\label{eq:spde-weak-form}
	\begin{aligned}
		\langle u_t,\varphi\rangle
		&=\langle f,\varphi\rangle
		+\int_0^t \frac{\alpha}{2}\,\langle u_s,\Delta\varphi\rangle\,\mathrm ds
		+\int_0^t \langle b(s,\cdot,u_s(\cdot)),\varphi\rangle\,\mathrm ds \\
		&\qquad +\int_0^t\int_{\R} a(s,x,u_s(x))\,\varphi(x)\,W(\mathrm ds,\mathrm dx).
	\end{aligned}
\end{equation}
If $u$ satisfies~\eqref{eq:spde-weak-form}, then we say $u$ is a  solution to~\eqref{eq:spde-generic}.

Let $G_t(x):=\frac{1}{\sqrt{2\pi \alpha t}}\exp\!\left(-\frac{x^2}{2\alpha t}\right)$ denote the one-dimensional heat kernel associated with $\partial_t u=\frac{\alpha}{2}\Delta u$. We say $u$ is a mild solution to~\eqref{eq:spde-generic} if for every $t\ge 0$ and $x\in \R$,
\begin{align}\label{eq:spde-mild-form}
	u(t,x)
	&= \int_{\R} G_t(x-y) f(y)\,\mathrm dy
	+ \int_0^t\int_{\R} G_{t-s}(x-y)\, b\bigl(s,y,u(s,y)\bigr)\,\mathrm dy\,\mathrm ds \\
	&\qquad + \int_0^t\int_{\R} G_{t-s}(x-y)\, a\bigl(s,y,u(s,y)\bigr)\, W(\mathrm ds,\mathrm dy).
\end{align}
Under mild regularity and growth conditions on the coefficients $a$ and $b$, $u$ is a  solution to~\eqref{eq:spde-generic} if and only if it is a mild solution (see\ \cite{shiga1994two}*{Theorem 2.1}).

Lastly, a \emph{probabilistic weak solution} of~\eqref{eq:spde-generic} consists of a filtered probability space $(\Omega,\mathcal F,(\mathcal F_t)_{t\ge0},\P)$, a space--time white noise $W$ on it, and an $(\mathcal F_t)$-adapted process $u$ with paths in $\mathcal C(\R_+;\mathcal C^{\mathrm{tem}}_\R)$ such that~\eqref{eq:spde-weak-form} holds almost surely for every $\varphi\in C_c^\infty(\R)$ and $t\ge0$. When the coefficients are defined only for $u\in[0,1]$, as in~\eqref{eq:spde_canning} and~\eqref{eq: spde target}, $u$ is required to take values in $[0,1]$. We say that \emph{weak uniqueness} (uniqueness in law) holds if any two weak solutions with the same initial condition $f$, possibly defined on different probability spaces, have the same law on $\mathcal C(\R_+;\mathcal C^{\mathrm{tem}}_\R)$.

\subsection{Discrete heat kernel}\label{sec:discrete heat kernel}
Let us consider the (lazy) symmetric random walk $(S_n)_{n\ge 0}$ on $\Z$ with parameter $\alpha\in(0,1)$, defined by
\begin{align*}
	\P\bigl(S_{n+1}=x+e\mid S_n=x\bigr)=\frac{\alpha}{2},\qquad \text{for any }e \in \{-1,1\} ,
\end{align*}
and
\begin{align*}
	\P\bigl(S_{n+1}=x\mid S_n=x\bigr)=1-\alpha.
\end{align*}
For $n\ge 0$ and $x,y\in \Z$, we write its transition probabilities as
\begin{align}\label{eq:p_n unscaled}
	p_n(x,y):=\P\bigl(S_n=y\mid S_0=x\bigr).
\end{align}

Recall the discrete Laplacian $\Delta_d$ on $\Z$ from Section~\ref{sec:setup-main},
\begin{align*}
	(\Delta_d f)(x)=f(x+1)-2f(x)+f(x-1).
\end{align*}
Then the one-step transition operator $P$ is
\begin{align*}
	(P f)(x)=(1-\alpha)f(x)+\frac{\alpha}{2}\bigl(f(x+1)+f(x-1)\bigr)
	= f(x)+\frac{\alpha}{2}(\Delta_d f)(x),
\end{align*}
so the transition probability matrix is
\begin{align}\label{eq:discrete semi-group and laplacian}
	P = I+\frac{\alpha}{2}\Delta_d,
\end{align}
and $p_n(x,y)=(P^n)(x,y)$.

\begin{prop}[{\cite{lawler2010random}*{Propositions 2.4.2 and 2.4.4}}]\label{prop:lawler-242}
	Suppose $p_n$ is the $n$-step transition probability matrix of the above described random walk on $\Z$, then
	\begin{align}\label{ineq:discrete-heat-kernel-gradient-abs-difference}
		\sum_{z \in \Z} \abs{p_n(z) - p_n(z+y)} \le c\abs{y}n^{-1/2},
	\end{align}
	and
	\begin{align*}
		p_n(0) \le c n^{-1/2}.
	\end{align*}
\end{prop}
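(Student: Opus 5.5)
We plan to argue by Fourier analysis, since $p_n$ is the $n$-fold convolution of the symmetric step law $\mu=(\tfrac\alpha2,1-\alpha,\tfrac\alpha2)$. We read both bounds for $n\ge1$; for $n=0$ the right-hand sides are not finite. Write $p_n(z):=p_n(0,z)$. The characteristic function of one step is $\phi(\theta)=1-\alpha+\alpha\cos\theta$, so
\[
p_n(z)=\frac1{2\pi}\int_{-\pi}^{\pi}\phi(\theta)^n e^{-iz\theta}\,\mathrm d\theta .
\]
The first step is a uniform Gaussian-type bound: there is $c_\alpha>0$ with $|\phi(\theta)|\le e^{-c_\alpha\theta^2}$ on $[-\pi,\pi]$. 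Near $\theta=0$ this follows from $\phi(\theta)=1-\alpha(1-\cos\theta)\le 1-\tfrac{2\alpha}{\pi^2}\theta^2$. Away from $0$ we use $\phi\ge 1-2\alpha>-1$, which holds because $\alpha<1$. This is the only place where the restriction $\alpha\in(0,1)$ matters, since it excludes periodicity. The second claim then follows immediately:
\[
p_n(0)\le\frac1{2\pi}\int_{\R}e^{-c_\alpha n\theta^2}\,\mathrm d\theta\le c\,n^{-1/2}.
\]

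For the gradient bound~\eqref{ineq:discrete-heat-kernel-gradient-abs-difference}, we first reduce to $|y|=1$. Write $p_n(z)-p_n(z+y)$ as a telescoping sum of $|y|$ unit differences, apply the triangle inequality, and use translation invariance of the sum over $z$. It then suffices to show $\sum_z|f_n(z)|\le c\,n^{-1/2}$ for $f_n(z):=p_n(z)-p_n(z+1)$.

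We would avoid unimodality arguments, because $\mu$ is log-concave only when $\alpha\le 2/3$. Instead we use a weighted Cauchy--Schwarz inequality:
\[
\sum_z|f_n(z)|\le\Big(\sum_z\big(1+\tfrac{z^2}{n}\big)f_n(z)^2\Big)^{1/2}\Big(\sum_z\frac{1}{1+z^2/n}\Big)^{1/2}.
\]
The second factor is $O(n^{1/4})$. The main task is therefore to show $\sum_z f_n(z)^2\lesssim n^{-3/2}$ and $\sum_z z^2 f_n(z)^2\lesssim n^{-1/2}$. We obtain both by Parseval. The transform of $f_n$ is $\widehat f_n(\theta)=\phi(\theta)^n(1-e^{-i\theta})$, up to sign conventions in the exponent, and the transform of $z f_n(z)$ is, up to constants, $\partial_\theta\widehat f_n$.

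For the first sum, $|1-e^{-i\theta}|\le|\theta|$ gives $\int\theta^2e^{-2c_\alpha n\theta^2}\,\mathrm d\theta\lesssim n^{-3/2}$. For the second, using $\phi'=-\alpha\sin\theta$,
\[
\partial_\theta\widehat f_n=n\phi^{n-1}\phi'(1-e^{-i\theta})+\phi^n\,i e^{-i\theta},
\]
whose modulus is bounded by $C\bigl(n\theta^2e^{-c'n\theta^2}+e^{-c'n\theta^2}\bigr)$. To control $\phi^{n-1}$ we reuse the Gaussian bound with $n-1$ in place of $n$; the case $n=1$ is trivial. Squaring and integrating gives $n^2\cdot n^{-5/2}+n^{-1/2}\lesssim n^{-1/2}$. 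Combining,
\[
\sum_z|f_n(z)|\lesssim\big(n^{-3/2}+n^{-1}\cdot n^{-1/2}\big)^{1/2}n^{1/4}=n^{-1/2}.
\]

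The only genuinely delicate point is the uniform bound $|\phi|\le e^{-c\theta^2}$ on the whole circle. Everything else is routine Gaussian integration, with constants depending only on $\alpha$. Alternatively, one may simply invoke the cited results of Lawler--Limic directly.
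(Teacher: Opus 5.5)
Your proof is correct, but it follows a different route from the paper. The paper gives no proof of this proposition: it imports both bounds from Lawler--Limic, where they are derived within the local central limit theorem framework. There the on-diagonal bound comes from the characteristic-function estimate. The $\ell^1$ gradient bound comes from comparing $p_n(z)-p_n(z+y)$ with the corresponding Gaussian difference through the LCLT error term, with the region $|z|\gg\sqrt n$ handled separately by a tail estimate.

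Your on-diagonal argument is essentially that standard one. The uniform bound $|\phi(\theta)|\le e^{-c_\alpha\theta^2}$ is justified correctly: $1-\cos\theta\ge 2\theta^2/\pi^2$ on $[-\pi,\pi]$ gives the upper bound everywhere, and $\phi\ge 1-2\alpha>-1$ controls the negative values that occur when $\alpha>1/2$.

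For the gradient you use an $\ell^2$ device instead of pointwise asymptotics. The weighted Cauchy--Schwarz step costs $(\sum_z(1+z^2/n)^{-1})^{1/2}=O(n^{1/4})$. Parseval applied to $\widehat f_n=\phi^n(1-e^{-i\theta})$ and to $\partial_\theta\widehat f_n$ gives $\sum_z f_n^2\lesssim n^{-3/2}$ and $\sum_z z^2 f_n^2\lesssim n^{-1/2}$. I checked the exponents, including $e^{-c_\alpha(n-1)\theta^2}\le e^{-c_\alpha n\theta^2/2}$ for $n\ge2$, and they combine to $n^{-1/2}$ as claimed. The telescoping reduction to $|y|=1$ and the trivial case $n=1$ are also fine.

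Your route is short and self-contained. It needs no LCLT error terms, tail bounds, or unimodality; your remark that the step law is log-concave only for $\alpha\le 2/3$ is accurate. It also makes the dependence on $\alpha$ explicit through the single constant $c_\alpha$. The citation instead buys generality, covering arbitrary dimension and general aperiodic irreducible walks. It also matches the sharper pointwise LCLT, Lemma~\ref{lem:local clt}, which the paper needs anyway for its temporal $\ell^2$ estimate.
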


Some simple consequences are given as follows: we will denote the transition probability by $p_n(x,y)$ as $p_n(x-y)$ in the following.

\begin{corollary}\label{cor:heat kernel on diagonal bound}
	Uniformly in $n \in \N$,
	\[
		\sup_{x,z} p_n(x,z) \lesssim n^{-\frac{1}{2}}.
	\]
\end{corollary}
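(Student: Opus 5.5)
The plan is to reduce the off-diagonal bound to the on-diagonal bound of Proposition~\ref{prop:lawler-242}, using translation invariance and the symmetry of the lazy walk. Since the walk has spatially homogeneous increments, $p_n(x,z)=p_n(z-x)$ depends only on the difference. It therefore suffices to show
\[
\sup_{y\in\Z}p_n(y)\le p_n(0)\qquad\text{or at least}\qquad \sup_{y\in\Z}p_n(y)\lesssim n^{-1/2}.
\]

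\emph{Route via Chapman--Kolmogorov (main plan).} Write $n=m+k$ with $m=\lceil n/2\rceil$, $k=\lfloor n/2\rfloor$. Then
\[
p_n(y)=\sum_{w}p_m(w)p_k(y-w)\le \Bigl(\sum_w p_m(w)^2\Bigr)^{1/2}\Bigl(\sum_w p_k(w)^2\Bigr)^{1/2}
\]
by Cauchy--Schwarz. By symmetry $p_m(-w)=p_m(w)$, so $\sum_w p_m(w)^2=\sum_w p_m(w)p_m(-w)=p_{2m}(0)$, and similarly $\sum_w p_k(w)^2=p_{2k}(0)$. Hence $p_n(y)\le \sqrt{p_{2m}(0)p_{2k}(0)}$. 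Applying the second bound of Proposition~\ref{prop:lawler-242} to each factor gives $p_n(y)\le c(2m)^{-1/4}(2k)^{-1/4}\lesssim n^{-1/2}$ for $n\ge2$.

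\emph{Alternative.} One can also get the bound from the gradient estimate~\eqref{ineq:discrete-heat-kernel-gradient-abs-difference} with $|y|=1$, by telescoping, but the Cauchy--Schwarz route is cleaner.

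The only step needing care is the small-$n$ case. For $n=1$ (or whenever $k=0$) the bound holds trivially, since $p_n\le1\le n^{-1/2}\cdot1$ up to the constant. Absorbing this into the implicit constant gives the uniformity in $n\in\N$. There is no real obstacle beyond this bookkeeping.
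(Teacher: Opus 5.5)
Your proof is correct and is essentially the paper's argument: split at half time via Chapman--Kolmogorov, apply Cauchy--Schwarz and symmetry to reduce to on-diagonal values, then invoke Proposition~\ref{prop:lawler-242}. The only difference is in the odd case. You split asymmetrically as $m=k+1$ and bound by $\sqrt{p_{2m}(0)p_{2k}(0)}$, whereas the paper peels off a single step and reuses the even-case bound $\sup_z p_{2k}(x,z)\le p_{2k}(0,0)$.
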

\begin{proof}[Proof]
	See Appendix~\ref{app:heat-kernel-proofs}.
\end{proof}

\begin{corollary}\label{cor:gradient l2 bound}
	Uniformly in $x,y \in \Z$ and $n \in \N$,
	\begin{align*}
		\sum_z \pr{p_n(x,z) - p_n(y,z)}^2 \lesssim \min\left\{n^{-1/2}\frac{\abs{x-y}}{n^{1/2}}, n^{-1/2}\right\}.
	\end{align*}
	Moreover, uniformly in $x\in\Z$ and $n,h\ge1$,
	\begin{align}\label{ineq:heat kernel temporal l1 difference}
		\sum_{z\in\Z}\abs{p_{n+h}(x,z)-p_n(x,z)}
		\lesssim\min\left\{1,\sqrt{\frac{h}{n}}\right\}.
	\end{align}
\end{corollary}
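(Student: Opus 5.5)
The statement to prove is Corollary~\ref{cor:gradient l2 bound}. I would derive both parts from Proposition~\ref{prop:lawler-242} and Corollary~\ref{cor:heat kernel on diagonal bound}, using translation invariance $p_n(x,z)=p_n(z-x)$ and the symmetry of the lazy walk.

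\textbf{Spatial $\ell^2$ bound.} Write $y:=y-x$. Use the elementary inequality
\[
\sum_z \bigl(p_n(z)-p_n(z+y)\bigr)^2\le \sup_{z}\abs{p_n(z)-p_n(z+y)}\,\sum_z\abs{p_n(z)-p_n(z+y)}.
\]
The supremum is at most $2\sup_z p_n(z)\lesssim n^{-1/2}$ by Corollary~\ref{cor:heat kernel on diagonal bound}. The $\ell^1$ sum is bounded by $c\abs{y}n^{-1/2}$ by \eqref{ineq:discrete-heat-kernel-gradient-abs-difference}, and trivially by $2$. Together these give $\lesssim n^{-1/2}\min\{\abs{y}n^{-1/2},1\}$, which is the claimed minimum.

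\textbf{Temporal $\ell^1$ bound.} The bound by $1$ (up to the constant $2$) is trivial, since both kernels are probability vectors. So assume $h\le n$. By the semigroup property $p_{n+h}=p_n * p_h$, so
\[
p_{n+h}(z)-p_n(z)=\sum_y p_h(y)\bigl(p_n(z-y)-p_n(z)\bigr).
\]
Summing $\abs{\cdot}$ over $z$ and applying \eqref{ineq:discrete-heat-kernel-gradient-abs-difference} gives
\[
\sum_z\abs{p_{n+h}(z)-p_n(z)}\le c\,n^{-1/2}\sum_y p_h(y)\abs{y}=c\,n^{-1/2}\,\E\abs{S_h}.
\]
Since the increments of $S$ are i.i.d.\ with mean zero and variance $\alpha$, Cauchy--Schwarz gives $\E\abs{S_h}\le\sqrt{\alpha h}$. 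The result is $\lesssim\sqrt{h/n}$, and combining with the trivial bound yields $\min\{1,\sqrt{h/n}\}$.

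\textbf{Main obstacle.} There is no real difficulty. The only point needing care is the temporal step: one must use the convolution identity and take the absolute values inside the sum over $y$, rather than try to estimate a time derivative of the kernel directly. The remaining bookkeeping consists of the constants, the relabeling $y-x\to y$, and the case $h>n$, which is handled by the trivial bound.
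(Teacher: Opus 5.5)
Your proposal is correct and follows the paper's proof essentially step for step. The spatial estimate uses the same $\ell^\infty\cdot\ell^1$ bound together with \eqref{ineq:discrete-heat-kernel-gradient-abs-difference}, and the temporal estimate uses the same convolution identity plus $\E\abs{S_h}\le\sqrt{\alpha h}$. The only cosmetic difference is in the $n^{-1/2}$ branch: you use the trivial $\ell^1$ bound $2$, whereas the paper expands the square and bounds it by $p_{2n}(x,x)+p_{2n}(y,y)$.
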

\begin{proof}[Proof]
	See Appendix~\ref{app:heat-kernel-proofs}.
\end{proof}
We will also need the following version of local central limit theorem. Here, we let
\begin{align*}
	\overline p_n(z) :=\frac{1}{\pr{2\pi n \alpha}^\frac{1}{2}} \exp \pr{- \frac{z^2}{2\alpha n}},
	\qquad z \in \Z,\, n \in \N.
\end{align*}

\begin{lemma}[{\cite{lawler2010random}*{Theorem 2.1.1}}]\label{lem:local clt}
	There exists $c>0$ and for every $k \ge 4$ there is a $c(k)< \infty$ such that for all integers $n >0$ and $x \in \Z$, the following holds with $z:= x/ \sqrt{n}$:
	\begin{equation}\label{ineq: local clt useful}
		\bigl|p_n(x)-\overline p_n(x)\bigr|
		\le  \frac{c(k)}{n^{3/2}}\pr{(|z|^k+1)\,e^{-\frac{z^2}{2\alpha} }+\frac{1}{n^{(k-3)/2}}}.
	\end{equation}
	\begin{equation}
		\bigl|p_n(x)-\overline p_n(x)\bigr|
		\le \frac{c}{n^{3/2}|z|^2} .
	\end{equation}
\end{lemma}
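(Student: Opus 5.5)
This is a direct application of \cite{lawler2010random}*{Theorem 2.1.1}, so the plan is to check that the lazy walk $(S_n)$ of Section~\ref{sec:discrete heat kernel} satisfies its hypotheses, and to confirm that the Gaussian $\overline p_n$ above is the one normalized as in that theorem. The increment law is $\mu(\pm1)=\alpha/2$ and $\mu(0)=1-\alpha$. Since $\alpha\in(0,1)$, $\mu$ is symmetric (hence mean zero) and has finite range. It is irreducible on $\Z$, because $\pm1$ are in its support. It is aperiodic, because $\mu(0)>0$. Its variance is $\alpha$. Hence it belongs to the class $\mathcal P_1$ of Lawler--Limic, and the covariance there is $\Gamma=\alpha$. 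Their Gaussian approximation is $(2\pi n\Gamma)^{-1/2}\exp(-x^2/(2\Gamma n))$, which is exactly $\overline p_n(x)$, and the scaling $z=x/\sqrt n$ matches. With these checks, \eqref{ineq: local clt useful} and the second bound follow verbatim. The theorem's two estimates are the version with $k$ moments, together with the variant valid for $|z|\ge1$ obtained from the second-derivative Fourier argument. Symmetry of $\mu$ kills the odd (third-order) correction term, which is why the error is $n^{-3/2}$ rather than $n^{-1}$.

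For a self-contained argument, I would proceed via Fourier inversion. Write
\[
p_n(x)=\frac1{2\pi}\int_{-\pi}^{\pi}\phi(\theta)^n e^{-ix\theta}\,\mathrm d\theta,
\qquad \phi(\theta)=1-\alpha+\alpha\cos\theta .
\]
\begin{enumerate}
\item Aperiodicity gives $|\phi(\theta)|\le 1-c\theta^2$ on $[-\pi,\pi]$. So the region $|\theta|\ge n^{-1/2+\delta}$ contributes only $O(e^{-cn^{2\delta}})$.
\item On the remaining region, substitute $\theta=s/\sqrt n$ and expand
\[
\log\phi(s/\sqrt n)=-\frac{\alpha s^2}{2n}+O\!\left(\frac{s^4}{n^2}\right).
\]
There is no cubic term, since $\phi$ is even.
\item Compare the resulting integral with the Gaussian integral that produces $\overline p_n$. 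This yields an error of order $n^{-3/2}$ times a polynomial-times-Gaussian profile in $z$.
\item The factor $(|z|^k+1)e^{-z^2/(2\alpha)}$ together with the additive $n^{-(k-3)/2}$ comes from carrying the expansion to order $k$.
\item The bound $c\,n^{-3/2}|z|^{-2}$ comes from integrating by parts twice in $\theta$, which produces the factor $x^{-2}$.
\end{enumerate}

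The main obstacle in the self-contained route is bookkeeping the remainder terms uniformly in $x$ while retaining the Gaussian factor in $z$. Since the result is stated as a citation, I would present only the hypothesis verification and the identification $\Gamma=\alpha$, and refer to Lawler--Limic for the Fourier estimates.
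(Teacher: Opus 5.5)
Your proposal is correct and takes the same route as the paper, which simply invokes Theorem~2.1.1 of \cite{lawler2010random}. Your check that the lazy walk is symmetric, finite-range, irreducible and aperiodic with covariance $\Gamma=\alpha$, so that Lawler--Limic's Gaussian and their exponent $x\cdot\Gamma^{-1}x/(2n)=z^2/(2\alpha)$ coincide with $\overline p_n$ and the stated bound, is exactly the justification the paper leaves implicit. Two minor wording corrections: since the walk has finite range, all moments exist, and $k$ only indexes the order of the expansion; and the second bound holds for every $z\neq0$, although it is uninformative when $|z|<1$.
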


\begin{lemma}\label{lem: heat kernel temporal l2 difference bound}
	Let $p_n(x,y)$ be the discrete heat kernel on $\Z$. Then, uniformly in $n,k \in \N$,
	\begin{align*}
		\sum_y\pr{p_{n+k}(x,y) - p_{n}(x,y)}^2 \lesssim \frac{k}{n^{3/2}}.
	\end{align*}
\end{lemma}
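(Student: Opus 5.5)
The bound $\sum_y(p_{n+k}(x,y)-p_n(x,y))^2\lesssim k\,n^{-3/2}$ should follow from a Fourier computation, using that $P=I+\frac\alpha2\Delta_d$ is a convolution operator on $\Z$. Its symbol is
\[
\phi(\theta)=1-\alpha+\alpha\cos\theta=1-\alpha(1-\cos\theta),\qquad \theta\in[-\pi,\pi],
\]
so $p_n(x,\cdot)$ has Fourier transform $e^{i\theta x}\phi(\theta)^n$. By Plancherel,
\[
\sum_y\bigl(p_{n+k}(x,y)-p_n(x,y)\bigr)^2=\frac1{2\pi}\int_{-\pi}^{\pi}\phi(\theta)^{2n}\bigl(1-\phi(\theta)^k\bigr)^2\,\mathrm d\theta .
\]
This does not depend on $x$, which gives the uniformity in $x$ for free.

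Since $\alpha\in(0,1)$, we have $\phi(\theta)\in[1-2\alpha,1]\subset(-1,1]$. Set $\rho:=\max\{|1-2\alpha|,\,1-\alpha\}<1$.
\begin{itemize}
\item On $|\theta|\le\pi/2$: here $0\le1-\cos\theta\le1$, so $\phi\ge1-\alpha>0$. Using $1-\cos\theta\ge c\theta^2$ we get $0<\phi(\theta)\le e^{-c\alpha\theta^2}$.
\item On $|\theta|>\pi/2$: here $|\phi|\le\rho$.
\end{itemize}

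For the difference factor, when $0\le\phi\le1$ we have
\[
0\le1-\phi^k\le k(1-\phi)=k\alpha(1-\cos\theta)\le k\alpha\theta^2/2 .
\]
We also always have $|1-\phi^k|\le2$.

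On the central region, $(1-\phi^k)^2\le\min\{4,\,k^2\alpha^2\theta^4/4\}\le k\alpha\theta^2$, using $\min\{a^2,4\}\le2a$ for $a\ge0$. Substituting $u=\theta\sqrt n$,
\[
\int_{|\theta|\le\pi/2}e^{-2c\alpha n\theta^2}\,k\alpha\theta^2\,\mathrm d\theta\lesssim k\,n^{-3/2}.
\]
On the outer region the integrand is at most $4\rho^{2n}$, and $\rho^{2n}\lesssim n^{-3/2}\le k\,n^{-3/2}$ since $k\ge1$.

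The only delicate point is the possible negativity of $\phi$ near $\theta=\pm\pi$ when $\alpha>1/2$. In that region $(1-\phi^k)^2$ is not controlled by $k(1-\phi)$, but it is bounded by $4$, and the geometric factor $\rho^{2n}$ absorbs it. That is why I split the domain at $|\theta|=\pi/2$ rather than using a single estimate.

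If one prefers to avoid Fourier analysis, an alternative is to write $p_{n+k}-p_n=\sum_{j=0}^{k-1}(p_{n+j+1}-p_{n+j})$ and use $\|p_{m+1}-p_m\|_{\ell^2}\lesssim m^{-3/4}$, which follows from Lemma~\ref{lem:local clt}. However, this only yields $k\,n^{-3/4}$ for the norm, i.e. $k^2n^{-3/2}$ for the sum of squares, which is worse than the claim when $k>1$. Interpolating that bound with the trivial bound $n^{-1/2}$ from Corollary~\ref{cor:heat kernel on diagonal bound} still gives only $\min\{k^2n^{-3/2},n^{-1/2}\}$. This falls short of $k\,n^{-3/2}$ in the range $1<k<n$, so the Fourier route is the one I would use.
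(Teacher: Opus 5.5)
Your Fourier argument is correct and complete, but it takes a different route from the paper's. The paper does not use Fourier analysis directly. By symmetry and the semigroup property it expands the square exactly,
\[
\sum_y\bigl(p_{n+k}(0,y)-p_n(0,y)\bigr)^2=p_{2n+2k}(0,0)-2p_{2n+k}(0,0)+p_{2n}(0,0),
\]
and bounds this by $|p_{2n+2k}(0,0)-p_{2n+k}(0,0)|+|p_{2n+k}(0,0)-p_{2n}(0,0)|$. It then controls each term by the local CLT at the origin, $|p_{m+h}(0,0)-p_m(0,0)|\lesssim hm^{-3/2}$, using the mean value theorem for the Gaussian term plus the $O(m^{-3/2})$ LCLT error.

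Since $p_m(0,0)=\frac1{2\pi}\int_{-\pi}^{\pi}\phi(\theta)^m\,\mathrm d\theta$, your integral $\frac1{2\pi}\int\phi^{2n}(1-\phi^k)^2\,\mathrm d\theta$ is exactly the Fourier representation of that same second difference, so both proofs estimate the same quantity. The paper's version is shorter given the LCLT it already cites. Yours is self-contained (the LCLT is itself proved by this kind of Fourier analysis) and handles the possible negativity of $\phi$ near $\theta=\pm\pi$ explicitly.

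Your route also gives more with no extra work. If on the central region you keep $(1-\phi^k)^2\le\min\{1,k^2\alpha^2\theta^4/4\}$ instead of passing through $\min\{a^2,4\}\le 2a$, you get $\min\{k^2n^{-5/2},n^{-1/2}\}$. This is at most $\min\{kn^{-3/2},n^{-1/2}\}$, so it gives both halves of \eqref{ineq:heat kernel square time difference} at once.

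Finally, your closing claim that a non-Fourier route falls short is too pessimistic. Besides the paper's expansion of the square, even your telescoping argument works. The per-step bound $\|p_{m+1}-p_m\|_{\ell^2}\lesssim m^{-3/4}$ you used is valid but far from sharp; the true order is $m^{-5/4}$, obtainable from the local CLT or from your own formula with $k=1$. Summing the sharp bound gives $\|p_{n+k}-p_n\|_{\ell^2}^2\lesssim\min\{k^2n^{-5/2},n^{-1/2}\}\le kn^{-3/2}$.
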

\begin{proof}[Proof]
	See Appendix~\ref{app:heat-kernel-proofs}.
\end{proof}
\begin{lemma}\label{lem:weighted discrete semigroup stability}
	For $\lambda,T>0$ and $L\in\N$, define for  $f: \Z/L \to \R$ that
	\[
		\norm{f}_{(\lambda),L}
		:=\sup_{x_i\in\Z/L}e^{-\lambda|x_i|}\abs{f(x_i)}.
	\]
	For $k\in\Z_+$ and $x_i\in\Z/L$, let $P_k^Lf(x_i):=\sum_{j\in\Z}p_k(Lx_i,j)f(j/L)$.
	Then, for every $f$ with $\norm{f}_{(\lambda),L}<\infty$ and every $k\in\Z_+$ satisfying $k\le TL^2$,
	\[
		\norm{P_k^Lf}_{(\lambda),L}
		\le
		2\exp\left\{\frac{\alpha T\lambda^2}{2}\cosh\lambda\right\}
		\norm{f}_{(\lambda),L}.
	\]
\end{lemma}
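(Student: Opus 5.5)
We will prove the bound by representing $P_k^L$ through the lazy random walk $S$ and comparing the exponential weights at the start and end points. Write $y_j:=j/L$. Since $p_k(Lx_i,j)=\P(S_k=j\mid S_0=Lx_i)$, for every $x_i\in\Z/L$ we have
\begin{align*}
	e^{-\lambda|x_i|}\abs{P_k^Lf(x_i)}
	\le \norm{f}_{(\lambda),L}\sum_{j\in\Z}p_k(Lx_i,j)e^{\lambda|y_j|-\lambda|x_i|}
	\le \norm{f}_{(\lambda),L}\,\E_0\bracket{e^{\lambda|S_k|/L}}.
\end{align*}
The second inequality uses the triangle inequality $|y_j|\le|x_i|+|y_j-x_i|$ together with translation invariance of the walk. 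The bound is therefore uniform in $x_i$, and it remains to control $\E_0[e^{\lambda|S_k|/L}]$ uniformly in $k\le TL^2$.

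First, $e^{|u|}\le e^{u}+e^{-u}$, and by symmetry of $S$ this gives $\E_0[e^{\lambda|S_k|/L}]\le 2\E_0[e^{\lambda S_k/L}]$. The increments of $S$ are i.i.d. with moment generating function
\begin{align*}
	\E\bracket{e^{\mu \xi}}=1-\alpha+\alpha\cosh\mu=1+\alpha(\cosh\mu-1).
\end{align*}
Using $1+v\le e^v$, we obtain $\E_0[e^{\lambda S_k/L}]\le\exp\{k\alpha(\cosh(\lambda/L)-1)\}$.

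The only mildly delicate point is the elementary inequality
\begin{align*}
	\cosh\mu-1\le\frac{\mu^2}{2}\cosh\mu .
\end{align*}
It follows from the Taylor expansion: the series $\cosh\mu-1=\sum_{n\ge1}\mu^{2n}/(2n)!$ is dominated termwise by $\frac{\mu^2}{2}\sum_{m\ge0}\mu^{2m}/(2m)!$, since $(2m+2)!\ge 2(2m)!$. Apply this with $\mu=\lambda/L$. Because $L\ge1$, we have $\cosh(\lambda/L)\le\cosh\lambda$, and hence
\begin{align*}
	k\alpha\pr{\cosh(\lambda/L)-1}\le k\alpha\frac{\lambda^2}{2L^2}\cosh\lambda\le \frac{\alpha T\lambda^2}{2}\cosh\lambda,
\end{align*}
using $k\le TL^2$ in the last step. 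Combining the three displays gives
\begin{align*}
	\norm{P_k^Lf}_{(\lambda),L}\le 2\exp\left\{\frac{\alpha T\lambda^2}{2}\cosh\lambda\right\}\norm{f}_{(\lambda),L},
\end{align*}
which is the claim. The case $k=0$ is trivial, since $P_0^L$ is the identity.
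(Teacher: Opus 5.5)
Your proof is correct and follows the paper's argument step for step. Both proofs use the random-walk representation, the triangle-inequality reduction to $\E_0[e^{\lambda|S_k|/L}]$, the symmetrization $e^{|u|}\le e^u+e^{-u}$, the increment moment generating function combined with $1+v\le e^v$, and finally $L\ge1$ and $k\le TL^2$. The only cosmetic difference is that you prove $\cosh\mu-1\le\frac{\mu^2}{2}\cosh\mu$ by comparing Taylor series term by term, whereas the paper uses the integral form $\cosh z-1=\int_0^z(z-s)\cosh s\,\mathrm ds$.
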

\begin{proof}
	See Appendix~\ref{app:heat-kernel-proofs}.
\end{proof}

For the rest of this article, for each $L \in \N$, we let
\begin{align}\label{eq:q_scaled kernel}
	q^L_n(x_i,x_j) := p_n(x_iL,x_j L), \qquad \text{ for }n \in \Z_+, \, x_i,x_j \in \Z/L,
\end{align}
and observe for any $f: \Z/L \to \R$ and $x_i \in \Z/L$, we have 
\begin{align}\label{eq:discrete heat kernel and laplace}
	f(x_i)+\frac{\alpha}{2}\Delta_L f(x_i) = \sum_{x_\ell \in \Z/L}q_{1}^L(x_i,x_\ell)f(x_\ell),
\end{align}
where $\Delta_Lf(x):=f(x+\frac1L)-2f(x)+f(x-\frac1L)$. Then for $i \in \Z$ and $x_i = i / L$,
$$
	\Delta_dY_{\lfloor tL^2\rfloor}(i) = \Delta_L\xi_t(x_i)=  \xi_t(x_{i+1}) - 2 \xi_t(x_i)+ \xi_t(x_{i-1}).
$$
\section{Discrete evolution equations and moment bounds}\label{sec:discrete-evolution}
In this section, we will derive two evolution equations for $\xi^L$ that correspond to the mild and weak formulations of~\eqref{eq: spde target}. Then we will use the mild formulation for $\xi^L$ to derive the moment bound for its spatial-temporal increments, which is needed for tightness results.
\subsection{Discrete mild/weak formulations}
For each  $k \in \N$, $L \ge 1$ and $ x_i\in \Z/L$,
\begin{align}\label{eq:martingale decomposition one step}
	\xi_{k/L^2}^L(x_i) -\xi^L_{(k-1)/L^2}(x_i) = \mathbb E\left[ {\xi^L_{k/L^2}(x_i)}  - \xi^L_{(k-1)/L^2}(x_i) \mid \mathcal F_{k-1}\right]+\eta^L_k(x_i),
\end{align}
where
\begin{align*}
	\eta^L_k(x_i)  := \xi_{k/L^2}^L(x_i) - \E[\xi_{k/L^2}^L(x_i)\mid \mathcal F_{k-1}].
\end{align*}
Note that for each $x_i \in \Z/L$, $\sum_{j = 1}^k \eta^L_j(x_i)$ is a martingale in $k\in \N$ with respect to the filtration $(\mathcal F_{k})_{k \ge 1}$.

For $f:\Z/L\to[0,1]$, define
\begin{align}\label{eq:BL definition}
	B_L(f)(x_i):=b_L\pr{f(x_i)+\frac{\alpha}{2}\Delta_Lf(x_i)},
	\qquad x_i\in\Z/L.
\end{align}

We see for all $k,L \in \N$ and $x_i \in \Z/L$,
\begin{equation}
	\begin{aligned}
		&\qquad\mathbb E\left[ \xi_{k/L^2}^L(x_i)  - \xi_{(k-1)/L^2}^L(x_i) \mid \mathcal F_{k-1}\right]\\
		&\overset{\eqref{eq:abstract spatial frequency process}}{=}  \E\left[\mathcal R^L_{i,k-1}\pr{\xi^L_{(k-1)/L^2}(x_i)+\frac{\alpha}{2}\Delta_L \xi^L_{(k-1)/L^2}(x_i)} - \xi^L_{(k-1)/L^2}(x_i)\mid\mathcal F_{k-1} \right]\\
		&\overset{\eqref{eq:R assumption}}{=} \frac{\alpha}{2}\Delta_L \xi^L_{(k-1)/L^2}(x_i) + L^{-2}b_L\pr{\xi^L_{(k-1)/L^2}(x_i)+\frac{\alpha}{2}\Delta_L \xi^L_{(k-1)/L^2}(x_i)}\\
		&\overset{\eqref{eq:BL definition}}{=}\frac{\alpha}{2}\Delta_L \xi^L_{(k-1)/L^2}(x_i) + L^{-2}B_L\pr{\xi^L_{(k-1)/L^2}}(x_i).
	\end{aligned}
\end{equation}
Inserting this identity into~\eqref{eq:martingale decomposition one step} gives the exact discrete equation
\begin{equation}\label{eq:discrete SPDE}
	\xi_{k/L^2}^L(x_i) =\pr{1+\frac{\alpha}{2}\Delta_L}\xi^L_{(k-1)/L^2}(x_i) + L^{-2}B_L\pr{\xi^L_{(k-1)/L^2}}(x_i) + \eta^L_k(x_i).
\end{equation}
Iterate this to see for any $k \ge 1$ and $x_i \in \Z/L$ that
\begin{equation}\label{eq: mild formulation prelim}
	\begin{aligned}
		\xi^L_{k/L^2}(x_i) &= \sum_{\ell} q_{k}^L(x_i,x_\ell) \xi_0^L(x_\ell)+ \sum_{j = 0}^{k-1}\sum_{x_\ell \in \Z/L} q^L_{k - 1 - j}(x_i,x_\ell) L^{-2}  B_L(\xi^L_{j/L^2})(x_\ell)\\
		&\qquad+\sum_{j = 1}^{k} \sum_{x_\ell \in \Z/L}q_{k-j}^L\pr{x_i,x_\ell}\eta^L_j\pr{x_\ell}.
	\end{aligned}
\end{equation}
Indeed,~\eqref{eq: mild formulation prelim} holds for $k = 1$ because $q_0^L$ is the identity kernel. Assume by induction that~\eqref{eq: mild formulation prelim} holds for some $k \in \N$. Then
\begin{align*}
	\xi_{(k+1)/L^2}^L(x_i) &\overset{\eqref{eq:discrete SPDE}}{=}\pr{1+\frac{\alpha}{2}\Delta_L}\xi^L_{k/L^2}(x_i) + L^{-2}{ B_L\pr{\xi^L_{k/L^2}}(x_i)} + \eta^L_{k+1}(x_i)\\
	& \overset{\eqref{eq:discrete heat kernel and laplace}}{=}\sum_{x_r\in\Z/L} q^L_1\pr{x_i,x_r}\xi^L_{k/L^2}(x_r) + L^{-2}{ B_L\pr{\xi^L_{k/L^2}}(x_i)} + \eta^L_{k+1}(x_i)\\
	&\overset{\eqref{eq: mild formulation prelim}}{=}\sum_{\ell} q_{k+1}^L(x_i,x_\ell)\xi_0^{L}(x_\ell) + \sum_{j = 0}^{k-1} \sum_{\ell} q_{k-j}^L(x_i,x_\ell) L^{-2} { B_L(\xi_{j/L^2}^L)(x_\ell)}\\
	&\qquad + \sum_{j = 1}^k \sum_{\ell} q_{k+1-j}^L(x_i,x_\ell) \eta_j^L(x_\ell)\\
	&\qquad+ L^{-2}{ B_L\pr{\xi^L_{k/L^2}}(x_i)} + \eta^L_{k+1}(x_i)\\
	&=\sum_\ell q_{k+1}^L(x_i,x_\ell) \xi_0^L(x_\ell)+\sum_{j = 0}^k \sum_\ell q_{k-j}^L(x_i,x_\ell) L^{-2} { B_L(\xi_{j/L^2}^L)(x_\ell)}\\
	&\qquad + \sum_{j = 1}^{k+1} \sum_\ell q_{k+1-j}^L(x_i,x_\ell) \eta_{j}^L(x_\ell),
\end{align*}
which is exactly~\eqref{eq: mild formulation prelim} with $k$ replaced by $k+1$, where we used the semigroup property and the fact that $q_0^L$ is the identity kernel. Similarly, one can obtain the following weak formulation of the discrete equation:
\begin{align}\label{eq: weak formulation prelimit}
	\xi_{k/L^2}^L(x_i)& = \xi_0^L(x_i) +L^{-2}\sum_{j = 0}^{k-1} \frac{\alpha L^2}{2}\Delta_L \xi^L_{j/L^2}(x_i) + L^{-2} \sum_{j = 0}^{k-1}  B_L(\xi^L_{j/L^2})(x_i)+ \sum_{j = 1}^{k} \eta^L_{j}(x_i).
\end{align}

\noindent Recall the martingale term from~\eqref{eq:martingale decomposition one step}. For each $k \in \N$ and $i \in \Z$,
\begin{align*}
	\eta_k^L(x_i) &= \xi_{k/L^2}^L(x_i) - \E[\xi^L_{k/L^2}(x_i)|\mathcal F_{k-1}]\\
	& =  \xi_{k/L^2}^L(x_i) -\pr{ \xi_{\frac{k-1}{L^2}}^L(x_i) +\frac{\alpha}{2}\Delta_L  \xi_{\frac{k-1}{L^2}}^L(x_i)}\\
	&\qquad+ \E[\pr{\xi_{\frac{k-1}{L^2}}^L(x_i)+\frac{\alpha}{2}\Delta_L \xi_{\frac{k-1}{L^2}}^L(x_i)} - \xi_{k/L^2}^L(x_i)|\mathcal F_{k-1}]\\
	&\overset{\eqref{eq:abstract spatial frequency process}}{=} \Ru_{i,k-1}^L \pr{ \xi_{\frac{k-1}{L^2}}^L(x_i) +\frac{\alpha}{2}\Delta_L  \xi_{\frac{k-1}{L^2}}^L(x_i)} - \pr{ \xi_{\frac{k-1}{L^2}}^L(x_i) +\frac{\alpha}{2}\Delta_L  \xi_{\frac{k-1}{L^2}}^L(x_i)}\\
	&\qquad - \E \bracket{\Ru_{i,k-1}^L \pr{ \xi_{\frac{k-1}{L^2}}^L(x_i) +\frac{\alpha}{2}\Delta_L  \xi_{\frac{k-1}{L^2}}^L(x_i)} - \pr{ \xi_{\frac{k-1}{L^2}}^L(x_i) +\frac{\alpha}{2}\Delta_L  \xi_{\frac{k-1}{L^2}}^L(x_i)}\mid \F_{k-1}}.
\end{align*}
Therefore, for $p>8$ as in~\eqref{ineq:p_moment bound}, it holds uniformly in $k \in \N$, $i \in \Z$ and all sufficiently large $L$ that
\begin{align}\label{ineq:mtg_increment bounds}
	\E\bracket{\abs{\eta^L_{k}(x_i)}^p\mid \F_{k-1}}
	&\lesssim \E_x\bracket{\abs{\Ru^L(x)-x}^p\mid x=\xi_{\frac{k-1}{L^2}}^L(x_i)+\frac{\alpha}{2}\Delta_L\xi_{\frac{k-1}{L^2}}^L(x_i)}\overset{\eqref{ineq:p_moment bound}}{\lesssim} L^{-1-\kappa_p}.
\end{align}
Also, for all $k,L \in \N$, $i \in \Z$, it holds that
\begin{equation}\label{eq:mtg_increment l2}
	\begin{aligned}
		\E\bracket{\eta_{k}^L(x_i)^2\mid \mathcal F_{k-1}} &= \E\bracket{\pr{\Ru_{i,k-1}^L(x) - x}^2|x = Y^L_{k-1}(i) +\frac{\alpha}{2}\Delta_d Y^L_{k-1}(i)}\\
		&\quad\quad- \pr{\E\bracket{\Ru_{i,k-1}^L(x) - x|x = Y^L_{k-1}(i) +\frac{\alpha}{2}\Delta_d Y^L_{k-1}(i)}}^2\\
		&\overset{\eqref{eq:R assumption}}{=}\frac{1}{L}\sigma_L\pr{Y^L_{k-1}(i) +\frac{\alpha}{2}\Delta_d Y^L_{k-1}(i)}
		-\frac{1}{L^4}b_L\pr{Y^L_{k-1}(i) +\frac{\alpha}{2}\Delta_d Y^L_{k-1}(i)}^2\\
		&=\frac{1}{L}\sigma_L\pr{\xi_{\frac{k-1}{L^2}}(x_i) +\frac{\alpha}{2}\Delta_L \xi_{\frac{k-1}{L^2}}(x_i)}
		-\frac{1}{L^4}b_L\pr{\xi_{\frac{k-1}{L^2}}(x_i) +\frac{\alpha}{2}\Delta_L \xi_{\frac{k-1}{L^2}}(x_i)}^2,
	\end{aligned}
\end{equation}
which is bounded above by $L^{-1}$ up to some multiplicative constant uniformly in $k \in \N$, $i \in \Z$ and sufficiently large $L \in \N$ by~\eqref{eq:coefficient convergence}.

\subsection{Spatial-temporal moment bounds}
By~\eqref{eq: mild formulation prelim} we see for each $x_i \in \Z/L$ and $t\ge 0$,
\begin{align*}
	\xi_{t}^L(x_i) &= \sum_{j} q^L_{\lfloor tL^2\rfloor}(x_i,x_j) \xi_0(x_j) + L^{-2} \sum_{k = 0}^{\lfloor tL^2\rfloor -1} \sum_{j}q^L_{\lfloor tL^2\rfloor - 1 -k }(x_i,x_j) { B_L\pr{\xi_{k/L^2}^L}(x_j)}\\
	&\qquad \qquad  +\sum_{k = 0}^{\lfloor tL^2\rfloor - 1} \sum_j q^L_{\lfloor tL^2\rfloor - 1 -k}(x_i,x_j)\eta^L_{k+1}(x_j) \\
	&:= I_1^L(t,x_i) +I_2^L(t,x_i)+I_3^{L}(t,x_i).
\end{align*}

We now would like to consider moment bounds for spatial increments.

\begin{prop}\label{prop:spatial_increment_moment_bound}
	For any $T>0$, the following inequality holds uniformly for sufficiently large $L \ge 1$, $0<t\le T$ and $x_i,x_j \in \Z/L$ with $|x_i - x_j| \le 1$,
	\begin{align*}
		\E\left[ \abs{I_2^L(t,x_i)-I_2^{L}(t,x_j)}^p\right] +  \E\left[ \abs{I_3^L(t,x_i)-I_3^{L}(t,x_j)}^p\right]\le C \abs{x_i - x_j}^{\frac{p}{4}\wedge (1+\kappa_p)}.
	\end{align*}
\end{prop}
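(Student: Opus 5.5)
We treat $I_2^L$ and $I_3^L$ separately. Write $n:=\lfloor tL^2\rfloor$, $d:=|x_i-x_j|\ge 1/L$ (the case $x_i=x_j$ is trivial), and $\Delta q_m(\ell):=q^L_m(x_i,x_\ell)-q^L_m(x_j,x_\ell)$. Since $\xi^L$ takes values in $[0,1]$ and $\sup_x|b_L(x)|$ is bounded for large $L$ by~\eqref{eq:coefficient convergence}, $B_L(\xi^L_{k/L^2})$ is uniformly bounded. For $I_2^L$ we therefore get the deterministic bound
\begin{align*}
	\abs{I_2^L(t,x_i)-I_2^L(t,x_j)}\lesssim L^{-2}\sum_{m=0}^{n-1}\sum_{\ell}\abs{\Delta q_m(\ell)}
	\lesssim L^{-2}\sum_{m=0}^{n-1}\min\Big\{1,\frac{Ld}{\sqrt m}\Big\},
\end{align*}
using~\eqref{ineq:discrete-heat-kernel-gradient-abs-difference} with $|y|=Ld$. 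The sum is at most $L^{-2}\cdot Ld\sqrt n\lesssim d\sqrt T$. Raising to the $p$-th power gives $d^p\le d^{p/4}$ because $d\le1$.

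For $I_3^L$ we use that $\eta^L_{k+1}(x_\ell)$ are martingale differences in $k$ that are conditionally independent across $\ell$ given $\mathcal F_k$, since the operators $\mathcal R^L_{i,k}$ are i.i.d. We apply Burkholder's inequality in the time index $k$, and then Rosenthal's inequality conditionally on $\mathcal F_k$ in the space index. Set $M_k:=\sum_\ell \Delta q_{n-1-k}(\ell)\eta^L_{k+1}(x_\ell)$. Then
\begin{align*}
	\E\Big|\sum_k M_k\Big|^p\lesssim \E\Big(\sum_k \E[M_k^2\mid\mathcal F_k]\Big)^{p/2}+\sum_k\E|M_k|^p .
\end{align*}
By~\eqref{eq:mtg_increment l2}, $\E[M_k^2\mid\mathcal F_k]\lesssim L^{-1}\sum_\ell\Delta q_{n-1-k}(\ell)^2$. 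Corollary~\ref{cor:gradient l2 bound} gives
\begin{align*}
	\sum_k \E[M_k^2\mid\mathcal F_k]\lesssim L^{-1}\sum_{m<n}\min\Big\{\frac{Ld}{m},\frac1{\sqrt m}\Big\}\lesssim L^{-1}\Big(\sqrt{L^2d^2}+Ld\log\frac{n}{L^2d^2}\Big)\lesssim d\,(1+\log_+ (T/d^2)).
\end{align*}
This bound is $\lesssim d^{1/2}$, so its $p/2$-th power is $\lesssim d^{p/4}$.

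For the $p$-th moment term, Rosenthal's inequality conditionally on $\mathcal F_k$ gives
\begin{align*}
	\E[|M_k|^p\mid\mathcal F_k]\lesssim \Big(L^{-1}\sum_\ell\Delta q_m(\ell)^2\Big)^{p/2}+\sum_\ell|\Delta q_m(\ell)|^p L^{-1-\kappa_p},
\end{align*}
by~\eqref{ineq:mtg_increment bounds}. We bound $\sum_\ell|\Delta q_m|^p\le (\sup|\Delta q_m|)^{p-1}\sum_\ell|\Delta q_m|\lesssim m^{-(p-1)/2}\min\{1,Ld/\sqrt m\}$ using Corollary~\ref{cor:heat kernel on diagonal bound}, with the $m=0$ term bounded by $2$. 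Summing over $m$, the second piece is $\lesssim L^{-1-\kappa_p}\cdot\min\{1,Ld\}\lesssim L^{-1-\kappa_p}\le d^{1+\kappa_p}$ when $d\le 1/L$. For $d\ge1/L$, the terms with $m\lesssim L^2d^2$ need care: they sum to $O(1)$ times $L^{-1-\kappa_p}$, and the tail is smaller. Hence we must compare $L^{-1-\kappa_p}$ with $d^{p/4}\wedge d^{1+\kappa_p}$. Since $d\ge 1/L$, we have $L^{-1-\kappa_p}\le d^{1+\kappa_p}$. Because $d\le1$, $d^{1+\kappa_p}\le d^{p/4\wedge(1+\kappa_p)}$ also holds. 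The first piece, summed over $k$, is at most $n\cdot(L^{-1}\sup_m\sum\Delta q_m^2)^{p/2}$. We handle it more sharply by bounding $\sum_k(\cdot)^{p/2}\le(\sum_k\cdot)^{p/2}$, which is already controlled by $d^{p/4}$.

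The main obstacle is the $I_3^L$ estimate, specifically the bookkeeping that makes the Rosenthal $p$-th moment term produce the exponent $1+\kappa_p$. One has to check that the short-time terms ($m$ small, where the kernel difference is of order one) only contribute a bounded number of copies of $L^{-1-\kappa_p}$, and then convert this into powers of $d$ via $d\ge1/L$. We also need to confirm that conditional independence across sites is legitimate, since each $\mathcal R^L_{\ell,k}$ is evaluated at an $\mathcal F_k$-measurable point. If the log factor in the quadratic-variation bound causes trouble, we instead use the interpolation $\min\{a,b\}\le a^{1/2}b^{1/2}$, which gives $\sum_m (Ld)^{1/2}m^{-3/4}$. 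This also yields order $d^{1/2}$ cleanly.
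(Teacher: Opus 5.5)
Your proposal is correct and follows essentially the same route as the paper. $I_2^L$ is bounded deterministically via the $\ell^1$-gradient estimate \eqref{ineq:discrete-heat-kernel-gradient-abs-difference}. For $I_3^L$, you apply the martingale Rosenthal inequality in time and then conditional Rosenthal across sites, controlling kernel differences with Corollaries~\ref{cor:heat kernel on diagonal bound} and~\ref{cor:gradient l2 bound}, and use $d\ge 1/L$ to turn $L^{-1-\kappa_p}$ into $d^{1+\kappa_p}$. Your split point $L^2d^2$ and the bound $\sum_k a_k^{p/2}\le(\sum_k a_k)^{p/2}$ are only cosmetic variations on the paper's bookkeeping.

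Two small fixes. Treat the $m=0$ term of the quadratic-variation sum separately; it contributes $2L^{-1}\le 2d$. Also drop the phrase ``when $d\le 1/L$'': the inequality attached to it runs the wrong way, and it is not needed since $d\ge 1/L$ always.
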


\begin{proof}
	{ By~\eqref{eq:coefficient convergence}, there exist $L_0>1$ and $C>0$ such that, for all $L\ge L_0$, $t\ge0$, and $x_i\in\Z/L$,
	\begin{align*}
		\abs{B_L\pr{\xi_t^L}(x_i)}\le C.
	\end{align*}}
	If $t<L^{-2}$, then $\lfloor tL^2\rfloor=0$ and $I^L_2(t,\cdot)=I^L_3(t,\cdot)=0$; the claim is also trivial if $x_i=x_j$. Hence we assume $t\ge L^{-2}$ and $x_i\neq x_j$, so that $|x_i-x_j|\ge L^{-1}$. In the sums over $k$ below, the term $k=\lfloor tL^2\rfloor-1$ has kernel $q^L_0$, the identity kernel.
	Hence there is some $C>0$ so that for all $L\ge L_0$, $i,j \in \Z$ and $t>0$
	\begin{align*}
		\abs{I^L_2(t,x_i) - I^L_2(t,x_j)}^p  &\le \pr{L^{-2}\sum_{k = 0}^{\lfloor tL^2\rfloor-1}\sum_\ell\abs{q^L_{\lfloor tL^2\rfloor - 1 -k}(x_i,x_\ell) - q^L_{\lfloor tL^2\rfloor - 1 -k}(x_j,x_\ell)}}^p\\
		&\overset{\eqref{ineq:discrete-heat-kernel-gradient-abs-difference}}{\le} C\pr{2L^{-2}+L^{-2} \sum_{k =0}^{\lfloor tL^2\rfloor - 2} \frac{\abs{x_i - x_j}}{\sqrt{(\lfloor tL^2\rfloor-1-k)/L^2}}}^p\\
		&\le C\pr{L^{-2}+\abs{x_i - x_j}\int_0^t \frac{1}{\sqrt{t-s}}\mathrm ds}^p\\
		&\le C |x_i -x_j|^p.
	\end{align*}
	Here $2L^{-2}$ is from when $k=\lfloor tL^2\rfloor-1$, for which $\sum_\ell|q^L_0(x_i,x_\ell)-q^L_0(x_j,x_\ell)|=2$, and in the last line we used $L^{-2}\le L^{-1}\le|x_i-x_j|$.

	On the other hand, by Rosenthal's inequality (cf.\ \cite{hall2014martingale}*{Theorem 2.12}), we see it holds uniformly in $t>0$ and $x_i,x_j \in \Z/L$ that
	\begin{align*}
		&\E\left[ \abs{I_3^L(t,x_i) - I_3^L(t,x_j)}^p\right] \\\lesssim  &\mathbb E\left[\pr{\sum_{k=0}^{\lfloor tL^2\rfloor - 1} \E \left[\pr{ \sum_{\ell}\pr{q^L_{\lfloor tL^2\rfloor - 1 -k}(x_i,x_\ell) -  q^L_{\lfloor tL^2\rfloor - 1 -k}(x_j,x_\ell)} \eta^L_{k+1}(x_\ell)}^2 \mid\mathcal F_{k}\right] }^{p/2}\right]\\
		&\quad+ \sum_{k = 0}^{\lfloor tL^2\rfloor-1} \E\left[\abs{\sum_{\ell} \pr{q^L_{\lfloor tL^2\rfloor - 1 -k}(x_i,x_\ell) -  q^L_{\lfloor tL^2\rfloor - 1 -k}(x_j,x_\ell)} \eta^L_{k+1}(x_\ell)}^p\right]\\
		&=:J_1(t,x_i,x_j)+J_2(t,x_i,x_j).
	\end{align*}
	For fixed $x_i,x_j$, define
	\begin{align*}
		\delta_m^{i,j}(x_\ell):= q^L_{m}(x_i,x_\ell) - q^L_m(x_j,x_\ell),
	\end{align*}

	We note that, by independence of $\{\Ru_{i,g}^L\}_{i,g}$, the sequence $\{\eta^L_{g+1}(x_\ell)\}_{\ell \in \Z}$ is independent, mean zero, conditioned on $\mathcal F_g$ for each $g\in \N$. Hence for all $\ell \neq \ell' \in \Z$,
	\begin{align*}
		\E\left[\eta^L_{k+1}(x_\ell)\eta^L_{k+1}(x_{\ell'})\mid \mathcal F_k\right]
		&=\E\left[\eta^L_{k+1}(x_\ell)\mid \mathcal F_k\right]\E\left[\eta^L_{k+1}(x_{\ell'})\mid \mathcal F_k\right]=0.
	\end{align*}
	Hence, by~\eqref{eq:mtg_increment l2} and Corollary~\ref{cor:gradient l2 bound}, for $m \ge 1$, 
	\begin{align*}
		\E\left[\pr{\sum_\ell \delta_m^{i,j}(x_\ell) \eta^L_{k+1}(x_\ell)}^2 |\mathcal F_{k}\right] &= \sum_{\ell, \ell'}\E\left[ \delta_m^{i,j}(x_\ell) \delta_m^{i,j}(x_{\ell'}) \eta^L_{k+1}(x_\ell) \eta^L_{k+1}(x_{\ell'})|\mathcal F_{k}\right]\\
		&=\sum_\ell\delta_m^{i,j}(x_\ell)^2 \E\left[ \eta^L_{k+1}(x_\ell)^2 |\mathcal F_{k}\right]\\
		&{\overset{\eqref{eq:mtg_increment l2},\,\eqref{eq:coefficient convergence}}{\lesssim} L^{-1}\sum_\ell \delta_m^{i,j}(x_\ell)^2} \\
		&{\overset{\text{Cor.~\ref{cor:gradient l2 bound}}}{\lesssim} L^{-1} \min\left\{\frac{L\abs{x_i - x_j}}{m}, m^{-1/2} \right\}}.
	\end{align*}
	Note that $\delta_0^{i,j} = \mathbf{1}_{\{x_i\}} - \mathbf{1}_{\{x_j\}}$, and since $L^{-1} \le \abs{x_i -x_j}$ for any $x_i \neq x_j$, we see
	\begin{align*}
		{ L^{-1}\sum_{k = 0}^{\lfloor tL^2\rfloor -1} \sum_{\ell}\delta_{\lfloor tL^2\rfloor -1-k}^{i,j}(x_\ell)^2 \le 2\abs{x_i -x_j}+L^{-1}\sum_{k = 1}^{\lfloor tL^2\rfloor} \sum_\ell \delta_k^{i,j}(x_\ell)^2.}
	\end{align*}
	Consider now for $t\ge L^{-2}$ that
	\begin{align*}
		J_1(t,x_i,x_j) &{\lesssim} \abs{x_i - x_j}^{\frac p2}+\pr{{ L^{-1}}\pr{\sum_{k = 1}^{\lfloor \pr{t\wedge \abs{x_i - x_j}}L^2\rfloor} +\sum^{\lfloor tL^2\rfloor}_{\lfloor \pr{t\wedge \abs{x_i - x_j}}L^2\rfloor}}\sum_\ell \delta_k^{i,j}(x_\ell)^2}^\frac{p}{2}\\
		&{\lesssim} \abs{x_i - x_j}^{\frac p2}+\pr{{ L^{-1}}\sum_{k = 1}^{\lfloor \pr{t\wedge \abs{x_i - x_j}}L^2\rfloor} k^{-1/2}+\sum^{\lfloor tL^2\rfloor}_{\lfloor \pr{t\wedge \abs{x_i - x_j}}L^2\rfloor} \frac{\abs{x_i - x_j}}{k}}^\frac{p}{2}\\
		&{\lesssim} \abs{x_i - x_j}^{\frac p2}+\pr{{ L^{-1}} \int_0^{\ \pr{t\wedge \abs{x_i - x_j}}L^2} s^{-1/2} \mathrm ds + \sum^{\lfloor tL^2\rfloor}_{\lfloor \pr{t\wedge \abs{x_i - x_j}}L^2\rfloor} \frac{\abs{x_i - x_j}}{k} } ^\frac{p}{2}\\
		&\lesssim  \abs{x_i - x_j}^{\frac p2}+\pr{\pr{t\wedge \abs{x_i - x_j}}^\frac{1}{2}+\abs{x_i - x_j} \ln \pr{\frac{t}{t\wedge |x_i - x_j|}}}^{\frac{p}{2}}\\
		&\lesssim \abs{x_i - x_j}^\frac{p}{4},
	\end{align*}
	where we used the fact that $|x_i - x_j| \le 1$ in the last line.

	For $J_2$ term, by conditional independence and Rosenthal's inequality, we see, for $m\ge1$,
	\begin{align*}
		\E\left[\abs{\sum_\ell \delta_m^{i,j}(x_\ell) \eta^L_{k+1}(x_\ell) } ^p\bigg | \mathcal F_{k}\right]&\lesssim \pr{\sum_\ell  \delta_m^{i,j}(x_\ell)^2 \E \left[\eta^L_{k+1}(x_\ell)^2\bigg| \mathcal F_{k}\right]}^\frac{p}{2} +\sum_\ell |\delta_m^{i,j}(x_\ell)|^p \E\left[ |\eta^L_{k+1}(x_\ell)|^p \bigg|\mathcal F_{k}\right]\\
		&{\overset{\eqref{eq:mtg_increment l2},\,\eqref{ineq:p_moment bound}}{\lesssim} \pr{\sum_\ell \delta_m^{i,j}(x_\ell)^2 L^{-1}}^\frac{p}{2} + L^{-1-\kappa_p} \sum_\ell |\delta_m^{i,j}(x_\ell)|^p} \\
		&{\lesssim} \pr{\min\left\{ \frac{|x_i - x_j|}{m}, L^{-1} m^{-1/2}\right\}}^\frac{p}{2}+ m^{-\frac{p-2}{2}} { L^{-1-\kappa_p}} \sum_\ell \delta_m^{i,j}(x_\ell)^2\\
		&{\lesssim} \frac{|x_i-x_j|^\frac{p}{2}}{m^{p/2}}+ m^{-\frac{p-2}{2}}{ L^{-\kappa_p}}\min\left\{\frac{\abs{x_i-x_j}}{m}, { L^{-1}} m^{-1/2} \right\}\\
		&{\lesssim} m^{-\frac{p}{2}} \pr{\abs{x_i - x_j}^\frac{p}{2} + { L^{-\kappa_p}}\abs{x_i - x_j}}\\
		&\lesssim m^{-\frac{p}{2}}\pr{\abs{x_i - x_j}^\frac{p}{2} +\abs{x_i - x_j}^{1+\kappa_p}}\\
		&\lesssim m^{-\frac{p}{2}}\abs{x_i - x_j }^{\frac{p}{2}\wedge \pr{1+\kappa_p}}
	\end{align*}
	where we used Corollary~\ref{cor:gradient l2 bound} and Corollary~\ref{cor:heat kernel on diagonal bound} in the third line, and the fact that for $x_i \neq x_j$, $|x_i - x_j| \ge L^{-1}$ in the second to the last line, and $\abs{x_i - x_j} \le 1$ in the last line. For $m=0$, then the above equals $\E\bracket{\abs{\eta^L_{k+1}(x_i)-\eta^L_{k+1}(x_j)}^p}\lesssim L^{-1-\kappa_p}\le\abs{x_i-x_j}^{1+\kappa_p}$ by~\eqref{ineq:mtg_increment bounds}. Therefore, for any $p >2$,
	\begin{align*}
		J_2(t,x_i,x_j) &\lesssim \abs{x_i - x_j}^{1+\kappa_p}+\abs{x_i - x_j}^{\frac{p}{2}\wedge\pr{1+\kappa_p}}\sum_{m = 1}^{\infty} m^{-p/2} \lesssim \abs{x_i - x_j}^{\frac{p}{2}\wedge\pr{1+\kappa_p}}.
	\end{align*}
	Hence, it holds uniformly in $L \in \N$, $\abs{x_i - x_j}\le 1$ and $t \in [0,T]$ that
	\begin{align*}
		\E \bracket{\abs{I_3(t,x_i) - I_3(t,x_j)}^p} \lesssim J_1(t,x_i,x_j) +J_2(t,x_i,x_j) \lesssim \abs{x_i - x_j}^{\frac{p}{4}\wedge \pr{1+\kappa_p}}.
	\end{align*}
	Combined with the moment bounds for $I_2$ term, we see the desired result holds.
\end{proof}

Now we consider the moment bound for temporal increment.
\begin{prop}\label{prop:temporal_increment_moment_bound}
	For every $T>0$, let $p>8$ and $\kappa_p>3$ be as in~\eqref{ineq:p_moment bound}. Uniformly over all sufficiently large $L$, $x_i\in\Z/L$, and $s,t\in L^{-2}\Z_+$ satisfying $0\le s<t\le T$,
	\begin{align*}
		\E\bracket{\abs{I_2^L(t,x_i)-I_2^L(s,x_i)}^p}
		+\E\bracket{\abs{I_3^L(t,x_i)-I_3^L(s,x_i)}^p}
		\lesssim_T (t-s)^{\frac{p}{4}\wedge\frac{1+\kappa_p}{2}}.
	\end{align*}
\end{prop}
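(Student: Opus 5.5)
Write $n:=tL^2$, $m:=sL^2$ (integers, $m<n$) and $h:=n-m\ge1$, so that $t-s=h/L^2\ge L^{-2}$. The plan mirrors the proof of Proposition~\ref{prop:spatial_increment_moment_bound}. In each term we split the temporal increment into a ``new'' part, consisting of the summands with index $k\in[m,n-1]$, and an ``old'' part, consisting of the summands with $k\le m-1$. In the old part the kernel difference $q^L_{n-1-k}-q^L_{m-1-k}$ appears.

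\textbf{The drift term $I_2$.} We use the uniform bound $|B_L(\xi^L)|\le C$. The new part is then bounded by $L^{-2}\cdot h\cdot C=C(t-s)$, since each kernel is a probability kernel. For the old part we apply \eqref{ineq:heat kernel temporal l1 difference} with $n$ replaced by $m-1-k\ge0$; the case $0$ is handled by the trivial bound $2$. This gives
\[
L^{-2}\sum_{r=0}^{m-1}\min\Bigl\{1,\sqrt{h/r}\Bigr\}\lesssim L^{-2}\bigl(h+\sqrt{h}\,\sqrt{m}\bigr)\lesssim (t-s)+\sqrt{t-s}\,\sqrt{T}.
\]
Hence $\E|I_2^L(t)-I_2^L(s)|^p\lesssim_T (t-s)^{p/2}$, which is dominated by the claimed rate because $t-s\le T$ and $\tfrac p2\ge\tfrac p4\wedge\tfrac{1+\kappa_p}{2}$.

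\textbf{The martingale term $I_3$.} We apply Rosenthal's inequality exactly as before, which produces a quadratic-variation part $J_1$ and a $p$-th power part $J_2$. The conditional orthogonality of $\{\eta^L_{k+1}(x_\ell)\}_\ell$ and \eqref{eq:mtg_increment l2} reduce the conditional variance of the $k$-th summand to $L^{-1}\sum_\ell d_k(x_\ell)^2$. Here $d_k$ is either $q^L_{n-1-k}(x_i,\cdot)$ (new part) or $q^L_{n-1-k}-q^L_{m-1-k}$ (old part).

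For the new part, Corollary~\ref{cor:heat kernel on diagonal bound} gives $\sum_\ell q_r^2\lesssim (r\vee1)^{-1/2}$. Summing over $r<h$ gives $L^{-1}\sqrt h=\sqrt{t-s}$.

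For the old part, Lemma~\ref{lem: heat kernel temporal l2 difference bound} gives the bound $h\,r^{-3/2}$, and the trivial bound gives $r^{-1/2}$, where $r=m-1-k$. Summing $\min\{r^{-1/2},h r^{-3/2}\}$ over $r\ge0$ yields $\lesssim\sqrt h$, with the split taken at $r=h$. This again gives $L^{-1}\sqrt h=\sqrt{t-s}$. Thus $J_1\lesssim (t-s)^{p/4}$.

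For $J_2$, we use conditional Rosenthal in $\ell$ together with \eqref{ineq:p_moment bound}. For each $k$ this bounds the summand by
\[
\Bigl(L^{-1}\sum_\ell d_k^2\Bigr)^{p/2}+L^{-1-\kappa_p}\sum_\ell|d_k|^p .
\]
We use $\sup|d_k|\lesssim (r\vee1)^{-1/2}$ and $\sum_\ell|d_k|^p\le \sup|d_k|^{p-2}\sum_\ell d_k^2$.

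In the new part, the first term summed over $r<h$ is at most $L^{-p/2}\sum_r (r\vee1)^{-p/4}\lesssim L^{-p/2}$, which is at most $(t-s)^{p/4}$ because $t-s\ge L^{-2}$. The second term is at most $L^{-1-\kappa_p}\sum_r (r\vee1)^{-(p-1)/2}\lesssim L^{-1-\kappa_p}\le (t-s)^{(1+\kappa_p)/2}$.

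In the old part the same summability holds since $p>8$: both pieces are dominated by $\sum_r (r\vee1)^{-p/4}<\infty$. This gives contributions $L^{-p/2}$ and $L^{-1-\kappa_p}$, both bounded by the target via $L^{-2}\le t-s$.

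Combining the pieces and using $t-s\le T$ gives the exponent $\tfrac p4\wedge\tfrac{1+\kappa_p}2$.

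\textbf{Main obstacle.} The step I expect to be delicate is the old-part $L^2$ sum in $J_1$. There one must combine Lemma~\ref{lem: heat kernel temporal l2 difference bound} with the crude bound $(r\vee1)^{-1/2}$ so that the sum gives exactly $\sqrt h$ and not $\sqrt h\log$. The second delicate point is the careful handling of the $r=0$ identity-kernel terms, which is where the constraint $s,t\in L^{-2}\Z_+$, and hence $t-s\ge L^{-2}$, is used.
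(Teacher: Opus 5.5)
Your proposal is correct and follows essentially the same route as the paper. Both arguments split into new and old parts and bound $I_2$ via \eqref{ineq:heat kernel temporal l1 difference}. For $I_3$, both apply Rosenthal in $k$ and then conditional Rosenthal in $\ell$, control the old-part quadratic sum by combining Corollary~\ref{cor:heat kernel on diagonal bound} with Lemma~\ref{lem: heat kernel temporal l2 difference bound}, and use $t-s\ge L^{-2}$ to absorb the $L^{-p/2}$ and $L^{-1-\kappa_p}$ contributions. The only difference is cosmetic: you apply Rosenthal to the whole martingale transform $I_3^L(t,x_i)-I_3^L(s,x_i)$, whereas the paper first splits it into the two pieces by the triangle inequality.
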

\begin{proof}
	Let us fix an arbitrary $T>0$ and consider first $I_2^L$. Set $h:=(t-s)L^2\ge1$. By the uniform boundedness of $\abs{B_L(\xi^L)}$, we see it holds uniformly in $0\le s,t \le T$ and $L \in \N$ that
	\begin{align*}
		\abs{I_2^L(t,x_i)-I_2^L(s,x_i)}
		&\le L^{-2}\sum_{k=sL^2}^{tL^2-1}\sum_{x_\ell\in\Z/L}
		q_{tL^2-1-k}^L(x_i,x_\ell)
		\abs{B_L(\xi_{k/L^2}^L)(x_\ell)}\\
		&\qquad+L^{-2}\sum_{k=0}^{sL^2-1}\sum_{x_\ell\in\Z/L}
		\abs{q_{tL^2-1-k}^L(x_i,x_\ell)-q_{sL^2-1-k}^L(x_i,x_\ell)}
		\abs{B_L(\xi_{k/L^2}^L)(x_\ell)}\\
		&\lesssim L^{-2}\left(h+
		\sum_{k=0}^{sL^2-1}\sum_{x_\ell\in\Z/L}
		\abs{q_{tL^2-1-k}^L(x_i,x_\ell)-q_{sL^2-1-k}^L(x_i,x_\ell)}\right)\\
		&=L^{-2}\left(h+
		\sum_{n=0}^{sL^2-1}\sum_{x_\ell\in\Z/L}
		\abs{q_{n+h}^L(x_i,x_\ell)-q_n^L(x_i,x_\ell)}\right)\\
		&{\overset{\eqref{ineq:heat kernel temporal l1 difference}}{\lesssim}}
		L^{-2}\left(h+1+
		\sum_{n=1}^{sL^2-1}\min\left\{1,\sqrt{\frac{h}{n}}\right\}\right)\\
		&\lesssim L^{-2}\pr{h+1+\sqrt{hsL^2}}\\
		&\lesssim (t-s)+\sqrt{s(t-s)}\\
		&\lesssim\sqrt{t-s},
	\end{align*}
	where we used the fact that $h\ge1$ in the sixth line. Hence, it holds uniformly in $L \in \N$, $0 \le s,t \le T$ and $x_i \in \Z/L$ that
	\begin{align*}
		\E\bracket{\abs{I_2^L(t,x_i)-I_2^L(s,x_i)}^p}
		\lesssim\abs{t-s}^{p/2}
		\lesssim\abs{t-s}^{\frac p4\wedge\frac{1+\kappa_p}{2}}.
	\end{align*}

	We now prove the estimate for $I_3^L$. Note that
	\begin{align}\label{ineq:temporal difference moment martingale}
		\norm{I_3^L(t,x_i)-I_3^L(s,x_i)}_{L^p_\Omega}^p&\lesssim \norm{\sum_{k = sL^2}^{tL^2 - 1} \sum_\ell q^L_{tL^2-1-k}(x_i,x_\ell) \eta^L_{k+1}(x_\ell)}_{L^p_\Omega}^p\nonumber\\
		&\qquad \qquad + \norm{\sum_{k = 0}^{sL^2 - 1}\sum_\ell \pr{q^L_{tL^2-1-k}(x_i,x_\ell)- q^L_{sL^2-1-k}(x_i,x_\ell)}\eta^L_{k+1}(x_\ell)}_{L^p_\Omega}^p\nonumber\\
		&:= J_1+J_2.
	\end{align}
	By Rosenthal's inequality, we see it holds uniformly in $L \in \N$, $0 \le s,t\le T$ and $x_i \in \Z/L$ that
	\begin{align*}
		J_1&\lesssim \norm{\sum_{k = sL^2}^{tL^2 - 1} \E \bracket{\pr{\sum_\ell q^L_{tL^2 -1-k}(x_i,x_\ell) \eta^L_{k+1}(x_\ell)}^2\bigg|\mathcal F_{k}}}_{L^{p/2}_\Omega}^{p/2}\\
		&\qquad + \sum_{k = sL^2}^{tL^2 -1} \norm{\sum_\ell q^L_{tL^2 -1-k}(x_i,x_\ell) \eta^L_{k+1}(x_\ell) }_{L^p_\Omega}^p\\
		&:= J_1^1+J_1^2.
	\end{align*}
	By conditional independence and the fact that $h \ge 1$, we see
	\begin{align*}
		J_1^1&=\norm{\sum_{k = sL^2}^{tL^2-1} \sum_\ell q^L_{tL^2-1-k}(x_i,x_\ell)^2\E\bracket{\eta^L_{k+1}(x_\ell)^2\bigg |\mathcal F_{k}}}_{L^\frac{p}{2}_\Omega}^\frac{p}{2}\\
		&{\lesssim} \pr{{ L^{-1}}\sum_{k = sL^2}^{tL^2-1} q^L_{2tL^2-2-2k}(x_i,x_i)}^\frac{p}{2}\\
		&{\overset{\mathrm{Cor.}\,\ref{cor:heat kernel on diagonal bound}}{\lesssim}}\pr{L^{-1}\pr{1+\sum_{r=1}^{h-1}r^{-1/2}}}^\frac{p}{2}\\
		&\lesssim\pr{L^{-1}\sqrt h}^\frac{p}{2}\\
		&=\abs{t-s}^\frac{p}{4}.
	\end{align*}
	Also,
	\begin{align*}
		J_1^2&\lesssim \sum_{k = sL^2}^{tL^2 -1} \E\left[\pr{\sum_\ell q^L_{tL^2 -1-k}(x_i,x_\ell)^2 \E[\eta^L_{k+1}(x_\ell)^2\mid\mathcal F_k]} ^\frac{p}{2}\right]+\sum_{k = sL^2}^{tL^2 -1} \sum_\ell  q^L_{tL^2 -1-k}(x_i,x_\ell)^p \norm{\eta^L_{k+1}(x_\ell)}_{L^p_\Omega}^p.
	\end{align*}
	First term on the right hand side is bounded by
	\begin{align*}
		\sum_{k = sL^2}^{tL^2 -1} \pr{\sum_\ell q^L_{tL^2 -1-k}(x_i,x_\ell)^2{ L^{-1}} }^\frac{p}{2}&{\overset{\mathrm{Cor.}\,\ref{cor:heat kernel on diagonal bound}}{\lesssim}} { L^{-\frac{p}{2}}}\sum_{k = sL^2}^{tL^2 -2} \pr{tL^2-1-k}^{-p/4} + { L^{-p/2}}\\
		&{\lesssim} L^{-\frac{p}{2}}\pr{1+ \sum_{k = 1}^\infty k^{-\frac{p}{4}}}\\
		&\lesssim L^{-\frac{p}{2}} \le (t-s)^\frac{p}{4},
	\end{align*}
	where we used the fact that  $t-s\ge L^{-2}$ and $p >8$ in the last line. Consider now the second term on the right hand side for $J_1^2$,
	\begin{align*}
		\sum_{k = sL^2}^{tL^2 -1} \sum_\ell  q^L_{tL^2 -1-k}(x_i,x_\ell)^p \norm{\eta^L_{k+1}(x_\ell)}_{L^p_\Omega}^p&{\overset{\eqref{ineq:p_moment bound}}{\lesssim}}{ L^{-1-\kappa_p}\sum_{k=sL^2}^{tL^2-1}\sum_\ell q^L_{tL^2-1-k}(x_i,x_\ell)^p}\\
		&\le L^{-1-\kappa_p} \sum_{k = sL^2}^{tL^2-1} \pr{\sup_\ell q^L_{tL^2-1-k}(x_i,x_\ell)}^{p-1}\sum_\ell q^L_{tL^2-1-k}(x_i,x_\ell)\\
		&{\overset{\mathrm{Cor.}\,\ref{cor:heat kernel on diagonal bound}}{\lesssim}} { L^{-1-\kappa_p}}\pr{1+\sum_{k=1}^{(t-s)L^2-1}k^{-\frac{p-1}{2}}}\\
		&{\lesssim} { L^{-1-\kappa_p}}\\
		&{\le} {(t-s)^{\frac{1+\kappa_p}{2}}},
	\end{align*}
	where we used the fact that $p>8$ in the fourth inequality and  $t-s\ge L^{-2}$ in the last inequality. Combining the previous estimates to see
	\begin{align}\label{ineq:temporal J1}
		J_1 \lesssim J_1^1+J_1^2 \lesssim \abs{t-s}^{\frac{p}{4}\wedge \frac{1+\kappa_p}{2}}.
	\end{align}

	For $J_2$, let us denote for $0\le s,t \le T$, $i \in \Z$, $k \in \N$ and $x_\ell \in \Z/L$ that
	\begin{align*}
		\delta_k^{s,t;i}(x_\ell)
		:=\abs{q^L_{tL^2-k-1}(x_i,x_\ell)-q^L_{sL^2-k-1}(x_i,x_\ell)}.
	\end{align*}
	Similar to the $J_1$ term, we may use Rosenthal's inequality in $k$, followed by conditional Rosenthal's inequality in $x_\ell$, to see that
	\begin{align*}
		J_2 \lesssim J_2^1+J_2^2+J_2^3,
	\end{align*}
	where
	\begin{align*}
		J_2^1&:=\E\bracket{\pr{\sum_{k=0}^{sL^2-1}\sum_\ell
		\delta_k^{s,t;i}(x_\ell)^2
		\E\bracket{\eta_{k+1}^L(x_\ell)^2\mid\mathcal F_k}}^{p/2}},\\
		J_2^2&:=\sum_{k=0}^{sL^2-1}\E\bracket{\pr{\sum_\ell
		\delta_k^{s,t;i}(x_\ell)^2
		\E\bracket{\eta_{k+1}^L(x_\ell)^2\mid\mathcal F_k}}^{p/2}},\\
		J_2^3&:=\sum_{k=0}^{sL^2-1}\sum_\ell
		\E\bracket{\delta_k^{s,t;i}(x_\ell)^p
		\abs{\eta_{k+1}^L(x_\ell)}^p}.
	\end{align*}

	Recall $h=(t-s)L^2$. By Corollary~\ref{cor:heat kernel on diagonal bound} and Lemma~\ref{lem: heat kernel temporal l2 difference bound}, it holds uniformly in $h,n \in \N$ that
	\begin{align}\label{ineq:heat kernel square time difference}
		\sum_\ell\abs{q^L_{n+h}(x_i,x_\ell)-q^L_n(x_i,x_\ell)}^2
		\lesssim\min\left\{n^{-1/2},hn^{-3/2}\right\},
	\end{align}
	Hence, it holds uniformly in $L \in \N$, $0\le s,t\le T$ and  $x_i \in \Z/L$ that
	\begin{align*}
		J_2^1 &{\overset{\eqref{eq:coefficient convergence}}{\lesssim}} \pr{L^{-1}\sum_{k = 0}^{sL^2 - 1} \sum_\ell \delta_k^{s,t;i}(x_\ell)^2}^\frac{p}{2}\\
		&{\overset{\eqref{ineq:heat kernel square time difference}}{\lesssim}}\pr{L^{-1}\pr{1+\sum_{n=1}^{h}n^{-1/2}
		+h\sum_{n=h+1}^{\infty}n^{-3/2}}}^{p/2}\\
		&\lesssim\pr{L^{-1}\pr{1+\sqrt h}}^{p/2}\\
		&\lesssim\pr{L^{-1}\sqrt h}^{p/2}=(t-s)^{p/4}.
	\end{align*}
	where we used the fact that $h \ge 1$ in the last inequality. Similarly, since $p>8$,
	\begin{align*}
		J_2^2
		&\lesssim L^{-p/2}\left(1+\sum_{n=1}^{\infty}
		\min\left\{n^{-1/2},hn^{-3/2}\right\}^{p/2}\right)\le L^{-p/2}\left(1+\sum_{n=1}^{\infty}n^{-p/4}\right)
		\lesssim (L^{-2})^{p/4}
		\le(t-s)^{p/4},
	\end{align*}
	where the last inequality uses $t-s\ge L^{-2}$.

	By Corollary~\ref{cor:heat kernel on diagonal bound}, it holds uniformly in  $L,n,h \in \N$ and $x_i \in \Z/L$ that
	\begin{align*}
		\sum_\ell\abs{q^L_{n+h}(x_i,x_\ell)-q^L_n(x_i,x_\ell)}^p
		&\le\sup_\ell\pr{q^L_{n+h}(x_i,x_\ell)+q^L_n(x_i,x_\ell)}^{p-1}\sum_\ell\pr{q^L_{n+h}(x_i,x_\ell)+q^L_n(x_i,x_\ell)}\\
		&\lesssim n^{-(p-1)/2}.
	\end{align*}
	Thus, using the fact that $\abs{t-s} \ge L^{-2}$ when $t \neq s$, we see it holds uniformly in $L \in \N$, $0\le s,t \le T$ and $x_i \in \Z/L$ that
	\begin{align*}
		J_2^3
		&{\overset{\eqref{ineq:mtg_increment bounds}}{\lesssim}}L^{-1-\kappa_p} \sum_{k = 0}^{sL^2 - 1} \sum_\ell\abs{\delta^{s,t;i}_k(x_\ell)}^p\\
		&\lesssim L^{-1-\kappa_p} \pr{1+\sum_{k = 1}^\infty k^{-\frac{p-1}{2}}}\\
		&\lesssim L^{-1-\kappa_p}
		\le(t-s)^{(1+\kappa_p)/2}.
	\end{align*}
	Collecting the above estimates to see it holds uniformly in $L \in \N$, $0\le t,s \le T$  and $x_i \in \Z/L$ that
	\begin{align*}
		J_2 \lesssim \abs{t-s}^{\frac{p}{4}\wedge\frac{1+\kappa_p}{2}}.
	\end{align*}
	Inserting this and~\eqref{ineq:temporal J1} into~\eqref{ineq:temporal difference moment martingale} to see
	\begin{align*}
		\E\bracket{\abs{I_3^L(t,x_i)-I_3^L(s,x_i)}^p}
		&\lesssim\abs{t-s}^{\frac p4\wedge\frac{1+\kappa_p}{2}},
	\end{align*}
	which is the desired result.
\end{proof}

\section{\texorpdfstring{Tightness in $\mathcal D(\R_+; \mathcal C^{\mathrm{tem}}_\R)$}{Tightness in D(R+; Ctem(R))}}\label{sec:tightness}
In this section, we will show the sequence of spatially interpolated processes $\{\widetilde \xi^L\}_{L}$ is tight in $\mathcal D(\R_+;\mathcal C^{\mathrm{tem}}_\R)$ via bounds of the moments of the  spatial-temporal increments that are obtained in Propositions~\ref{prop:spatial_increment_moment_bound} and~\ref{prop:temporal_increment_moment_bound}.

For $f \in \mathcal C_\R$, define $\pi_L^\text{disc} :\mathcal C(\R)\to \mathcal C_{\Z/L}$ by $\pi_L^\text{disc}f(x_i) = f(x_i)$ for all $x_i \in \Z/L$. Define $\pi^\text{int}_L: \mathcal C_{\Z/L} \to \mathcal C_\R$ by linear interpolation. Then there exists a symmetric, non-negative function $\rho \in \mathcal C_c(\R)$  with $\int_\R \rho(x) \mathrm dx = 1$, so that for any $u_L : \Z /L \to \R$,
\begin{align}\label{eq:interpolation convolution identity}
	\widetilde u_L(x):= \frac{1}{L}\sum_{x_i \in \Z /L} u_L(x_i) \rho_L(x-x_i) = \pi_L^\text{int}u_L(x),\qquad \text{for each }x \in \R,
\end{align}
where $\rho_L(x):= L\rho(Lx)$ for each $L \in \N$ and $x \in \R$.

In this section, we denote for any function $f:\Z/L \to \R$ by
\begin{align*}
	P^L_{k}f(x_i) := \sum_{x_\ell \in \Z/L} q^L_k(x_i,x_\ell) f(x_\ell),\qquad k \in \Z_+,\, x_i \in \Z/L,
\end{align*}
whenever the right hand side is summable. Also, denote for any measurable $f: \R \to \R$ by
\begin{align*}
	P_tf(x):= \int_\R G_t(x-y) f(y) \mathrm dy,\qquad (t,x) \in \R_+\times \R,
\end{align*}
whenever the right hand side makes sense.

For  interval $K\subset \R$, we denote for any continuous function $f:\R \to \R$ and $\gamma \in (0,1]$ the H\"older norms
\begin{align*}
	\norm{f}_{\mathcal C_K^{\gamma}}:= \norm{f}_{\mathcal C^0_K} + \sup_{x,y \in K\atop x\neq y} \frac{\abs{f(x) - f(y)}}{\abs{x-y}^{\gamma}}.
\end{align*}
Lastly, we let $\omega_f(\delta):= \sup_{|x-y| \le \delta} |f(x) - f(y)|$ for any continuous function $f$ on $\R$. For $\lambda>0$, $\delta\ge0$, and $f\in\mathcal C^\mathrm{tem}_\R$, we also write
\begin{align}\label{eq:weighted_modulus_of_continuity}
	\omega_{f,\lambda}(\delta)
	:=\sup_{|x-y|\le\delta}e^{-\lambda|x|}\abs{f(x)-f(y)}.
\end{align}
We will need the following lemma. Recall $\norm{\cdot}_{(\lambda),L}$ for $\lambda >0$ and $L \in \N$ as in Lemma~\ref{lem:weighted discrete semigroup stability}.
\begin{lemma}\label{lem:initial convergence}
	Suppose $u_0 \in \mathcal C_b(\R)$ and  $u_L:\Z/L\to\R$ for each $L \in \N$ is bounded and satisfies, for every $\lambda>0$,
	\begin{align}\label{ineq:initial discretized convergence weighted}
		\norm{u_L-\pi_L^\text{disc}u_0}_{(\lambda),L}\longrightarrow 0,\qquad \text{ as }L \to \infty.
	\end{align}
	For $t\ge0$, set $A_L(t):=\pi_L^\text{int}P^L_{\lfloor tL^2\rfloor}u_L$. Then for every $T,\lambda>0$,
	\begin{align}\label{ineq:initial semigroup convergence}
		\lim_{L \to \infty}\sup_{t\in[0,T]}\norm{A_L(t)-P_tu_0}_{(\lambda)}=0.
	\end{align}
	Let $\widehat A_L$ be the temporal interpolation of $A_L$, then for each $T,\lambda >0$, it holds uniformly in $L \in \N$ that
	\begin{align}\label{ineq:initial temporal interpolation gap}
		\sup_{t\in[0,T]}\norm{A_L(t)-\widehat A_L(t)}_{(\lambda)}
		\lesssim
		\norm{u_L-\pi_L^\text{disc}u_0}_{(\lambda),L}
		+\omega_{u_0,\lambda}(1/L),
	\end{align}
	which converges to zero as $L\to\infty$.
\end{lemma}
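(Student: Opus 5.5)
We split $A_L(t)-P_tu_0$ into three pieces. The first is the error in the initial data, $\pi^\text{int}_L P^L_{\lfloor tL^2\rfloor}(u_L-\pi^\text{disc}_Lu_0)$. The second is the discrete-versus-continuous semigroup error on the grid, $P^L_{\lfloor tL^2\rfloor}\pi^\text{disc}_Lu_0-\pi^\text{disc}_LP_tu_0$. The third is the interpolation error $\pi^\text{int}_L\pi^\text{disc}_LP_tu_0-P_tu_0$.

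The first piece is controlled directly by Lemma~\ref{lem:weighted discrete semigroup stability}. Linear interpolation does not increase the weighted sup norm beyond a factor $e^{\lambda/L}\le e^\lambda$, because $e^{-\lambda|x|}\le e^{\lambda/L}e^{-\lambda|x_i|}$ for grid neighbours. So this piece is bounded by $C_{T,\lambda}\norm{u_L-\pi^\text{disc}_Lu_0}_{(\lambda),L}$ uniformly in $t\le T$, and it tends to $0$ by~\eqref{ineq:initial discretized convergence weighted}.

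The third piece is at most $\omega_{P_tu_0}(1/L)$. Since $u_0\in\mathcal C_b$ we have $\omega_{P_tu_0}\le\omega_{u_0}$. One caveat is that $u_0$ is only bounded continuous, not necessarily uniformly continuous, so we work with the weighted modulus. Its convergence $\omega_{u_0,\lambda}(\delta)\to0$ as $\delta\to0$ follows from uniform continuity on compacts together with boundedness times $e^{-\lambda|x|}$ at infinity. The same weighted argument applies to $P_tu_0$ uniformly in $t\le T$, because $P_t$ is a convolution with a probability density.

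The main obstacle is the second piece, uniformly in $t\in[0,T]$ in the weighted norm. We fix $\varepsilon>0$ and treat small and large times separately.

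For $t\le\delta$, both $P^L_{\lfloor tL^2\rfloor}\pi^\text{disc}_Lu_0(x_i)-u_0(x_i)$ and $P_tu_0(x_i)-u_0(x_i)$ are expectations of $u_0(x_i+\text{displacement})-u_0(x_i)$. The displacement has second moment at most $\alpha t$ (that is, $\alpha\lfloor tL^2\rfloor/L^2$). Splitting on whether $|{\rm displacement}|\le\eta$ gives a bound of $\omega_{u_0,\lambda}(\eta)$-type plus $2\|u_0\|_\infty\alpha\delta/\eta^2$, up to a weight factor $e^{\lambda\eta}$. This is small for suitable $\eta,\delta$.

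For $t\in[\delta,T]$, we set $n=\lfloor tL^2\rfloor\ge\delta L^2/2$. We compare $p_n$ with $\overline p_n$ via Lemma~\ref{lem:local clt}: the $\ell^1$ error $\sum_z|p_n(z)-\overline p_n(z)|$ is $O(n^{-1/2})$, using the first bound for $|z|\le n^{1/2+}$ and the second for the tails. The Riemann sum $\sum_j\overline p_n(Lx_i-j)u_0(x_j)$ then differs from $\int G_{n/L^2}(x_i-y)u_0(y)\,dy$ by a term controlled by $\|u_0\|_\infty$ times the Lipschitz norm of the Gaussian at scale $\sqrt\delta$, times $1/L$, together with the local modulus of $u_0$. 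Finally, $|P_{n/L^2}u_0-P_tu_0|\le\|u_0\|_\infty\|G_{n/L^2}-G_t\|_{L^1}\lesssim\|u_0\|_\infty L^{-2}/\delta$. Since $u_0$ is bounded, all of these bounds hold in plain sup norm, which dominates the weighted norm.

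For~\eqref{ineq:initial temporal interpolation gap}, the temporal interpolation $\widehat A_L(t)$ is a convex combination of $A_L(k/L^2)$ and $A_L((k+1)/L^2)$. Therefore $\norm{A_L(t)-\widehat A_L(t)}_{(\lambda)}\le\norm{\pi^\text{int}_L(P^L_{k+1}-P^L_k)u_L}_{(\lambda)}$. Writing $P^L_{k+1}-P^L_k=P^L_k\frac\alpha2\Delta_L$, we apply Lemma~\ref{lem:weighted discrete semigroup stability} to $\frac\alpha2\Delta_Lu_L$. The discrete Laplacian is bounded by twice the nearest-neighbour differences of $u_L$. Replacing $u_L$ by $\pi^\text{disc}_Lu_0$ costs $4e^{\lambda}\norm{u_L-\pi^\text{disc}_Lu_0}_{(\lambda),L}$, and the remaining neighbour differences of $u_0$ are bounded by $e^{\lambda/L}\omega_{u_0,\lambda}(1/L)$. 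Together these give~\eqref{ineq:initial temporal interpolation gap}, and both terms vanish by the observations above.
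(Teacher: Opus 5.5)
Your proof of \eqref{ineq:initial semigroup convergence} follows a genuinely different route from the paper's. Your first piece and the temporal-interpolation bound \eqref{ineq:initial temporal interpolation gap} are handled as in the paper.

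The paper never compares kernels. For $u_0\in\mathcal C_0(\R)$ it checks generator convergence $L^2(P^L_1-I)\pi_L^{\mathrm{disc}}\varphi\to\frac{\alpha}{2}\varphi''$ on the core $\mathcal C_c^\infty(\R)$ and invokes the Trotter--Kato type theorem of Ethier--Kurtz (Ch.~1, Thm.~6.5). It then passes to $u_0\in\mathcal C_b(\R)$ by writing $u_0=\chi_Mu_0+(1-\chi_M)u_0$, and disposes of the far part with the weight $e^{-\lambda|x|}$ and Chebyshev bounds for $q^L_k$ and $G_t$.

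You instead compare the kernels directly. For $t\le\delta$ you use Chebyshev plus the weighted modulus. For $t\in[\delta,T]$ you use the local CLT of Lemma~\ref{lem:local clt}, a Riemann-sum comparison, and $\|G_{n/L^2}-G_t\|_{L^1}\lesssim L^{-2}/\delta$. Your route treats bounded continuous data in one pass, uses only estimates already quoted in the paper, and gives explicit rates. The paper's route is shorter and needs no kernel asymptotics, but it requires the two-step reduction because the semigroup approximation theorem lives on $\mathcal C_0(\R)$.

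Two details of your Riemann-sum step need repair; both are easy.

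First, the term $\sum_j\int_{x_j}^{x_{j+1}}G_s(x_i-y)\,|u_0(x_j)-u_0(y)|\,dy$ does not vanish uniformly in $x_i$ when $u_0$ is not uniformly continuous. Take $u_0(x)=\sin(x^2)$ and $|x_i|\gg L$. So your remark that all bounds hold in plain sup norm is false for this term. Keep the weight instead: since $e^{-\lambda|x_i|}\le e^{\lambda|x_i-x_j|}e^{-\lambda|x_j|}$ and $\int_\R G_s(z)e^{\lambda|z|}\,dz\lesssim_{T,\lambda}1$ for $s\le T$, the weighted term is at most $C_{T,\lambda}\,\omega_{u_0,\lambda}(1/L)$. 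This is the same device you already use for the third piece.

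Second, the error from freezing $G_s(x_i-\cdot)$ on each cell should be bounded by $L^{-1}\|G_s'\|_{L^1(\R)}\lesssim L^{-1}s^{-1/2}$. The Lipschitz (sup) norm of $G_s$ does not work, because it does not sum over infinitely many cells unless you also truncate to a window and use Gaussian tails.

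With these changes the argument is complete.
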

\begin{proof}
	Let $T,\lambda >0$ be arbitrary but fixed. We will first prove~\eqref{ineq:initial semigroup convergence}.  For each $L \in \N$, set $r_L:=u_L-\pi_L^\text{disc}u_0$. We first note that, for any $f:\Z/L\to\R$ with $\norm{f}_{(\lambda),L}<\infty$,
	\begin{align}\label{ineq:continuous and discrete weighted norm}
		\norm{\pi_L^\text{int}f}_{(\lambda)}\le e^{\lambda/L}\norm{f}_{(\lambda),L}.
	\end{align}
	Indeed, if $x\in[x_i,x_{i+1}]$, then $\pi_L^\text{int}f(x)$ is a convex combination of $f(x_i)$ and $f(x_{i+1})$, and $\max\{|x_i|-|x|,|x_{i+1}|-|x|\}\le L^{-1}$. Hence, by Lemma~\ref{lem:weighted discrete semigroup stability},~\eqref{ineq:continuous and discrete weighted norm} and~\eqref{ineq:initial discretized convergence weighted},
	\begin{align*}
		\sup_{t\in[0,T]}\norm{\pi_L^\text{int}P^L_{\lfloor tL^2\rfloor}r_L}_{(\lambda)}
		&\le 2e^{\lambda/L}\exp\left\{\frac{\alpha T\lambda^2}{2}\cosh\lambda\right\}\norm{r_L}_{(\lambda),L}\longrightarrow0, \qquad \text{ as  }L \to \infty,
	\end{align*}
	Therefore, by the triangle inequality, it suffices to prove
	\begin{align*}
		\sup_{t\in[0,T]}\left\|\pi_L^\text{int}P^L_{\lfloor tL^2\rfloor}\pi_L^\text{disc}u_0-P_tu_0\right\|_{(\lambda)}\longrightarrow0,\qquad \text{as }L \to \infty.
	\end{align*}

	Suppose first that $u_0\in\mathcal C_0(\R)$. For every $\varphi\in\mathcal C_c^\infty(\R)$ and $x_i \in \Z/L$,
	\begin{align*}
		L^2(P_1^L-I)\pi_L^\text{disc}\varphi(x_i)
		=
		\frac{\alpha L^2}{2}
		\bigl(
		\varphi(x_i+L^{-1})-2\varphi(x_i)
		+\varphi(x_i-L^{-1})
		\bigr).
	\end{align*}
	Taylor's theorem therefore gives
	\begin{align*}
		\sup_{x_i\in\Z/L}
		\left|
		L^2(P_1^L-I)\pi_L^\text{disc}\varphi(x_i)
		-\frac{\alpha}{2}\varphi''(x_i)
		\right|
		&\longrightarrow0 \qquad \text{ as }L \to \infty.
	\end{align*}
	Since $\mathcal C_c^\infty(\R)$ is a core for the generator $\frac{\alpha}{2}\Delta$ of the heat semigroup on $\mathcal C_0(\R)$, we may apply \cite{ethier2009markov}*{Chapter 1, Theorem 6.5} with the contractions $P_1^L$ on the supremum-norm spaces of lattice functions that vanish at infinity, time steps $L^{-2}$, and comparison maps $\pi_L^\text{disc}$. It follows that
	\begin{align*}
		J_{1,L}
		:=\sup_{t\in[0,T]}\sup_{x_i\in\Z/L}
		\abs{P^L_{\lfloor tL^2\rfloor}\pi_L^\text{disc}u_0(x_i)-P_tu_0(x_i)}
		\longrightarrow0,\qquad \text{ as } L \to \infty.
	\end{align*}
	Because $\pi_L^\text{int}$ is a contraction in the supremum norm, we see that for all $L \in \N$,
	\begin{align*}
		\sup_{t\in[0,T]}
		\left\|
		\pi_L^\text{int}P^L_{\lfloor tL^2\rfloor}\pi_L^\text{disc}u_0
		-\pi_L^\text{int}\pi_L^\text{disc}P_tu_0
		\right\|_{\mathcal C_0(\R)}
		\le J_{1,L}.
	\end{align*}
	Moreover, for $t>0$ and $x,y\in\R$,
	\begin{align*}
		\abs{P_tu_0(x)-P_tu_0(y)}
		&\le \int_\R G_t(z)\abs{u_0(x-z)-u_0(y-z)}\,\mathrm dz
		\le \omega_{u_0}(\abs{x-y}),
	\end{align*}
	and the same bound holds at $t=0$ because $P_0u_0=u_0$. Hence, it holds for all $L \in \N$   that
	\begin{align*}
		\sup_{t\in[0,T]}\norm{P_tu_0-\pi_L^\text{int}\pi_L^\text{disc}P_tu_0}_{\mathcal C_0(\R)}
		\le \omega_{u_0}(1/L).
	\end{align*}
	Since every $u_0\in\mathcal C_0(\R)$ is uniformly continuous, $\omega_{u_0}(1/L)\to0$. Together with $J_{1,L}\to0$, this proves the desired convergence for $u_0\in\mathcal C_0(\R)$.

	Now suppose $u_0 \in \mathcal C_b(\R)$. For a bounded continuous function $v$, set
	\begin{align*}
		D_L(v):=\sup_{t\in[0,T]}\norm{\pi_L^\text{int}P^L_{\lfloor tL^2\rfloor}\pi_L^\text{disc}v-P_tv}_{(\lambda)}.
	\end{align*}
	For each $M\ge1$, choose a continuous function $\chi_M:\R\to[0,1]$ such that $\chi_M=1$ on $[-M,M]$ and $\chi_M=0$ outside $[-M-1,M+1]$, and set $v_M:=(1-\chi_M)u_0$. Then
	\begin{align*}
		D_L(u_0)\le D_L(\chi_Mu_0)+D_L(v_M),
	\end{align*}
	where $D_L(\chi_Mu_0)\to0$ for each fixed $M$ by the preceding $\mathcal C_0(\R)$ case.

	We next control $D_L(v_M)$ uniformly in $L$. Fix $M_0>0$ and take $M>M_0+1$, with $R:=M-M_0-1$. Since $P_k^L$, $P_t$, and $\pi_L^\text{int}$ are contractions in the supremum norm,
	\begin{align*}
		\sup_{\substack{|x|>M_0\\t\in[0,T]}}
		e^{-\lambda|x|}\abs{\pi_L^\text{int}P^L_{\lfloor tL^2\rfloor}\pi_L^\text{disc}v_M(x)-P_tv_M(x)}
		\le 2\norm{u_0}_{\mathcal C_b(\R)}e^{-\lambda M_0}.
	\end{align*}
	Since $v_M$ vanishes on $[-M,M]$, translation invariance of $q_k^L$ gives, for all $x_j \in \Z/L$ with $\abs{x_j} \le M_0+1$, it holds that
	\begin{align*}
		\abs{P_k^L\pi_L^\text{disc}v_M(x_j)}
		&\le \norm{u_0}_{\mathcal C_b(\R)}
		\sum_{|x_\ell|>M}q_k^L(0,x_\ell-x_j)\\
		&\le \norm{u_0}_{\mathcal C_b(\R)}
		\sum_{|z_\ell|>R}q_k^L(0,z_\ell),
	\end{align*}
	In the last line we wrote $z_\ell:=x_\ell-x_j$ and used the fact that $|z_\ell|\ge |x_\ell|-|x_j|>M-M_0-1=R$. Since for all $\abs{x} \le M_0$, there exists $x_i \in \Z/L$ with $\abs{x_i} \le M_0+1$ and $x \in [x_{i-1},x_{i}]$, it follows that
	\begin{align*}
		\sup_{\substack{|x|\le M_0\\k\le TL^2}}
		\abs{\pi_L^\text{int}P^L_k\pi_L^\text{disc}v_M(x)}
		&\le \norm{u_0}_{\mathcal C_b(\R)}
		\sup_{k\le TL^2}\sum_{|z_\ell|>R}q_k^L(0,z_\ell).
	\end{align*}
	Similarly, for $|x|\le M_0$ and $t>0$,
	\begin{align*}
		\abs{P_tv_M(x)}
		&\le \norm{u_0}_{\mathcal C_b(\R)}
		\int_{|y|>M}G_t(x-y)\,\mathrm dy\\
		&\le \norm{u_0}_{\mathcal C_b(\R)}
		\int_{|z|>R}G_t(z)\,\mathrm dz,
	\end{align*}
	At $t=0$, both $\pi_L^\text{int}\pi_L^\text{disc}v_M(x)$ and $P_0v_M(x)=v_M(x)$ vanish for $|x|\le M_0$. Therefore,
	\begin{align*}
		\sup_{\substack{|x|\le M_0\\t\in(0,T]}}
		\abs{P_tv_M(x)}
		&\le \norm{u_0}_{\mathcal C_b(\R)}
		\sup_{t\in(0,T]}\int_{|z|>R}G_t(z)\,\mathrm dz.
	\end{align*}
	Under $q_k^L(0,\cdot)$, the coordinate variable has the same law as $S_k/L$ and therefore has variance $\alpha k/L^2$; likewise, $G_t$ has variance $\alpha t$. Hence Chebyshev's inequality yields
	\begin{align*}
		\sup_{k\le TL^2}\sum_{|z_\ell|>R}q_k^L(0,z_\ell)
		+\sup_{t\in(0,T]}\int_{\abs{z}>R}G_t(z)\,\mathrm dz
		\le \frac{2\alpha T}{R^2}.
	\end{align*}
	Consequently, for every $L\in\N$,
	\begin{align*}
		D_L(v_M)
		\le 2\norm{u_0}_{\mathcal C_b(\R)}e^{-\lambda M_0}
		+\frac{2\alpha T}{R^2}\norm{u_0}_{\mathcal C_b(\R)}.
	\end{align*}
	Since $D_L(\chi_Mu_0)\to0$ for each fixed $M$, we obtain
	\begin{align*}
		\limsup_{L\to\infty}D_L(u_0)
		\le 2\norm{u_0}_{\mathcal C_b(\R)}e^{-\lambda M_0}
		+\frac{2\alpha T}{(M-M_0-1)^2}\norm{u_0}_{\mathcal C_b(\R)}.
	\end{align*}
	Choosing first $M_0$ and then $M$ sufficiently large shows that $D_L(u_0)\to0$, proving~\eqref{ineq:initial semigroup convergence}.

	It remains to prove~\eqref{ineq:initial temporal interpolation gap}. By the definition of $\widehat A_L$, we see it holds for all $t \ge 0$ that
	\begin{align*}
		\norm{A_L(t)-\widehat A_L(t)}_{(\lambda)}
		&\le\norm{\pi_L^\text{int}(P^L_{\lfloor tL^2\rfloor+1}-P^L_{\lfloor tL^2\rfloor})u_L}_{(\lambda)}\\
		&{\overset{\eqref{ineq:continuous and discrete weighted norm}}{\le}}{e^{\lambda/L}\norm{P_{\lfloor tL^2\rfloor}^L(P_1^L-I)u_L}_{(\lambda),L}}\\
		&\le 2e^{\lambda/L}\exp\left\{\frac{\alpha T\lambda^2}{2}\cosh\lambda\right\}
		\norm{(P^L_1-I)u_L}_{(\lambda),L}.
	\end{align*}
	where we used Lemma~\ref{lem:weighted discrete semigroup stability} in the last inequality.

	Recall that $r_L=u_L-\pi_L^\text{disc}u_0$ and note that for any $f: \Z/L \to \R$ with $\norm{f}_{(\lambda), L}< \infty$,
	\begin{align*}
		e^{-\lambda|x_i|}\abs{f(x_{i\pm1})}
		\le e^{\lambda/L}\norm{f}_{(\lambda),L},\qquad \text{ for all  } x_i \in \Z/L.
	\end{align*}
	Hence, it holds for all $L \in \N$ that
	\begin{align*}
		\norm{(P^L_1-I)r_L}_{(\lambda),L}
		&\le \alpha(1+e^{\lambda/L})\norm{r_L}_{(\lambda),L},\\
		\norm{(P^L_1-I)\pi_L^\text{disc}u_0}_{(\lambda),L}
		&\le \alpha\omega_{u_0,\lambda}(1/L).
	\end{align*}
	Since $u_L=r_L+\pi_L^\text{disc}u_0$, the preceding estimates imply that the following holds uniformly in $L \in \N$,
	\begin{align*}
		\sup_{t\in[0,T]}\norm{A_L(t)-\widehat A_L(t)}_{(\lambda)}
		\lesssim
		\norm{r_L}_{(\lambda),L}
		+\omega_{u_0,\lambda}(1/L),
	\end{align*}
	which proves~\eqref{ineq:initial temporal interpolation gap}.

	Finally, we show $\omega_{u_0,\lambda}(\delta)\to0$ as $\delta\downarrow0$ under the stated assumption $u_0\in\mathcal C_b(\R)$. Indeed, given $\varepsilon>0$, choose $K>0$ such that $2\norm{u_0}_{\mathcal C_b(\R)}e^{-\lambda K}<\varepsilon$. By uniform continuity of $u_0$ on $[-K-1,K+1]$, there exists $h_\varepsilon\in(0,1]$ such that $\abs{u_0(x)-u_0(y)}<\varepsilon$ whenever $x,y\in[-K-1,K+1]$ and $|x-y|\le h_\varepsilon$. For $\delta\le h_\varepsilon$, the part of~\eqref{eq:weighted_modulus_of_continuity} with $|x|\le K$ is therefore bounded by $\varepsilon$, while the part with $|x|>K$ is bounded by $2\norm{u_0}_{\mathcal C_b(\R)}e^{-\lambda K}<\varepsilon$. Hence $\omega_{u_0,\lambda}(\delta)\le\varepsilon$, which completes the proof.
\end{proof}

\begin{lemma}[{\cite{di2012hitchhiker}*{Theorem~8.2}}]\label{lem:GRR}
	Let $K \subset \R^d$ be a compact set with Lipschitz boundary, let $p \in [1,\infty)$ and $s \in (0,1)$ so that $sp>d$. Then there exists $C>0$ depending only on $s,p$ and $K$ so that, for every continuous $f:K\to\R$,
	\begin{align*}
		\norm{f}_{\mathcal C_K^{\gamma}} \le C\pr{\norm{f}_{L^p_K}^p + \int_K \int_K \frac{\abs{f(x) - f(y)}^p}{\abs{x-y}^{d+sp}} \mathrm dx \mathrm dy}^\frac{1}{p}
	\end{align*}
	where $\gamma = \pr{sp - d} / p$.
\end{lemma}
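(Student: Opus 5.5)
This is the fractional Sobolev (Morrey-type) embedding $W^{s,p}(K)\hookrightarrow\mathcal C^{0,\gamma}(K)$ with $\gamma=s-d/p$. I would prove it by a Campanato/Garsia--Rodemich--Rumsey averaging argument on small balls, using the Lipschitz boundary of $K$ only to guarantee uniform interior cones.

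\textbf{Geometric input and averages.} Since $K$ is compact with Lipschitz boundary, there are $r_0>0$ and $c_K>0$ with $|B_r(x)\cap K|\ge c_K r^d$ for all $x\in K$ and $r\le r_0$; this uniform cone condition is the only place the boundary enters. Write $f_{x,r}$ for the mean of $f$ over $B_r(x)\cap K$.

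\textbf{Oscillation of averages.} By Jensen,
\begin{align*}
	\frac{1}{|B_r(x)\cap K|}\int_{B_r(x)\cap K}\abs{f(z)-f_{x,r}}^p\,\mathrm dz
	\le \frac{(2r)^{d+sp}}{(c_Kr^d)^2}\int_{B_r(x)\cap K}\int_{B_r(x)\cap K}\frac{\abs{f(z)-f(w)}^p}{\abs{z-w}^{d+sp}}\,\mathrm dz\,\mathrm dw .
\end{align*}
Denote by $[f]^p$ the double integral in the statement. The right-hand side is then at most $C r^{sp-d}[f]^p$. This gives
\[
	\abs{f_{x,r}-f_{x,r/2}}\le C r^{s-d/p}[f]=Cr^\gamma[f].
\]
Summing the dyadic series, which converges because $\gamma>0$ exactly when $sp>d$, and using continuity of $f$ so that $f_{x,2^{-k}r}\to f(x)$, yields
\[
	\abs{f(x)-f_{x,r}}\le C r^\gamma[f].
\]

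\textbf{Hölder estimate.} For $x,y\in K$ with $r:=\abs{x-y}\le r_0/2$, compare both $f_{x,r}$ and $f_{y,r}$ with $f_{x,2r}$. The set $B_r(y)\cap K$ sits inside $B_{2r}(x)\cap K$ and both have measure comparable to $r^d$, so the same Jensen bound gives $\abs{f_{x,r}-f_{y,r}}\le Cr^\gamma[f]$. Combined with the previous step, $\abs{f(x)-f(y)}\le C\abs{x-y}^\gamma[f]$. For $\abs{x-y}>r_0/2$ the bound follows from the sup bound and the compactness of $K$.

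\textbf{Sup bound.} Take $r=r_0$. Then $\abs{f(x)}\le\abs{f_{x,r_0}}+Cr_0^\gamma[f]$, and Hölder gives $\abs{f_{x,r_0}}\le (c_Kr_0^d)^{-1/p}\norm{f}_{L^p_K}$. Hence $\norm{f}_{\mathcal C^0_K}+\sup_{x\neq y}\frac{\abs{f(x)-f(y)}}{\abs{x-y}^\gamma}\le C(\norm{f}_{L^p_K}+[f])$, which is bounded by $C(\norm{f}^p_{L^p_K}+[f]^p)^{1/p}$.

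\textbf{Main obstacle.} The delicate step is uniformity near the boundary: the comparability of the measures $|B_r\cap K|\asymp r^d$ and the nesting of intersected balls. This is exactly where the Lipschitz (cone) condition is needed. In the paper's application $K$ is an interval or a rectangle, so this step is elementary.
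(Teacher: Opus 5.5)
Your proof is correct. The Jensen estimate gives $\frac{1}{|B_r(x)\cap K|}\int_{B_r(x)\cap K}|f-f_{x,r}|^p\lesssim r^{sp-d}[f]^p$. The density bound at scales $r/2$, $r$ and $2r\le r_0$ turns this into both the dyadic comparison and the comparison between neighbouring centres. Since $\gamma>0$, the telescoping series converges, and the far-apart case and the sup bound close the argument.

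The paper itself gives no proof of this lemma; it imports it from Di Nezza--Palatucci--Valdinoci. There the result is stated for $W^{s,p}$-extension domains with no external cusps. It is obtained by bounding a Campanato seminorm $\sup_{x,\rho}\rho^{-sp}\int_{B_\rho(x)\cap\Omega}|f-\langle f\rangle|^p$ with the same Jensen double-integral inequality you write, and then invoking Campanato's characterization of H\"older spaces.

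So the key inequality is shared. What you do differently is prove the Campanato step by hand: you telescope the averages $f_{x,2^{-k}r}$ down to $f(x)$, and you compare $f_{x,r}$ with $f_{y,r}$ through $f_{x,2r}$, using only averages over $B_r(x)\cap K$.

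Your route is self-contained. It also shows that the Lipschitz boundary enters only through the density bound $|B_r(x)\cap K|\ge c_K r^d$ for $r\le r_0$, with no extension operator needed. For the intervals $I_m$ and rectangles $[0,T]\times I_m$ where the paper applies the lemma, this bound is elementary and translation invariant. Hence the uniformity in $m$ used in Lemma~\ref{lem:holder norm bound uniform} can be read off directly from your constants.

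The citation, in exchange, is shorter. It also covers general extension domains and $f\in L^p$ (via a continuous representative), rather than continuous $f$ on a set satisfying the density bound.
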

\begin{lemma}\label{lem:holder norm bound uniform}
	For $m \in \Z$, denote $I_m = [m,m+1]$ and let $p>8$  and $\kappa_p>3$ be given as in~\eqref{ineq:p_moment bound}. Denote $Z_L(t,x):= \widetilde \xi^L_t(x) - \pi_L^\text{int}P^L_{\lfloor tL^2\rfloor}u_L(x)$, where $u_L:=\xi_0^L$, for each $L \in \N$, and $(t,x) \in \R_+\times \R$.  Set $\kappa:=\frac p4\wedge\frac{1+\kappa_p}{2}>2$. Then, for every $T>0$ and $0<\gamma<(\kappa-2)/p$,
	\begin{align*}
		\sup_{L} \sup_{m \in \Z}\E \bracket{\sup_{t\in [0,T]}\norm{Z_L(t,\cdot)}_{\mathcal C_{I_m}^{\gamma}}^p} < \infty.
	\end{align*}
\end{lemma}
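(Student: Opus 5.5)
The plan is to apply the fractional Sobolev embedding of Lemma~\ref{lem:GRR} in the two variables $(t,x)\in[0,T]\times I_m$, with $d=2$. Choosing $s\in(0,1)$ with $sp-2=\gamma p$ turns the space--time H\"older seminorm into a double integral of $p$-th moments of increments. The sup over $t$ of the spatial $\mathcal C^\gamma_{I_m}$ norm is then controlled by the joint H\"older norm of $Z_L$ on $K=[0,T]\times I_m$. First, $Z_L$ is continuous in $x$ but piecewise constant in $t$, since $\xi^L_t$ jumps at times in $L^{-2}\Z_+$. So I would first replace $Z_L$ by $\widehat Z_L$, its linear interpolation in time between consecutive grid times $k/L^2$, which is continuous on $K$. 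The difference $\sup_t\|Z_L-\widehat Z_L\|_{\mathcal C^\gamma_{I_m}}$ is bounded by the corresponding norm of a single one-step increment of $I_2+I_3$. Since $p>8$, its $p$-th moment over the $O(L^2)$ time steps in $[0,T]$ and the $O(L)$ lattice points in $I_m$ can be handled by a union bound. This uses the moment bounds $\lesssim L^{-2\kappa}$ from Proposition~\ref{prop:temporal_increment_moment_bound} together with an inverse inequality ($\gamma$-H\"older seminorm of a piecewise linear function $\le L^{\gamma}\times$ the lattice oscillation). Alternatively, one can simply work with $\widehat Z_L$ throughout and note that $\sup_t$ of $Z_L$ is attained at grid times, where $Z_L=\widehat Z_L$.

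The key estimate is
\[
\E|\widehat Z_L(t,x)-\widehat Z_L(s,y)|^p\lesssim |t-s|^{\kappa}+|x-y|^{\frac p4\wedge(1+\kappa_p)}\lesssim (|t-s|^{1/2}+|x-y|)^{2\kappa},
\]
uniformly in $L$, $m$ and $(t,x),(s,y)\in K$. Note that $\frac p4\wedge(1+\kappa_p)\ge 2\kappa$ and $|x-y|\le1$. At lattice points and grid times, $Z_L=I_2^L+I_3^L$, so the bound follows by the triangle inequality from Propositions~\ref{prop:spatial_increment_moment_bound} and~\ref{prop:temporal_increment_moment_bound}; uniformity in $m$ holds because those bounds are uniform in $x_i$. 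For general points I would pass through the interpolation. Linear interpolation in $x$ (the convolution identity~\eqref{eq:interpolation convolution identity}) and in $t$ are convex combinations. When $|x-y|<1/L$ or $|t-s|<L^{-2}$, one writes the increment as a fraction of a lattice increment and uses that the fraction is $\le (L|x-y|)$, respectively $(L^2|t-s|)$, combined with $(L|x-y|)^p L^{-2\kappa}\le |x-y|^{2\kappa}$ since $2\kappa\le p$. This sub-lattice-scale case is the step that needs the most care, but it is routine.

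Plugging into Lemma~\ref{lem:GRR}, I would use the parabolic metric, after the change of variables $t=\tau^2$-type scaling, or more simply the Euclidean one with the weaker bound $(|t-s|+|x-y|)^{\kappa}$, valid since $|t-s|^{\kappa}\le$ and $|x-y|^{2\kappa}\le|x-y|^{\kappa}$. The double integral $\int_K\int_K \E|\cdot|^p/|z-z'|^{2+sp}$ is finite provided $\kappa>sp=\gamma p+2$, i.e. $\gamma<(\kappa-2)/p$, which is exactly the hypothesis. The $L^p_K$ term is bounded uniformly because $|Z_L|\le 2$ ($\widetilde\xi^L$ and $P^L u_L$ take values in $[0,1]$). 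Taking expectations and applying Fubini gives a bound independent of $L$ and $m$. The main obstacle is the bookkeeping in the interpolation step, ensuring the increment bound holds at all real $(t,x)$ and not only on the grid. I expect the elementary convexity argument above to suffice.
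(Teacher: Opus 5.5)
Your proof follows the paper's own argument. You reduce to the temporal interpolation; your observation that $Z_L(t,\cdot)=\widehat Z_L(t_k,\cdot)$ for $t\in[t_k,t_{k+1})$ is exactly what justifies this, so the union-bound route is unnecessary. You then extend the grid increment bounds of Propositions~\ref{prop:spatial_increment_moment_bound} and~\ref{prop:temporal_increment_moment_bound} to all of $[0,T]\times I_m$ by linear interpolation, and apply Lemma~\ref{lem:GRR} with $d=2$ and $s=\gamma+2/p$, which requires precisely $\gamma p+2<\kappa$. Bounding the $L^p$ term by $\abs{Z_L}\le 2$ instead of using $\widetilde Z_L(0,\cdot)=0$ is a harmless simplification.

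One claim is false and must be removed. The inequality $\frac p4\wedge(1+\kappa_p)\ge 2\kappa=\frac p2\wedge(1+\kappa_p)$ fails whenever $1+\kappa_p>\frac p4$; for example, $p=12$ and $\kappa_p=4$ give $3<5$. So the parabolic bound $(\abs{t-s}^{1/2}+\abs{x-y})^{2\kappa}$ does not follow from the propositions. For the same reason, the spatial sub-lattice estimate cannot use a lattice increment of order $L^{-2\kappa}$.

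The fix is what the paper does. Use $\frac p4\wedge(1+\kappa_p)\ge\kappa$ and $\abs{x-y}\le 1$ directly. In the spatial sub-lattice step, use $(L\abs{x-y})^pL^{-\kappa}\le\abs{x-y}^\kappa$, which holds since $L\abs{x-y}<1$ and $\kappa\le p$. This gives the Euclidean bound $(\abs{t-s}+\abs{x-y})^\kappa$, which is all your final finiteness condition uses.
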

\begin{proof}
	Let  $\widetilde Z_L(t,x)$ be obtained through linearly interpolating in time of $Z_L$ between the points $t_i \in \N/L^2 \cup \{0\}$. Hence $\widetilde Z_L$ is a continuous random field on $\R_+\times \R$, and it is enough to prove the inequality for $\widetilde Z_L$.

	Fix an arbitrary $T>0$ and $0<\gamma<(\kappa-2)/p$,  set $\varsigma:=\gamma+2/p$, so that $2/p<\varsigma<\kappa/p$. Since $\frac p4\wedge(1+\kappa_p)\ge\kappa$, Propositions~\ref{prop:spatial_increment_moment_bound} and~\ref{prop:temporal_increment_moment_bound}, together with the triangle inequality, imply that
	\begin{align*}
		\E\bracket{\abs{Z_L(t_k,x_i)-Z_L(t_j,x_\ell)}^p}
		\lesssim
		\abs{t_k-t_j}^{\kappa}+\abs{x_i-x_\ell}^{\kappa}
	\end{align*}
	uniformly for $t_j,t_k\in L^{-2}\Z_+\cap[0,T+1]$, $m \in \Z$ and $x_i,x_\ell\in I_m\cap(\Z/L)$. Clearly, by the definition of linear interpolation, this implies that uniformly in $m$ and all sufficiently large $L$, it holds uniformly in $t,s \in [0,T]$, $m \in \Z$ and  $x,y \in I_m$ that
	\begin{align}\label{ineq:Z interpolated space time increment}
		\E\bracket{\abs{\widetilde Z_L(t,x)-\widetilde Z_L(s,y)}^p}
		\lesssim\pr{\abs{t-s}+\abs{x-y}}^{\kappa}.
	\end{align}
	Moreover, since $\widetilde Z_L(0,x)=0$ for all $x \in \R$, we see from~\eqref{ineq:Z interpolated space time increment} that
	\begin{align}\label{ineq:Z interpolated lp bound}
		\sup_L\sup_{m\in\Z}\E\bracket{
		\norm{\widetilde Z_L}_{L^p([0,T]\times I_m)}^p}
		<\infty.
	\end{align}
	Then by Lemma~\ref{lem:GRR},  it holds uniformly in $m \in \Z$ and sufficiently large $L\ge 1$ that
	\begin{align*}
		\E \bracket{\norm{\widetilde Z_L}_{\mathcal C_{[0,T]\times I_m}^{\gamma}}^p}&{\lesssim} {\E\bracket{\norm{\widetilde Z_L}_{L^p([0,T]\times I_m)}^p}+ \E \bracket{\iint_{[0,T]^2}\iint_{I_m\times I_m} \frac{\abs{\widetilde Z_L(t,x) -\widetilde Z_L(s,y)}^p}{\pr{\abs{t-s}+\abs{x-y}}^{2+\varsigma p}} \mathrm dx\mathrm dy \mathrm dt \mathrm ds}}\\
		&{\overset{\eqref{ineq:Z interpolated lp bound}}{\lesssim}} {1+ \iint_{[0,T]^2}\iint_{I_m\times I_m} \frac{\E \bracket{\abs{\widetilde Z_L(t,x) -\widetilde Z_L(s,y)}^p}}{\pr{\abs{t-s}+\abs{x-y}}^{2+\varsigma p}} \mathrm dx\mathrm dy \mathrm dt \mathrm ds}\\
		&{\overset{\eqref{ineq:Z interpolated space time increment}}{\lesssim}} {1+ \iint_{[0,T]^2}\iint_{I_m^2} \pr{\abs{t-s}+\abs{x-y}}^{\kappa - \varsigma p - 2}\mathrm dx\mathrm dy \mathrm dt \mathrm ds}\\
		&{<} {\infty},
	\end{align*}
	which is the desired result.
\end{proof}

We will need the following interpolation inequality, whose proof can be found in Appendix~\ref{app:heat-kernel-proofs}.
\begin{lemma}\label{lem:holder-lebesgue-interpolation}
	Let $K \subset \R$ be any compact interval, let $p \ge 1$ and $\gamma \in (0,1]$, then for any continuous $f:K \to \R$, the following inequality holds
	\begin{align*}
		\norm{f}_{\mathcal C_K^0} \lesssim \norm{f}_{L^p_K}^{1-\theta}\norm{f}^\theta_{\mathcal C^\gamma_K}+\norm{f}_{L^p_K},
	\end{align*}
	where $\theta = \frac{1}{\gamma p+1}$, and the implicit constant depends only on $K$, $p\ge 1$ and $\gamma \in (0,1]$.
\end{lemma}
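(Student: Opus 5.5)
We reduce by scaling to the case $K=[0,1]$: any compact interval is an affine image of $[0,1]$, and under $x\mapsto a+\ell x$ the three norms $\norm{\cdot}_{\mathcal C^0_K}$, $\norm{\cdot}_{L^p_K}$ and $\norm{\cdot}_{\mathcal C^\gamma_K}$ change by multiplicative constants depending only on $\ell,p,\gamma$. This dependence is allowed in the statement. So let $f$ be continuous on $K$, and write $M:=\norm{f}_{\mathcal C^0_K}=\abs{f(x_0)}$ for some $x_0\in K$ (by compactness), and $H:=\sup_{x\neq y}\abs{f(x)-f(y)}/\abs{x-y}^\gamma\le\norm{f}_{\mathcal C^\gamma_K}$.

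The core step is a lower bound on the $L^p$ norm near the maximum point. For $\abs{x-x_0}\le r$ we have $\abs{f(x)}\ge M-Hr^\gamma$. Choose $r:=(M/(2H))^{1/\gamma}$, so that $\abs{f}\ge M/2$ on $[x_0-r,x_0+r]\cap K$. Since $K$ is an interval of length $\ell$, this set has length at least $\min\{r,\ell\}$. We then split into two cases.

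\emph{Case $r\le\ell$.} Here
\[
\norm{f}_{L^p_K}^p\ge (M/2)^p\, r = (M/2)^p (M/(2H))^{1/\gamma},
\]
so $M^{p+1/\gamma}\lesssim \norm{f}_{L^p_K}^p H^{1/\gamma}$. Raising both sides to the power $\gamma/(\gamma p+1)$ gives
\[
M\lesssim \norm{f}_{L^p_K}^{\gamma p/(\gamma p+1)} H^{1/(\gamma p+1)}=\norm{f}_{L^p_K}^{1-\theta}H^{\theta},
\]
with $\theta=1/(\gamma p+1)$, as required.

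\emph{Case $r>\ell$.} Then $\abs{f}\ge M/2$ on all of $K$, so $\norm{f}_{L^p_K}\ge (M/2)\ell^{1/p}$, that is, $M\lesssim\norm{f}_{L^p_K}$. This produces the additive term in the statement.

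The degenerate case $H=0$ means $f$ is constant, and it falls under the second case directly. Combining the two cases and using $H\le\norm{f}_{\mathcal C^\gamma_K}$ yields the lemma. There is no real obstacle; the only point requiring care is ensuring that the neighborhood lies inside $K$ when $x_0$ is an endpoint. This is handled by the bound on the length by $\min\{r,\ell\}$ (with a factor $1/2$ lost at an endpoint, which is harmless).
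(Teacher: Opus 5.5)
Your proof is correct, and it reaches the inequality by a different route than the paper. The paper argues at an arbitrary point $x\in K$. It bounds $\abs{f(x)}$ by the average of $\abs{f}$ over $I=K\cap[x-r,x+r]$ plus the oscillation $r^\gamma\abs{f}_{\mathcal C^\gamma_K}$. H\"older's inequality then gives $\norm{f}_{\mathcal C^0_K}\lesssim r^{-1/p}\norm{f}_{L^p_K}+r^\gamma\abs{f}_{\mathcal C^\gamma_K}$ for every $r\in(0,1]$. Finally it optimizes with $r=\min\{1,(\norm{f}_{L^p_K}/\abs{f}_{\mathcal C^\gamma_K})^{1/(\gamma+1/p)}\}$, and the cap at $1$ produces the additive $\norm{f}_{L^p_K}$ term.

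You instead work at a maximizer $x_0$ and choose the radius from the unknown ratio $M/H$, so that $\abs{f}\ge M/2$ on the neighbourhood. This turns the estimate into a direct lower bound on $\norm{f}_{L^p_K}$, with no averaging, no H\"older step and no optimization in $r$. Your case $r>\ell$ plays the role of the paper's cap $r\le 1$.

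Your version gives explicit constants. In the first case you get exactly $M\le 2\norm{f}_{L^p_K}^{1-\theta}H^\theta$, and $K$ enters only through the additive term $2\ell^{-1/p}\norm{f}_{L^p_K}$. In the paper, $\ell$ enters both terms through $\abs{I}\gtrsim r$. The paper's averaging bound, in turn, holds at every point and does not need the supremum to be attained, so it localizes more easily.

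Your opening reduction to $K=[0,1]$ is never used, since you then work with a general interval of length $\ell$; you can drop it.
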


\begin{thm}[Tightness in $\mathcal D(\R_+; \mathcal C_\R^\text{tem})$]\label{thm:tightness}
	Suppose $\widetilde\xi_0^L\to u_0$ in $\mathcal C_\R^\mathrm{tem}$ for some $u_0\in\mathcal C_b(\R)$, and write $u_L:=\xi_0^L$. Let $Y_L$ be the temporal linear interpolation of $\widetilde\xi^L$ between the points in $\Z_+/L^2$. Then $\{Y_L\}_{L\in\N}$ is tight in $\mathcal C(\R_+;\mathcal C_\R^\mathrm{tem})$.

	Let $p >8$ and $\kappa_p >3$ be as in~\eqref{ineq:p_moment bound}, set $\kappa:=\frac{p}{4}\wedge\frac{1+\kappa_p}{2}>2$. Then for every $T,\lambda>0$ and $0<\varepsilon<2(\kappa-2)$, it holds uniformly in $L \in \N$ that
	\begin{align}\label{ineq:full temporal interpolation gap}
		\E\bracket{
		\sup_{t\in[0,T]}
		\norm{\widetilde\xi_t^L-Y_L(t)}_{(\lambda)}^p}
		\lesssim
		L^{-2(\kappa-2)+\varepsilon}
		+\norm{u_L-\pi_L^\text{disc}u_0}_{(\lambda),L}^p
		+\omega_{u_0,\lambda}(1/L)^p.
	\end{align}
	As a consequence, $\{\widetilde\xi^L\}_{L\in\N}$ is tight in $\mathcal D(\R_+;\mathcal C_\R^\mathrm{tem})$, and all its limit points have continuous paths.
\end{thm}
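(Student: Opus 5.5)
We split $\widetilde\xi^L_t=\pi_L^\text{int}P^L_{\lfloor tL^2\rfloor}u_L+Z_L(t,\cdot)$, as in Lemma~\ref{lem:holder norm bound uniform}, and write $Y_L=\widehat A_L+\widetilde Z_L$, where $\widehat A_L$ and $\widetilde Z_L$ are the temporal interpolations of $A_L$ and $Z_L$. We then treat the two pieces separately.

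\textbf{Deterministic part.} By Lemma~\ref{lem:initial convergence}, $\widehat A_L$ converges in $\mathcal C([0,T];\mathcal C^\mathrm{tem}_\R)$ to $t\mapsto P_tu_0$. This requires $\norm{u_L-\pi_L^\text{disc}u_0}_{(\lambda),L}\to0$, which follows from $\widetilde\xi_0^L\to u_0$ in $\mathcal C^\mathrm{tem}_\R$, because $\widetilde\xi^L_0$ agrees with $u_L$ on the lattice. The limit $t\mapsto P_tu_0$ is continuous into $\mathcal C^\mathrm{tem}_\R$, so the deterministic part is tight; it is in fact convergent.

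\textbf{Stochastic part.} Lemma~\ref{lem:holder norm bound uniform} gives a uniform bound on $\E\bigl[\norm{\widetilde Z_L}^p_{\mathcal C^{\gamma}([0,T]\times I_m)}\bigr]$; this is the space--time H\"older bound established inside its proof. We sum over $m$ with weights $e^{-\lambda|m|}$. This gives uniform control of weighted H\"older norms on $[0,T]\times\R$ for every $\lambda>0$. By Arzel\`a--Ascoli, sets of the form $\{f:\sup_m e^{-\lambda_k |m|}\norm{f}_{\mathcal C^\gamma([0,T]\times I_m)}\le M_k\ \forall k\}$ are compact in $\mathcal C([0,T];\mathcal C^\mathrm{tem}_\R)$. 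Markov's inequality then yields tightness of $\widetilde Z_L$, and hence of $Y_L$, in $\mathcal C(\R_+;\mathcal C^\mathrm{tem}_\R)$.

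\textbf{Interpolation gap \eqref{ineq:full temporal interpolation gap}.} Write
\begin{align*}
	\widetilde\xi^L_t-Y_L(t)=\bigl(A_L(t)-\widehat A_L(t)\bigr)+\bigl(Z_L(t)-\widetilde Z_L(t)\bigr).
\end{align*}
The first difference is bounded by \eqref{ineq:initial temporal interpolation gap}. The second is at most $\sup_{|t-s|\le L^{-2}}|\widetilde Z_L(t,x)-\widetilde Z_L(s,x)|$, which is a temporal H\"older modulus at scale $L^{-2}$. On each block $[0,T]\times I_m$, the Garsia--Rodemich--Rumsey estimate of Lemma~\ref{lem:GRR} gives, for any exponent $\gamma'<(\kappa-2)/p$,
\begin{align*}
	\E\Bigl[\sup_{|t-s|\le L^{-2},\,x\in I_m}\abs{\widetilde Z_L(t,x)-\widetilde Z_L(s,x)}^p\Bigr]\lesssim L^{-2\gamma' p}.
\end{align*}
Choosing $\gamma'$ with $2\gamma'p=2(\kappa-2)-\varepsilon$ produces the term $L^{-2(\kappa-2)+\varepsilon}$. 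The weighted sup over $x\in\R$ is bounded by $\sum_m e^{-\lambda p|m|}(\cdots)$, which is uniformly summable in $m$. This is where the uniformity in $m$ from Lemma~\ref{lem:holder norm bound uniform} is used, and I expect it to be the main technical step.

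\textbf{Conclusion.} Since $\widetilde\xi^L$ differs from the $\mathcal C$-tight sequence $Y_L$ by a quantity tending to $0$ in probability in the uniform-on-compacts metric, $\widetilde\xi^L$ is tight in $\mathcal D(\R_+;\mathcal C^\mathrm{tem}_\R)$. Here we use that Skorokhod distance is dominated by local uniform distance. Its limit points coincide with those of $Y_L$, so they have continuous paths.
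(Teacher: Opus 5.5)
Your proof is correct. The decomposition $Y_L=\widehat A_L+\widetilde Z_L$, the use of Lemma~\ref{lem:initial convergence} for the deterministic part, and the closing converging-together argument are the same as in the paper. The difference is in how you bound $Z_L-\widetilde Z_L$.

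The paper works one time step at a time. For $F_{k,L}:=Z_L(t_{k+1})-Z_L(t_k)$ it obtains:
\begin{itemize}
\item a spatial H\"older bound with exponent $\gamma<(\kappa-1)/p$, from Proposition~\ref{prop:spatial_increment_moment_bound} and the one-dimensional case of Lemma~\ref{lem:GRR};
\item an $L^p_{I_m}$ bound of order $L^{-2\kappa}$, from Proposition~\ref{prop:temporal_increment_moment_bound}.
\end{itemize}
It merges these via Lemma~\ref{lem:holder-lebesgue-interpolation} into $\E\|F_{k,L}\|^p_{\mathcal C^0_{I_m}}\lesssim L^{-2\kappa(1-\theta_\gamma)}$. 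It then pays a factor $L^2$ for the union over the $O(L^2)$ steps, and letting $\gamma\uparrow(\kappa-1)/p$ gives $L^{-2(\kappa-2)+\varepsilon}$.

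You instead read the gap off the space--time H\"older norm with exponent $\gamma'<(\kappa-2)/p$. The proof of Lemma~\ref{lem:holder norm bound uniform} already controls this norm uniformly in $m$ and $L$, and you evaluate it at temporal scale $L^{-2}$. This gives the same rate. Your route is shorter and makes both Lemma~\ref{lem:holder-lebesgue-interpolation} and the union bound unnecessary. It also explains the restriction $\varepsilon<2(\kappa-2)$ naturally, as the condition $\gamma'>0$. The paper's per-step argument, for its part, needs only one-dimensional GRR applied to one-step lattice increments, rather than the two-dimensional estimate.

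Your Arzel\`a--Ascoli and Markov argument for the tightness of $\widetilde Z_L$ is an inline proof of the criterion that the paper simply cites from Shiga.

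Two minor points need care. Apply the H\"older bound on $[0,T+1]\times I_m$, since the step containing $t\le T$ may end after $T$. Also choose the compact sets for all $T\in\N$ simultaneously, so that you get tightness in $\mathcal C(\R_+;\mathcal C^{\mathrm{tem}}_\R)$.
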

\begin{proof}
	Let $T,\lambda >0$ be arbitrary but fixed, define for $L \in \N$ and $(t,x) \in \R_+\times \R$ that
	\begin{align*}
		A_L(t,x):=\pi_L^\text{int}P^L_{\lfloor tL^2\rfloor}u_L(x),\qquad
		Z_L(t,x):=\widetilde\xi_t^L-A_L(t,x),
	\end{align*}
	and denote their temporal linear interpolations by $\widehat A_L$ and $\widehat Z_L$, respectively. By linearity,
	\begin{align}\label{eq:Y decomposition}
		Y_L=\widehat A_L+\widehat Z_L,
		\qquad
		\widetilde\xi^L-Y_L
		=(A_L-\widehat A_L)+(Z_L-\widehat Z_L).
	\end{align}

	We first estimate the stochastic remainder. Let $t_k:=k/L^2$ for $k \in \Z_+$ and denote
	\begin{align*}
		F_{k,L}:=Z_L(t_{k+1})-Z_L(t_k).
	\end{align*}
	We see by Proposition~\ref{prop:spatial_increment_moment_bound} and Lemma~\ref{lem:GRR} with $d=1$ that, for every $0<\gamma<\frac{\kappa-1}{p}$, it holds uniformly in  $L \in \N,\, m \in \Z$ and $k \in \Z_+$ with $t_k \le T+1$ that
	\begin{align}\label{ineq:increment spatial holder Z}
		\E\bracket{\norm{F_{k,L}}_{\mathcal C_{I_m}^{\gamma}}^p}
		\lesssim 1.
	\end{align}
	On the other hand, since $t_{k+1}-t_k=L^{-2}$, Proposition~\ref{prop:temporal_increment_moment_bound} implies the following holds uniformly in $L \in \N,\, k\in \Z_+$ and $m \in \Z$ with $t_k \le T+1$,
	\begin{align}\label{ineq:increment lp Z}
		\E\bracket{\norm{F_{k,L}}_{L^p_{I_m}}^p}
		\lesssim L^{-2\kappa}.
	\end{align}

	Let $\theta_{\gamma}:=(\gamma p+1)^{-1}$, then Lemma~\ref{lem:holder-lebesgue-interpolation}, H\"older's inequality,~\eqref{ineq:increment spatial holder Z}, and~\eqref{ineq:increment lp Z} imply the following holds uniformly in $L \in \N,\, k\in \Z_+$ and $m \in \Z$ with $t_k \le T+1$,
	\begin{align*}
		\E\bracket{\norm{F_{k,L}}_{\mathcal C^0_{I_m}}^p}
		\lesssim
		\E\bracket{
		\norm{F_{k,L}}_{L^p_{I_m}}^{p(1-\theta_{\gamma})}
		\norm{F_{k,L}}_{\mathcal C_{I_m}^{\gamma}}^{p\theta_{\gamma}}}
		+\E\bracket{\norm{F_{k,L}}_{L^p_{I_m}}^p}\lesssim
		L^{-2\kappa(1-\theta_{\gamma})}
		+L^{-2\kappa}.
	\end{align*}
	For $t\in[t_k,t_{k+1}]$, the definition of temporal interpolation gives that, for all $x \in \R$,
	\begin{align*}
		\abs{Z_L(t,x)-\widehat Z_L(t,x)}
		\le\abs{F_{k,L}(x)}.
	\end{align*}
	Since $e^{-\lambda|x|}\lesssim e^{-\lambda|m|}$ uniformly in $m \in \Z$ and $x\in I_m$, we see it holds uniformly in $L \in \N$ that
	\begin{align*}
		\E\bracket{\sup_{t\in[0,T]}
		\norm{Z_L(t)-\widehat Z_L(t)}_{(\lambda)}^p}
		&\lesssim
		\sum_{k\le(T+1)L^2}\sum_{m\in\Z}
		e^{-\lambda p|m|}
		\E\bracket{\norm{F_{k,L}}_{\mathcal C^0_{I_m}}^p}\\
		&\lesssim
		L^{2-2\kappa(1-\theta_{\gamma})}
		+L^{2-2\kappa}.
	\end{align*}
	Note that $\kappa(1-\theta_{\gamma}) =\frac{\kappa\gamma p}{\gamma p+1} \longrightarrow\kappa-1$ as $\gamma\uparrow(\kappa-1)/p$. Hence, for every $\varepsilon>0$, we may choose $\gamma<(\kappa-1)/p$ sufficiently close to $(\kappa-1)/p$ so that the following holds uniformly in $L \in \N$,
	\begin{align}\label{ineq:stochastic temporal interpolation gap}
		\E\bracket{\sup_{t\in[0,T]}
		\norm{Z_L(t)-\widehat Z_L(t)}_{(\lambda)}^p}
		\lesssim
		L^{-2(\kappa-2)+\varepsilon}.
	\end{align}

	Since $\widetilde\xi_0^L=\pi_L^\text{int}u_L$ and spatial interpolation agrees with $u_L$ on $\Z/L$, we see
	\begin{align*}
		\norm{u_L-\pi_L^\text{disc}u_0}_{(\lambda),L}
		\le\norm{\widetilde\xi_0^L-u_0}_{(\lambda)}
		\longrightarrow0,\qquad \text{ as  }L \to \infty.
	\end{align*}
	Lemma~\ref{lem:initial convergence} therefore applies and gives
	\begin{align}\label{ineq:initial gap used in tightness}
		\sup_{t\in[0,T]}
		\norm{A_L(t)-\widehat A_L(t)}_{(\lambda)}
		\lesssim\norm{u_L-\pi_L^\text{disc}u_0}_{(\lambda),L}
		+\omega_{u_0,\lambda}(1/L),
	\end{align}
	uniformly in $L \in \N$. Therefore, by~\eqref{eq:Y decomposition},~\eqref{ineq:stochastic temporal interpolation gap},~\eqref{ineq:initial gap used in tightness},
	\begin{align*}
		\E\bracket{\sup_{t\in[0,T]}
		\norm{\widetilde\xi_t^L-Y_L(t)}_{(\lambda)}^p}
		\lesssim
		L^{-2(\kappa-2)+\varepsilon}+\norm{u_L-\pi_L^\text{disc}u_0}_{(\lambda),L}^p
		+\omega_{u_0,\lambda}(1/L)^p,
	\end{align*}
	which proves~\eqref{ineq:full temporal interpolation gap}.

	Finally, Lemma~\ref{lem:initial convergence} also shows that $\widehat A_L\to P_\cdot u_0$ in $\mathcal C(\R_+;\mathcal C_\R^\mathrm{tem})$, uniformly on compact time intervals. Propositions~\ref{prop:spatial_increment_moment_bound} and~\ref{prop:temporal_increment_moment_bound}, together with \cite[Lemma 6.3]{shiga1994two}, imply that $\{\widehat Z_L\}_{L\in\N}$, hence  $\{Y_L\}_{L\in\N}$, is tight in $\mathcal C(\R_+;\mathcal C_\R^\mathrm{tem})$.

	By Lemma~\ref{lem:initial convergence}, the right hand side of~\eqref{ineq:full temporal interpolation gap} converges to zero for $0<\varepsilon<2(\kappa-2)$. Hence $\widetilde\xi^L-Y_L\to0$ in probability, uniformly on compact time intervals in $\mathcal C_\R^\mathrm{tem}$. The tightness of $\{\widetilde\xi^L\}_{L\in\N}$ in $\mathcal D(\R_+;\mathcal C_\R^\mathrm{tem})$ follows.
\end{proof}

We will need the following lemma.
\begin{lemma}\label{lem:equicontinuity}
	Suppose $\widetilde\xi_0^L\to u_0$ in $\mathcal C_\R^\mathrm{tem}$ for some $u_0\in\mathcal C_b(\R)$, and set $u_L:=\xi_0^L$. Let $p>8$ be as in~\eqref{ineq:p_moment bound}. Then, for every $K\ge1$ and $T>0$,
	\begin{align*}
		\lim_{L\to\infty}\E\bracket{
		\sup_{t\in[0,T]}\sup_{\abs{x_i}\le K\atop x_i\in\Z/L}
		\abs{\xi_t^L(x_i)-\xi_t^L(x_{i+1})}^p}=0.
	\end{align*}
\end{lemma}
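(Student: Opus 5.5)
We decompose $\xi_t^L=I_1^L(t)+Z_L(t)$ on the lattice, where $I_1^L(t,x_i)=P^L_{\lfloor tL^2\rfloor}u_L(x_i)$ is the deterministic part. The nearest-neighbour difference then splits as
\[
\abs{\xi_t^L(x_i)-\xi_t^L(x_{i+1})}\le \abs{I_1^L(t,x_i)-I_1^L(t,x_{i+1})}+\abs{Z_L(t,x_i)-Z_L(t,x_{i+1})}.
\]
We treat the two pieces separately, on the compact window $[-K-1,K+1]$, which is covered by finitely many intervals $I_m$.

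\emph{Deterministic part.} We write $u_L=r_L+\pi_L^\text{disc}u_0$ with $\norm{r_L}_{(\lambda),L}\to0$. Since $P^L_k$ commutes with the lattice shift by $1/L$, the difference of the $\pi_L^\text{disc}u_0$ contribution is $P^L_k$ applied to $u_0(\cdot+1/L)-u_0(\cdot)$. Lemma~\ref{lem:weighted discrete semigroup stability} bounds its weighted norm by $\omega_{u_0,\lambda}(1/L)$, uniformly in $k\le TL^2$. The $r_L$ contribution is bounded by $2\norm{P^L_k r_L}_{(\lambda),L}e^{\lambda/L}$, by the same lemma. Multiplying by $e^{\lambda(K+1)}$ to remove the weight on $\abs{x_i}\le K$, we get
\[
\sup_{t\le T,\ \abs{x_i}\le K}\abs{I_1^L(t,x_i)-I_1^L(t,x_{i+1})}\lesssim_{K,T,\lambda}\norm{r_L}_{(\lambda),L}+\omega_{u_0,\lambda}(1/L)\longrightarrow0.
\]
The convergence of $\omega_{u_0,\lambda}(1/L)$ to zero is shown at the end of the proof of Lemma~\ref{lem:initial convergence}. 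This piece is deterministic, so its $p$-th power converges trivially.

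\emph{Stochastic part.} This is the main step. Lemma~\ref{lem:holder norm bound uniform} gives, for some $\gamma\in(0,(\kappa-2)/p)$,
\[
\sup_L\sup_m\E\Bigl[\sup_{t\le T}\norm{Z_L(t,\cdot)}^p_{\mathcal C^\gamma_{I_m}}\Bigr]<\infty,
\]
where $Z_L$ agrees with $\xi^L_t-I_1^L(t)$ on $\Z/L$ after interpolation. Hence, on each $I_m$, uniformly in $t\le T$,
\[
\sup_{x_i,x_{i+1}\in I_m}\abs{Z_L(t,x_i)-Z_L(t,x_{i+1})}\le L^{-\gamma}\norm{Z_L(t)}_{\mathcal C^\gamma_{I_m}}.
\]
A pair $x_i,x_{i+1}$ may straddle an integer. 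In that case it lies in the union of two adjacent intervals, and we split at the integer point, using the triangle inequality and the Hölder bound on each side. Summing over the finitely many $m$ with $\abs{m}\le K+1$ gives
\[
\E\Bigl[\sup_{t\le T}\sup_{\abs{x_i}\le K}\abs{Z_L(t,x_i)-Z_L(t,x_{i+1})}^p\Bigr]\lesssim_K L^{-\gamma p}\longrightarrow 0.
\]

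\emph{Combining.} Using $\abs{a+b}^p\le 2^{p-1}(\abs a^p+\abs b^p)$, both pieces tend to zero, which proves the claim. The only delicate point is to check that Lemma~\ref{lem:holder norm bound uniform} applies with a strictly positive $\gamma$. This holds because $\kappa>2$, which follows from $p>8$ and $\kappa_p>3$. It is also why we must work with the centred field $Z_L$ rather than $\xi^L$ itself: the initial datum $u_0$ has no Hölder regularity, only continuity, and that part is handled separately through $\omega_{u_0,\lambda}$.
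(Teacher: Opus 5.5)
Your proof is correct. The stochastic part is the same as the paper's: Lemma~\ref{lem:holder norm bound uniform} applied at spatial scale $L^{-1}$ and summed over the finitely many $I_m$ meeting $[-K,K+1]$. The only real difference is in the deterministic part.

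\textbf{Paper's route.} The paper compares $A_L(t)=\pi_L^{\mathrm{int}}P^L_{\lfloor tL^2\rfloor}u_L$ with the continuum flow $P_tu_0$ using the semigroup convergence \eqref{ineq:initial semigroup convergence}. That convergence rests on the Ethier--Kurtz approximation theorem plus a truncation argument. The paper then invokes uniform continuity of $(t,x)\mapsto P_tu_0(x)$ on $[0,T]\times[-K-1,K+1]$.

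\textbf{Your route.} You use translation invariance of $q^L_k$ to write the nearest-neighbour increment as $P^L_k$ applied to the discrete difference of $u_L$. You then apply Lemma~\ref{lem:weighted discrete semigroup stability} directly. This is the same mechanism the paper uses for \eqref{ineq:initial temporal interpolation gap}, with $P^L_1-I$ replaced by the lattice shift minus the identity.

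\textbf{Comparison.} Your argument is more elementary and never needs the limiting heat flow. It also yields an explicit bound $\lesssim_{K,T,\lambda}\|u_L-\pi_L^{\mathrm{disc}}u_0\|_{(\lambda),L}+\omega_{u_0,\lambda}(1/L)$. The paper's route simply recycles a convergence statement it needs anyway in the tightness proof.

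\textbf{Minor remark.} Since $L\in\N$, every integer is a lattice point. So consecutive lattice points never straddle an integer, and each pair $x_i,x_{i+1}$ already lies in a single $I_m$. Your splitting step is therefore harmless but unnecessary.
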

\begin{proof}
	Define for $t \ge 0$ and $L \in \N$ that
	\begin{align*}
		A_L(t):=\pi_L^\text{int}P^L_{\lfloor tL^2\rfloor}u_L,\qquad
		Z_L(t):=\widetilde\xi_t^L-A_L(t).
	\end{align*}
	Note that $\widetilde\xi_0^L=\pi_L^\text{int}u_L(x_i) = u_L(x_i)$ for all $x_i \in \Z/L$. Hence for every $\lambda >0$,
	\begin{align*}
		\norm{u_L-\pi_L^\text{disc}u_0}_{(\lambda),L}
		\le\norm{\widetilde\xi_0^L-u_0}_{(\lambda)}
		\longrightarrow0,\qquad \text{ as }L \to \infty.
	\end{align*}
	So Lemma~\ref{lem:initial convergence} applies. Note that for $t\in[0,T]$ and $x_i\in\Z/L$,
	\begin{align*}
		\abs{\xi_t^L(x_i)-\xi_t^L(x_{i+1})}
		&\le
		\abs{A_L(t,x_i)-A_L(t,x_{i+1})}
		+\abs{Z_L(t,x_i)-Z_L(t,x_{i+1})}.
	\end{align*}

	\noindent We first consider the initial-semigroup term.  We see for any $\lambda >0$,
	\begin{align*}
		\sup_{t\in[0,T]}
		\sup_{\abs{x_i}\le K\atop x_i\in\Z/L}
		\abs{A_L(t,x_i)-A_L(t,x_{i+1})}&\le
		2e^{\lambda(K+1)}
		\sup_{t\in[0,T]}\norm{A_L(t)-P_tu_0}_{(\lambda)}\\
		&\qquad+
		\sup_{t\in[0,T]}
		\sup_{\substack{|x-y|\le L^{-1}\\|x|,|y|\le K+1}}
		\abs{P_tu_0(x)-P_tu_0(y)}.
	\end{align*}
	The first term on the right hand side converges to zero by Lemma~\ref{lem:initial convergence}. Since $(t,x)\mapsto P_tu_0(x)$ is continuous, it is uniformly continuous on $[0,T]\times[-K-1,K+1]$, so the second term also converges to zero. Consequently,
	\begin{align*}
		\lim_{L\to \infty}\sup_{t\in[0,T]}
		\sup_{\abs{x_i}\le K\atop x_i\in\Z/L}
		\abs{A_L(t,x_i)-A_L(t,x_{i+1})}
		=0.
	\end{align*}

	It remains to control the stochastic remainder.  Let $\mathcal I_K:=\{m\in\Z:I_m\cap[-K,K+1]\neq\varnothing\}$. Every pair $x_i,x_{i+1}$ with $|x_i|\le K$ is contained in some $I_m$ with $m\in\mathcal I_K$. Therefore, by Lemma~\ref{lem:holder norm bound uniform}, there exists some $\gamma >0$ so that the following holds uniformly in $L \in \N$,
	\begin{align*}
		\E\bracket{
		\sup_{t\in[0,T]}
		\sup_{\abs{x_i}\le K\atop x_i\in\Z/L}
		\abs{Z_L(t,x_i)-Z_L(t,x_{i+1})}^p}\le L^{-\gamma p}
		\sum_{m\in\mathcal I_K}
		\E\bracket{
		\sup_{t\in[0,T]}
		\norm{Z_L(t,\cdot)}_{\mathcal C^\gamma_{I_m}}^p}
		\lesssim L^{-\gamma p}.
	\end{align*}
	Combining the preceding estimates gives the desired result.
\end{proof}

\section{Characterizing limiting points}\label{sec:limit-points}
By Theorem~\ref{thm:tightness}, the laws of $\{\widetilde\xi^L\}_L$ are tight in $\mathcal D(\R_+;\mathcal C^\mathrm{tem}_\R)$,  which is a Polish space, hence,  Prokhorov's theorem and the Skorokhod representation theorem allow us, after passing to a subsequence and realizing the processes on a common probability space, to assume that $\widetilde\xi^L\to\xi$ almost surely in $\mathcal D(\R_+;\mathcal C^\mathrm{tem}_\R)$. The limit $\xi$ has continuous paths, so this convergence is uniform on compact time intervals in the topology of $\mathcal C^\mathrm{tem}_\R$. In this section, we will show that $\xi$ solves~\eqref{eq: spde target}.

In this section, for a right continuous square integrable martingale $(M_t)_{t \ge 0}$, we shall denote $([M]_t)_{t \ge 0}$ its predictable quadratic variation. In other words, $([M]_t)_{t\ge 0}$ is the unique  predictable, right-continuous increasing process with $[M]_0 = 0$, so that $M_t^2 - [M]_t$ is a martingale in $t \ge 0$. If $M^1$ and $M^2$ are two square integrable right-continuous martingales, we define  $[M^1,M^2]$ to be the unique predictable finite-variation process starting from zero such that $M^1M^2-[M^1,M^2]$ is a martingale.

For $\varphi \in \mathcal C_c^\infty(\R)$, denote for $t \ge 0$ that
\begin{align}\label{eq:mtg phi limit}
	M_t(\varphi):=\langle \xi_t - \xi_0,\varphi\rangle - \int_0^t \frac{\alpha}{2}\langle \xi_s,\Delta \varphi\rangle+\langle b(\xi_s),\varphi\rangle \mathrm ds,
\end{align}
We first note that for any $L \ge 1$, $\widetilde \xi^L$ and $\xi$ are $[0,1]$-valued random fields on $\R_+\times \R$.  Define
\begin{align*}
	\widetilde B_L\pr{\xi_s^L}(x):=\pi_L^\mathrm{int}\pr{B_L\pr{\xi_s^L}}(x)
	=\pi_L^\mathrm{int}\pr{b_L\left(\xi_s^L+\frac{\alpha}{2}\Delta_L\xi_s^L\right)}(x),\qquad (s,x) \in \R_+\times \R.
\end{align*}
For $\varphi\in\mathcal C_c^\infty(\R)$, $L\in\N$, and $t\ge0$, define
\begin{align}\label{eq:mtg varphi prelimit}
	\widetilde M_t^L(\varphi):=  \left\langle \widetilde \xi^L_t- \widetilde \xi_0^L, \varphi \right \rangle -  \int_0^{\frac{\lfloor tL^2 \rfloor}{L^2} } \frac{\alpha}{2L^{-2}}\left \langle \pi_L^\mathrm{int}\pr{\Delta_L \xi^L_s}, \varphi\right \rangle \mathrm ds - \int_0^{\frac{\lfloor tL^2 \rfloor}{L^2}} \left \langle \widetilde B_L\pr{\xi_s^L}, \varphi\right \rangle \mathrm ds.
\end{align}
By~\eqref{eq: weak formulation prelimit}, this is a square integrable martingale in $t\ge0$.
\begin{prop}\label{prop:limiting martingale}
	For any $\varphi \in \mathcal C_c^\infty(\R)$ the process $(M_t(\varphi))_{t \ge 0}$ defined as in~\eqref{eq:mtg phi limit} is a continuous, mean zero, square integrable martingale with respect to the natural filtration of $\xi$. In particular, let $p >8$ be as in~\eqref{ineq:p_moment bound}, then  for any $T>0$
	\begin{align}\label{eq:lp convergence of mtg term}
		\lim_{L \to \infty}  \norm{\sup_{t \in [0,T]}\abs{\widetilde M^L_t(\varphi) - M_t(\varphi)}}_{L^p_\Omega} = 0.
	\end{align}
\end{prop}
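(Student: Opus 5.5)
We will prove \eqref{eq:lp convergence of mtg term} first, and then deduce the martingale property from it. Throughout we use that the Skorokhod representation gives $\widetilde\xi^L\to\xi$ almost surely, uniformly on $[0,T]$ in $\mathcal C^\mathrm{tem}_\R$. Since $\varphi$ has compact support, this gives uniform convergence on $[0,T]\times\operatorname{supp}\varphi$. All processes take values in $[0,1]$.

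Fix $\varphi\in\mathcal C_c^\infty(\R)$ and compare the terms of \eqref{eq:mtg varphi prelimit} and \eqref{eq:mtg phi limit} one by one.

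\emph{First term.} $\langle\widetilde\xi^L_t-\widetilde\xi^L_0,\varphi\rangle\to\langle\xi_t-\xi_0,\varphi\rangle$ uniformly in $t\le T$, almost surely.

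\emph{Laplacian term.} We use discrete summation by parts. Since $\pi_L^{\mathrm{int}}$ is a convolution with $\rho_L$ by \eqref{eq:interpolation convolution identity}, we have
\[
\tfrac{\alpha L^2}{2}\langle\pi_L^{\mathrm{int}}\Delta_L\xi^L_s,\varphi\rangle=\tfrac{\alpha}{2}\langle\widetilde\xi^L_s,L^2\Delta_{L}(\rho_L*\varphi)\rangle .
\]
Here $L^2\Delta_L$ acts on the continuum shift by $\pm1/L$. The function $L^2\Delta_L(\rho_L*\varphi)$ converges uniformly to $\varphi''$, with support in a fixed compact set. Hence this term converges to $\int_0^t\frac\alpha2\langle\xi_s,\Delta\varphi\rangle\,\mathrm ds$, uniformly in $t\le T$. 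The time-truncation $\lfloor tL^2\rfloor/L^2$ changes the integral by at most $O(L^{-2})$, because the integrand is bounded.

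\emph{Drift term.} Write
\[
b_L(\xi^L_s+\tfrac\alpha2\Delta_L\xi^L_s)-b(\xi_s)
=\bigl[b_L-b\bigr](\cdot)+\bigl[b(\xi^L_s+\tfrac\alpha2\Delta_L\xi^L_s)-b(\xi^L_s)\bigr]+\bigl[b(\xi^L_s)-b(\xi_s)\bigr].
\]
The first bracket tends to $0$ uniformly by \eqref{eq:coefficient convergence}. For the second bracket, $|\Delta_L\xi^L_s(x_i)|$ is at most twice the maximal neighbour difference. By Lemma~\ref{lem:equicontinuity}, this difference tends to $0$ in $L^p$, uniformly over $s\le T$ and $|x_i|\le K$. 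Since $b$ is uniformly continuous on $[0,1]$, the second bracket tends to $0$ in probability. The third bracket tends to $0$ by the uniform convergence of $\xi^L$ to $\xi$, again using the uniform continuity of $b$. Passing from $\pi^{\mathrm{int}}$ of lattice values to continuum values costs only the modulus of continuity on the grid, which is controlled in the same way.

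Combining the three terms, $\sup_{t\le T}|\widetilde M^L_t(\varphi)-M_t(\varphi)|\to0$ in probability. Every term is bounded by $C(\varphi,T)$ uniformly in $L$, since $\xi\in[0,1]$ and $b_L,b$ are bounded. Dominated convergence therefore upgrades the convergence to $L^p_\Omega$, which is \eqref{eq:lp convergence of mtg term}.

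\emph{Martingale property.} $M(\varphi)$ is continuous because $\xi$ has continuous paths, and it is bounded on $[0,T]$. To prove the martingale property, take $0\le s<t$, bounded continuous $\Phi$ on $\mathcal C^{\mathrm{tem}}_\R$, and times $s_1,\dots,s_n\le s$. Since $\widetilde M^L(\varphi)$ is an $\mathcal F^L$-martingale and $\widetilde\xi^L_{s_j}$ is adapted up to the floor rounding, we have
\[
\E\bigl[(\widetilde M^L_t-\widetilde M^L_s)\textstyle\prod_j\Phi_j(\widetilde\xi^L_{s_j})\bigr]=0 .
\]
We pass to the limit in $L$ using the $L^p$ convergence and bounded continuity. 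This gives the martingale property of $M(\varphi)$ with respect to the natural filtration of $\xi$. Square integrability follows from boundedness, and $M_0=0$ gives mean zero.

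\emph{Main obstacle.} The delicate step is the drift term, specifically the argument $\xi^L_s+\frac\alpha2\Delta_L\xi^L_s$ inside $b_L$. Its closeness to $\xi^L_s$ is not automatic, and it is exactly what Lemma~\ref{lem:equicontinuity} supplies. A second point of care is the measurability step in the martingale identification: the time-truncation $\lfloor tL^2\rfloor$ must be handled so that the test functionals are adapted to the prelimit filtration.
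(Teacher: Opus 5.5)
Your proposal is correct and follows essentially the same route as the paper. The shared steps are summation by parts for the Laplacian term, Lemma~\ref{lem:equicontinuity} with uniform continuity of $b$ and the almost-sure local uniform convergence for the drift term, bounded convergence to upgrade to $L^p_\Omega$, and the standard test-functional passage to the limit that the paper obtains by citing Ethier--Kurtz. One harmless slip: the exact identity is $\langle\pi_L^{\mathrm{int}}\Delta_L\xi^L_s,\varphi\rangle=\langle\widetilde\xi^L_s,\Delta_L\varphi\rangle$. The extra $\rho_L*$ belongs only in the discrete pairing $\frac1L\sum_i\xi^L_s(x_i)\Delta_L(\rho_L*\varphi)(x_i)$, and this does not affect the convergence to $\langle\xi_s,\varphi''\rangle$.
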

\begin{proof}
	We shall fix an arbitrary $T >0$ in this proof. We note that continuity of $M_t(\varphi)$ in $t\ge 0$ is trivial since all terms on the right hand side of~\eqref{eq:mtg phi limit} are continuous in time.

	Let $\varphi \in \mathcal C_c^\infty(\R)$ and $K \ge 1$ so that $\mathrm{supp}\pr{\varphi} \subset [-K,K]$. For any $L \ge 1$, recall $\rho_L$ from~\eqref{eq:interpolation convolution identity}. Consider by Fubini's theorem and summation by parts that
	\begin{align*}
		\left \langle\pi^\mathrm{int}_L\pr{  \Delta_L \xi_s^L}, \varphi\right \rangle &= \int_\R \frac{1}{L}\sum_{x_i \in \Z/L} \Delta_L\xi^L_s(x_i) \rho_L(x - x_i) \varphi(x) \mathrm dx\\
		&= \frac{1}{L}\sum_{x_i \in \Z/L}\Delta_L\xi^L_s(x_i)  \int_\R \rho_L(x- x_i ) \varphi(x) \mathrm dx  \\
		&=\frac{1}{L} \sum_{x_i \in \Z/L} \Delta_L \xi_s^L(x_i) \int_\R \rho_L(x) \varphi(x+x_i) \mathrm dx\\
		&=\frac{1}{L}\sum_{x_i \in \Z/L} \xi_s^L(x_i) \int_\R \rho_L(x) \pr{\varphi(x+x_{i+1}) -2\varphi(x+x_i) +\varphi(x+x_{i - 1})}\mathrm dx \\
		&=\frac{1}{L}\sum_{x_i \in \Z/L} \xi_s^L(x_i) \int_\R \rho_L(x - x_i) \pr{\varphi\pr{x+\frac{1}{L}} - 2\varphi(x) + \varphi\pr{x - \frac{1}{L}} }\mathrm dx \\
		&= \frac{1}{L} \sum_{x_i \in \Z/L} \xi_s^L(x_i) \int_\R \rho_L(x-x_i) \Delta_L\varphi(x) \mathrm dx \\
		&=\int_\R \pr{\frac{1}{L} \sum_{x_i \in \Z/L} \xi_s^L(x_i) \rho_L(x - x_i)  }  \Delta_L \varphi(x) \mathrm dx   \\
		&=\left \langle \widetilde \xi_s^L, \Delta_L \varphi\right \rangle.
	\end{align*}
	Therefore, for any $t>0$
	\begin{equation}\label{ineq: martingale difference}
		\begin{aligned}
			\abs{M_t(\varphi) - \widetilde M_t^L(\varphi)} &\le  \abs{  \left\langle  \xi_t-  \xi_0, \varphi \right \rangle -\left\langle \widetilde \xi^L_t- \widetilde \xi_0^L, \varphi \right \rangle}+\int_0^{\frac{\lfloor tL^2\rfloor}{L^2}} \frac{\alpha}{2} \abs{\left \langle   \xi_s-\widetilde \xi^L_s, \Delta \varphi\right \rangle} \mathrm ds \\
			&\qquad + \int_0^{\frac{\lfloor tL^2 \rfloor}{L^2}} \abs{ \left\langle b(\xi_s) - b(\widetilde \xi_s^L), \varphi\right\rangle} \mathrm ds + E_1^L(t)+E_2^L(t)+E_3^L(t),
		\end{aligned}
	\end{equation}
	where
	\begin{align*}
		E_1^L(t) &=\int_0^{\frac{\lfloor tL^2 \rfloor}{L^2}}\frac{\alpha}{2} \abs{\left \langle \widetilde \xi^L_s, \frac{\Delta_L}{L^{-2}}\varphi - \Delta \varphi \right \rangle }\mathrm ds\\
		E_2^L (t)&=\int_0^{\frac{\lfloor tL^2 \rfloor}{L^2}} \abs{\left \langle\widetilde B_L\pr{\xi_s^L}- b\pr{\widetilde \xi^L_s}, \varphi \right \rangle} \mathrm ds\\
		E_3^L(t)&=\int_{\frac{\lfloor tL^2 \rfloor}{L^2}}^t \frac{\alpha}{2}\abs{\langle \xi_s, \Delta \varphi \rangle} + \abs{\langle b(\xi_s), \varphi \rangle}\mathrm ds.
	\end{align*}

	We see by boundedness of $\xi$, $\xi^L$ and the fact that $\varphi \in \mathcal C_c^\infty(\R)$ that
	\begin{align*}
		\limsup_{L \to \infty}\sup_{t \in [0,T]} E_3^L(t) + E_1^L(t) = 0,\qquad \text{ almost surely. }
	\end{align*}
	Let us denote for $f: [0,1] \to \R$ that
	\begin{align}\label{eq:modulus of continuity on unit interval}
		\omega_f^0(\delta):=\sup\left\{\abs{f(u)-f(v)}:u,v\in[0,1],\ \abs{u-v}\le \delta\right\},\qquad \text{ for }\delta \ge 0.
	\end{align}
	Since $b$ is continuous on $[0,1]$,  we see $\lim_{\delta\downarrow 0}\omega_b^0(\delta)=0$.

	Let $x \in \R$, then there exists $x_i = i/L \in \Z/L$ so that  $x=\tau x_i+(1-\tau)x_{i+1}$ for some $\tau\in[0,1]$. The definitions of $\widetilde B_L$ and linear interpolation give
	\begin{align*}
		\abs{\widetilde B_L\pr{\xi_s^L}(x)-b\pr{\widetilde\xi_s^L(x)}}\le \sup_{y\in[0,1]}\abs{b_L(y)-b(y)}
		+\omega_b^0\left(\max_{-1\le j\le1}
		\abs{\xi_s^L(x_{i+j+1})-\xi_s^L(x_{i+j})}\right).
	\end{align*}
	Consequently,
	\begin{align*}
		\sup_{\abs{x}\le K}
		\abs{\widetilde B_L\pr{\xi_s^L}(x)-b\pr{\widetilde\xi_s^L(x)}}\le \sup_{y\in[0,1]}\abs{b_L(y)-b(y)}
		+\omega_b^0\left(\sup_{\substack{\abs{x_i}\le K+1\\x_i\in\Z/L}}
		\abs{\xi_s^L(x_{i+1})-\xi_s^L(x_i)}\right).
	\end{align*}
	Lemma~\ref{lem:equicontinuity}, the boundedness of $\omega_b^0$, and dominated convergence therefore imply
	\begin{align}
		\limsup_{L\to\infty}\norm{\sup_{t\in[0,T]}E_2^L(t)}_{L^p_\Omega}
		&\lesssim \norm{\varphi}_{L^1_\R}\limsup_{L\to\infty}
		\left\{\sup_{y\in[0,1]}\abs{b_L(y)-b(y)}\right.\nonumber\\
		&\left.\qquad+\norm{\omega_b^0\left(\sup_{\substack{s\in[0,T],\ \abs{x_i}\le K+1\\x_i\in\Z/L}}
		\abs{\xi_s^L(x_{i+1})-\xi_s^L(x_i)}\right)}_{L^p_\Omega}\right\}=0.
	\end{align}
	By the almost-sure convergence obtained from Theorem~\ref{thm:tightness}, for every $K\ge1$, it holds that
	\begin{align*}
		\limsup_{L \to \infty}\sup_{t \in [0,T]} \sup_{\abs{x} \le K} \abs{\widetilde\xi_t^L(x) - \xi_t(x)} = 0,\qquad \text{almost surely.}
	\end{align*}
	Hence, inserting the previous estimates into~\eqref{ineq: martingale difference}, we see by the uniform boundedness of $\xi^L,\xi$ and the dominated convergence theorem, it holds that
	\begin{align*}
		\lim_{L \to \infty}\norm{\sup_{t\in[0,T]}
		\abs{\widetilde M_t^L(\varphi)-M_t\pr{\varphi}}}_{L^p_\Omega}
		\lesssim&
		\lim_{L\to \infty}\bigg(\norm{\sup_{t\in[0,T]}\sup_{\abs{x}\le K}
		\abs{\widetilde\xi_t^L(x)-\xi_t(x)}}_{L^p_\Omega}\\
		&\qquad\qquad\qquad+\norm{\omega_b^0\left(\sup_{t\in[0,T]}\sup_{\abs{x}\le K}
		\abs{\widetilde\xi_t^L(x)-\xi_t(x)}\right)}_{L^p_\Omega}\bigg)\\
		=&0,
	\end{align*}
	which proves~\eqref{eq:lp convergence of mtg term}.

	Moreover, $\widetilde M^L(\varphi)$ is adapted to the natural filtration of $\widetilde\xi^L$ and is a martingale with respect to that filtration, while $M(\varphi)$ is adapted to the natural filtration of $\xi$. Since $p>1$,~\eqref{eq:lp convergence of mtg term} and Theorem~\ref{thm:tightness} yield the joint convergence in law
	\begin{align*}
		\pr{\widetilde\xi^L,\widetilde M^L(\varphi)}
		\Longrightarrow \pr{\xi,M(\varphi)}.
	\end{align*}
	In addition, by the uniform boundedness of $\widetilde \xi^L$ and $\xi$ and of $b_L$ for large $L$ (by~\eqref{eq:coefficient convergence}),   $\{\widetilde M_t^L(\varphi)\}_L$ is uniformly integrable for every $t\ge0$. Therefore, by \cite{ethier2009markov}*{Chapter 7, Problem 7, p.~362} with $E=\mathcal C^\mathrm{tem}_\R$,  $M(\varphi)$ is a martingale with respect to the natural filtration of $\xi$. Its square integrability follows again from~\eqref{eq:lp convergence of mtg term}, and it has mean zero because $M_0(\varphi)=0$.
\end{proof}
For $L\ge 1$ and $\varphi \in \mathcal C(\R)$, we denote $\widehat \varphi_L:\Z/L \to \R$ via
\begin{align*}
	\widehat \varphi_L(x_i):= \int_\R \rho_L(x - x_i) \varphi(x) \mathrm dx,\qquad x_i \in \Z/L.
\end{align*}
Denote for $(s,x) \in \R_+\times \R$ and $x_i \in \Z/L$ that
\begin{align*}
	\Gamma_L\pr{\xi_s^L}(x_i)
	&:=\sigma_L\pr{\xi_s^L(x_i)+\frac{\alpha}{2}\Delta_L\xi_s^L(x_i)}
	-L^{-3}b_L\pr{\xi_s^L(x_i)+\frac{\alpha}{2}\Delta_L\xi_s^L(x_i)}^2,\\
	\widetilde\Gamma_L\pr{\xi_s^L}(x)
	&:=\pi_L^\mathrm{int}\pr{\Gamma_L\pr{\xi_s^L}}(x).
\end{align*}
\begin{lemma}\label{lem:quadratic variation prelimit}
	Let $L \ge 1$, $\varphi,\psi \in \mathcal C_c^\infty(\R)$ and $(\widetilde M_t^L (\varphi))_t$ be as in~\eqref{eq:mtg varphi prelimit}.  Then
	\begin{align*}
		[\widetilde M^L(\varphi),\widetilde M^L(\psi)]_t
		=\int_0^{\frac{\lfloor tL^2\rfloor}{L^2}}\frac{1}{L} \sum_{x_i \in \Z/L}
		\widehat{\varphi}_L(x_i)\widehat{\psi}_L(x_i)
		\Gamma_L\pr{\xi_{s}^L}(x_i) \mathrm ds,
		\qquad 0\le t<\infty\quad\text{a.s.}
	\end{align*}
	In addition, almost surely in $0\le t < \infty$,
	\begin{align*}
		\abs{[\widetilde M^L(\varphi),\widetilde M^L(\psi)]_t
		-\int_0^{\frac{\lfloor tL^2\rfloor}{L^2}}
		\left\langle\varphi\psi,\widetilde\Gamma_L\pr{\xi_s^L}\right\rangle\mathrm ds}
		\le t\omega_\psi\pr{ 2L^{-1}}
		\norm{\varphi}_{L^1_\R}\sup_{y\in[0,1]}\sigma_L(y),
		\qquad
	\end{align*}
\end{lemma}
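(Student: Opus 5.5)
The identity follows from the discrete structure of $\widetilde M^L(\varphi)$. The approximation then follows by a direct Riemann-sum comparison.

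\textbf{Step 1: $\widetilde M^L(\varphi)$ as a sum of noise increments.} By~\eqref{eq: weak formulation prelimit} and the summation-by-parts identity in the proof of Proposition~\ref{prop:limiting martingale}, $\widetilde M^L_t(\varphi)$ is piecewise constant on $[k/L^2,(k+1)/L^2)$. Using the convolution representation~\eqref{eq:interpolation convolution identity},
\[
\widetilde M^L_t(\varphi)=\sum_{k=1}^{\lfloor tL^2\rfloor}\frac1L\sum_{x_i\in\Z/L}\widehat\varphi_L(x_i)\,\eta^L_k(x_i).
\]
The jumps occur at the deterministic times $k/L^2$. Hence the predictable covariation is the sum of the conditional covariances of the increments given $\mathcal F_{k-1}$.

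\textbf{Step 2: compute the conditional covariances.} Conditionally on $\mathcal F_{k-1}$, the variables $\{\eta^L_k(x_i)\}_i$ are independent and mean zero. Therefore
\[
\E\Big[\Big(\tfrac1L\sum_i\widehat\varphi_L(x_i)\eta_k^L(x_i)\Big)\Big(\tfrac1L\sum_j\widehat\psi_L(x_j)\eta_k^L(x_j)\Big)\Big|\mathcal F_{k-1}\Big]=\frac1{L^2}\sum_i\widehat\varphi_L(x_i)\widehat\psi_L(x_i)\,\E[\eta_k^L(x_i)^2\mid\mathcal F_{k-1}].
\]
By~\eqref{eq:mtg_increment l2}, $\E[\eta_k^L(x_i)^2\mid\mathcal F_{k-1}]=L^{-1}\Gamma_L(\xi^L_{(k-1)/L^2})(x_i)$. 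Each summand thus equals $L^{-2}\cdot L^{-1}\sum_i\widehat\varphi_L\widehat\psi_L\Gamma_L$. Since $\xi^L_s=\xi^L_{(k-1)/L^2}$ for $s\in[(k-1)/L^2,k/L^2)$, the sum over $k\le\lfloor tL^2\rfloor$ equals the stated integral up to $\lfloor tL^2\rfloor/L^2$. This process is predictable, indeed deterministic-time-indexed and $\mathcal F_{k-1}$-measurable, so it is the predictable covariation. Square integrability holds because the $\eta$'s are bounded.

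\textbf{Step 3: the approximation bound.} By Fubini,
\[
\langle\varphi\psi,\widetilde\Gamma_L(\xi_s^L)\rangle=\frac1L\sum_i\Gamma_L(\xi^L_s)(x_i)\int\rho_L(x-x_i)\varphi(x)\psi(x)\,\mathrm dx.
\]
The difference from the Step 2 integrand is
\[
\frac1L\sum_i\Gamma_L(x_i)\int\rho_L(x-x_i)\varphi(x)\bigl(\widehat\psi_L(x_i)-\psi(x)\bigr)\,\mathrm dx .
\]
Because $\rho$ is the hat function with support in $[-1,1]$, $\rho_L(\cdot-x_i)$ is supported in $|x-x_i|\le L^{-1}$ and has unit mass. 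Hence $|\widehat\psi_L(x_i)-\psi(x)|\le\omega_\psi(2L^{-1})$ on that support. We also have $0\le\Gamma_L\le\sup\sigma_L$, dropping the nonpositive $-L^{-3}b_L^2$ term; if $\Gamma_L$ could be negative, use $|\Gamma_L|\le\sup\sigma_L$ for large $L$ as needed. Finally, $\frac1L\sum_i\rho_L(x-x_i)=1$. Together these bound the difference by $\omega_\psi(2L^{-1})\|\varphi\|_{L^1}\sup\sigma_L$. Integrating over $s\le\lfloor tL^2\rfloor/L^2\le t$ gives the claim.

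\textbf{Main obstacle.} The main care point is Step 1: writing $\widetilde M^L$ exactly via $\widehat\varphi_L$. This needs the drift and Laplacian terms, integrated over $[0,\lfloor tL^2\rfloor/L^2]$ as piecewise-constant Riemann sums, to cancel precisely against~\eqref{eq: weak formulation prelimit} tested against $\widehat\varphi_L$. The cancellation uses $\langle\pi_L^{\rm int}f,\varphi\rangle=\frac1L\sum f(x_i)\widehat\varphi_L(x_i)$ for every lattice function $f$.
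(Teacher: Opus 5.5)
Your proposal is correct and follows essentially the same route as the paper. It uses the same representation $\widetilde M^L_t(\varphi)=\sum_{k\le\lfloor tL^2\rfloor}\frac1L\sum_{x_i}\widehat\varphi_L(x_i)\eta^L_k(x_i)$ and the same conditional-independence computation via~\eqref{eq:mtg_increment l2}. It also makes the same comparison of $\widehat\varphi_L\widehat\psi_L$ with $\widehat{\varphi\psi}_L$, using $\operatorname{supp}\rho_L\subset[-L^{-1},L^{-1}]$, $\frac1L\sum_{x_i}\rho_L(\cdot-x_i)=1$ and $0\le\Gamma_L\le\sup_{[0,1]}\sigma_L$.

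Your hedge about $\Gamma_L$ possibly being negative is unnecessary: Step~2 already gives $\Gamma_L(\xi^L_{(k-1)/L^2})(x_i)=L\,\E[\eta^L_k(x_i)^2\mid\mathcal F_{k-1}]\ge0$ for every $L\ge1$.
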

\begin{proof}
	Recall $\eta^L$ in~\eqref{eq:martingale decomposition one step} for $L \ge 1$, then by~\eqref{eq: weak formulation prelimit} and Fubini's theorem, it holds for all $t \ge 0$ and $L \ge 1$ that
	\begin{align*}
		\widetilde M^L_t(\varphi)&= \sum_{k = 1}^{{\lfloor tL^2 \rfloor}} \int_\R \frac{1}{L}\sum_{x_i \in \Z/L}  \eta^L_k(x_i) \rho_L(x_i-x) \varphi(x) \mathrm dx\\
		&=\sum_{k = 1}^{{\lfloor tL^2 \rfloor}}\frac{1}{L}\sum_{x_i \in \Z/L}  \eta^L_k(x_i) \int_\R  \rho_L(x_i-x) \varphi(x) \mathrm dx\\
		&=:\sum_{k = 1}^{{\lfloor tL^2 \rfloor}}\frac{1}{L}\sum_{x_i \in \Z/L}  \eta^L_k(x_i) \widehat{\varphi}_L(x_i).
	\end{align*}
	Hence, by conditional independence, we see
	\begin{align*}
		[\widetilde M^L(\varphi),\widetilde M^L(\psi)]_t &= \sum_{k = 1}^{\lfloor tL^2\rfloor}\frac{1}{L^2}\mathbb E \bracket{\pr{ \sum_{x_i \in \Z/L} \eta^L_k(x_i)\widehat{\varphi}_L(x_i)   }\pr{ \sum_{x_i \in \Z/L} \eta^L_k(x_i)\widehat{\psi}_L(x_i)   } \mid \mathcal F_{k-1}}\\
		&=\sum_{k = 1}^{\lfloor tL^2\rfloor}\frac{1}{L^2} \sum_{x_i,x_j \in \Z/L} \widehat{\varphi}_L(x_i)\widehat{\psi}_L(x_j) \E \bracket{\eta^L_k(x_i) \eta^L_k(x_j)\mid \mathcal F_{k-1}}\\
		&=\sum_{k = 1}^{\lfloor tL^2\rfloor  } \frac{1}{L^2} \sum_{x_i \in \Z/L} \widehat \varphi_L(x_i)\widehat \psi_L(x_i) \E\bracket{\eta^L_k(x_i)^2\mid \mathcal F_{k-1}}\\
		&\overset{\eqref{eq:mtg_increment l2} }{=} \frac{1}{L^2}\sum_{k = 0}^{\lfloor tL^2\rfloor-1} \frac{1}{L}\sum_{x_i \in \Z/L} \widehat{\varphi}_L (x_i)\widehat{\psi}_L (x_i) \Gamma_L\pr{\xi_{k/L^2}^L}(x_i)\\
		&= \int_0^{\frac{\lfloor tL^2\rfloor}{L^2}} \frac{1}{L} \sum_{x_i \in \Z/L} \widehat{\varphi}_L(x_i)\widehat{\psi}_L (x_i) \Gamma_L\pr{\xi_{s}^L}(x_i) \mathrm ds,
	\end{align*}
	which proves the first part of the Lemma.

	For the second part, since $\int_\R \rho_L(x) \mathrm dx = 1$ and $\mathrm{supp}(\rho_L) \subset[-1/L,1/L]$ for all $L \ge 1$, we see for $x_i \in \Z/L$,
	\begin{align*}
		\abs{\widehat \varphi_L(x_i)\widehat{\psi}_L (x_i) - \widehat{\varphi\cdot\psi}_L(x_i)}&= \abs{\iint_{\R^2} \varphi(x)\psi(y) \rho_L(x-x_i)\rho_L(y - x_i) \mathrm dy \mathrm dx - \int_{\R} \varphi(x)\psi(x) \rho_L(x-x_i) \mathrm dx}\\
		&=\abs{\iint_{\R^2} \varphi(x) \pr{\psi(y) - \psi(x)}\rho_{L}(x-x_i) \rho_L(y - x_i) \mathrm dy \mathrm dx}\\
		&\le \iint_{\R^2} \abs{\varphi(x)} \abs{\psi(y) - \psi(x)}\rho_{L}(x-x_i) \rho_L(y - x_i) \mathrm dy \mathrm dx\\
		&= \iint_{[x_i - 1/L,x_i+1/L]^2}\abs{\varphi(x)} \abs{\psi(y) - \psi(x)}\rho_{L}(x-x_i) \rho_L(y - x_i) \mathrm dy \mathrm dx\\
		&\le \omega_\psi\pr{2L^{-1}} \int_\R \abs{\varphi(x)} \rho_L(x- x_i) \mathrm dx\\
		&= \omega_\psi\pr{2L^{-1}} \widehat{\abs{\varphi} }_L(x_i).
	\end{align*}
	Therefore,
	\begin{align*}
		\abs{\frac{1}{L}\sum_{x_i \in \Z/L} \widehat\varphi_L(x_i) \widehat{\psi}_L(x_i) \Gamma_L\pr{\xi^L_s}(x_i) - \left \langle \varphi\psi, \widetilde \Gamma_L\pr{\xi^L_s} \right \rangle}&\le \frac{1}{L} \sum_{x_i \in \Z/L } \abs{\widehat \varphi_L(x_i)\widehat{\psi}_L(x_i)  - \widehat{\varphi\cdot \psi}_L(x_i)}\Gamma_L\pr{\xi^L_s}(x_i)\\
		&\le \omega_\psi\pr{2L^{-1}} \frac{1}{L} \sum_{x_i \in \Z/L} \widehat{\abs{\varphi}}_L(x_i) \Gamma_L\pr{\xi^L_s}(x_i)\\
		&\le \omega_\psi(2L^{-1}) \norm{\varphi}_{L^1_\R}\sup_{y\in[0,1]}\sigma_L(y),
	\end{align*}
	Indeed,~\eqref{eq:R assumption} gives
	\begin{align*}
		\Gamma_L(\xi_s^L)(x_i)
		=L\operatorname{Var}_{x}\pr{\mathcal R^L(x)-x}
		\bigg|_{x=\xi_s^L(x_i)+\frac{\alpha}{2}\Delta_L\xi_s^L(x_i)},
	\end{align*}
	and hence $0\le\Gamma_L(\xi_s^L)(x_i)\le\sup_{y\in[0,1]}\sigma_L(y)$. Integrating with respect to $s$ proves the second assertion.
\end{proof}

\begin{prop}\label{prop:characterization of limiting martingale}
	Let $\varphi,\,\psi\in\mathcal C_c^\infty(\R)$. Then
	\begin{align*}
		\left[M(\varphi) ,M(\psi) \right]_t = \int_0^t \langle \sigma(\xi_s), \varphi\psi\rangle\mathrm ds,\qquad t \ge 0.
	\end{align*}
\end{prop}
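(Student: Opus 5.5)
We identify the quadratic covariation by passing to the limit in the martingale $\widetilde M^L(\varphi)\widetilde M^L(\psi)-[\widetilde M^L(\varphi),\widetilde M^L(\psi)]$. By polarization it would suffice to treat $\varphi=\psi$, but the argument works directly for the pair. Define
\begin{align*}
	N_t:=M_t(\varphi)M_t(\psi)-\int_0^t\langle\sigma(\xi_s),\varphi\psi\rangle\,\mathrm ds .
\end{align*}
The goal is to show that $N$ is a martingale with respect to the natural filtration of $\xi$. Since $t\mapsto\int_0^t\langle\sigma(\xi_s),\varphi\psi\rangle\,\mathrm ds$ is continuous and adapted, hence predictable, and has finite variation and starts at $0$, the uniqueness in the definition of $[M(\varphi),M(\psi)]$ then gives the claim.

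\textbf{Step 1: identify the limit of the prelimit covariation.} By Lemma~\ref{lem:quadratic variation prelimit}, $[\widetilde M^L(\varphi),\widetilde M^L(\psi)]_t$ differs from $\int_0^{\lfloor tL^2\rfloor/L^2}\langle\varphi\psi,\widetilde\Gamma_L(\xi^L_s)\rangle\,\mathrm ds$ by at most $T\omega_\psi(2L^{-1})\norm{\varphi}_{L^1_\R}\sup\sigma_L$, and this tends to $0$. Next we compare $\widetilde\Gamma_L(\xi^L_s)(x)$ with $\sigma(\widetilde\xi^L_s(x))$ on $\mathrm{supp}(\varphi\psi)\subset[-K,K]$. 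This repeats the argument for $E_2^L$ in Proposition~\ref{prop:limiting martingale}, with $(b_L,b)$ replaced by $(\sigma_L,\sigma)$. We get the bound
\begin{align*}
	\sup_{y\in[0,1]}\abs{\sigma_L(y)-\sigma(y)}+L^{-3}\sup_y b_L(y)^2+\omega^0_\sigma\Bigl(\sup_{\abs{x_i}\le K+1}\abs{\xi^L_s(x_{i+1})-\xi^L_s(x_i)}\Bigr).
\end{align*}
This bound tends to $0$ in $L^1_\Omega$, uniformly in $s\le T$, by \eqref{eq:coefficient convergence} and Lemma~\ref{lem:equicontinuity}. Finally, the almost sure locally uniform convergence $\widetilde\xi^L\to\xi$, the uniform continuity of $\sigma$ on $[0,1]$, and dominated convergence give
\begin{align*}
	\sup_{t\le T}\abs{[\widetilde M^L(\varphi),\widetilde M^L(\psi)]_t-\int_0^t\langle\sigma(\xi_s),\varphi\psi\rangle\,\mathrm ds}\to0\quad\text{in }L^1_\Omega .
\end{align*}

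\textbf{Step 2: pass to the limit in the martingale property.} By \eqref{eq:lp convergence of mtg term} with $p>8$, we have $\widetilde M^L_t(\varphi)\widetilde M^L_t(\psi)\to M_t(\varphi)M_t(\psi)$ in $L^{p/2}_\Omega$, uniformly on $[0,T]$. Hence $N^L_t:=\widetilde M^L_t(\varphi)\widetilde M^L_t(\psi)-[\widetilde M^L(\varphi),\widetilde M^L(\psi)]_t$ converges to $N_t$ in $L^1_\Omega$, uniformly in $t\le T$. Each $N^L$ is a martingale with respect to the filtration of $\widetilde\xi^L$. Its uniform integrability follows from the $L^{p/2}$ bounds: the products are bounded in $L^{p/2}$, and the brackets are bounded by $T\norm{\varphi\psi}_{L^1}\sup\sigma_L$. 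We then proceed as at the end of Proposition~\ref{prop:limiting martingale}. For $0\le s_1<\dots<s_n\le s<t$ and bounded continuous $H$ on $(\mathcal C^{\mathrm{tem}}_\R)^n$, we have
\begin{align*}
	\E\bigl[(N^L_t-N^L_s)H(\widetilde\xi^L_{s_1},\dots,\widetilde\xi^L_{s_n})\bigr]=0,
\end{align*}
and letting $L\to\infty$ yields the same identity for $N$ and $\xi$. Equivalently, one may invoke \cite{ethier2009markov}*{Chapter 7, Problem 7} again. Thus $N$ is a martingale for the natural filtration of $\xi$.

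\textbf{Main obstacle.} The delicate point is Step 1, namely controlling the nonlinear coefficient $\sigma_L$ evaluated at the migrated value $\xi+\frac\alpha2\Delta_L\xi$ rather than at $\widetilde\xi$. The modulus $\omega^0_\sigma$ together with Lemma~\ref{lem:equicontinuity} handles this, just as for the drift. The remaining steps are routine uniform-integrability bookkeeping.
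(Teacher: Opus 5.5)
Your proposal is correct and follows essentially the same route as the paper. Both arguments show that $N^L\to N$ uniformly on $[0,T]$, combining Lemma~\ref{lem:quadratic variation prelimit}, the modulus $\omega^0_\sigma$ together with Lemma~\ref{lem:equicontinuity}, and the almost-sure convergence $\widetilde\xi^L\to\xi$; they then transfer the martingale property and conclude by uniqueness of predictable covariation. You work in $L^1_\Omega$ where the paper uses $L^2_\Omega$, either suffices, and the $O(L^{-2})$ error from integrating up to $\lfloor tL^2\rfloor/L^2$ rather than $t$, which you leave implicit, is trivial.
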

\begin{proof}
	For $t\ge0$ and $\varphi,\psi\in\mathcal C_c^\infty(\R)$, set
	\begin{align*}
		A_t(\varphi,\psi):=\int_0^t\langle\sigma(\xi_s),\varphi\psi\rangle\,\mathrm ds.
	\end{align*}
	Since $A(\varphi,\psi)$ is continuous, adapted, and of finite variation, it is predictable. By uniqueness of predictable covariation, it therefore suffices to show that $M(\varphi)M(\psi)-A(\varphi,\psi)$ is a martingale. For each $L$, the corresponding compensated product
	\begin{align*}
		N_t^L:=\widetilde M_t^L(\varphi)\widetilde M_t^L(\psi)
		-\bracket{\widetilde M^L(\varphi),\widetilde M^L(\psi)}_t,\qquad t \ge 0
	\end{align*}
	is a martingale. Write $N_t:=M_t(\varphi)M_t(\psi)-A_t(\varphi,\psi)$. We will prove that, for every $T>0$, $N^L$ converges to $N$ uniformly on $[0,T]$ in $L^2(\Omega)$; equivalently,
	\begin{align*}
		\lim_{L \to \infty}\norm{\sup_{t\in[0,T]}\abs{\pr{\widetilde M^L_t(\varphi) \widetilde M^L_t(\psi) - \bracket{\widetilde M^L(\varphi),\widetilde M^L(\psi)}_t} - \pr{M_t(\varphi)M_t(\psi) -\int_0^t \langle \sigma(\xi_s), \varphi\psi\rangle \mathrm ds}}}_{L^2_\Omega} =0.
	\end{align*}
	By~\eqref{eq:lp convergence of mtg term} and H\"older's inequality, the product terms converge uniformly on $[0,T]$ in $L^2(\Omega)$. Moreover, Lemma~\ref{lem:quadratic variation prelimit} gives
	\begin{align*}
		\sup_{t\in[0,T]}\abs{\bracket{\widetilde M^L(\varphi),\widetilde M^L(\psi)}_t-A_t(\varphi,\psi)}
		&\le T\omega_\psi(2L^{-1})\norm{\varphi}_{L^1_\R}
		\sup_{y\in[0,1]}\sigma_L(y)\\
		&\qquad+\int_0^{\frac{\lceil TL^2\rceil}{L^2}}
		\left\langle\abs{\widetilde\Gamma_L(\xi_s^L)-\sigma(\xi_s)},
		\abs{\varphi\psi}\right\rangle\mathrm ds\\
		&\qquad+L^{-2}\norm{\varphi\psi}_{L^1_\R}\sup_{y\in[0,1]}\sigma(y).
	\end{align*}
	By~\eqref{eq:coefficient convergence}, $\sup_{y\in[0,1]}\sigma_L(y)$ is uniformly bounded for all sufficiently large $L$. Hence the first and third terms on the right hand side converge to zero. It therefore remains to prove
	\begin{align}\label{eq: char of limiting martingale done}
		\lim_{L \to \infty}  \norm{\int_0^{\frac{\lceil TL^2\rceil}{L^2}} \left \langle \abs{\widetilde \Gamma_L(\xi_s^L) - \sigma(\xi_s)}, \abs{\varphi\psi}\right \rangle \mathrm ds}_{L^2_\Omega} = 0.
	\end{align}
	Recall $\omega_f^0$ from~\eqref{eq:modulus of continuity on unit interval}. Since $\sigma$ is continuous on $[0,1]$, $\omega_\sigma^0(r)\to0$ as $r\downarrow0$. Suppose $\operatorname{supp}(\varphi\psi)\subset[-K,K]$ for some $K\ge1$. If $x\in[x_i,x_{i+1}]$ for some $x_i = i/L$, then the definition of $\widetilde\Gamma_L$ and spatial linear interpolation give
	\begin{align*}
		\abs{\widetilde\Gamma_L\pr{\xi_s^L}(x)-\sigma\pr{\widetilde\xi_s^L(x)}}\le \sup_{y\in[0,1]}\abs{\sigma_L(y)-\sigma(y)}
		&+L^{-3}\sup_{y\in[0,1]}\abs{b_L(y)}^2\\
		&+\omega_\sigma^0\left(\max_{-1\le j\le1}
		\abs{\xi_s^L(x_{i+j+1})-\xi_s^L(x_{i+j})}\right).
	\end{align*}
	Therefore, for all sufficiently large $L$,
	\begin{align*}
		\sup_{\substack{s\in[0,T+1]\\\abs{x}\le K}}
		\abs{\widetilde\Gamma_L\pr{\xi_s^L}(x)-\sigma\pr{\xi_s(x)}}&\le \sup_{y\in[0,1]}\abs{\sigma_L(y)-\sigma(y)}+L^{-3}\sup_{y\in[0,1]}\abs{b_L(y)}^2\\
		&\qquad\quad+\omega_\sigma^0\left(
		\sup_{\substack{s\in[0,T+1],\ \abs{x_i}\le K+1\\x_i\in\Z/L}}
		\abs{\xi_s^L(x_{i+1})-\xi_s^L(x_i)}\right)\\
		&\qquad\quad+\omega_\sigma^0\left(
		\sup_{\substack{s\in[0,T+1]\\\abs{x}\le K}}
		\abs{\widetilde\xi_s^L(x)-\xi_s(x)}\right).
	\end{align*}
	By Lemma~\ref{lem:equicontinuity}, the third term converges to zero in $L^p(\Omega)$. The almost-sure local uniform convergence $\widetilde\xi^L\to\xi$ implies that the fourth term converges to zero almost surely. The first two terms in the preceding bound converge to zero by~\eqref{eq:coefficient convergence}. Therefore, the boundedness and continuity of $\omega_\sigma^0$ imply that the right hand side converges to zero in $L^2(\Omega)$ as $L \to \infty$. Since $\lceil TL^2\rceil/L^2\le T+1$, we see~\eqref{eq: char of limiting martingale done} holds.

	Consequently, $N^L\to N$ uniformly on $[0,T]$ in $L^2(\Omega)$. Together with the almost-sure convergence $\widetilde\xi^L\to\xi$, this yields
	\begin{align*}
		(\widetilde\xi^L,N^L)\Longrightarrow(\xi,N),
	\end{align*}
	and the $L^2$ convergence implies that $\{N_t^L\}_L$ is uniformly integrable for every $t\ge0$. Moreover, $N^L$ is a martingale with respect to the natural filtration of $\widetilde\xi^L$, while $N$ is adapted to the natural filtration of $\xi$. Hence \cite{ethier2009markov}*{Chapter 7, Problem 7, p.~362} implies that $N$ is a martingale with respect to the natural filtration of $\xi$. The uniqueness of predictable covariation therefore gives $[M(\varphi),M(\psi)]=A(\varphi,\psi)$.
\end{proof}
\begin{proof}[Proof of Theorem~\ref{thm:main}]
	We recall the standard argument for completeness. Fix $n\in\N$, test functions $\varphi_1,\dots,\varphi_n\in\mathcal C_c^\infty(\R)$, and $F\in\mathcal C_c^\infty(\R^n)$. Set
	\begin{align*}
		Z_t^i:=\langle\xi_t,\varphi_i\rangle,
		\qquad Z_t:=\pr{Z_t^1,\dots,Z_t^n}.
	\end{align*}
	Recall $M$ from~\eqref{eq:mtg phi limit}, Proposition~\ref{prop:limiting martingale} gives
	\begin{align*}
		Z_t^i=Z_0^i+\int_0^t
		\frac{\alpha}{2}\langle\xi_s,\Delta\varphi_i\rangle
		+\langle b(\xi_s),\varphi_i\rangle\mathrm ds+M_t(\varphi_i).
	\end{align*}
	Note that since $\varphi_i$'s have compact support, $(Z_t^i)_{t \ge 0}$ is a continuous square integrable semi-martingale for each $i = 1,\dots, n$. By Proposition~\ref{prop:characterization of limiting martingale}, we see
	\begin{align*}
		[M(\varphi_i),M(\varphi_j)]_t
		=\int_0^t\langle\sigma(\xi_s),\varphi_i\varphi_j\rangle\,\mathrm ds.
	\end{align*}
	Therefore, It\^o's formula shows that
	\begin{align*}
		F(Z_t)-F(Z_0)
		&-\int_0^t\sum_{i=1}^n\partial_iF(Z_s)
		\left\{
		\frac{\alpha}{2}\langle\xi_s,\Delta\varphi_i\rangle
		+\langle b(\xi_s),\varphi_i\rangle
		\right\}\mathrm ds\\
		&-\frac12\int_0^t\sum_{i,j=1}^n\partial_{ij}F(Z_s)
		\langle\sigma(\xi_s),\varphi_i\varphi_j\rangle\,\mathrm ds
	\end{align*}
	is a mean zero martingale in $t\ge 0$. Hence the law of $\xi$ solves the  martingale problem associated with~\eqref{eq: spde target}. By \cite{iwata1987infinite}*{Theorem 3.3}, on an extension of the underlying probability space there exists a space--time white noise $W$ such that $\xi$ is a weak solution of~\eqref{eq: spde target}. Weak uniqueness for~\eqref{eq: spde target} therefore identifies the law of every subsequential limit with the law of its unique weak solution. Together with tightness, this proves the convergence of the full sequence.
\end{proof}
\appendix
\section{Appendix}\label{app:heat-kernel-proofs}

\begin{proof}[Proof of Corollary~\ref{cor:heat kernel on diagonal bound}]
	Suppose $n = 2k$ for some $k \ge 1$. Then, by Cauchy--Schwarz,
	\begin{align*}
		p_{2k}(x,y) &= \sum_z p_k(x,z)p_k(z,y)\\
		& \le \pr{\sum_z p_k(x,z)^2}^\frac{1}{2}\pr{\sum_z p_k(y,z)^2}^\frac{1}{2}\\
		&=\sqrt{p_{2k}(x,x)}\sqrt{p_{2k}(y,y)}\\
		&=p_{2k}(0,0)\\
		&\lesssim k^{-1/2},
	\end{align*}
	where we used Proposition~\ref{prop:lawler-242} in the last inequality.

	Suppose now $n = 2k+1$ for some $k \ge 1$. Then
	\begin{align*}
		p_{2k+1}(x,y)  = \sum_z p_{2k}(x,z)p_1(z,y) \le \sup_z p_{2k}(x,z) =p_{2k}(0,0) \lesssim k^{-1/2} \lesssim\pr{2k+1}^{-1/2}.
	\end{align*}
	The case $n=1$ is immediate.
\end{proof}

\begin{proof}[Proof of Corollary~\ref{cor:gradient l2 bound}]
	Consider $n\in \N$ that
	\begin{align*}
		\sum_z\pr{p_n(x,z) - p_n(y,z)}^2  &\le \sum_z \abs{p_n(x,z)-p_n(y,z)}\pr{p_n(x,z)+p_n(y,z)}\\
		& \le 2\sup_{u,v} p_n(u,v) \sum_z \abs{p_n(x,z)-p_n(y,z)}\\
		&\lesssim n^{-1/2}\frac{\abs{x-y}}{n^{1/2}},
	\end{align*}
	where we used Proposition~\ref{prop:lawler-242}  and Corollary~\ref{cor:heat kernel on diagonal bound} in the last line.

	On the other hand, by Proposition~\ref{prop:lawler-242} again, we see
	\begin{align*}
		\sum_z\pr{p_n(x,z) - p_n(y,z)}^2& = \sum_z\pr{p_n(x,z)^2-2p_n(x,z)p_n(y,z)+p_n(y,z)^2}\\
		&\le p_{2n}(x,x)+p_{2n}(y,y)\\
		&=2p_{2n}(0,0)\lesssim n^{-1/2}.
	\end{align*}
	Finally, the semigroup property implies that, for $n,h\ge1$ and $x \in \Z$,
	\begin{align*}
		\sum_{z\in\Z}\abs{p_{n+h}(x,z)-p_n(x,z)}
		&\le\sum_{w\in\Z}p_h(x,w)
		\sum_{z\in\Z}\abs{p_n(w,z)-p_n(x,z)}\\
		&{\overset{\eqref{ineq:discrete-heat-kernel-gradient-abs-difference}}{\lesssim}} {n^{-1/2}\sum_{w\in\Z}p_h(x,w)\abs{w-x}}\\
		&\le n^{-1/2}
		\left(\sum_{w\in\Z}p_h(x,w)\abs{w-x}^2\right)^{1/2}\\
		&=\sqrt{\frac{\alpha h}{n}},
	\end{align*}
	where we used Jensen's inequality in the third line and the fact that $\E_x[(S_h-S_0)^2]=\alpha h$ in the last line. Together with the trivial bound by $2$, this proves~\eqref{ineq:heat kernel temporal l1 difference}. This completes the proof.
\end{proof}

\begin{proof}[Proof of Lemma~\ref{lem: heat kernel temporal l2 difference bound}]
	By translation invariance we may assume $x=0$. By the semigroup property, we see it holds for all $n,k \in \N$ that
	\begin{align*}
		\sum_{y\in\Z}\bigl(p_{n+k}(0,y)-p_n(0,y)\bigr)^2
		&= \sum_y p_{n+k}(0,y)^2-2\sum_y p_{n+k}(0,y)p_n(0,y)+\sum_y p_n(0,y)^2\\
		&= p_{2n+2k}(0,0)-2p_{2n+k}(0,0)+p_{2n}(0,0).
	\end{align*}
	By Lemma~\ref{lem:local clt} with $x=0$, for all $m,h\ge 1$,
	\begin{align*}
		\abs{p_{m+h}(0,0)-p_m(0,0)}
		&\le \abs{p_{m+h}(0,0)-(2\pi\alpha(m+h))^{-1/2}}\\
		&\qquad +\abs{(2\pi\alpha(m+h))^{-1/2}-(2\pi\alpha m)^{-1/2}}\\
		&\qquad +\abs{(2\pi\alpha m)^{-1/2}-p_m(0,0)}\\
		&\lesssim (m+h)^{-3/2}+hm^{-3/2}+m^{-3/2}
		\lesssim hm^{-3/2}.
	\end{align*}
	Consequently,
	\begin{align*}
		p_{2n+2k}(0,0)-2p_{2n+k}(0,0)+p_{2n}(0,0)&\le \abs{p_{2n+2k}(0,0)-p_{2n+k}(0,0)}
		+\abs{p_{2n+k}(0,0)-p_{2n}(0,0)}\\
		&\lesssim k(2n+k)^{-3/2}+k(2n)^{-3/2}\\
		&\lesssim kn^{-3/2},
	\end{align*}
	as desired.
\end{proof}

\begin{proof}[Proof of Lemma~\ref{lem:weighted discrete semigroup stability}]
	Let $(S_k)_{k\ge0}$ be the lazy symmetric random walk introduced in Section~\ref{sec:discrete heat kernel}. Its increments $X_1,X_2,\ldots$ are independent and satisfy
	\[
		\P(X_j=1)=\P(X_j=-1)=\frac{\alpha}{2},
		\qquad
		\P(X_j=0)=1-\alpha.
	\]
	By translation invariance, for $x_i=i/L$,
	\[
		P_k^Lf(x_i)
		=\E\left[f\left(x_i+\frac{S_k}{L}\right)\right].
	\]
	Consequently,
	\begin{align*}
		e^{-\lambda|x_i|}\abs{P_k^Lf(x_i)}
		&\le
		\norm{f}_{(\lambda),L}
		\E\left[
		e^{\lambda\left(
		|x_i+S_k/L|-|x_i|
		\right)}
		\right] \\
		&\le
		\norm{f}_{(\lambda),L}
		\E\left[e^{\lambda|S_k|/L}\right].
	\end{align*}

	For every $\theta\in\R$,
	\[
		\E[e^{\theta X_1}]
		=\frac{\alpha}{2}e^\theta+(1-\alpha)
		+\frac{\alpha}{2}e^{-\theta}
		=1-\alpha+\alpha\cosh\theta.
	\]
	Since $S_k=X_1+\cdots+X_k$, independence gives
	\[
		\E[e^{\theta S_k}]
		=\bigl(1-\alpha+\alpha\cosh\theta\bigr)^k.
	\]
	Using $e^{|x|}\le e^x+e^{-x}$ and the symmetry of $S_k$, we obtain
	\begin{align*}
		\E\left[e^{\lambda|S_k|/L}\right]
		&\le
		\E\left[e^{\lambda S_k/L}\right]
		+\E\left[e^{-\lambda S_k/L}\right] \\
		&=2\bigl(1-\alpha+\alpha\cosh(\lambda/L)\bigr)^k \\
		&\le
		2\exp\left\{
		\alpha k\bigl(\cosh(\lambda/L)-1\bigr)
		\right\},
	\end{align*}
	where the last inequality follows from $1+x\le e^x$.

	For $z\ge0$,
	\[
		\cosh z-1
		=\int_0^z(z-s)\cosh s\,\mathrm ds
		\le \frac{z^2}{2}\cosh z.
	\]
	Therefore, since $L\ge1$ and $k\le TL^2$,
	\begin{align*}
		\alpha k\bigl(\cosh(\lambda/L)-1\bigr)
		&\le
		\frac{\alpha k\lambda^2}{2L^2}\cosh(\lambda/L) \\
		&\le
		\frac{\alpha T\lambda^2}{2}\cosh\lambda.
	\end{align*}
	It follows that
	\[
		\E\left[e^{\lambda|S_k|/L}\right]
		\le
		2\exp\left\{
		\frac{\alpha T\lambda^2}{2}\cosh\lambda
		\right\}.
	\]
	Taking the supremum over $x_i\in\Z/L$ completes the proof.
\end{proof}

\begin{proof}[Proof of Lemma~\ref{lem:holder-lebesgue-interpolation}]
	Write $\abs{f}_{\mathcal C_K^\gamma}:=\sup_{x\neq y\in K}\frac{|f(x)-f(y)|}{|x-y|^\gamma}$. Let $x\in K$ and $r\in(0,1]$ and $I:=K\cap[x-r,x+r]$ and note $|I|\gtrsim r$. Then
	\begin{align*}
		|f(x)|
		&\le \frac{1}{|I|}\int_I |f(y)|\,\mathrm dy + \frac{1}{|I|}\int_I |f(x)-f(y)|\,\mathrm dy\\
		&\le |I|^{-1/p}\norm{f}_{L^p_K} + r^\gamma\abs{f}_{\mathcal C_K^\gamma}\\
		&\lesssim r^{-1/p}\norm{f}_{L^p_K} + r^\gamma\abs{f}_{\mathcal C_K^\gamma},
	\end{align*}
	where the upper bound is independent of $x$. Taking supremum over $x\in K$ gives
	\begin{align*}
		\norm{f}_{\mathcal C_K} \lesssim r^{-1/p}\norm{f}_{L^p_K}+ r^\gamma\abs{f}_{\mathcal C_K^\gamma}.
	\end{align*}
	Choosing $r:=\min\left\{1,\left(\norm{f}_{L^p_K}/\abs{f}_{\mathcal C_K^\gamma}\right)^{\frac{1}{\gamma+1/p}}\right\}$ yields $\norm{f}_{\mathcal C_K}\lesssim \norm{f}_{L^p_K}^{1-\theta}\abs{f}_{\mathcal C_K^\gamma}^\theta+\norm{f}_{L^p_K}$ with $\theta=\frac{1}{\gamma p+1}$. Finally, $\abs{f}_{\mathcal C_K^\gamma}\le \norm{f}_{\mathcal C_K^\gamma}$, so the stated bound follows.
\end{proof}

\bibliographystyle{plain}
\bibliography{ref}

% \bib, bibdiv, biblist are defined by the amsrefs package.
\begin{bibdiv}
\begin{biblist}

\bib{alsmeyer2026two}{article}{
      author={Alsmeyer, Gerold},
      author={Cordero, Fernando},
      author={Dopmeyer, Hannah},
       title={A two-size {W}right--{F}isher model: asymptotic analysis via uniform renewal theory},
        date={2026},
     journal={Stochastic Processes and their Applications},
      volume={192},
       pages={104812},
}

\bib{athreya2000uniqueness}{article}{
      author={Athreya, Siva},
      author={Tribe, Roger},
       title={Uniqueness for a class of one-dimensional stochastic {PDE}s using moment duality},
        date={2000},
     journal={The Annals of Probability},
      volume={28},
      number={4},
       pages={1711\ndash 1734},
}

\bib{baake2019modelling}{article}{
      author={Baake, Ellen},
      author={Gonz{\'a}lez~Casanova, Adri{\'a}n},
      author={Probst, Sebastian},
      author={Wakolbinger, Anton},
       title={Modelling and simulating {L}enski's long-term evolution experiment},
        date={2019},
     journal={Theoretical Population Biology},
      volume={127},
       pages={58\ndash 74},
}

\bib{boenkost2021haldaneweak}{article}{
      author={Boenkost, Florin},
      author={Gonz{\'a}lez~Casanova, Adri{\'a}n},
      author={Pokalyuk, Cornelia},
      author={Wakolbinger, Anton},
       title={Haldane's formula in {C}annings models: the case of moderately weak selection},
        date={2021},
     journal={Electronic Journal of Probability},
      volume={26},
       pages={1\ndash 36},
}

\bib{barnes2025wright}{article}{
      author={Barnes, Clayton},
      author={Mytnik, Leonid},
      author={Sun, Zhenyao},
       title={Wright--{F}isher stochastic heat equations with irregular drifts},
        date={2025},
     journal={Probability Theory and Related Fields},
}

\bib{cannings1974latent}{article}{
      author={Cannings, C.},
       title={The latent roots of certain {M}arkov chains arising in genetics: a new approach, {I}. {H}aploid models},
        date={1974},
     journal={Advances in Applied Probability},
      volume={6},
      number={2},
       pages={260\ndash 290},
}

\bib{conway1978functions}{book}{
      author={Conway, John~B.},
       title={Functions of one complex variable {I}},
     edition={2},
      series={Graduate Texts in Mathematics},
   publisher={Springer-Verlag},
     address={New York},
        date={1978},
      volume={11},
}

\bib{durrett2016genealogies}{article}{
      author={Durrett, Rick},
      author={Fan, Wai-Tong~Louis},
       title={Genealogies in expanding populations},
        date={2016},
     journal={The Annals of Applied Probability},
      volume={26},
      number={6},
       pages={3456\ndash 3490},
}

\bib{di2012hitchhiker}{article}{
      author={Di~Nezza, Eleonora},
      author={Palatucci, Giampiero},
      author={Valdinoci, Enrico},
       title={Hitchhikerʼs guide to the fractional {S}obolev spaces},
        date={2012},
     journal={Bulletin des Sciences Math{\'e}matiques},
      volume={136},
      number={5},
       pages={521\ndash 573},
}

\bib{ethier2009markov}{book}{
      author={Ethier, Stewart~N.},
      author={Kurtz, Thomas~G.},
       title={Markov processes: characterization and convergence},
   publisher={John Wiley \& Sons},
        date={2009},
}

\bib{etheridge2020rescaling}{article}{
      author={Etheridge, Alison~M.},
      author={V{\'e}ber, Amandine},
      author={Yu, Feng},
       title={Rescaling limits of the spatial {L}ambda-{F}leming-{V}iot process with selection},
        date={2020},
     journal={Electronic Journal of Probability},
      volume={25},
       pages={1\ndash 89},
}

\bib{fan2021stochastic}{article}{
      author={Fan, Wai-Tong~Louis},
       title={Stochastic {PDE}s on graphs as scaling limits of discrete interacting systems},
        date={2021},
     journal={Bernoulli},
      volume={27},
      number={3},
       pages={1899\ndash 1941},
}

\bib{gonzalez2016individual}{article}{
      author={Gonz{\'a}lez~Casanova, Adri{\'a}n},
      author={Kurt, Noemi},
      author={Wakolbinger, Anton},
      author={Yuan, Linglong},
       title={An individual-based model for the {L}enski experiment, and the deceleration of the relative fitness},
        date={2016},
     journal={Stochastic Processes and their Applications},
      volume={126},
      number={8},
       pages={2211\ndash 2252},
}

\bib{casanova2020wright}{article}{
      author={Gonz{\'a}lez~Casanova, Adri{\'a}n},
      author={Mir{\'o}~Pina, Ver{\'o}nica},
      author={Pardo, Juan~Carlos},
       title={The {W}right--{F}isher model with efficiency},
        date={2020},
     journal={Theoretical Population Biology},
      volume={132},
       pages={33\ndash 46},
}

\bib{GPP}{article}{
      author={Gonz{\'a}lez~Casanova, Adri{\'a}n},
      author={Pardo, Juan~Carlos},
      author={P{\'e}rez, Jos{\'e}~Luis},
       title={Branching processes with interactions: the subcritical cooperative regime},
        date={2021},
     journal={Advances in Applied Probability},
      volume={53},
      number={1},
       pages={251\ndash 278},
}

\bib{gillespie1974natural}{article}{
      author={Gillespie, John~H.},
       title={Natural selection for within-generation variance in offspring number},
        date={1974},
     journal={Genetics},
      volume={76},
      number={3},
       pages={601\ndash 606},
}

\bib{gillespie1975natural}{article}{
      author={Gillespie, John~H.},
       title={Natural selection for within-generation variance in offspring number {II}. {D}iscrete haploid models},
        date={1975},
     journal={Genetics},
      volume={81},
      number={2},
       pages={403\ndash 413},
}

\bib{hall2014martingale}{book}{
      author={Hall, Peter},
      author={Heyde, Christopher~C.},
       title={Martingale limit theory and its application},
   publisher={Academic Press},
        date={2014},
}

\bib{huillet2022asymptotic}{article}{
      author={Huillet, Thierry},
      author={M{\"o}hle, Martin},
       title={Asymptotic genealogies for a class of generalized {W}right--{F}isher models},
        date={2022},
     journal={Modern Stochastics: Theory and Applications},
      volume={9},
      number={1},
       pages={17\ndash 43},
}

\bib{hou2026subcritical}{misc}{
      author={Hou, Haojie},
      author={Sun, Zhenyao},
       title={On the subcritical self-catalytic branching {B}rownian motions},
        date={2026},
        note={arXiv:2501.16739},
}

\bib{iwata1987infinite}{article}{
      author={Iwata, Koichiro},
       title={An infinite dimensional stochastic differential equation with state space {$C(\mathbb R)$}},
        date={1987},
     journal={Probability Theory and Related Fields},
      volume={74},
      number={1},
       pages={141\ndash 159},
}

\bib{kingman1982coalescent}{article}{
      author={Kingman, J. F.~C.},
       title={The coalescent},
        date={1982},
     journal={Stochastic Processes and their Applications},
      volume={13},
      number={3},
       pages={235\ndash 248},
}

\bib{kliem2011convergence}{article}{
      author={Kliem, Sandra},
       title={Convergence of rescaled competing species processes to a class of {SPDE}s},
        date={2011},
     journal={Electronic Journal of Probability},
      volume={16},
       pages={618\ndash 657},
}

\bib{kimura1964stepping}{article}{
      author={Kimura, Motoo},
      author={Weiss, George~H.},
       title={The stepping stone model of population structure and the decrease of genetic correlation with distance},
        date={1964},
     journal={Genetics},
      volume={49},
      number={4},
       pages={561\ndash 576},
}

\bib{lawler2010random}{book}{
      author={Lawler, Gregory~F.},
      author={Limic, Vlada},
       title={Random walk: a modern introduction},
      series={Cambridge Studies in Advanced Mathematics},
   publisher={Cambridge University Press},
        date={2010},
      volume={123},
}

\bib{marchal2017sub}{article}{
      author={Marchal, Olivier},
      author={Arbel, Julyan},
       title={On the sub-{G}aussianity of the {B}eta and {D}irichlet distributions},
        date={2017},
     journal={Electronic Communications in Probability},
      volume={22},
      number={54},
       pages={1\ndash 14},
}

\bib{mohle2000total}{article}{
      author={M{\"o}hle, Martin},
       title={Total variation distances and rates of convergence for ancestral coalescent processes in exchangeable population models},
        date={2000},
     journal={Advances in Applied Probability},
      volume={32},
      number={4},
       pages={983\ndash 993},
}

\bib{mohle2001forward}{article}{
      author={M{\"o}hle, Martin},
       title={Forward and backward diffusion approximations for haploid exchangeable population models},
        date={2001},
     journal={Stochastic Processes and their Applications},
      volume={95},
      number={1},
       pages={133\ndash 149},
}

\bib{mohle2001classification}{article}{
      author={M{\"o}hle, Martin},
      author={Sagitov, Serik},
       title={A classification of coalescent processes for haploid exchangeable population models},
        date={2001},
     journal={The Annals of Probability},
      volume={29},
      number={4},
       pages={1547\ndash 1562},
}

\bib{mueller1995stochastic}{article}{
      author={Mueller, Carl},
      author={Tribe, Roger},
       title={Stochastic p.d.e.'s arising from the long range contact and long range voter processes},
        date={1995},
     journal={Probability Theory and Related Fields},
      volume={102},
      number={4},
       pages={519\ndash 545},
}

\bib{mueller2009some}{incollection}{
      author={Mueller, Carl},
       title={Some tools and results for parabolic stochastic partial differential equations},
        date={2009},
   booktitle={A minicourse on stochastic partial differential equations, {Lecture Notes in Math.}, vol. 1962, {Springer}, {Berlin}},
      editor={Khoshnevisan, Davar},
      editor={Rassoul-Agha, Firas},
      series={Lecture Notes in Mathematics},
      volume={1962},
   publisher={Springer},
     address={Berlin},
       pages={111\ndash 144},
}

\bib{ng2011dirichlet}{book}{
      author={Ng, Kai~Wang},
      author={Tian, Guo-Liang},
      author={Tang, Man-Lai},
       title={Dirichlet and related distributions: Theory, methods and applications},
   publisher={John Wiley \& Sons},
        date={2011},
}

\bib{pitman1999coalescents}{article}{
      author={Pitman, Jim},
       title={Coalescents with multiple collisions},
        date={1999},
     journal={The Annals of Probability},
      volume={27},
      number={4},
       pages={1870\ndash 1902},
}

\bib{sagitov1999general}{article}{
      author={Sagitov, Serik},
       title={The general coalescent with asynchronous mergers of ancestral lines},
        date={1999},
     journal={Journal of Applied Probability},
      volume={36},
      number={4},
       pages={1116\ndash 1125},
}

\bib{serfling1974probability}{article}{
      author={Serfling, R.~J.},
       title={Probability inequalities for the sum in sampling without replacement},
        date={1974},
     journal={The Annals of Statistics},
      volume={2},
      number={1},
       pages={39\ndash 48},
}

\bib{shiga1988stepping}{incollection}{
      author={Shiga, Tokuzo},
       title={Stepping stone models in population genetics and population dynamics},
        date={1988},
   booktitle={Stochastic processes in physics and engineering},
      series={Mathematics and Its Applications},
      volume={42},
   publisher={Reidel},
     address={Dordrecht},
       pages={345\ndash 355},
}

\bib{shiga1994two}{article}{
      author={Shiga, Tokuzo},
       title={Two contrasting properties of solutions for one-dimensional stochastic partial differential equations},
        date={1994},
     journal={Canadian Journal of Mathematics},
      volume={46},
      number={2},
       pages={415\ndash 437},
}

\bib{walsh1986introduction}{incollection}{
      author={Walsh, John~B.},
       title={An introduction to stochastic partial differential equations},
        date={1986},
   booktitle={{{\'E}cole d'{\'E}t{\'e} de Probabilit{\'e}s de Saint-Flour XIV---1984}, {Lecture Notes in Math.}, vol. 1180, {Springer}, {Berlin}},
      series={Lecture Notes in Math.},
      volume={1180},
   publisher={Springer},
       pages={265\ndash 439},
}

\end{biblist}
\end{bibdiv}

\end{document}